\newtheorem{thm}{Theorem}[section]
\newtheorem{theorem}[thm]{Theorem}
\newtheorem{proposition}[thm]{Proposition}
\newtheorem{lemma}[thm]{Lemma}
\newtheorem{claim}[thm]{Claim}
\newtheorem{corollary}[thm]{Corollary}
\theoremstyle{remark}
\newtheorem{example}{Example}[section]
\newtheorem{remark}[example]{Remark}
\newcommand{\dF}{\mathbb{F}}
\newcommand{\dK}{\mathbb{K}}
\newcommand{\dR}{\mathbb{R}}
\newcommand{\spa}{{\rm span}}
\newcommand{\llangle}{\langle\!\langle}
\newcommand{\rrangle}{\rangle\!\rangle}
\title{Matroids of Gain Graphs in Applied Discrete Geometry}
\author{Shin-ichi Tanigawa\footnote{
Research Institute for Mathematical Sciences, Kyoto University. 
Kyoto 606-8502, Japan. {\tt tanigawa@kurims.kyoto-u.ac.jp} 
Supported in part by 
JSPS Grant-in-Aid for Scientific Research (B) and 
JSPS Grant-in-Aid for Young Scientists (B).}} 
\begin{document}
\renewcommand{\thefootnote}{ }
\footnotetext{2010 {\em Mathematics Subject Classification}. Primary 52C25, 05B35; Secondary 05C75, 05C10, 68R10.}
\footnotetext{{\em Key words}. rigidity of graphs, parallel drawings, rigidity matroids, 
gain graphs, group-labeled graphs, frame matroids, lift matroids}

\maketitle

\begin{abstract}
A $\Gamma$-gain graph is a graph whose oriented edges are labeled invertibly from a group $\Gamma$.
Zaslavsky proposed two matroids of $\Gamma$-gain graphs, called frame matroids and lift matroids,
and investigated linear representations of them. 
Each matroid has a canonical representation over a field $\mathbb{F}$ 
if $\Gamma$ is isomorphic to a subgroup of $\mathbb{F}^{\times}$ in the case of frame matroids 
or  $\Gamma$ is isomorphic to an additive subgroup of $\mathbb{F}$ in the case of lift matroids.
The canonical representation of the frame matroid of a complete graph is  also known as a Dowling geometry,
as it was first introduced by Dowling for finite groups $\Gamma$.

In this paper, we extend these matroids in two ways.
The first one is extending the rank function of each matroid, based on submodular functions over $\Gamma$.
The resulting rank function generalizes that of the union of frame matroids or lift matroids.
Another one is extending the canonical linear representation of 
the union of $d$ copies of a frame matroid or a lift matroid, based on linear representations of $\Gamma$ on 
a $d$-dimensional vector space.
We show that linear matroids of the latter extension are indeed special cases of the first extensions, as in the relation between 
Dowling geometries and frame matroids.
We also discuss an attempt to unify the extension of frame matroids and that of lift matroids.

This work is motivated from recent research on the combinatorial rigidity of symmetric graphs.
As special cases, we give several new results on this topic,
including combinatorial characterizations of 
the symmetry-forced rigidity of generic body-bar frameworks with point group symmetries 
or  crystallographic symmetries
and   the symmetric parallel redrawability of generic bar-joint frameworks with point group symmetries 
or crystallographic symmetries.  
\end{abstract}

%

\section{Introduction}

A {\em $\Gamma$-gain graph} $(G,\psi)$ is a pair of a graph $G=(V,E)$ 
and an assignment $\psi$ of an element of a group $\Gamma$ with each oriented edge
such that reversing  the direction inverts the assigned element.
Gain graphs are also known as group-labeled graphs
and appear in wide range of combinatorial problems and applications.
Zaslavsky~\cite{zaslavsky1989biased,zaslavsky1991biased,zaslavsky1994} 
studied a class of matroids of graphs, called {\em frame matroids} 
(formerly known as  {\em bias matroids}),
and as a principal subcase he considered a matroid of a $\Gamma$-gain graph $(G,\psi)$,
called the {\em frame matroid } $\mathbf{F}(G,\psi)$ of $(G,\psi)$.
Frame matroids include  several known matroids, 
such as graphic matroids, bicircular matroids, Dowling geometries, and matroids on signed graphs.
Zaslavsky~\cite{zaslavsky1991biased} also proposed another matroid on a $\Gamma$-gain graph $(G,\psi)$, 
called the {\rm lift matroid} $\mathbf{L}(G,\psi)$,
which can be constructed from the graphic matroid of $G$ by an elementary lift.

Each matroid has a canonical representation over a field $\mathbb{F}$ 
if $\Gamma$ is isomorphic to a subgroup of the multiplicative group $\mathbb{F}^{\times}$ of $\mathbb{F}$
in the case of $\mathbf{F}(G,\psi)$ or 
$\Gamma$ is isomorphic to an additive subgroup of $\mathbb{F}$ in the case of $\mathbf{L}(G,\psi)$.
The canonical representation of the frame matroid of a dense graph is  also known as a Dowling geometry,
as it was first introduced by Dowling~\cite{dowling1973class}.

As a further extension, Whittle~\cite{whittle1989generalisation} 
discussed a counterpart of frame matroids in general matroids,
by extending the construction of frame matroids from graphic matroids.

In this paper, we shall consider extensions, sticking to gain graphs.
We propose matroids of gain graphs, extending the constructions of frame matroids or lift matroids 
in the following two ways.
The first one is extending the rank function of each matroid, based on submodular functions over $\Gamma$.
The resulting rank function generalizes that of the union of frame matroids or lift matroids.
Another one is extending the canonical linear representation of 
the union of $d$ copies of a frame matroid or a lift matroid, based on linear representations of $\Gamma$ on 
a $d$-dimensional vector space.
We show that linear matroids of the latter extension are indeed special cases of the first extensions, as in the relation between 
Dowling geometries and frame matroids.

\subsection{Applications to rigidity theory}
This work is motivated from recent research on the combinatorial rigidity of symmetric graphs
and most parts of this paper are devoted to this application.

Characterizing generic rigidity of graphs is one of central problems in rigidity theory,
where a graph is identified with  a {\em bar-joint framework} 
by regarding  each vertex as a joint and each edge as a bar in the Euclidean space (see e.g.~\cite{Whitley:1997}).
In this context, a bar-joint framework is denoted by a pair $(G,p)$ of a graph $G=(V,E)$ 
and $p:V\rightarrow \mathbb{R}^d$.
For 2-dimensional rigidity, Laman's theorem~\cite{laman:Rigidity:1970} 
(along with a result by Asimov and Roth~\cite{asimov1978} or Gluck~\cite{gluck})
asserts that $(G,p)$ is minimally rigid on 
any generic $p:V\rightarrow\mathbb{R}^2$
if and only if $|E|=2|V|-3$ and $|F|\leq 2|V(F)|-3$ hold for any nonempty $F\subseteq E$, 
where $V(F)$ denotes the set of vertices incident to edges in $F$.
However, despite exhausting efforts so
far, the 3-dimensional counterpart has not been obtained yet.
 
Although characterizing generic 3-dimensional rigidity of graphs is recognized as 
one of the most difficult open problems in this field, there are solvable structural models even
in higher dimension. 
The most important case is a {\em body-bar framework} introduced by Tay~\cite{tay:84}.
A body-bar framework is a structural model consisting of disjoint rigid bodies articulated 
by bars, and the underlying graph is extracted by associating each body with a vertex and 
each bar with an edge. 
Tay~\cite{tay:84} proved that a generic body-bar framework (i.e., relative positions of bars are generic) 
is rigid  if only if the underlying graph has rank ${d+1\choose 2}(|V|-1)$ 
in the union of ${d+1\choose 2}$ copies of the graphic matroid.

Building up mathematical models of oscillations of chemical compounds or phase transitions of crystal materials is
one of main issues in theoretical physics,
and toward understanding topological impacts in such phenomena  
there are attempts to extend those theorems for generic rigidity to {\em symmetric frameworks} in the past few years.
Here, symmetric frameworks are those which are invariant with
an action of a point group in finite case or of a space group in infinite case.
The papers by Borcea and Streinu~\cite{borcea2010}, Power~\cite{power2012}, 
or Schulze et al.~\cite{Schulze2012protein} demonstrate 
applications of the theory to specific ideal crystals or proteins 
and discuss possible extensions.   

For a finite case, initiated 
by a combinatorial necessary condition~\cite{fowler2000symmetry,connelly2009symmetric}, 
Schulze~\cite{schulze,schulze2010symmetric} showed an extension of Laman's theorem of 
minimal 2-dimensional rigidity subject to certain point group symmetries.

Characterizing {\em symmetry-forced rigidity},  proposed
for finite frameworks in \cite{schulze2011orbit} and for
infinite periodic frameworks in \cite{borcea2010,borcea2011minimally}, 
is now recognized as an important initial step to understand the rigidity of symmetric frameworks,
where in this model each motion is also subject to the underlying symmetry.
(For other attempts to  capture the flexibility of periodic frameworks, 
see, e.g., \cite{owen2008frameworks,owen:2011}.)   
It was  proved that the  
{\em symmetry-forced generic rigidity} 
(i.e., symmetry-forced rigidity on generic configurations subject to the symmetry)
can be checked by computing  the rank of linear matroids defined on the edge sets 
of the underlying quotient gain graphs,
and thus can be analyzed as in a conventional manner.
After this concept has been emerged, 
characterizing in terms of 
the underlying quotient gain graphs were proved by 
Ross~\cite{ross2011,ross2012rigidity} for 
periodic 2-dimensional bar-joint frameworks and periodic 3-dimensional 
body-bar frameworks with fixed lattice metric and 
by Malestein and Theran~\cite{malestein2010,malestein2011generic}
for crystallographic 2-dimensional bar-joint frameworks with flexible lattice metric.

The result of this paper is indeed inspired by these previous results. 
As shown by Lov{\'a}sz and Yemini~\cite{lovasz:1982}, Tay~\cite{tay:84} and Whiteley~\cite{whiteley:88,Whitley:1997},
the union of copies of graphic matroids plays a central role in combinatorial rigidity theory,
that is, most combinatorial characterizations are written in terms of the union 
of copies of graphic matroids or its variants, called {\em count matroids} 
(see e.g., \cite{Frank2011} for count matroids). 
It is thus natural to investigate the union of copies of frame matroids or lift matroids 
to derive the symmetric analogues 
on gain graphs.
However, when compared with the canonical linear representation of the union of frame matroids 
(cf.~\textsection\ref{sec:poly}),
linear matroids of gain graphs proposed 
in the context of 
rigidity~\cite{schulze2011orbit,ross2012rigidity,malestein2011generic,borcea2011periodic,borcea2011minimally,borcea2011periodic}
much rely on algebraic structures of the underlying groups.
The primary motivation of this paper is to propose a new class of matroids of gain graphs, 
which forms the foundation
in the study of symmetry-forced rigidity, as does the union of graphic matroids in classical rigidity problem.

As another application, we shall also consider the symmetric version of the {\em parallel redrawing problem}
of graphs.
In the parallel redrawing problem, we are asked whether a given straight-line drawing of 
a graph admits a parallel redrawing,
that is, another straight-line drawing such that each edge is parallel to the corresponding one in the original 
drawing. 
Since any drawing admits a parallel redrawing by a translation or a dilation, we are asked 
whether all possible parallel redrawing are obtained in these trivial ways. 
Whiteley~\cite{Whitley:1997} proved a combinatorial characterization 
for parallel redrawability of generic drawings.
Here, we shall discuss the symmetric counterpart, called the {\em symmetric parallel redrawing problem}, 
where both drawing and its redrawing are subject to symmetry. 

Whiteley~\cite{Whitley:1997} also gave similar matroids arose in scene analysis, 
which can be characterized by count matroids.
Replacing the union of graphic matroids with our new matroids, 
it is possible to extend the characterizations to the symmetric version. 

We list applications addressed in this papers:
the $d$-dimensional symmetric parallel redrawing problem 
with point group symmetry (\textsection\ref{subsec:drawing});
the $2$-dimensional symmetry-forced rigidity of 
bar-joint frameworks with rotational symmetry (\textsection\ref{subsec:rigidity});
the $d$-dimensional symmetry-forced rigidity of body-bar frameworks 
with point group symmetry or crystallographic symmetry 
with fixed lattice metric (\textsection\ref{subsec:symmetric_body_bar});
the $d$-dimensional symmetric parallel redrawing problem with crystallographic symmetry with flexible lattice metric
(\textsection\ref{subsec:cry_parallel});
the $2$-dimensional symmetry-forced rigidity of bar-joints frameworks with crystallographic symmetry whose linear 
part is a group of rotations (\textsection\ref{subsec:cry_parallel}).
The results provide alternative proofs of  existing works as well as new statements, 
which solve questions (explicitly/implicitly) posed 
in~\cite{ross2011,ross2012rigidity,malestein2011generic,schulze2011orbit}. 

%

\subsection{Organization}
The paper is organized as follows.
In \textsection\ref{sec:gain_graph} and \textsection\ref{sec:poly}, 
we briefly review fundamental facts on gain graphs and (poly)matroids, respectively.
In particular, we shall explain details of matroids 
induced by monotone submodular functions in \textsection\ref{sec:poly}, 
as our extensions belong to this class.

Extensions of frame matroids and lift matroids are described in 
\textsection\ref{sec:fractional_lifting}, \textsection\ref{sec:dowling_extension},
\textsection\ref{sec:lift} and \textsection\ref{sec:unified},
and the remaining sections are devoted to applications.
In \textsection\ref{sec:fractional_lifting}, we give 
an extension of rank functions of frame matroids via submodular functions over groups,
while in \textsection\ref{sec:dowling_extension} we
give an extension of Dowling geometries via group representations.
We give a combinatorial characterization (Theorem~\ref{thm:main_rank}) of the proposed linear matroids, 
which implies that these linear matroids are special cases of  
matroids combinatorially defined in \textsection\ref{sec:fractional_lifting}.
Proving such a characterization does not look an easy task at a glance,
but it turns out, by using the polymatroid theory discussed in \textsection\ref{sec:poly}, 
that the problem is as  easy as the case of frame matroids.  

As applications, we will discuss the parallel redrawing problem and 
the symmetry-forced rigidity of bar-joint frameworks with point group symmetry 
in \textsection\ref{sec:applications}.
In \textsection\ref{sec:action}, we also discuss an application to
the symmetry-forced rigidity of body-bar frameworks with crystallographic symmetry.  

In \textsection\ref{sec:lift} we give counterparts of those results for lift matroids.
In \textsection\ref{sec:unified}, 
we attempt to unify the extension of frame matroids and that of lift matroids,
based on the representation theory obtained so far. 
In \textsection\ref{sec:further}, we give further applications to 
the parallel redrawing problem or the rigidity problem of bar-joint frameworks with crystallographic symmetry.  

\subsection{Notations}
We conclude introduction by listing  notations used throughout the paper. 
A {\em partition} ${\cal P}$ of a finite set $E$ is a set of nonempty subsets of $E$
such that each element of $E$ belongs to exactly one subset of ${\cal P}$.
If $E=\emptyset$, the partition of $E$ is defined as the empty set.
A {\em subpartition} of $E$ is a partition of a subset of $E$.

For an undirected graph $G$, $V(G)$ and $E(G)$ denotes the vertex set and the edge set of $G$, respectively.
For $F\subseteq E(G)$, $V(F)$ denotes the set of endvertices of edges in $F$,
and let $G[F]=(V(F), F)$, that is, the graph edge-induced by $F$. 

For simplicity of description, 
we shall use some terminologies for referring edge subsets, which are conventionally used for subgraphs, as follows.
Let $F\subseteq E$.
$F$ is called {\em connected} if $G[F]$ is connected.
A {\em connected component} of $F$ is the edge set of a connected component of $G[F]$. 
$C(F)$ denotes the partition of $F$ into connected components of $F$,
and let $c(F)=|C(F)|$.
$F$ is called a {\em forest} if it contains no cycle
and  called a {\em tree} if it is connected and forest.
$F$ is called a {\em spanning tree} of a graph $G=(V,E)$ 
if $F$ is a tree with $F\subseteq E$ and $V(F)=V$.

A graph is called {\em simple} if it contains neither a loop nor parallel edges.
In a simple undirected graph, an edge between $i$ and $j$ is denoted by $\{i,j\}$.
Similarly, in a simple directed graph, an edge oriented from $i$ to $j$ is denoted by $(i,j)$.
Even though the graph is not simple, we sometimes denote $e=(i,j)$ to means that an edge $e$ is oriented from $i$ to $j$,  if it is clear from the context.
    

Throughout the paper,  $\mathbb{K}$ denotes a field, which may be finite,
and $\mathbb{F}$ a subfield of $\mathbb{K}$ such that
$\mathbb{K}$ has transcendentals $\alpha_1,\dots, \alpha_k$ that form
an algebraically independent set over $\mathbb{F}$,
where $k$ is finite and will become clear from the context 
(i.e., it depends on the size of ground sets).
Then we assume that vector space $\mathbb{F}^d$ is contained in $\mathbb{K}^d$ by extension of scalars.
For a set $X\subseteq \dF^d$,
$\dim_{\mathbb{F}} X$ denotes the dimension of the linear subspace spanned by $X$ in $\dF^d$.

For a finite set $E$ and a vector space $W$, the set of linear maps from $E$ to $W$ is denoted by $W^E$,
i.e., $W^E=\{\psi\mid \psi\colon E\rightarrow W\}$. 
  
For a group $\Gamma$ and $X\subseteq \Gamma$, $\langle X\rangle$ denotes the subgroup of $\Gamma$ generated by $X$.

\section{Fundamentals on Gain Graphs}
\label{sec:gain_graph}
In this section we shall review properties of gain graphs.
See e.g.,~\cite{GrossTucker, zaslavsky1989biased,zaslavsky1991biased} for concrete explanations on this topic.
Propositions given in this section are rather straightforward,
and can be found in  \cite{gain_sparsity}. 
\subsection{Gain graphs}
Let $G=(V,E)$ be a directed graph which may contain multiple edges and loops, and let $\Gamma$ be a group.
A pair is called a {\em $\Gamma$-gain graph} $(G,\psi)$, in which each edge is associated with an
element of $\Gamma$ by a {\em gain function} $\psi:E\rightarrow \Gamma$.
$G$ is a directed graph, but
its orientation is used only for the reference of the gain labeling.
Namely, we can change orientation of each edge as we like, 
by imposing a property to $\psi$ such that, if an edge in one direction has label $g$, 
then it has $g^{-1}$ in the other direction.
Thus, we often do not distinguish $G$ and the underlying undirected graph
and use notations in the introduction, which were introduced for undirected graphs, 
if it is clear from the context.


A {\em walk} is a sequence $W=v_0,e_1,v_1,e_2,v_2,\dots,v_{k-1}, e_k,v_k$ of 
vertices and edges such that $v_{i-1}$ and $v_i$ are endvertices of $e_i$ for $1\leq i\leq k$.
For two walks $W$ and $W'$ for which the end vertex of $W$ and the starting vertex of $W'$ coincide,
the concatenation of $W$ and $W'$ is  the walk $W$ followed by $W'$.
A walk is called {\em closed} if the starting vertex and the end vertex coincide.
The {\em gain} of a walk $W$ is defined as $\psi(W)=\psi(e_1)\cdot \psi(e_2)\cdots \psi(e_k)$ if each edge is oriented in the forward direction through $W$, 
and  for a backward edge $e_i$ we replace $\psi(e_i)$ with $\psi(e_i)^{-1}$ in the formulation.

Let $(G,\psi)$ be a gain graph. 
For  $v\in V(G)$,
we denote by $\pi_1(G,v)$ the set of closed walks starting at $v$.
Similarly, for $X\subseteq E(G)$ and $v\in V(G)$, 
$\pi_1(X,v)$ denotes the set of closed walks starting at $v$ and using only edges of $X$,
where $\pi_1(X,v)=\emptyset$ if $v\notin V(X)$. 
For $X\subseteq E(G)$, the subgroup induced by $X$ relative to $v$ is defined as 
$\langle X\rangle_{\psi,v}=\{\psi(W)\mid W\in \pi_1(X,v)\}$.
\begin{proposition}
\label{prop:conjugate}
For any connected $X\subseteq E(G)$ and two vertices $u,v\in V(X)$, 
$\langle X\rangle_{\psi,u}$ is conjugate to $\langle X\rangle_{\psi,v}$.
\end{proposition}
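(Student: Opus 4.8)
The plan is to exhibit an explicit conjugating element coming from a connecting walk inside $X$. First I would use connectivity of $X$: since $u,v\in V(X)$ and $G[X]$ is connected, there is a walk $P$ from $u$ to $v$ that uses only edges of $X$. Set $g=\psi(P)\in\Gamma$, and let $P^{-1}$ denote the reverse walk (from $v$ to $u$), so that $\psi(P^{-1})=\psi(P)^{-1}=g^{-1}$ by the definition of the gain of a walk.

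Next I would compare the two induced subgroups through conjugation by $g$. Given any closed walk $W\in\pi_1(X,v)$, the concatenation $P\cdot W\cdot P^{-1}$ is a closed walk starting and ending at $u$, and it uses only edges of $X$; hence it lies in $\pi_1(X,u)$, and its gain is $\psi(P)\psi(W)\psi(P^{-1})=g\,\psi(W)\,g^{-1}$. This shows $g\langle X\rangle_{\psi,v}g^{-1}\subseteq \langle X\rangle_{\psi,u}$. The reverse inclusion is symmetric: for $W'\in\pi_1(X,u)$, the walk $P^{-1}\cdot W'\cdot P\in\pi_1(X,v)$ has gain $g^{-1}\psi(W')g$, giving $g^{-1}\langle X\rangle_{\psi,u}g\subseteq\langle X\rangle_{\psi,v}$, i.e. $\langle X\rangle_{\psi,u}\subseteq g\langle X\rangle_{\psi,v}g^{-1}$. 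Combining the two inclusions yields $\langle X\rangle_{\psi,u}=g\langle X\rangle_{\psi,v}g^{-1}$, which is exactly the assertion that the two subgroups are conjugate.

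There is no real obstacle here; the only points requiring a little care are bookkeeping ones: checking that concatenations of walks are again walks using only edges of $X$ (immediate, since $P$, $W$, $P^{-1}$ all lie in $X$), that the endpoints match up so the concatenations are indeed closed walks at the correct base vertex, and that the gain of a concatenation is the product of the gains while the gain of a reversed walk is the inverse — all of which follow directly from the definitions of walk, closed walk, and the gain $\psi(\cdot)$ of a walk recalled just above. I would also note the degenerate case $u=v$, where one may take $P$ to be the trivial walk and $g$ the identity, so the statement holds trivially.
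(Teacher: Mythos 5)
Your proof is correct and is exactly the standard conjugation-by-a-connecting-walk argument that the paper has in mind (the paper omits the proof as routine, citing \cite{gain_sparsity}): conjugating by the gain $g=\psi(P)$ of a walk $P\subseteq X$ from $u$ to $v$ identifies $\langle X\rangle_{\psi,v}$ with a subgroup of $\langle X\rangle_{\psi,u}$, and the symmetric inclusion gives equality $\langle X\rangle_{\psi,u}=g\langle X\rangle_{\psi,v}g^{-1}$. No gaps; your bookkeeping of concatenations, reversals, and the trivial case $u=v$ is all that is needed.
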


\subsection{Switching operations}
For $v\in V(G)$ and $g\in \Gamma$, 
a {\em  switching} at $v$ with $g$  changes the gain function $\psi$ on $E(G)$ as follows:
\begin{equation*}
\label{eq:label_change}
 \psi'(e)=\begin{cases}
g\cdot \psi(e) & \text{if $e$ is directed from $v$} \\
\psi(e)\cdot g^{-1} & \text{if $e$ is directed to $v$} \\
\psi(e) & \text{otherwise}.
\end{cases}
\end{equation*}
By definition, $\psi'(e)=g\cdot \psi(e)\cdot g^{-1}$ if $e$ is a loop attached at $v$.
We say that a gain function $\psi'$ on $E(G)$ is {\em equivalent} to another gain function 
$\psi$ on $E(G)$ 
if $\psi'$ is obtained from $\psi$ by a sequence of switchings.


\begin{proposition}
\label{prop:fundamental_operation}
Let $(G,\psi)$ be a gain graph.
Let $\psi'$ be a gain function equivalent to $\psi$.
Then, for any $X\subseteq E(G)$ and any $v\in V(G)$, $\langle X\rangle_{\psi',v}$ 
is conjugate to $\langle X\rangle_{\psi,v}$.
\end{proposition}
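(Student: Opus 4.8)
The plan is to reduce to the case of a single switching and then track gains along walks by a telescoping argument. Since equivalence of gain functions is defined as a finite sequence of switchings, and ``being conjugate'' is a transitive relation on subsets of $\Gamma$, it suffices to prove the statement when $\psi'$ is obtained from $\psi$ by one switching, at a vertex $w$ with some $g\in\Gamma$; the general case then follows by induction on the length of the sequence, composing the conjugating elements. So assume this single-switching situation and introduce the vertex potential $f\colon V(G)\to\Gamma$ defined by $f(w)=g$ and $f(u)=1_\Gamma$ for $u\neq w$. Unwinding the definition of switching, one checks that for every edge $e$ with reference orientation from $a$ to $b$ one has $\psi'(e)=f(a)\,\psi(e)\,f(b)^{-1}$, including the loop case $a=b$.

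Next I would show that for an arbitrary walk $W=v_0,e_1,v_1,\dots,e_k,v_k$ the gains satisfy $\psi'(W)=f(v_0)\,\psi(W)\,f(v_k)^{-1}$. The key point is that when $W$ traverses $e_i$ from $v_{i-1}$ to $v_i$ --- whether along the forward or the backward direction of $e_i$ --- the contribution of $e_i$ to $\psi'(W)$ equals $f(v_{i-1})$ times its contribution to $\psi(W)$ times $f(v_i)^{-1}$; in the forward case this is exactly the edge identity above, and in the backward case it follows by taking inverses of that identity. Multiplying these contributions over $i=1,\dots,k$, the interior factors $f(v_i)^{-1}f(v_i)$ cancel and only $f(v_0)$ on the left and $f(v_k)^{-1}$ on the right survive. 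Applying this to a closed walk $W$ at $v$ gives $\psi'(W)=f(v)\,\psi(W)\,f(v)^{-1}$, and since $\pi_1(X,v)$ depends only on $X$ and $G$, not on the gain function, we get $\langle X\rangle_{\psi',v}=f(v)\,\langle X\rangle_{\psi,v}\,f(v)^{-1}$ (the statement being vacuous if $v\notin V(X)$). Hence $\langle X\rangle_{\psi',v}$ is conjugate to $\langle X\rangle_{\psi,v}$, which together with the induction proves the proposition.

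The only real obstacle is bookkeeping: one must get the handedness right in the edge identity $\psi'(e)=f(a)\,\psi(e)\,f(b)^{-1}$, confirm that it is stable under taking inverses for backward traversals, and check that it specializes correctly when $e$ is a loop. Once that identity is established the telescoping, and therefore the whole proof, is essentially forced.
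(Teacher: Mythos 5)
Your proof is correct: the single-switching reduction, the potential $f$ with $f(w)=g$ and $f(u)=1_\Gamma$ elsewhere, the edge identity $\psi'(e)=f(a)\psi(e)f(b)^{-1}$ (stable under inversion and covering the loop case), and the telescoping along closed walks give exactly $\langle X\rangle_{\psi',v}=f(v)\langle X\rangle_{\psi,v}f(v)^{-1}$, and transitivity of conjugacy handles an arbitrary sequence of switchings. The paper itself omits the proof (deferring to a reference as "straightforward"), and your argument is the standard one it relies on, so there is nothing to correct.
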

%

\begin{proposition}
\label{prop:tree_identity}
Let $(G,\psi)$ be a gain graph.
Then, for any forest $F\subseteq E(G)$, there is an equivalent gain function $\psi'$ on $E(G)$
such that $\psi'(e)$ is identity  for every $e\in F$.
\end{proposition}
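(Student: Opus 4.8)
The plan is to prove Proposition~\ref{prop:tree_identity} by induction on the number of edges of the forest $F$, peeling off leaf edges one at a time and using a single switching operation to clear each label.

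First I would handle the base case $F = \emptyset$, which is trivial since there is nothing to clear, so $\psi' = \psi$ works. For the inductive step, suppose $|F| \geq 1$. Since $F$ is a forest, the graph $G[F]$ has a vertex $v$ that is a leaf of $G[F]$, i.e. exactly one edge $e \in F$ is incident to $v$ in $G[F]$; write $e = (v,u)$ or $e = (u,v)$ depending on orientation (if $e$ is a loop then $G[F]$ would contain a cycle, contradicting that $F$ is a forest, so $v \neq u$). Now perform a switching at $v$: if $e$ is directed from $v$, choose $g = \psi(e)^{-1}$, and if $e$ is directed to $v$, choose $g = \psi(e)$. In either case the new gain function $\psi_1$ satisfies $\psi_1(e) = \mathrm{identity}$. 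Crucially, since $v$ is incident in $G[F]$ only to $e$, the switching at $v$ leaves $\psi$ unchanged on every other edge of $F$ (it may alter labels on edges of $E(G) \setminus F$ incident to $v$, but that is harmless). Hence $F \setminus \{e\}$ is a forest on which $\psi_1$ is already identity on no edges we care about yet, and we may apply the induction hypothesis to the gain graph $(G, \psi_1)$ and the smaller forest $F \setminus \{e\}$ to obtain an equivalent gain function $\psi'$ that is identity on $F \setminus \{e\}$. Since the further switchings producing $\psi'$ from $\psi_1$ are all performed at vertices, and we must check they do not disturb $e$: this requires a small refinement of the induction.

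The cleaner formulation, which I would actually carry out, is a stronger induction statement: for any forest $F$, there is a sequence of switchings, \emph{each performed at a non-root vertex of the corresponding rooted forest}, producing $\psi'$ with $\psi'|_F$ identity; or, more simply, one processes $G[F]$ tree by tree and within each tree processes edges in a leaf-to-root order, performing at the $i$-th step a switching at the current leaf of the remaining subtree. Because each switching is at a vertex whose only remaining $F$-edge is the one being cleared, and already-cleared edges lie on the ``root side'', no already-cleared edge is ever touched again. Formally: root each tree of $G[F]$ arbitrarily; order the edges $e_1, \dots, e_m$ of $F$ so that each $e_i$, when removed, leaves its far-from-root endpoint $v_i$ a leaf of the forest $F \setminus \{e_1, \dots, e_{i-1}\}$ (a reverse BFS/DFS order). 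At step $i$ switch at $v_i$ with the appropriate $g_i$ so that $e_i$ gets label identity; since $v_i$ is incident in $F \setminus \{e_1,\dots,e_{i-1}\}$ only to $e_i$, this switching does not change the label of any $e_j$ with $j < i$, and it sets $\psi(e_i)$ to identity. After all $m$ steps, the resulting $\psi'$ is equivalent to $\psi$ and is identity on all of $F$.

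The main obstacle — really the only point needing care — is the bookkeeping that a switching at $v_i$ does not re-corrupt a previously cleared edge $e_j$; this is exactly why the leaf-peeling order matters, and it reduces to the observation that in $G[F \setminus \{e_1,\dots,e_{i-1}\}]$ the vertex $v_i$ has degree one with incident edge $e_i$, together with the defining formula for switching, which only alters labels of edges incident to the switching vertex. Everything else is routine. I would also note that the same argument shows one can simultaneously make $\psi'$ identity on a chosen spanning forest, which is the form in which this proposition is typically invoked.
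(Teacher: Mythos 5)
Your overall strategy is the right one (the paper itself treats this proposition as routine and defers the proof to a reference): clear the edges of each tree of $F$ one at a time by switchings, using the fact that a switching at $v$ only changes labels of edges incident to $v$, so altering non-forest edges is harmless and the only issue is not to disturb already-cleared forest edges. But the formal procedure you settle on is stated in the wrong direction, and the justification you give for precisely the step you flag as the delicate one does not hold. You order the edges so that the far-from-root endpoint $v_i$ of $e_i$ is a leaf of the \emph{remaining} forest $F\setminus\{e_1,\dots,e_{i-1}\}$ (a bottom-up, leaf-peeling order) and claim this prevents the switching at $v_i$ from touching any $e_j$ with $j<i$. That implication is false: being a leaf of the remaining forest only says that the other $F$-edges at $v_i$ have \emph{already} been processed; they are still edges of $G$ incident to $v_i$, and the switching at $v_i$ multiplies their (identity) labels by $g_i^{\pm1}$, corrupting them. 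Already on a two-edge path $a$--$b$--$c$ rooted at $a$ with both labels nontrivial, your order first clears $\{b,c\}$ by switching at $c$ and then clears $\{a,b\}$ by switching at $b$, which resets $\{b,c\}$ to a nontrivial label. Note also that your parenthetical ``already-cleared edges lie on the root side'' is true only for the opposite order, so the argument is internally inconsistent exactly where the care is needed.

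The fix is small. Either run the iteration top-down: process the edges of each rooted tree in ordinary BFS/DFS order from the root and switch at the child endpoint $v_i$; then every previously cleared edge lies strictly on the root side of $v_i$ and hence is not incident to $v_i$, while the edges below $v_i$ that do get altered have not yet been processed. Equivalently, repair your induction from the first paragraph by reversing it: pick a leaf $v$ of $G[F]$ with unique incident edge $e\in F$, apply the induction hypothesis to $F\setminus\{e\}$ \emph{first}, and only then switch at $v$ to clear $e$; since $v$ meets no edge of $F\setminus\{e\}$, this final switching leaves all previously cleared labels intact, and no check about later switchings disturbing $e$ is needed. (A one-shot alternative: switch at each vertex $v$ of $V(F)$ with $g_v$ equal to the gain of the tree path from the root of its component to $v$; the combined effect of these switchings replaces $\psi(e)$, for $e=(u,w)$, by $g_u\psi(e)g_w^{-1}$, which is the identity on every edge of $F$.)
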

%
%


Proposition~\ref{prop:tree_identity} suggests a simple way to compute $\langle F\rangle_{\psi,v}$ up to congruence,
in analogy with the fact that 
a cycle space of a graph is spanned by fundamental cycles. 
For a connected $X\subseteq E(G)$, 
take a spanning tree $T$ of the edge induced graph $G[X]$. 
By Proposition~\ref{prop:tree_identity} 
we can convert the gain function to an equivalent gain function such that $\phi(e)={\rm id}$ for all $e\in T$.
Then, observe that any closed walk $W\in \pi_1(X,v)$ 
can be considered as concatenations of closed walks $W_1, W_2, \dots, W_k$ 
such that $W_i$ is a closed walk in $\pi_1(X,v)$ that passes through only one edge of $X\setminus T$.
Since $\phi(e)$ is identity  for all $e\in T$, 
it follows that $\phi(W)$ is a product of elements in $\{\phi(e):e\in X\setminus T\}$, 
implying that $\langle X\rangle_{\phi,v}\subseteq \langle \phi(e):e\in X\setminus T\rangle$.
Conversely, 
$\phi(e)$  is contained in $\langle X\rangle_{\phi,v}$ for all $e\in X\setminus T$.
Thus, $\langle X\rangle_{\phi,v}=\langle \phi(e):e\in X\setminus T\rangle$.
In particular, we proved the following.
\begin{proposition}
\label{lem:checking_label}
For a connected $X\subseteq E(G)$ and a spanning tree $T$ of graph $(V(X),X)$,
suppose that $\psi(e)$ is identity for all $e\in T$.
Then, $\langle X\rangle_{\psi,v}=\langle \psi(e): e\in X\setminus T\rangle$. 
\end{proposition}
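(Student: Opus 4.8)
The plan is to verify the two inclusions $\langle X\rangle_{\psi,v}\subseteq \langle \psi(e):e\in X\setminus T\rangle$ and $\langle \psi(e):e\in X\setminus T\rangle\subseteq\langle X\rangle_{\psi,v}$ separately, exactly along the lines of the informal discussion that precedes the statement. Throughout, fix the base vertex $v\in V(X)$ and recall that $X$ is connected, so $T$ is a genuine spanning tree of $(V(X),X)$ and every vertex of $X$ is joined to $v$ by a unique path in $T$; since $\psi$ is identity on all edges of $T$, each such tree path has gain identity.

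For the inclusion $\supseteq$, I would argue as follows. Fix $e=(i,j)\in X\setminus T$. Let $P_{vi}$ denote the unique $v$--$i$ path in $T$ and $P_{jv}$ the unique $j$--$v$ path in $T$. The concatenation $W_e=P_{vi},e,P_{jv}$ is a closed walk in $\pi_1(X,v)$, and $\psi(W_e)=\psi(P_{vi})\cdot\psi(e)\cdot\psi(P_{jv})=\psi(e)$ because the tree-path gains are identity. Hence $\psi(e)\in\langle X\rangle_{\psi,v}$ for every $e\in X\setminus T$, and since $\langle X\rangle_{\psi,v}$ is a subgroup of $\Gamma$ it contains the subgroup generated by these elements.

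For the inclusion $\subseteq$, take an arbitrary closed walk $W\in\pi_1(X,v)$; I want to show $\psi(W)$ lies in $\langle \psi(e):e\in X\setminus T\rangle$. The key step is the decomposition of $W$ into ``fundamental'' pieces, which I would make precise by induction on the number of occurrences of non-tree edges in $W$. If $W$ uses no edge of $X\setminus T$, then $W$ lies entirely in the tree $T$, so $\psi(W)$ is a product of identities and equals the identity. Otherwise, write $W=v_0,e_1,v_1,\dots,e_k,v_k$ with $v_0=v_k=v$, let $e_\ell$ be the first non-tree edge encountered, say $e_\ell=(a,b)$ traversed from $a$ to $b$ (the backward case is symmetric, using $\psi(e_\ell)^{-1}$). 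Split $W$ as the concatenation of the initial segment $W'$ from $v$ to $a$, the single edge $e_\ell$, and the terminal segment $W''$ from $b$ to $v$. Now insert the tree path $P_{av}$ and its reverse $P_{va}$ at $a$, and likewise $P_{vb},P_{bv}$ at $b$: this rewrites $\psi(W)$ as $\psi(W')\psi(P_{av})\cdot\bigl(\psi(P_{va})\psi(e_\ell)\psi(P_{bv})\bigr)\cdot\psi(P_{vb})\psi(W'')$. The middle factor is $\psi(W_{e_\ell})=\psi(e_\ell)\in\langle\psi(e):e\in X\setminus T\rangle$ by the computation above, while $W'P_{av}$ and $P_{vb}W''$ are closed walks in $\pi_1(X,v)$ each containing strictly fewer non-tree edge occurrences than $W$; by the inductive hypothesis their gains lie in the subgroup generated by $\{\psi(e):e\in X\setminus T\}$. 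Hence so does $\psi(W)$, completing the induction and the inclusion.

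I do not expect a serious obstacle here: the only point requiring care is bookkeeping the orientation of each traversal of $e_\ell$ (forward versus backward) so that the right power $\psi(e_\ell)^{\pm1}$ appears, and ensuring the inductive measure (number of non-tree edge occurrences in the walk) genuinely drops in the two residual closed walks $W'P_{av}$ and $P_{vb}W''$ — which it does, since the occurrence $e_\ell$ has been removed and the inserted tree paths contribute only tree edges. Combining the two inclusions gives $\langle X\rangle_{\psi,v}=\langle\psi(e):e\in X\setminus T\rangle$, as claimed.
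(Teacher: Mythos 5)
Your argument is correct and follows essentially the same route as the paper: the paper's justification is the informal paragraph preceding the proposition, which decomposes any closed walk in $\pi_1(X,v)$ into closed walks each using a single edge of $X\setminus T$ and notes that tree edges contribute identity gains, exactly the two inclusions you verify. Your induction on the number of non-tree edge occurrences simply makes the paper's decomposition precise, so there is nothing substantively different to flag.
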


A connected edge subset $F$ in a gain graph $(G,\psi)$ is called {\em balanced} 
if $\langle F\rangle_{\psi,v}$ is the identity group for some $v\in V(F)$.
$F$ is called {\em unbalanced} if it is not balanced.
By Proposition~\ref{prop:conjugate}, this property is invariant under the choice of the base vertex  $v\in V(F)$,
and $F$ is unbalanced if and only if $F$ contains an unbalanced cycle.
Thus, we can extend this notion to any $F\subseteq E(G)$ (possibly disconnected sets) such that $F$ is {\em unbalanced} 
if and only if $F$ contains an unbalanced cycle.

\section{Matroids and Polymatroids}
\label{sec:poly}
\subsection{Polymatroids}
\label{subsec:polymatroids}

Let $E$ be a finite set. A function $\mu:2^E\rightarrow \mathbb{R}$ is called {\em submodular} 
if $\mu(X)+\mu(Y)\geq \mu(X\cup Y)+\mu(X\cap Y)$ for every $X,Y\subseteq E$. 
It is well known that $\mu:2^E\rightarrow \mathbb{R}$ is submodular if and only if
$\mu(X\cup\{e\})-\mu(X)\geq \mu(Y\cup \{e\})-\mu(Y)$ for any $X\subseteq Y\subseteq E$ and $e\in E\setminus Y$.
$\mu$ is called {\em monotone} if $\mu(X)\leq \mu(Y)$ for any $X\subseteq Y$.
$\mu$ is called {\em normalized} if $\mu(\emptyset)=0$.

Suppose that $\mu:2^E\rightarrow \mathbb{Z}$ is a normalized integer-valued function on $E$.
The pair $(E,\mu)$ is called an {\em integer polymatroid}  if $\mu$ is monotone and submodular,
and $\mu$ is called the {\em rank function} of $(E,\mu)$.
Throughout the paper, we shall refer to integer polymatroids as  {\em polymatroids}.
$(E,\mu)$ is called a {\em matroid} if $\mu$ further satisfies  $\mu(e)\leq 1$ for every $e\in E$.


\subsection{Matroids induced by submodular functions}
\label{subsec:induced_matroids}

Let $E$ be a finite set.
An integer-valued monotone submodular function $\mu:2^E\rightarrow \mathbb{Z}$ {\em induces} a matroid on $E$, 
denoted by $\mathbf{M}(\mu)$,
where $F\subseteq E$ is independent if and only if $|X|\leq \mu(X)$ 
for every nonempty $X\subseteq F$~\cite{edmonds:1970}. 
This matroid can be understood through the following two polymatroid constructions, {\em Dilworth truncation} and {\em restriction}.

\subsubsection{Dilworth truncation}
Let $\mu:2^E\rightarrow \mathbb{Z}$ be monotone submodular.
Let us first assume that $\mu(e)\geq 0$ for every $e\in E$,
and  consider
\begin{equation}
\label{eq:poly_trun}
 \hat{\mu}(F)=\max\{\sum_{e\in F} x(e)\mid x\in \mathbb{R}_+^F \colon \emptyset\neq \forall X\subseteq F, \sum_{e\in X}x(e)\leq \mu(X)\}
\end{equation}
for $F\subseteq E$.
%
It is know that $\hat{\mu}:2^E\rightarrow \mathbb{R}$ is a monotone submodular function, written by
\begin{equation}
\label{eq:hat}
 \hat{\mu}(F)=\min\{\sum_{1\leq i\leq k}\mu(F_i)\mid \text{ a partition } \{F_1,\dots, F_k\} 
\text{ of } F \} \qquad (F\subseteq E),
\end{equation} 
where $\hat{\mu}(\emptyset)=0$
(see, e.g., \cite[Section 48.2]{Schrijver} or \cite[Theorem~2.6]{fujishige}).
It is easy to check that, even if $\mu(e)<0$ holds, $\hat{\mu}$ can be extended to be monotone submodular as follows:
\begin{equation}
\label{eq:hat2}
\hat{\mu}(F)=\min\{\sum_{1\leq i\leq k}\mu(F_i)\mid \text{ a partition } \{F_1,\dots, F_k\} \text{ of } F_+\} 
\qquad (F\subseteq E)
\end{equation}
where $F_+=\{e\in F\mid \mu(e)\geq 0\}$, and  $\hat{\mu}(F)=0$ if $F_+=\emptyset$.
Since $\hat{\mu}$ is nonnegative and normalized, $(E,\hat{\mu})$ is a polymatroid, 
which is called a {\em polymatroid induced by $\mu$}, denoted by $\mathbf{P}(\mu)$. 
$\hat{\mu}$ is called the {\em Dilworth truncation} (or the {\em lower truncation}) of $\mu$ in the literature.
See, e.g., \cite{Schrijver,Frank2011,fujishige} for more detail on Dilworth truncations and applications. 

\subsubsection{Restriction}
Another important operation we will use is the {\em restriction} of $\mu$ (to the hypercube). 
For a polymatroid $(E,\mu)$ (where $\mu(\emptyset)=0$ by definition), 
let $\mu^{\mathbf{1}}:2^E\rightarrow \mathbb{Z}$ be 
\begin{equation}
\label{eq:rist}
 \mu^{\mathbf{1}}(F)=\min\{|F\setminus X|+\mu(X) \mid X\subseteq F\} \qquad (F\subseteq E).
\end{equation}
Then, it can be seen that $\mu^{\mathbf{1}}$ is a monotone submodular function with $\mu^{\mathbf{1}}(F)\leq |F|$ 
(see e.g.,\cite[Section 3.1(b)]{fujishige}). 
In particular, $\mu^{\mathbf{1}}(e)\leq 1$ for every $e\in E$, 
which implies that $(E,\mu^{\mathbf{1}})$ is a matroid.
It is easy to see that $F\subseteq E$ is independent in $(E,\mu^{\mathbf{1}})$ if and only if 
$|X|\leq \mu(X)$ holds for any $X\subseteq F$.

\subsubsection{Rank formula of induced matroids}
Combining these two operations, 
we now check that $\hat{\mu}^{\mathbf{1}}$ (i.e., $(\hat{\mu})^{\mathbf{1}}$) is the rank of the matroid 
induced by an integer-valued monotone submodular function $\mu$.
Note that, by (\ref{eq:hat2}) and (\ref{eq:rist}),
\begin{equation}
\label{eq:rank}
\hat{\mu}^{\mathbf{1}}(F)=\min \Bigl\{|F_+\setminus \bigcup_{i=1}^k F_i|+\sum_{i=1}^k\mu(F_i)\mid 
\text{ a subpartition } \{F_1,\dots, F_k\} \text{ of } F_+ \Bigr\},
\end{equation}
and $(E,\hat{\mu}^{\mathbf{1}})$ is a matroid. 
Since $(E,\hat{\mu}^{\mathbf{1}})$ is obtained from $(E,\hat{\mu})$ by a restriction, 
$F\subseteq E$ is independent in $(E,\hat{\mu}^{\mathbf{1}})$ if and only if $|X|\leq \hat{\mu}(X)$ holds for any $X\subseteq F$.
The latter condition is equivalent to $|X|\leq \mu(X)$ for any nonempty $X\subseteq F$
by (\ref{eq:poly_trun}).
We thus have  $\mathbf{M}(\mu)=(E, \hat{\mu}^{\mathbf{1}})$.

%

\subsection{Matroid union}
\label{subsec:union}

Let us consider two monotone submodular functions $\mu_1$ and $\mu_2$ on a finite set $E$.
Since the monotonicity and the submodularity are preserved by taking summation, 
$\mu_1+\mu_2$ is  monotone and submodular.
Thus, for two polymatroids $\mathbf{P}_1=(E,\mu_1)$ and $\mathbf{P}_2=(E,\mu_2)$,
$(E,\mu_1+\mu_2)$ forms a polymatroid, which is called the {\em sum} of $\mathbf{P}_1$ and $\mathbf{P}_2$.

In a similar manner, suppose that we have two matroids $\mathbf{M}_1=(E,r_1)$ and $\mathbf{M}_2=(E,r_2)$ 
with the rank functions $r_1$ and $r_2$.
Their {\em union} $\mathbf{M}_1\vee \mathbf{M}_2$ is defined by $(E,(r_1+r_2)^{\mathbf{1}})$, i.e.,
$(r_1+r_2)^{\mathbf{1}}(F)=\min\{|F\setminus X|+r_1(X)+r_2(X)\mid X\subseteq F\}$ for $F\subseteq E$.
It is well known that $F$ is independent in $\mathbf{M}_1\vee \mathbf{M}_2$ if and only if 
$F$ can be partitioned into $F_1$ and $F_2$ such that $F_i$ is independent in $\mathbf{M}_i$ 
for $i=1,2$~\cite{edmonds:1968}. 
%

\subsection{Linear Polymatroids}
Let $\mathbb{K}$ be a field and $\mathbb{F}$ be a subfield of $\mathbb{K}$ as defined in introduction.
For a finite set $E$, let us associate a linear subspace $A_e$ of $\mathbb{F}^d$ with each $e\in E$
by $\Phi:e\in E\mapsto A_e\subseteq \mathbb{F}^d$. 
Then, $\dim_{\Phi}:2^{E}\rightarrow \mathbb{Z}$, defined by $\dim_{\Phi}(F)=\dim_{\dF}\{A_e\mid e\in F\}$,  
is a set function on $E$, 
and $(E,\dim_{\Phi})$ forms a polymatroid, denoted by $\mathbf{LP}(E,\Phi)$. 
If a polymatroid $(E,\mu)$ is isomorphic to $\mathbf{LP}(E,\Phi)$ for some $\Phi$ 
(i.e., $\mu(F)=\dim_{\Phi}(F)$ for any $F\subseteq E$), 
$(E,\mu)$ is said to be a {\em linear polymatroid},
and  $\Phi$ is called a {\em linear representation} of $(E,\mu)$.

If $(E,\mu)$ is a matroid, a linear representation $\Phi$ is sometimes referred to as 
an assignment  of a vector,
rather than a $1$-dimensional linear space,  with each element in $E$.


\subsubsection{Generic linear matroids}
\label{subsec:points}
In \textsection\ref{subsec:induced_matroids}, 
we have reviewed two polymatroid operations, restrictions and Dilworth truncations.  
Below, we shall take a look at geometric interpretations of these operations for linear polymatroids.


Let $\mathbf{LP}(E,\Phi)$ be a linear polymatroid with a linear representation  
$\Phi:e\in E\mapsto A_e\subseteq \mathbb{F}^d$.
For each $e\in E$, we shall pick a basis $v_1,\dots, v_{k_e}$ of $A_e$, where $k_e=\dim_{\dF} A_e$,
and define a {\em representative vector} by $x_e=\sum_i\alpha_e^iv_i$,
where $\alpha_e^i$ is a number in $\mathbb{K}$
such that $\{\alpha_e^i\colon e\in E, 1\leq i\leq k_e\}$ is algebraically independent over $\mathbb{F}$.
That is, by extension of the underlying field from $\mathbb{F}$ to $\mathbb{K}$, 
we have {\em generically} chosen  a representative vector $x_e$ from each $A_e$.

This gives us a linear matroid with a linear representation $e \mapsto x_e$ over $\mathbb{K}$.
Lov{\'a}sz~\cite{lovasz:1977} gave its rank formula.
\begin{theorem}[Lov{\'a}sz~\cite{lovasz:1977}]
\label{theorem:flat_matroid}
Let $\mathbb{K}$ be a field and $\mathbb{F}$ be a subfield of $\mathbb{K}$.
Let $\mathbf{LP}(E,\Phi)$ be a linear polymatroid with 
a linear representation $\Phi:e\in E\mapsto A_e\subseteq \mathbb{F}^d$,
and suppose that a representative vector $x_e$ is generically chosen from each $A_e$ over $\mathbb{K}$.
Then, 
\begin{equation}
\label{eq:flat_matroid_rank}
\dim_{\dK}\{x_e\mid e\in E\}=
\min \{|E\setminus F|+\dim_{\dF}\{A_e\mid e\in F\} \mid F\subseteq E\}.
\end{equation}
\end{theorem}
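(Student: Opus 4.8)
The plan is to establish both inequalities in \eqref{eq:flat_matroid_rank}, treating the right-hand side as an upper bound that is always attained generically.

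\textbf{The easy direction.} For the inequality $\dim_{\dK}\{x_e\mid e\in E\}\leq |E\setminus F|+\dim_{\dF}\{A_e\mid e\in F\}$ for any fixed $F\subseteq E$, I would argue as follows. Each representative vector $x_e$ with $e\in F$ lies in $A_e\subseteq \spa_{\dF}\{A_e\mid e\in F\}$, and this latter space, extended to $\dK$, has dimension $\dim_{\dF}\{A_e\mid e\in F\}$. Hence the vectors $\{x_e\mid e\in F\}$ span a space of dimension at most $\dim_{\dF}\{A_e\mid e\in F\}$. Adding the remaining $|E\setminus F|$ vectors increases the dimension by at most $|E\setminus F|$. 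Taking the minimum over $F$ gives the upper bound. This part uses no genericity at all.

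\textbf{The hard direction.} For the reverse inequality I must show that the generic choice actually achieves the minimum, i.e.\ $\dim_{\dK}\{x_e\mid e\in E\}\geq \min_F\bigl(|E\setminus F|+\dim_{\dF}\{A_e\mid e\in F\}\bigr)$. The key point is that $\dim_{\dK}\{x_e\mid e\in E\}$ equals the maximum size of a subset $S\subseteq E$ such that $\{x_e\mid e\in S\}$ is linearly independent over $\dK$, so it suffices to exhibit such an $S$ with $|S|$ equal to the right-hand side. Genericity enters precisely here: because the coordinates $\alpha_e^i$ are algebraically independent over $\dF$, a set $\{x_e\mid e\in S\}$ fails to be independent over $\dK$ only if a certain polynomial identity in the $\alpha_e^i$ holds with coefficients in $\dF$ — equivalently, only if every choice of representatives from the $A_e$ (over $\dF$) is dependent, i.e.\ only if $\dim_{\dF}\{A_e\mid e\in S\}<|S|$. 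So the independent sets of the generic linear matroid are exactly the sets $S$ with $\dim_{\dF}\{A_e\mid e\in S\}=|S|$, which means the generic linear matroid is precisely the matroid $\mathbf{M}(\dim_{\Phi})$ induced by the monotone submodular function $\dim_{\Phi}$ in the sense of \textsection\ref{subsec:induced_matroids} — equivalently $(E,\dim_{\Phi}^{\mathbf{1}})$ after the restriction operation of \eqref{eq:rist} (note $\dim_{\Phi}$ is already its own Dilworth truncation since it is subadditive, or one invokes that $\widehat{\dim_{\Phi}}=\dim_{\Phi}$ when $\dim_\Phi$ is itself submodular and we only restrict). Then the rank of this induced matroid on the whole ground set $E$ is given by the formula \eqref{eq:rank}, which for $F=E$ and with $\hat\mu=\mu=\dim_\Phi$ reduces exactly to the right-hand side of \eqref{eq:flat_matroid_rank}.

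\textbf{Where the real work lies.} The genuine obstacle is the genericity argument in the hard direction: one must verify that algebraic independence of the $\alpha_e^i$ over $\dF$ forces a set of representative vectors to be $\dK$-independent whenever \emph{some} choice of $\dF$-representatives is $\dF$-independent. The clean way is to pick, for each maximal $\dF$-independent subfamily realizing $\dim_{\dF}\{A_e\mid e\in S\}$, an actual $\dF$-linear combination witnessing independence, then observe that the determinant (of an appropriate maximal square submatrix of the matrix whose columns are the $x_e$) is a nonzero polynomial in the $\alpha_e^i$ with coefficients in $\dF$ — nonzero because specializing the $\alpha_e^i$ to suitable values in $\dF$ recovers the $\dF$-independent configuration — hence does not vanish at the algebraically independent point. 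Once this correspondence between generic independence over $\dK$ and the combinatorial condition $\dim_{\dF}\{A_e\mid e\in S\}=|S|$ is in hand, the rank formula is an immediate consequence of the matroid-induction machinery of \textsection\ref{subsec:induced_matroids}, in particular \eqref{eq:rank}.
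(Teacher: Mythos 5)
Your easy direction is fine, and the determinant argument correctly shows that a generic choice $\{x_e\mid e\in S\}$ is $\dK$-dependent if and only if \emph{every} choice of representatives of the $A_e$ over $\dF$ is dependent. The gap is in the next step: you equate ``every choice of representatives is dependent'' with ``$\dim_{\dF}\{A_e\mid e\in S\}<|S|$'', and correspondingly claim that the independent sets of the generic matroid are exactly the $S$ with $\dim_{\dF}\{A_e\mid e\in S\}=|S|$. That equivalence is false. Take $A_1=A_2$ equal to a fixed line $L$ in $\dF^3$ and $A_3=\dF^3$: then $\dim_{\dF}\{A_1,A_2,A_3\}=3=|S|$, yet any representatives $x_1,x_2\in L$ are parallel, so every system of representatives (generic or not) is dependent. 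The correct criterion is that $S$ admits an independent system of representatives if and only if $\dim_{\dF}\{A_e\mid e\in T\}\geq |T|$ for \emph{all} $T\subseteq S$, which is precisely the independence condition of $\mathbf{M}(\dim_\Phi)$; note also that your family $\{S: \dim_\Phi(S)=|S|\}$ is not even closed under taking subsets, so it cannot be the set of independent sets of any matroid, and the sentence ``which means the generic linear matroid is precisely $\mathbf{M}(\dim_\Phi)$'' does not follow from what you proved.

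The missing ingredient is exactly the nontrivial content of the theorem: the Rado-type result that if $\dim_{\dF}\{A_e\mid e\in T\}\geq |T|$ for every $T\subseteq S$, then there exists an independent system of representatives of the $A_e$, $e\in S$ (equivalently, one argues directly, as Lov\'asz does, by a submodularity/uncrossing or deletion--contraction induction that the generic rank is at least the minimum on the right-hand side). Only one direction of this equivalence is trivial; the other is a genuine theorem and is nowhere proved or cited in your argument. Once that is in hand, your final bookkeeping is correct: $\dim_\Phi$ is a polymatroid rank function, so no Dilworth truncation is needed, and the rank of $\mathbf{M}(\dim_\Phi)$ on $E$ is $\dim_\Phi^{\mathbf{1}}(E)=\min\{|E\setminus F|+\dim_{\dF}\{A_e\mid e\in F\}\mid F\subseteq E\}$, as required. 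So the structure of your proof is salvageable, but as written the hard direction rests on an incorrect equivalence and omits the key combinatorial lemma.
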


Note that the right hand side of (\ref{eq:flat_matroid_rank}) does not rely on the choice of representative vectors,
and hence this motivates us to define the {\em generic} matroid.
The {\em generic matroid obtained from $\mathbf{LP}(E,\Phi)$}, 
denoted by $\mathbf{LM}(E,\Phi)$, 
is defined to be
a matroid with a linear representation $e\mapsto x_e$ over $\mathbb{K}$.
Notice the coincidence of two formula (\ref{eq:rist}) and (\ref{eq:flat_matroid_rank}).
Namely, taking the generic matroid is the same meaning as 
the restriction  for linear polymatroids. 

Lov{\'a}sz actually proved Theorem~\ref{theorem:flat_matroid} under a much weaker assumption.
For a family $\{A_e\mid e\in E\}$ of linear subspaces in $\mathbb{K}^d$,
a set of vectors $x_e$ taken from each $A_e$ is said to be {\em in generic position} if 
\begin{equation}
\label{eq:generic_position}
x_f\in {\rm span}\{x_e\mid e\in X\} \  \Rightarrow \ A_f\subseteq {\rm span}\{x_e\mid e\in X\} \qquad 
\forall X\subseteq E, \forall f\in E\setminus X
\end{equation}
If $\{x_e\mid e\in E\}$ is in generic position, (\ref{eq:flat_matroid_rank}) holds (see \cite{lovasz:1977}).

\subsubsection{Dilworth truncation}
\label{subsec:linear_truncation}
We also have a geometric interpretation of Dilworth truncation.
For a linear polymatroid $\mathbf{LP}(E,\Phi)$ with $\Phi:e\mapsto A_e$, 
let ${\cal A}=\{A_e\mid e\in E\}$.
We now consider restricting ${\cal A}$ to a generic hyperplane (i.e., a $d-1$ dimensional linear subspace)
by extending the underlying field $\mathbb{F}$ to $\mathbb{K}$, again.
A hyperplane $H$ is called {\em generic} if it is expressed by
$H=\{x\in \mathbb{K}^d \mid \sum_{1\leq i\leq d} \alpha_i x(i)=0\}$
for some algebraically independent numbers $\{\alpha_1,\dots, \alpha_d\}$ over $\mathbb{F}$. 
Lov{\'a}sz~\cite{lovasz:1977} observed the following formula.

\begin{theorem}[Lov{\'a}sz~\cite{lovasz:1977}]
\label{theorem:truncation}
Let $\mathbb{K}$ be a field and $\mathbb{F}$ be a subfield of $\mathbb{K}$.
Let $\mathbf{LP}(E,\Phi)$ be a linear polymatroid with 
a linear representation $\Phi:e\in E\mapsto A_e\subseteq \mathbb{F}^d$,
and $H$ be a generic hyperplane of $\mathbb{K}^d$.
Then, 
\begin{equation}
\label{eq:truncation}
\dim_{\dK} \{A_e\cap H\mid e\in E\} = \min\{\mbox{$\sum_{i=1}^k$} (\dim_{\dF} \{A_e\mid e\in E_i\}-1)\},
\end{equation}
where the minimum is taken over all partitions $\{E_1,\dots, E_k\}$ of $E$ into nonempty subsets.
\end{theorem}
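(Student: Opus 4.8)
The plan is to derive Theorem~\ref{theorem:truncation} from Theorem~\ref{theorem:flat_matroid} by the standard trick of identifying $A_e \cap H$ with a generic representative vector of $A_e$, after a suitable coordinate change. First I would observe that a generic hyperplane $H = \{x \mid \sum_i \alpha_i x(i) = 0\}$ has dimension $d-1$, and for any subspace $A \subseteq \mathbb{F}^d$ that is not contained in $H$ (which is the generic situation whenever $A \neq 0$), we have $\dim_{\dK}(A \cap H) = \dim_{\dF} A - 1$. The key point is to pick, for each $e$, a nonzero vector $y_e \in A_e \cap H$ that behaves generically: concretely, choose a basis $v_1,\dots,v_{k_e}$ of $A_e$ over $\mathbb{F}$, write a general element $\sum_i t_i v_i$ of $A_e$, impose the single linear condition $\sum_i t_i \langle \alpha, v_i\rangle = 0$ cutting out $A_e \cap H$, and parametrize the resulting $(k_e-1)$-dimensional solution space by $k_e - 1$ free parameters which I take to be new algebraically independent transcendentals over $\mathbb{F}(\alpha_1,\dots,\alpha_d)$. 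This exhibits, inside $A_e \cap H$, a vector $y_e$ that is a generic representative of the $(k_e-1)$-dimensional space $A_e \cap H$ in the sense of \textsection\ref{subsec:points}.

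Next I would set up the bookkeeping so that Theorem~\ref{theorem:flat_matroid} applies directly. Let $B_e = A_e \cap H$, viewed as a subspace of the $(d-1)$-dimensional space $H \cong \mathbb{F}'^{d-1}$ where $\mathbb{F}' = \mathbb{F}(\alpha_1,\dots,\alpha_d)$; then the $y_e$ constructed above are representative vectors chosen generically from the $B_e$ over the further extension $\mathbb{K}$. Theorem~\ref{theorem:flat_matroid} (applied with base field $\mathbb{F}'$ in place of $\mathbb{F}$) then gives
\begin{equation*}
\dim_{\dK}\{y_e \mid e \in E\} = \min\{|E \setminus F| + \dim_{\dF'}\{B_e \mid e \in F\} \mid F \subseteq E\}.
\end{equation*}
Since $\dim_{\dK}\{y_e\} = \dim_{\dK}\{A_e \cap H\}$ and, by a short genericity argument, $\dim_{\dF'}\{B_e \mid e\in F\} = \dim_{\dF}\{A_e \mid e \in F\} - c$ where $c$ is the number of connected-type "pieces" — more precisely, for the restriction of a single generic hyperplane one gets $\dim_{\dF'}\{A_e \cap H \mid e \in F\} = \dim_{\dF}\{A_e \mid e\in F\} - 1$ when $\mathrm{span}\{A_e \mid e\in F\} \not\subseteq H$, which holds generically whenever $F \neq \emptyset$ — I would need to translate the right-hand side of the displayed formula into the partition form of \eqref{eq:truncation}. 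The link is exactly the identity \eqref{eq:hat}/\eqref{eq:hat2} relating the "$|E\setminus F| + g(F)$" restriction form to the "$\sum_i g(E_i)$" partition form: applying the restriction to the submodular function $F \mapsto \dim_{\dF}\{A_e\mid e\in F\} - 1$ (for $F\ne\emptyset$) yields its Dilworth truncation, whose value is $\min \sum_i (\dim_{\dF}\{A_e \mid e\in E_i\} - 1)$ over partitions.

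The main obstacle I anticipate is the genericity argument showing that a \emph{single} generic hyperplane $H$ simultaneously "behaves correctly" for all the subspaces $\{A_e \mid e \in F\}$ over all $F \subseteq E$ at once — i.e., that intersecting with $H$ drops the dimension of every nonzero sum $\mathrm{span}\{A_e \mid e\in F\}$ by exactly one, and that the chosen $y_e$ are genuinely in generic position in the sense of \eqref{eq:generic_position}. This requires checking that the relevant determinantal non-vanishing conditions are not killed by the algebraic dependencies among the $\alpha_i$; the clean way is to verify the weaker hypothesis \eqref{eq:generic_position} of Lov\'asz's theorem rather than full algebraic independence, exploiting that "generic position" is a property stable under the field extension and that one only needs finitely many implications to hold. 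The rest — combining \eqref{eq:rist}, \eqref{eq:hat2}, and the two theorems of Lov\'asz — is routine manipulation of submodular functions. I would also remark that the $F = \emptyset$ / balanced-piece edge cases (subspaces contained in $H$, or the empty partition) contribute $0$ on both sides and need only a one-line check.
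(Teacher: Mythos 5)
The paper offers no proof of this statement---it is quoted from Lov\'asz (with Mason credited for an independent proof)---so the only question is whether your proposed derivation from Theorem~\ref{theorem:flat_matroid} is sound, and it is not. The left-hand side of \eqref{eq:truncation} is the rank of the \emph{polymatroid} represented by $e\mapsto A_e\cap H$, i.e.\ the dimension of the span of the full subspaces $A_e\cap H$, whereas Theorem~\ref{theorem:flat_matroid} computes the rank of the \emph{matroid} obtained by choosing a single representative vector per element. Your bridging identity $\dim_{\dK}\{y_e\mid e\in E\}=\dim_{\dK}\{A_e\cap H\mid e\in E\}$ already fails for one element: with $d=3$ and $A_e=\mathbb{F}^3$, the space $A_e\cap H$ is $2$-dimensional while the single vector $y_e$ spans one dimension. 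Your second claimed identity, that $\dim\{A_e\cap H\mid e\in F\}=\dim_{\dF}\{A_e\mid e\in F\}-1$ whenever ${\rm span}\{A_e\mid e\in F\}\not\subseteq H$, is also false, and indeed the whole content of the theorem is that this naive guess must be replaced by a minimum over partitions: for two distinct lines $A_1,A_2$ in $\mathbb{F}^2$ and a generic line $H$ in $\mathbb{K}^2$ one has $A_i\cap H=\{0\}$, so the left side is $0$ while $\dim_{\dF}\{A_1,A_2\}-1=1$ (the partition $\{\{1\},\{2\}\}$ correctly yields $0$). Asserting this identity amounts to assuming the quantity the theorem is supposed to compute, so the argument is circular precisely where the work lies.

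The final step compounds this by conflating two different polymatroid operations: the restriction \eqref{eq:rist}, which is the ``$|E\setminus F|+\mu(F)$'' form appearing in Theorem~\ref{theorem:flat_matroid}, and the Dilworth truncation \eqref{eq:hat}/\eqref{eq:hat2}, which is the partition form appearing in \eqref{eq:truncation}. There is no identity converting one into the other: for a single element with $\dim_{\dF}A_e=3$ and $\mu(\{e\})=\dim_{\dF}A_e-1=2$ (normalized by $\mu(\emptyset)=0$), the restriction form gives $\min\{1,2\}=1$ while the truncation gives $2$, which is the value \eqref{eq:truncation} demands. This is exactly why the paper presents generic representative points (restriction, Theorem~\ref{theorem:flat_matroid}) and generic hyperplane sections (Dilworth truncation, the present theorem) as two separate geometric constructions, each resting on its own result of Lov\'asz. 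A correct proof needs a genuinely different argument---Lov\'asz's original one, or Mason's projective-geometric one---showing directly that a single generic hyperplane section realizes the Dilworth truncation for all subsets simultaneously; it cannot be obtained by relabelling the data and invoking Theorem~\ref{theorem:flat_matroid}, and verifying the genericity condition \eqref{eq:generic_position}, which you flag as the main obstacle, would not repair the two false identities above.
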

The same result was also obtained by Mason~\cite{Mason:1981,Mason:1976} 
from the view point of combinatorial geometry (projective matroids), see also \cite{brylawski}.

Setting $\mu(F)=\dim_{\dK} \{A_e\mid e\in F\}-1$ for $F\subseteq E$, we see that
the polymatroid induced by $\mu$, that is $(E,\hat{\mu})$, has linear representation 
$e\mapsto A_e\cap H$ from the coincidence of (\ref{eq:hat}) and (\ref{eq:truncation}).

\subsubsection{Linear matroid union}
\label{subsec:linear_union}
A linear representation of the sum of two polymatroids can be easily obtained in the following manner.
Suppose that we have two linear polymatroids $(E,\mu)$ and $(E,\mu')$ with linear representations 
$\Phi:e\in E\mapsto A_e\subseteq \mathbb{F}^s$ and 
$\Phi':e\in E\mapsto A_e'\subseteq \mathbb{F}^t$, respectively.
By definition, $(\mu+\mu')(F)=\mu(F)+\mu'(F)=\dim_{\dF} \{A_e\mid e\in F\}+\dim_{\dF} \{A_e'\mid e\in F\}$.
Hence, if we prepare $\mathbb{F}^{s+t}$ as  the underlying vector space, 
the polymatroid $(E,\mu+\mu')$ is represented by $e\mapsto A_e\oplus A_{e}'$.

Combining this with the discussions of \textsection\ref{subsec:union} and \textsection\ref{subsec:points}, 
it is now straightforward to see the following.
\begin{proposition}
\label{prop:linear_union}
Let $\mathbf{M}_i$ be a matroid on a finite set $E$ with a linear 
representation $e\mapsto x_e^i$ in a vector space $W_i$ for each $i=1,2$. 
Then, $\mathbf{M}_1\vee \mathbf{M}_2$ is represented by $e\mapsto x_e$,
where $x_e$ is a representative vector taken from  
$\spa \{x_e^1\}\oplus \spa \{x_e^2\}\in W_1\oplus W_2$ in generic position.
\end{proposition}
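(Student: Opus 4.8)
The plan is to assemble the statement from three ingredients already developed in the excerpt: the characterization of matroid union via the restriction operation (\textsection\ref{subsec:union}), the additive behaviour of linear representations under polymatroid sums (\textsection\ref{subsec:linear_union}), and Lov\'asz's theorem identifying the restriction $\mu^{\mathbf 1}$ with the generic matroid (Theorem~\ref{theorem:flat_matroid}, \textsection\ref{subsec:points}). First I would record that for $i=1,2$, the rank function $r_i$ of $\mathbf M_i$ coincides with $\dim_{\Phi_i}$ where $\Phi_i\colon e\mapsto \spa\{x_e^i\}\subseteq W_i$ is a $1$-dimensional linear representation; thus $\mathbf M_i=\mathbf{LP}(E,\Phi_i)$ as a (poly)matroid, since each $A_e=\spa\{x_e^i\}$ has dimension $\le 1$.

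Next I would apply the observation of \textsection\ref{subsec:linear_union}: the sum polymatroid $(E,r_1+r_2)$ has the linear representation $\Phi\colon e\mapsto \spa\{x_e^1\}\oplus\spa\{x_e^2\}\subseteq W_1\oplus W_2$, because $(r_1+r_2)(F)=\dim_{\dF}\{\spa\{x_e^1\}\mid e\in F\}+\dim_{\dF}\{\spa\{x_e^2\}\mid e\in F\}=\dim_{\dF}\{\spa\{x_e^1\}\oplus\spa\{x_e^2\}\mid e\in F\}$, the last equality using that the two summands live in complementary coordinate subspaces. So $(E,r_1+r_2)=\mathbf{LP}(E,\Phi)$.

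Now I would invoke the definition of matroid union: $\mathbf M_1\vee\mathbf M_2=(E,(r_1+r_2)^{\mathbf 1})$, which is exactly the restriction of the polymatroid $\mathbf{LP}(E,\Phi)$ to the hypercube. By the remark following Theorem~\ref{theorem:flat_matroid} (``taking the generic matroid is the same meaning as the restriction for linear polymatroids''), the restriction $(E,(r_1+r_2)^{\mathbf 1})$ is precisely the generic matroid $\mathbf{LM}(E,\Phi)$, whose linear representation is $e\mapsto x_e$ with $x_e$ a representative vector chosen generically — or more weakly, in generic position in the sense of (\ref{eq:generic_position}) — from $A_e=\spa\{x_e^1\}\oplus\spa\{x_e^2\}$. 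Since the right-hand side of (\ref{eq:flat_matroid_rank}) does not depend on the choice of representative vectors, any vector in generic position gives the same matroid, which matches the claimed $\spa\{x_e^1\}\oplus\spa\{x_e^2\}\in W_1\oplus W_2$. Comparing with the union rank formula $(r_1+r_2)^{\mathbf 1}(F)=\min\{|F\setminus X|+r_1(X)+r_2(X)\mid X\subseteq F\}$ from \textsection\ref{subsec:union} and the coincidence of (\ref{eq:rist}) with (\ref{eq:flat_matroid_rank}) closes the argument.

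I do not expect a serious obstacle here; the statement is essentially a bookkeeping composition of earlier results. The only point requiring a line of care is the direct-sum identity $\dim_{\dF}\{A_e\oplus A_e'\mid e\in F\}=\dim_{\dF}\{A_e\mid e\in F\}+\dim_{\dF}\{A_e'\mid e\in F\}$, which holds because $W_1$ and $W_2$ sit as complementary subspaces of $W_1\oplus W_2$ so that a linear dependence among the $A_e\oplus A_e'$ projects to independent dependences on each factor; and the mild subtlety that Proposition~\ref{prop:linear_union} phrases the conclusion in terms of ``generic position'' rather than full algebraic genericity, which is handled by Lov\'asz's stronger version of Theorem~\ref{theorem:flat_matroid} noted at the end of \textsection\ref{subsec:points}.
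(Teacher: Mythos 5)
Your argument is correct and follows exactly the route the paper intends: represent each $\mathbf{M}_i$ as a linear (poly)matroid, use the direct-sum construction of \textsection\ref{subsec:linear_union} for the sum $(E,r_1+r_2)$, and identify the union $(E,(r_1+r_2)^{\mathbf 1})$ with the generic matroid via Theorem~\ref{theorem:flat_matroid} and the coincidence of (\ref{eq:rist}) with (\ref{eq:flat_matroid_rank}). This matches the paper's own (sketched) justification, so there is nothing further to add.
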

This fact is at least known from  \cite{Mason:1976}.
More detailed descriptions with examples can be found in \cite{Mason:1976,Mason:1981,brylawski}.
%


\section{Matroids Induced by Submodular Functions over Groups}
\label{sec:fractional_lifting}

\subsection{Frame matroids}
Let $\Theta$ be the graph with two vertices $u$ and $v$ and three parallel edges.
A subdivision of $\Theta$ is called a theta graph.
Hence, a theta graph consists of three openly disjoint paths between $u$ and $v$ and contains three cycles.

Consider an undirected multigraph, which may contain loops and parallel edges.  
A family ${\cal C}$ of cycles is called a {\em linear class} if it satisfies the following property. If two cycles in ${\cal C}$ form a theta subgraph, then 
the third cycle of the theta subgraph is also contained in ${\cal C}$.
For a graph $G=(V,E)$ and a linear class ${\cal C}$ of cycles, the {\em frame matroid} 
$\mathbf{F}(G,{\cal C})$ is defined such that
$F\subseteq E$ is independent if and only if each connected component of $F$ contains no cycle or just one cycle, which is not included in the linear class ${\cal C}$ \cite{zaslavsky1989biased,zaslavsky1991biased}.
Therefore, the rank of $F\subseteq E$ in $\mathbf{F}(G,{\cal C})$ is equal to
\begin{equation*}
g_{\cal C}(F):=|V(F)|-c(F)+\sum_{X\in C(F)} \alpha_{\cal C}(X) \qquad (F\subseteq E)
\end{equation*}
where
\begin{equation*}
\label{eq:t}
\alpha_{\cal C}(X)=\begin{cases}
1 & \text{if $X$ contains a cycle not included in } {\cal C} \\
0 & \text{otherwise}.
\end{cases}
\end{equation*}
This also implies that $g_{\cal C}$ is monotone and submodular. 

In this paper we are interested in frame matroids on gain graphs.
Let $(G=(V,E),\psi)$ be a $\Gamma$-gain graph for a group $\Gamma$. 
Let ${\cal C}$ be the set of balanced cycles in $(G,\psi)$.
Then, ${\cal C}$ forms a linear class, and the associated frame matroid is defined. 
This matroid is called the {\em frame matroid of $(G,\psi)$},
denoted by $\mathbf{F}(G,\psi)$. 
If we define $g_{\Gamma}:2^E\rightarrow \mathbb{Z}$ by 
\begin{equation}
\label{eq:gain_rank}
g_{\Gamma}(F)=|V(F)|-c(F)+\sum_{X\in C(F)} \alpha_{\Gamma}(X) \qquad (F\subseteq E)
\end{equation}
where
\begin{equation}
\label{eq:a2}
\alpha_{\Gamma}(X)=\begin{cases}
1 & \text{if $X$ is unbalanced} \\
0 & \text{otherwise},
\end{cases}
\end{equation}
then we have $\mathbf{F}(G,\psi)=(E,g_{\Gamma})$.

For a positive integer $d$, the union of $d$ copies of $\mathbf{F}(G,\psi)$ is 
$(E,(dg_{\Gamma})^{\mathbf{1}})$ by definition. 
That is, it is the matroid induced by
$dg_{\Gamma}(F)=d|V(F)|-dc(F)+\sum_{X\in C(F)}d\alpha_{\Gamma}(X)$ for $F\subseteq E$.

\subsection{Lifting based on submodular functions on groups}
\label{subsec:fractional_lifting}
We  now  extend the construction of the union of frame matroids 
by using structures of the underlying group.
The idea is to replace the term $\alpha_{\Gamma}$ by a function taking fractional values.
 
For a group $\Gamma$,
we consider a function $\mu:2^{\Gamma}\rightarrow \mathbb{R}_+$ satisfying the following properties: 
\begin{description}
\item[(Normalized)] $\mu(\emptyset)=0$;
\item[(Monotonicity)] $\mu(X)\leq \mu(Y)$ for any $X\subseteq Y\subseteq \Gamma$;
\item[(Submodularity)] $\mu(X)+\mu(Y)\geq \mu(X\cup Y)+\mu(X\cap Y)$ for any $X, Y \subseteq \Gamma$;
\item[(Invariance under closure)] $\mu(X)=\mu(\langle X\rangle)$ for any nonempty $X\subseteq \Gamma$;
\item[(Invariance under conjugate)] $\mu(X)=\mu(\gamma X\gamma^{-1})$ for any nonempty $X\subseteq \Gamma$ 
and $\gamma\in \Gamma$.
\end{description}
We say that  $\mu:2^{\Gamma}\rightarrow \mathbb{R}_+$ is 
a {\em symmetric polymatroidal function} over $\Gamma$ if $\mu$ satisfies these five conditions.
The submodularity implies that, for any $X\subseteq Y\subseteq \Gamma$ and $e\in \Gamma$,
\begin{equation}
\label{eq:submodular_ineq}
\mu(X\cup \{e\})-\mu(X)\geq \mu(Y\cup \{e\})-\mu(Y).
\end{equation}

Extending the rank function (\ref{eq:gain_rank}) of frame matroids, 
we now propose a submodular function based on a symmetric polymatroidal function $\mu$.
Let $(G=(V,E),\psi)$ be a $\Gamma$-gain graph.
We consider $\mu(\langle F\rangle_{\psi,v})$ for a connected $F\subseteq E$ and $v\in V(F)$.
By Proposition~\ref{prop:conjugate}, $\langle F\rangle_{\psi,v}$ is 
conjugate to $\langle F\rangle_{\psi,u}$ for any $u,v\in V(F)$
for $F\subseteq E$,
and hence $\mu(\langle F\rangle_{\psi,u})=\mu(\langle F\rangle_{\psi,v})$ for any $u,v\in V(F)$.
Also, by Proposition~\ref{prop:fundamental_operation},
$\mu(\langle F\rangle_{\psi,v})$ is invariant with respect to the choice of equivalent gain functions $\psi$.
We hence simply denote $\mu(\langle F\rangle_{\psi,v})$  by $\mu \langle F\rangle$, 
implicitly assuming the gain function and the base vertex among $V(F)$.
We can then define a set function  $g_{\mu}:2^E\rightarrow \mathbb{R}$ by
\begin{align}
\label{eq:g}
g_{\mu}(F)&=|V(F)|-c(F)+\sum_{X\in C(F)}\mu\langle X\rangle \qquad (F\subseteq E).
\end{align}

%

Notice that, if $X$ and $Y$ are connected with $X\subseteq Y\subseteq E$, we have 
$\mu\langle X\rangle\leq \mu\langle Y\rangle$ by the monotonicity of $\mu$ over $\Gamma$.
However, the monotonicity and the submodularity of $\mu$ do not hold over $E$ in general.
The next theorem ensures these properties for $g_{\mu}$. 

\begin{theorem}
\label{thm:submodularity1}
Let $\mu:2^\Gamma\rightarrow [0,1]$ be a symmetric polymatroidal function over a group $\Gamma$ (with the upper bound $1$), 
and $(G=(V,E),\psi)$ a $\Gamma$-gain graph.
Then, $g_{\mu}$ is a monotone submodular function over $E$.
\end{theorem}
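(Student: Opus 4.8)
The plan is to verify monotonicity and submodularity of $g_\mu$ separately, reducing each to properties of $\mu$ over $\Gamma$ together with elementary facts about how connected components behave under adding edges.

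\textbf{Monotonicity.} First I would show that $g_\mu(F) \le g_\mu(F \cup \{e\})$ for a single edge $e \notin F$; the general case follows by induction. There are two cases. If adding $e$ merges two components $X_1, X_2$ of $F$ (or attaches $e$ to one component, or creates a new component on isolated vertices — these are handled similarly), then $|V(\cdot)| - c(\cdot)$ increases by exactly $1$, and the new component $X$ satisfies $X_1 \cup X_2 \subseteq X$, so $\mu\langle X\rangle \ge \max\{\mu\langle X_1\rangle, \mu\langle X_2\rangle\}$ by monotonicity of $\mu$ over $\Gamma$; combined with $\mu\langle\cdot\rangle \le 1$, the total change is $\ge 1 - 1 \ge 0$ when two components merge, and clearly $\ge 0$ in the attaching/new-component cases. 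If $e$ is added within a single existing component $X$ (so $e$ is a chord), then $|V(\cdot)| - c(\cdot)$ is unchanged, and $\mu\langle X \cup \{e\}\rangle \ge \mu\langle X\rangle$ since $\langle X\rangle_{\psi,v} \subseteq \langle X \cup \{e\}\rangle_{\psi,v}$. So $g_\mu$ is monotone. The bound $\mu \le 1$ is what makes the merging case work, and is the reason the hypothesis restricts to $[0,1]$-valued $\mu$.

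\textbf{Submodularity.} For this I would use the standard ``diminishing returns'' criterion: it suffices to show $g_\mu(F \cup \{e\}) - g_\mu(F) \ge g_\mu(F' \cup \{e\}) - g_\mu(F')$ for all $F \subseteq F' \subseteq E$ and $e \in E \setminus F'$. Write out both marginals in terms of the component structure. The graphic part $|V(\cdot)| - c(\cdot)$ is already known to be submodular (it is the rank function of the cycle matroid of $G$), so the marginal increments $\Delta_F := (|V(F\cup e)| - c(F\cup e)) - (|V(F)| - c(F))$ satisfy $\Delta_F \ge \Delta_{F'}$, each being $0$ or $1$. The delicate point is controlling the change in $\sum_X \mu\langle X\rangle$. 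I would split into the cases according to whether $e$ is a chord or a bridge with respect to $F$ and with respect to $F'$, using monotonicity of components ($F \subseteq F'$ forces each component of $F$ to sit inside a component of $F'$). When $e$ is a chord of a component $X_F$ of $F$: then it is also a chord of the component $X_{F'} \supseteq X_F$ of $F'$, and I need $\mu\langle X_F \cup e\rangle - \mu\langle X_F\rangle \ge \mu\langle X_{F'} \cup e\rangle - \mu\langle X_{F'}\rangle$. Here I would invoke Proposition~\ref{lem:checking_label}: after switching so that a spanning tree of $X_{F'}$ (hence of $X_F$) has identity gains, $\langle X_F\rangle$ and $\langle X_{F'}\rangle$ are generated by explicit sets of chord-gains with $\langle X_F\rangle$'s generating set contained in $\langle X_{F'}\rangle$'s, and adding $e$ adjoins the single element $\psi(e)$ to both; then the submodularity inequality (\ref{eq:submodular_ineq}) for $\mu$ over $\Gamma$, applied with the subset relation $\langle X_F\rangle \subseteq \langle X_{F'}\rangle$ (using invariance under closure so we may add the generator rather than $\langle\cdot \cup \psi(e)\rangle$ all at once), gives exactly what is needed. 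When $e$ is a bridge for both $F$ and $F'$, or a bridge for $F'$ but a chord for $F$ (the reverse cannot happen), I would track the $1$-unit contributions carefully, again bounding new $\mu$-values by $1$ from above and old ones by $0$ from below as needed, and checking the inequality holds in each configuration.

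\textbf{Main obstacle.} The routine bookkeeping of component merges is tedious but mechanical; the genuine content is the chord case, where I must correctly reduce $\mu\langle X_F \cup e\rangle - \mu\langle X_F\rangle \ge \mu\langle X_{F'}\cup e\rangle - \mu\langle X_{F'}\rangle$ to the abstract submodular inequality for $\mu$ on subsets of $\Gamma$. The subtlety is that $\langle X_F \cup e\rangle_{\psi,v}$ is $\langle \langle X_F\rangle_{\psi,v} \cup \{\psi(e)\}\rangle$ rather than literally $\langle X_F\rangle_{\psi,v} \cup \{\psi(e)\}$, so I need ``invariance under closure'' to identify $\mu$ of these, and I must make sure the same switching (identity on a common spanning tree, extended to all of $E$) simultaneously normalizes the gains for both $X_F$ and $X_{F'}$ — this is possible because $F \subseteq F'$ lets me pick the spanning tree of the larger component to restrict to one of the smaller. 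Once the configurations are enumerated and this reduction is in place, each case closes by one application of (\ref{eq:submodular_ineq}) plus the bound $0 \le \mu \le 1$.
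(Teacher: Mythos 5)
Your monotonicity argument and your treatment of the chord case are essentially the paper's: the paper also works with the marginal $\Delta(X,e)=g_{\mu}(X\cup\{e\})-g_{\mu}(X)$, normalizes gains on a spanning tree of the larger component chosen to restrict to a spanning tree of the smaller one (Propositions~\ref{prop:tree_identity} and~\ref{lem:checking_label}), and then closes the chord case by one application of (\ref{eq:submodular_ineq}) together with invariance under closure. However, there is a genuine gap in the case you dismiss as mechanical bookkeeping, namely when $e$ is a bridge with respect to both $F$ and $F'$ (endpoints in distinct components, or touching uncovered vertices, in both). There the two $+1$ contributions to the marginals cancel, and the inequality you must prove is
\[
\mu\langle X_i\cup X_j\cup\{e\}\rangle+\mu\langle Y_i\rangle+\mu\langle Y_j\rangle\;\geq\;\mu\langle Y_i\cup Y_j\cup\{e\}\rangle+\mu\langle X_i\rangle+\mu\langle X_j\rangle ,
\]
with $X_i\subseteq Y_i$, $X_j\subseteq Y_j$. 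This cannot be obtained by ``bounding new $\mu$-values by $1$ from above and old ones by $0$ from below'' plus monotonicity: for instance, values $\mu\langle X_i\rangle=\mu\langle X_j\rangle=0.5$, $\mu\langle X_i\cup X_j\cup\{e\}\rangle=0.6$, $\mu\langle Y_i\rangle=\mu\langle Y_j\rangle=0.6$, $\mu\langle Y_i\cup Y_j\cup\{e\}\rangle=1$ satisfy all such crude bounds and monotonicity yet violate the inequality, so the submodularity of $\mu$ over $\Gamma$ must enter here too. The paper's proof handles this by switching so that $\psi(e)=\mathrm{id}$, using closure invariance to rewrite $\mu\langle X_i\cup X_j\cup\{e\}\rangle=\mu(\langle X_i\rangle_{\psi,i}\cup\langle X_j\rangle_{\psi,j})$ (and likewise for $Y$), and then running a four-step chain that applies submodularity of $\mu$ over $\Gamma$ twice, interleaved with monotonicity (using $\langle X_i\rangle_{\psi,i}\subseteq\langle Y_i\rangle_{\psi,i}$ and $\langle X_j\rangle_{\psi,j}\subseteq\langle Y_j\rangle_{\psi,j}$). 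Your proposal contains no such exchange argument for this case, and it is in fact the hardest part of the proof rather than the chord case you single out.

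A smaller slip: your mixed case is labeled backwards. Since each component of $F$ lies inside a component of $F'$, ``chord for $F$'' forces ``chord for $F'$''; hence the case ``bridge for $F'$ but chord for $F$'' that you list is vacuous, while the case you declare impossible (``chord for $F'$ but bridge for $F$'') is exactly the one that occurs (the paper's case (2-i)). That case does close with monotonicity between nested connected sets together with the upper bound $\mu\leq 1$, so your toolkit suffices there once the labels are corrected, but as written your case enumeration would skip the real mixed case.
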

\begin{proof}
For each $X\subseteq E$ and $e=(i,j)\in E\setminus X$, 
let $\Delta(X,e)=g_{\mu}(X\cup \{e\})-g_{\mu}(X)$.
We denote by $X_i$ the connected component of $X$ for which $i\in V(X_i)$.
If such a component does not exist, let $X_i=\emptyset$.
Similarly, we denote by $X_j$ the component of $X$ for which $j\in V(X_j)$.

By a simple calculation, we have the following relation:
\begin{align}
\label{eq:g1}
\Delta(X,e)=\begin{cases}
\mu\langle X_i\cup\{e\}\rangle -\mu\langle X_i\rangle & \text{ if } e \text{ is a loop or } X_i=X_j\neq \emptyset \\
\mu\langle X_i\cup X_j\cup\{e\}\rangle+1-\mu \langle X_i\rangle-\mu\langle X_j\rangle & \text{ otherwise. }
\end{cases}
\end{align}

Let us check the monotonicity.
Suppose that $e$ is a loop or $X_i=X_j\neq \emptyset$.
Due to the monotonicity of $\mu$ over $\Gamma$, $\mu\langle X_i\cup \{e\}\rangle-\mu\langle X_i\rangle\geq 0$.
On the other hand,  suppose not.
Since $X_i$ and $X_i\cup X_j\cup \{e\}$ are connected,
we have $\mu\langle X_i\rangle \leq \mu\langle X_i\cup X_j\cup\{e\}\rangle$  by the monotonicity of $\mu$ over $\Gamma$.
Also, by the upper bound of $\mu$, $\mu\langle X_j\rangle\leq 1$.
We thus have $\Delta(X,e)=\mu\langle X_i\cup X_j\cup \{e\}\rangle+1-(\mu\langle X_i\rangle+\mu\langle X_j\rangle )\geq 0$.
This completes the proof of the monotonicity.

For the submodularity, we check 
\begin{equation}
\label{eq:submo_proof1}
\Delta(X,e)\geq \Delta(Y,e)
\end{equation} 
for any $X\subseteq Y\subseteq E$ and $e\in E\setminus Y$. 
We split the proof into two cases.

Case 1: Suppose that $e$ is a loop or $X_i=X_j\neq\emptyset$.
We then have $X_i\subseteq Y_i=Y_j$.
We take a tree $T\subseteq Y_i$  spanning $V(Y_i)$ such that 
$T\cap X_i$ forms a tree spanning $V(X_i)$.
By using switching operations, we may assume 
by Proposition~\ref{prop:tree_identity} that
$\psi(f)={\rm id}$ for every $f\in T$.
Observe then that 
$\langle Y_i\cup\{e\}\rangle_{\psi,i}=\langle \langle Y_i\rangle_{\psi,i}\cup \{\psi(e)\}\rangle$ 
and $\langle X_i\cup\{e\}\rangle_{\psi,i}=
\langle \langle X_i\rangle_{\psi,i}\cup \{\psi(e)\}\rangle$ by Proposition~\ref{lem:checking_label}.
We thus have 
\begin{align*}
\Delta(X,e)&=\mu\langle X_i\cup\{e\}\rangle-\mu\langle X_i\rangle \\
&=\mu(\langle \langle X_i\rangle_{\psi,i}\cup \{\psi(e)\}\rangle)-\mu(\langle X_i\rangle_{\psi,i}) \\
&=\mu(\langle X_i\rangle_{\psi,i} \cup \{\psi(e)\})-\mu(\langle X_i\rangle_{\psi,i}) \\
&\geq \mu(\langle Y_i\rangle_{\psi,i} \cup \{\psi(e)\})-\mu(\langle Y_i\rangle_{\psi,i}) \\
&=\mu(\langle \langle Y_i\rangle_{\psi,i}\cup \{\psi(e)\}\rangle)-\mu(\langle Y_i\rangle_{\psi,i}) \\
&=\mu\langle Y_i\cup\{e\}\rangle-\mu\langle Y_i\rangle=\Delta(Y,e), 
\end{align*}
where we used (\ref{eq:submodular_ineq})(\ref{eq:g1}) and  
the invariance of $\mu$ under closures.

\medskip

Case 2. Suppose that $e$ is a non-loop edge and at least one of $X_i\neq X_j$ or $X_i=X_j=\emptyset$ holds.
We further split the proof into subcases.

(2-i) If $Y_i=Y_j\neq \emptyset$, then, by (\ref{eq:g1}),
we have $\Delta(X,e)-\Delta(Y,e)=
\mu\langle X_i\cup X_j\cup\{e\}\rangle+1+\mu\langle Y_i\rangle-\mu\langle X_i\rangle-
\mu\langle X_j\rangle-\mu\langle Y_i\cup \{e\}\rangle$.
Since all these sets are connected or empty,
 $\mu\langle X_i\cup X_j\cup\{e\}\rangle\geq \mu\langle X_j\rangle$,
$\mu\langle Y_i\rangle\geq \mu\langle X_i\rangle$, and
$1\geq \mu\langle Y_i\cup\{e\}\rangle$.
Thus, 
$\mu\langle X_i\cup X_j\cup\{e\}\rangle+\mu\langle Y_i\rangle+1\geq \mu\langle X_i\rangle+\mu\langle X_j\rangle+\mu\langle Y_i\cup \{e\}\rangle$,
implying (\ref{eq:submo_proof1}).

(2-ii) If $Y_i\neq Y_j$ or $Y_i=Y_j=\emptyset$ holds,
then $e$ is a bridge connecting $X_i$ and $X_j$ in 
$X_i\cup X_j\cup\{e\}$ and is also a bridge connecting $Y_i$ and $Y_j$ in $Y_i\cup Y_j\cup \{e\}$.
By a switch operation, we may assume that $\psi(e)$ is identity.
Then, $\langle X_i\cup X_j\cup\{e\}\rangle_{\psi,i}=\langle \langle X_i\rangle_{\psi,i}\cup \langle X_j\rangle_{\psi,j}\rangle$.
This implies $\mu\langle X_i\cup X_j\cup\{e\}\rangle
=\mu(\langle X_i\rangle_{\psi,i}\cup \langle X_j\rangle_{\psi,j})$ by the invariance under closure.
Symmetrically, we have $\mu\langle Y_i\cup Y_j\cup \{e\}\rangle=
\mu(\langle Y_i\rangle_{\psi,i}\cup \langle Y_j\rangle_{\psi,j})$.
By using the submodularity and the monotonicity of $\mu$ over $\Gamma$, 
along with $X_k\subseteq Y_k$ for $k=1,2$, we have 
\begin{align*}
& \mu\langle X_i\cup X_j\cup \{e\}\rangle+\mu\langle Y_i\rangle+\mu\langle Y_j\rangle \\
&= \mu(\langle X_i\rangle_{\psi,i}\cup \langle X_j\rangle_{\psi,j})
+\mu(\langle Y_i\rangle_{\psi,i}) +\mu(\langle Y_j\rangle_{\psi,j}) \\
&\geq \mu(\langle X_i\rangle_{\psi,i}\cup \langle X_j\rangle_{\psi,j}\cup \langle Y_i\rangle_{\psi,i})+
\mu((\langle X_i\rangle_{\psi,i}\cup \langle X_j\rangle_{\psi,j})\cap \langle Y_i\rangle_{\psi,i})
+ \mu(\langle Y_j\rangle_{\psi,j}) \\
&\geq \mu(\langle Y_i\rangle_{\psi,i}\cup \langle X_j\rangle_{\psi,j})+
\mu(\langle X_i\rangle_{\psi,i})+ \mu(\langle Y_j\rangle_{\psi,j}) \\
&\geq \mu(\langle Y_i\rangle_{\psi,i}\cup \langle X_j\rangle_{\psi,j}\cup \langle Y_j\rangle_{\psi,j})+
\mu((\langle Y_i\rangle_{\psi,i}\cup \langle X_j\rangle_{\psi,j})\cap \langle Y_j\rangle_{\psi,j})+
\mu(\langle X_i\rangle_{\psi,i}) \\
&\geq \mu(\langle Y_i\rangle_{\psi,i}\cup \langle Y_j\rangle_{\psi,j})+
\mu(\langle X_j\rangle_{\psi,j})+
\mu(\langle X_i\rangle_{\psi,i}) \\
&=\mu \langle Y_i\cup Y_j\cup \{e\} \rangle+
\mu\langle X_j\rangle+
\mu\langle X_i\rangle .
\end{align*}
This implies (\ref{eq:submo_proof1}) by (\ref{eq:g1}).
\end{proof}

The aim of this paper is to extend the concept of the union of frame matroids.
We shall thus concentrate on
a function $\mu$ taking fractional values, that is, 
$\mu:2^{\Gamma}\rightarrow \{0,\frac{1}{d},\dots, \frac{d-1}{d},1\}$
for some finite positive integer $d$. 
As it is not integer-valued, 
$g_{\mu}$ does not induce a matroid in general, but if we  define $f_{\mu}:2^E\rightarrow \mathbb{Z}$ by
\begin{equation}
\label{eq:gain_rank_ex}
f_{\mu}(F)=dg_{\mu}(F) \qquad (F\subseteq E),
\end{equation}
$f_{\mu}$ is a normalized integer-valued monotone submodular function by Theorem~\ref{thm:submodularity1}.
Thus $f_{\mu}$ induces a polymatroid $\mathbf{P}(f_{\mu})=(E,f_{\mu})$ and 
a matroid $\mathbf{M}(f_{\mu})=(E,f_{\mu}^{\mathbf{1}})$ on $E$.


\begin{example}
\label{ex:gain1}
The frame matroid (or the union of copies) is a special case of $\mathbf{M}(f_{\mu})$,
where $\mu$ is defined by $\mu(X)=0$ for $X=\emptyset$ or $X={\rm id}$,
and otherwise $\mu(X)=1$.
In this case, (\ref{eq:gain_rank_ex}) is equal to (\ref{eq:gain_rank}). 
\end{example}

\begin{example}
\label{ex:gain2}
Let us consider a group $\Gamma$ equipped with a linear representation $\rho:\Gamma\rightarrow GL(\mathbb{F}^d)$
over a field $\mathbb{F}$.
Let
$d_{\rho}:2^{\Gamma}\rightarrow \mathbb{Z}$ be a function defined by
\[
 d_{\rho}(X)=\dim_{\dF} \{{\rm image}(I_d-\rho(\gamma))\mid \gamma\in X\} \qquad (X\subseteq \Gamma),
\]
where $d_{\rho}(\emptyset)=0$, $I_d$ denotes the identity matrix of size $d\times d$,
and ${\rm image}(I_d-\rho(\gamma))=\{(I_d-\rho(\gamma))x\mid x\in \dF^d\}$

It is easy to see that $d_{\rho}$ is monotone submodular
and  is invariant under conjugate.
Also, for any $\gamma_1, \gamma_2\in \Gamma$, we have 
${\rm image}(I_d-\rho(\gamma_1\gamma_2))\subseteq 
{\rm image}(I_d-\rho(\gamma_1)) + {\rm image}(I_d-\rho(\gamma_2))$ since
$(I_d-\rho(\gamma_1\gamma_2))=-(I_d-\rho(\gamma_1))(I_d-\rho(\gamma_2))+(I_d-\rho(\gamma_1))+(I_d-\rho(\gamma_2))$
and ${\rm image}((I_d-\rho(\gamma_1))(I_d-\rho(\gamma_2)))\subseteq {\rm image}(I_d-\rho(\gamma_1))$.
This implies the invariance of $d_{\rho}$ under closure.
Therefore, by setting $\mu=d_{\rho}/d$, we have another example of a symmetric polymatroidal function $\mu$.
The corresponding matroid will be extensively discussed in the next section.
\end{example}

\section{Matroids Induced by Group Representations}
\label{sec:dowling_extension}
Dowling geometries~\cite{dowling1973class} are special cases of frame matroids for finite groups,
which admit linear representations over finite fields $\mathbb{F}$.
In this section, we shall extend the union of Dowling geometries based on group representations.

\subsection{Dowling Geometries}
Suppose that $\Gamma$ is a nontrivial finite group and $n$ is a positive integer.
Define a $\Gamma$-gain graph $(K_n^{\bullet}(\Gamma),\psi^{\bullet})$ on $V(K_n^{\bullet}(\Gamma))=\{1,2,\dots, n\}$
such that (i)  for every $i,j$ with $1\leq i<j\leq n$ and every $\gamma\in \Gamma$, 
it has an edge from $i$ to $j$ with the gain $\gamma$
and (ii)  for each vertex $i$, it has a loop attached to $i$ with a gain $\gamma_i$, 
where $\gamma_i$ is any non-identity element of $\Gamma$.
The {\em Dowling geometry} $\mathbf{D}_n(\Gamma)$ is defined by $\mathbf{F}(K_n^{\bullet}(\Gamma),\psi^{\bullet})$,
the frame matroid of $(K_n^{\bullet}(\Gamma),\psi^{\bullet})$.

A remarkable property of $\mathbf{D}_n(\Gamma)$ is that, for $n\geq 3$, $\mathbf{D}_n(\Gamma)$ is representable over $\mathbb{F}$ if 
and only if $\Gamma$ is isomorphic to a subgroup of 
$\mathbb{F}^{\times}$~(see, e.g., \cite[Theorem 6.10.10]{oxley}).
The proof of one direction indicates the following explicit construction of representations.

Suppose that $\Gamma$ is isomorphic to a subgroup of $\mathbb{F}^{\times}$.
For a simpler description, we assume that $\Gamma$ is itself  a subgroup of $\mathbb{F}^{\times}$.
With each $e=(i,j)\in E(K_n^{\bullet}(\Gamma))$, we associate a vector  $x_e\in \mathbb{F}^{V}$
defined by 
\begin{equation*}
\label{eq:dowling_rep}
x_e(v)=\begin{cases}
-\psi(e) & \text{ if } v=i \\
1 & \text{ if } v=j \\
0 & \text{ otherwise}
\end{cases}
\end{equation*}
if $e$ is not a loop attached at $i$, and
\begin{equation*}
\label{eq:dowling_rep_loop}
x_e(v)=\begin{cases}
1 & \text{ if } v=i \\
0 & \text{ otherwise}
\end{cases}
\end{equation*}
if $e$ is a loop.
These give us a linear representation of $\mathbf{D}_n(\Gamma)$ over $\mathbb{F}$ 
(see e.g.,~\cite[Lemma 6.10.11]{oxley}),
which is called the {\em canonical representation}~\cite{zaslavsky2003biased}.
As each $\Gamma$-gain graph $(G,\psi)$ can be considered as a subgraph of $(K_n^{\bullet}(\Gamma),\psi^{\bullet})$, 
the restriction to $E(G)$ leads to the canonical representation of $\mathbf{F}(G,\psi)$.

Equivalently, instead of a vector assignment, we may associate a 1-dimensional linear space
\begin{equation}
D_e=\left\{x\in \mathbb{F}^V\Big\vert 
\begin{array}{l}
x(i)+\psi(e)x(j)=0, \\ 
x(V\setminus \{i,j\})=0 
\end{array}
\right\}
\end{equation}
with each non-loop edge $e=(i,j)$, 
and 
\begin{equation}
D_e=\{x\in \mathbb{F}^V\mid x(V\setminus \{i\})=0\}
\end{equation}
with a loop $e$ attached to $i$,
where, for $W\subseteq V$, $x(W)=0$ implies $x(k)=0$ for all $k\in W$.
Then, the union of $d$ copies of $\mathbf{D}_n(\Gamma)$ can be obtained 
in a systematic manner, by just following the technique mentioned in \textsection\ref{subsec:linear_union}.

To see more detail, 
let us consider the direct sum of $d$ copies of $\mathbb{F}^V$, which results in $(\mathbb{F}^d)^V$.
Then, the associated vector space with each edge $e=(i,j)$ becomes 
a $d$-dimensional space in $(\mathbb{F}^d)^V$ written by
\begin{equation}
\label{eq:dowling_union}
D_e^d= \left\{x\in (\mathbb{F}^d)^V\Big\vert 
\begin{array}{l}
x(i)+\psi(e)x(j)=0, \\
x(V\setminus \{i,j\})=0
\end{array}
\right\}
\end{equation}
and 
\begin{equation}
D_e^d=\{x\in (\mathbb{F}^d)^V\mid x(V\setminus \{i\})=0\}.
\end{equation}
By extension of scalars for the underlying vector space from $\mathbb{F}$ to $\mathbb{K}$, 
$D_e^d$ is contained in $\mathbb{K}^d$ and 
we can take a representative vector $x_e^d$  from each $D_e^d$ in generic position.
By Proposition~\ref{prop:linear_union}, we obtain a linear representation $e=(i,j)\mapsto x_e^d\in (\mathbb{K}^d)^V$ 
of the union of $d$ copies of Dowling geometry $\mathbf{D}_n(\Gamma)$, where each vector written by
\begin{equation}
\label{eq:dowling_union_rep}
x_e^d(v)=\begin{cases}
-\psi(e)\alpha_e & \text{ if } v=i \\
\alpha_e & \text{ if } v=j \\
0 & \text{ otherwise}
\end{cases}
\end{equation}
\begin{equation}
\label{eq:dowling_union_rep_loop}
x_e^d(v)=\begin{cases}
\alpha_e & \text{ if } v=i \\
0 & \text{ otherwise},
\end{cases}
\end{equation}
depending on whether $e=(i,j)$ is a non-loop or a loop, where $\alpha_e=(\alpha_e^1,\dots, \alpha_e^d)^{\top}\in \mathbb{K}^d$
such that $\{\alpha_e^i \colon 1\leq i\leq d, e\in E(K_n^{\bullet}(\Gamma))\}$ is algebraically 
independent over $\mathbb{F}$.
We shall extend this construction in the next subsection.

\medskip

In the subsequent discussion, we will frequently us the following (more or less) known fact about graphic matroids.
Consider a $\Gamma$-gain graph $(G=(V,E),\psi)$ such that $\psi(e)$ is identity for every $e\in E$.
Then, $\mathbf{F}(G,\psi)$ is just the graphic matroid of $G$.
The above result on linear representations of frame matroids implicitly implies the following  fact 
on the linear representation of the sum of $d$ copies of the graphic matroid, 
which will be frequently used in the subsequent discussion.
\begin{lemma}
\label{lem:graphic}
Let $(G=(V,E),\psi)$ be a gain graph such that $\psi(e)$ is identity for every $e\in E$.
Suppose that $E$ is connected.
Then, the following holds.
\begin{itemize}
\item $\dim_{\mathbb{F}} \{D_e^d\mid e\in E\}=d|V|-d$;
\item For any $x\in (\mathbb{F}^d)^V$ with $x(V\setminus \{i,j\})=0$,  $x\in \spa\{D_e^d\mid e\in E\}$ if and only if
$x(i)+x(j)=0$.
\end{itemize}
\end{lemma}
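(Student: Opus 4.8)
The plan is to reduce the statement to the classical rank formula for the $d$-fold union of a graphic matroid, using the explicit description of $D_e^d$ in \eqref{eq:dowling_union}. Since $\psi(e)=\mathrm{id}$ for all $e\in E$, each non-loop edge $e=(i,j)$ gives $D_e^d=\{x\in(\mathbb{F}^d)^V : x(i)+x(j)=0,\ x(V\setminus\{i,j\})=0\}$, which is a $d$-dimensional space. The key observation is that $\{D_e^d : e\in E\}$ lives inside the $d|V|$-dimensional space $(\mathbb{F}^d)^V$, and the sum $\spa\{D_e^d : e\in E\}$ should be exactly the ``balanced'' subspace $U := \{x\in(\mathbb{F}^d)^V : \sum_{v\in V} x(v) = 0 \text{ coordinatewise along each connected component}\}$; since $E$ is connected this is $\{x : \sum_{v\in V} x(v)=0\}$, of dimension $d|V|-d$. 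So the first bullet and the second are really the same claim packaged two ways.

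First I would prove the inclusion $\spa\{D_e^d : e\in E\}\subseteq U$: each generator of $D_e^d$ has the form $\delta_i\otimes w - \delta_j\otimes w$ for some $w\in\mathbb{F}^d$, and summing coordinates over $V$ kills it, so every element of the span lies in $U$; in particular, if $x$ has $x(V\setminus\{i,j\})=0$ and $x\in\spa\{D_e^d\}$, then $x(i)+x(j)=0$. For the reverse inclusion, I would use a spanning tree $T$ of $G$ (which exists since $E$ is connected): for an edge $\{i,j\}\in T$ and any $w\in\mathbb{F}^d$, the vector $\delta_i\otimes w-\delta_j\otimes w$ lies in $D_{\{i,j\}}^d$, and telescoping along tree paths shows $\delta_u\otimes w-\delta_v\otimes w\in\spa\{D_e^d : e\in T\}$ for every pair $u,v\in V$ and every $w$. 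These vectors visibly span $U$ (pick a root $r$; any $x\in U$ is $\sum_{v\neq r}\delta_v\otimes x(v) - \delta_r\otimes(\sum_{v\neq r}x(v))$, rewritten as a combination of the $\delta_v\otimes x(v)-\delta_r\otimes x(v)$). Hence $\spa\{D_e^d : e\in E\}=U$, giving $\dim_{\mathbb{F}}\{D_e^d : e\in E\}=\dim U = d|V|-d$, and the ``if'' direction of the second bullet follows since $x(i)+x(j)=0$ together with $x(V\setminus\{i,j\})=0$ forces $x\in U$, while a direct expansion writes such an $x$ as $\delta_i\otimes x(i)-\delta_j\otimes x(i)\in D_{\{i,j\}}^d\subseteq\spa\{D_e^d : e\in E\}$ (using connectedness only to guarantee at least one edge is present when $|V|\ge 2$; the case $V=\{i\}$ is vacuous).

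I do not expect a genuine obstacle here — the argument is essentially the incidence-structure computation for $d$ disjoint copies of the cycle matroid of a connected graph. The one point requiring a little care is the bookkeeping for loops: a loop $e$ attached at $i$ contributes $D_e^d=\{x : x(V\setminus\{i\})=0\}$, which is \emph{not} contained in $U$, so if $E$ contains a loop the first bullet would fail. I would therefore note that the intended reading is that $E$ consists of non-loop edges (or, equivalently, restrict attention to a spanning connected subgraph without loops); under this reading the proof above is complete. If the paper wants loops allowed, the dimension becomes $d|V|$ and the second bullet must be restated, but in the applications the relevant $E$ is loopless, so I would simply remark on this and proceed.
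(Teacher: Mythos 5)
Your proof is correct (under the loopless reading, which is the intended one: in the paper the lemma is only ever applied to spanning trees, where the gain-identity hypothesis and the Dowling construction preclude loops), but it takes a genuinely different route from the paper. The paper deduces the first bullet from the already-cited fact that $e\mapsto D_e$ is the canonical representation of the frame matroid, so $\dim_{\mathbb{F}}\{D_e^d\mid e\in E\}=d\,g_{\Gamma}(E)=d(|V|-c(E)+\alpha_{\Gamma}(E))=d|V|-d$, and it proves the second bullet by an auxiliary-edge trick: adjoin a new edge $e^*=(i,j)$ with gain $g$, compute $\dim_{\mathbb{F}}\{D_e\mid e\in E\cup\{e^*\}\}$ from the same rank formula ($|V|-1$ if $g=1$, $|V|$ otherwise), and conclude that a vector supported on $\{i,j\}$ with $x(i)+gx(j)=0$ lies in the span iff $g=1$, then pass to $d$ copies. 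You instead identify $\spa\{D_e^d\mid e\in E\}$ outright as the kernel $U$ of the coordinate-sum map, proving both inclusions by hand (each generator sums to zero; telescoping $\delta_u\otimes w-\delta_v\otimes w$ along spanning-tree paths gives all of $U$), from which both bullets drop out at once. Your version is more elementary and self-contained, and it makes the second bullet transparent without invoking the frame-matroid rank formula or an auxiliary edge; the paper's version is shorter given the machinery it has already set up and stays within the representation-theoretic framing used throughout. One small slip: your parenthetical writing of $x=\delta_i\otimes x(i)-\delta_j\otimes x(i)\in D_{\{i,j\}}^d$ presumes $\{i,j\}\in E$, which is not given, but this is harmless since the preceding identity $\spa\{D_e^d\mid e\in E\}=U$ already yields the ``if'' direction for arbitrary $i,j\in V$.
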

\begin{proof}
We may assume that $\Gamma$ is a subgroup of $\dF^{\times}$.
The first part directly follows from the above discussion on frame matroids and the canonical linear representation.
Indeed, since $D_e^d$ is the direct sum of $d$ copies of $D_e$,
we have 
$\dim_{\mathbb{F}}\{D_e^d\mid e\in E\}=
d \dim_{\mathbb{F}}\{D_e\mid e\in E\}=
d(|V|-c(E)+\alpha_\Gamma(E))=d|V|-d$.

The second part also follows from the above discussion.
We first consider the case of $d=1$.
Let $e^*=(i,j)$ be a new edge from $i$ to $j$ with the gain $\psi(e^*)=g$ for some $g\in \mathbb{F}\setminus \{0\}$,
and let $(G^*,\psi)$ be the gain graph obtained from $(G,\psi)$ by adding $e^*$.
In the canonical representation of $\mathbf{F}(G^*,\psi)$,
$e^*$ is associated with a vector $x_{e^*}\in \mathbb{F}^V$ with 
$x_{e^*}(V\setminus \{i,j\})=0$, $x_{e^*}(j)=1$ and $x_{e^*}(i)=-g$.

Since $E$ is connected, $E\cup \{e^*\}$ has a cycle passing through $e^*$.
We thus have $\langle E\cup \{e^*\}\rangle_{\psi,v}=\langle g\rangle$ for any $v\in V$ 
by Proposition~\ref{lem:checking_label}. 
This implies that 
\begin{equation*}
\dim_{\mathbb{F}} \{D_e\mid e\in E\cup \{e^*\}\}=g_{\Gamma}(E\cup\{e^*\})=
\begin{cases}
|V|-1 & \text{ if } g=1 \\
|V| & \text{ otherwise}.
\end{cases}
\end{equation*}
Therefore,
$\spa\{D_e\mid e\in E\}$ contains $x\in \mathbb{F}^V\setminus \{0\}$ 
with  $x(V\setminus \{i,j\})=0$ and $x(i)+gx(j)=0$ if and only if $g=1$.
Equivalently, $\spa\{D_e\mid e\in E\}$ contains $x\in \mathbb{F}^V\setminus \{0\}$ 
with  $x(V\setminus \{i,j\})=0$  if and only if $x(i)+x(j)=0$.

Thus, if we consider the direct sum of $d$ copies of $\dF^d$,
we conclude that  
$\spa\{D_e^d\mid e\in E\}$ contains $x\in (\mathbb{F}^d)^V\setminus \{0\}$ 
with  $x(V\setminus \{i,j\})=0$ if and only if  $x(i)+x(j)=0$.
\end{proof}

\subsection{Linear matroids induced by group representations}
\label{subsec:dowling_linear}
In this section, we shall extend the representation theory of 
the union of Dowling geometries. 
The idea of our construction is that, 
instead of coefficients $\psi(e)\in \mathbb{F}^{\times}$ of $\alpha_e$ in (\ref{eq:dowling_union_rep}),
we shall make use of linear representations of groups.
We then have a linear matroid induced by a group $\Gamma$, where $\Gamma$ is not restricted to finite abelian groups.
We show that resulting linear matroids are special cases of matroids given in 
\textsection\ref{sec:fractional_lifting}.

Let $\Gamma$ be a group equipped with a linear representation 
$\rho:\Gamma\rightarrow GL(\mathbb{F}^d)$ 
on a vector space of finite dimension $d$ over a field $\mathbb{F}$.
Let $(G=(V,E),\psi)$ be a finite $\Gamma$-gain graph.

As in the previous subsection, let $\mathbb{K}$ be an extension of $\mathbb{F}$ 
that contains an algebraically independent set
$\{\alpha_e^i\mid i=1,\dots, d, e\in E\}$  over $\mathbb{F}$,
and let $\alpha_e=(\alpha_e^1,\dots, \alpha_e^d)^{\top}\in \mathbb{K}^d$.

With each $e=(i,j)\in E$,
we assign a vector $x_{e,\psi}\in (\mathbb{K}^d)^V$ 
defined by 
\begin{equation}
\label{eq:dowling_new_rep}
x_{e,\psi}(v)=\begin{cases}
-\rho(\psi(e))\alpha_e & \text{ if } v=i \\
\alpha_e & \text{ if } v=j \\
0 & \text{ otherwise}
\end{cases}
\end{equation}
if $e=(i,j)$ is not a loop, and 
\begin{equation}
\label{eq:dowling_new_rep_loop}
x_{e,\psi}(v)=\begin{cases}
(I_d-\rho(\psi(e)))\alpha_e & \text{ if } v=i \\
0 & \text{ otherwise},
\end{cases}
\end{equation}
if $e$ is a loop.
The linear matroid induced on $\{x_{e,\psi} \mid e\in E\}$ is denoted by $\mathbf{D}_{\rho}(G,\psi)$.

Note that $\mathbf{D}_{\rho}(G,\psi)$ is the generic matroid obtained from a linear polymatroid with 
a linear representation $e\mapsto A_{e,\psi}$ defined by
\begin{equation}
\label{eq:A_non_loop}
A_{e,\psi}=\left\{x\in (\mathbb{F}^d)^V\Big\vert  
\begin{array}{l}
x(i)+\rho(\psi(e)) x(j)=0, \\ 
x(V\setminus \{i,j\})=0
\end{array}
\right\}
\end{equation}
for each non-loop edge $e\in E$, and 
\begin{equation}
\label{eq:A_loop}
A_{e,\psi}=\left\{x\in (\mathbb{F}^d)^V\Big\vert 
\exists \alpha\in \mathbb{F}^d\colon
\begin{array}{l} 
x(i)=(I_d-\rho(\psi(e)))\alpha, \\
x(V\setminus \{i\})=0
\end{array}
\right\}
\end{equation}
for a loop $e$, by extending the underlying field from $\mathbb{F}$ to $\mathbb{K}$.
Note also that $A_{e,\psi}$ is invariant with the choice of orientation of each edge, as each $\rho(\psi(e))$ is invertible.

Let  $\mathbf{DP}_{\rho}(G,\psi)$ be  the linear polymatroid on $E$ represented by
$e\mapsto A_{e,\psi}$.
Clearly, each $A_{e,\psi}$ depends on the gain function $\psi$,
but, as shown below, the associated polymatroid is actually invariant up to equivalence.

\begin{lemma}
\label{lem:linear_equivalence}
Let $\psi$ and $\psi'$ be equivalent gain functions.
Then,
\[
 \dim_{\mathbb{F}} \{A_{e,\psi}\mid e\in E \} = \dim_{\mathbb{F}} \{A_{e,\psi'}\mid e\in E \}.
\]
\end{lemma}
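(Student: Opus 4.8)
The plan is to reduce to the case of a single switching, since equivalence of gain functions means $\psi'$ is obtained from $\psi$ by a finite sequence of switchings. So it suffices to show that a switching at a single vertex $w\in V$ with a single element $h\in\Gamma$ induces a linear isomorphism of the ambient space $(\mathbb{F}^d)^V$ that carries each subspace $A_{e,\psi}$ onto $A_{e,\psi'}$. Once we have such an isomorphism $T\colon(\mathbb{F}^d)^V\to(\mathbb{F}^d)^V$ with $T(A_{e,\psi})=A_{e,\psi'}$ for all $e\in E$, it follows immediately that $\dim_{\mathbb{F}}\{A_{e,\psi}\mid e\in E\}=\dim_{\mathbb{F}}\{T(A_{e,\psi})\mid e\in E\}=\dim_{\mathbb{F}}\{A_{e,\psi'}\mid e\in E\}$, and the general claim follows by composing the isomorphisms along the sequence of switchings.

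The natural candidate for $T$ is the block-diagonal map acting on the $w$-coordinate by $\rho(h)$ and as the identity on every other coordinate: $(Tx)(w)=\rho(h)x(w)$ and $(Tx)(v)=x(v)$ for $v\ne w$. This is invertible because $\rho(h)\in GL(\mathbb{F}^d)$. First I would verify $T(A_{e,\psi})=A_{e,\psi'}$ for a non-loop edge $e=(i,j)$ by checking the defining equations in (\ref{eq:A_non_loop}): if $w\notin\{i,j\}$ then $\psi'(e)=\psi(e)$ and $T$ fixes the relevant coordinates, so there is nothing to prove; if $w=i$ (the edge directed from $w$, so $\psi'(e)=h\cdot\psi(e)$) then for $x\in A_{e,\psi}$ we have $x(i)+\rho(\psi(e))x(j)=0$, and setting $y=Tx$ gives $y(i)=\rho(h)x(i)=-\rho(h)\rho(\psi(e))x(j)=-\rho(h\psi(e))y(j)=-\rho(\psi'(e))y(j)$, which is exactly the equation defining $A_{e,\psi'}$; the case $w=j$ (edge directed to $w$, so $\psi'(e)=\psi(e)\cdot h^{-1}$) is the symmetric computation, using $\rho(\psi(e))=\rho(\psi'(e))\rho(h)$. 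Since $T$ is a bijection one gets $T(A_{e,\psi})\subseteq A_{e,\psi'}$ and, applying the same argument to the inverse switching, equality.

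Then I would handle a loop $e$ attached at vertex $i$ using (\ref{eq:A_loop}): if $w\ne i$ then $\psi'(e)=\psi(e)$ and $T$ acts trivially on the $i$-coordinate, so $A_{e,\psi'}=A_{e,\psi}$; if $w=i$ then by the switching rule $\psi'(e)=h\psi(e)h^{-1}$, and for $x\in A_{e,\psi}$ with $x(i)=(I_d-\rho(\psi(e)))\alpha$ we compute $(Tx)(i)=\rho(h)(I_d-\rho(\psi(e)))\alpha=(I_d-\rho(h\psi(e)h^{-1}))\rho(h)\alpha=(I_d-\rho(\psi'(e)))\alpha'$ with $\alpha'=\rho(h)\alpha\in\mathbb{F}^d$, which lies in $A_{e,\psi'}$; conversely every element of $A_{e,\psi'}$ arises this way since $\rho(h)$ is onto. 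I do not expect any serious obstacle here — the lemma is essentially the bookkeeping observation that the switching operation on gains corresponds to a coordinate change in the representation space, and this is the representation-theoretic analogue of Proposition~\ref{prop:fundamental_operation}. The only mild care needed is to track orientations correctly in the two subcases $w=i$ versus $w=j$ for non-loop edges and to confirm $\alpha'$ stays in $\mathbb{F}^d$ (not just $\mathbb{K}^d$), which it does because $\rho(h)$ has entries in $\mathbb{F}$.
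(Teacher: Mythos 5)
Your proposal is correct and follows essentially the same route as the paper: reduce to a single switching and exhibit the block-diagonal change of coordinates $T$ acting by $\rho(h)$ on the switched vertex, verifying $T(A_{e,\psi})=A_{e,\psi'}$ edge by edge (the paper merely streamlines the non-loop case by first using orientation-invariance of $A_{e,\psi}$ to assume all edges point away from the switched vertex, whereas you check both orientations directly).
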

\begin{proof}
Let us simply denote $d_{\psi}=\dim_{\mathbb{F}}\{A_{e,\psi}\mid e\in E\}$.
It is sufficient to show that $d_{\psi}$ is invariant from any switch operation.

Suppose that $\psi'$ is obtained from $\psi$ by a switch operation at $v\in V$ with 
$\gamma\in \Gamma$. 
Since $A_{e,\psi}$ is invariant with the choice of the edge orientation, we may assume that 
all of edges are oriented from $v$.
Then, $\psi'(e)=\gamma\psi(e)$ if $e$ is incident to $v$, $\psi'(e)=\gamma\psi(e)\gamma^{-1}$ if $e$ is a loop at $v$,
and otherwise $\psi'(e)=\psi(e)$.

Consider a bijective linear transformation 
$T:(\mathbb{F}^d)^{V}\rightarrow (\mathbb{F}^d)^{V}$ defined by,
for each $x\in (\mathbb{F}^d)^{V}$,
\begin{equation*}
T(x)(w)=\begin{cases}
x(w) & \text{ if } w\in V\setminus \{v\} \\
\rho(\gamma) x(v) & \text{ if } w=v. 
\end{cases}
\end{equation*}
We then have 
$x(v)=\rho(\gamma)^{-1}T(x)(v)$ and $x(w)=T(x)(w)$ for  $w\in V\setminus \{v\}$.
Therefore, if $e$ is a non-loop edge oriented from $v$ to a vertex $j\in V$,
\begin{align*}
TA_{e,\psi}&=\{T(x) \in (\mathbb{F}^d)^V \mid x(v)+\rho(\psi(e))x(j)=0, x(V\setminus \{v,j\})=0\} \\
&=\{y \in (\mathbb{F}^d)^V \mid y(v)+\rho(\gamma \psi(e))y(j)=0, y(V\setminus \{v,j\})=0\}.
\end{align*}
As $\psi'(e)=\gamma \psi(e)$, 
we obtain that $TA_{e,\psi}=A_{e,\psi'}$.
Similarly, if $e$ is a loop attached to $v$,
\begin{align*}
TA_{e,\psi}&=\{y \in (\mathbb{K}^d)^V \mid \exists\alpha\in \mathbb{K}^d\colon \rho(\gamma^{-1})y(v)=(I_d-\rho(\psi(e)))\alpha, y(V\setminus \{v\})=0\} \\
&=\{y \in (\mathbb{K}^d)^V \mid \exists\alpha\in \mathbb{K}^d\colon y(v)=(I_d-\rho(\gamma\psi(e)\gamma^{-1}))\alpha, y(V\setminus \{v\})=0\} \\
&=A_{e,\psi'},
\end{align*}
where $\psi'(e)=\gamma\psi(e)\gamma^{-1}$.

If $e$ is not incident to $v$, then we clearly have $TA_{e,\psi}=A_{e,\psi}=A_{e,\psi'}$.
Thus $d_{\psi}$ is invariant from any switch operation.
\end{proof}


\subsection{Combinatorial characterization}
\label{subsec:dowling_combinatorial}
We now show that the linear matroid $\mathbf{D}_{\rho}(G,\psi)$ is indeed equal to 
a special case of matroids given in \textsection\ref{subsec:fractional_lifting}.
Recall that, in \textsection\ref{subsec:fractional_lifting} (Example~\ref{ex:gain2}), 
$d_{\rho}:2^{\Gamma}\rightarrow \mathbb{Z}$ is given  by
\begin{equation}
\label{eq:d_rho_group}
d_{\rho}(X)=\dim_{\dF}\{{\rm image}(I_d-\rho(\gamma))\mid \gamma\in X\} \qquad (X\subseteq \Gamma).
\end{equation}
Let $(G=(V,E),\psi)$ be a $\Gamma$-gain graph.
As a special case of $f_{\mu}$ given in (\ref{eq:gain_rank_ex}),
we shall define a set function $f_{\rho}$ on $E$  by
\begin{align}
\label{eq:f_rho}
f_{\rho}(F)&=d|V(F)|-dc(F)+\sum_{X\in C(F)}d_{\rho}\langle X\rangle \qquad (F\subseteq E).
\end{align}
By Theorem~\ref{thm:submodularity1}, $f_{\rho}$ is 
a normalized integer-valued monotone submodular function, and thus 
$\mathbf{P}(f_{\rho})=(E,f_{\rho})$ is a polymatroid and 
$\mathbf{M}(f_{\rho})=(E,f_{\rho}^{\mathbf{1}})$ is a matroid on $E$.

We are now ready to state our main theorems.
The first theorem asserts the equivalence of $\mathbf{P}(f_{\rho})$ and $\mathbf{DP}_{\rho}(G,\psi)$,
while the second implies the equivalence of $\mathbf{M}(f_{\rho})$ and $\mathbf{D}_{\rho}(G,\psi)$.
\begin{theorem}
\label{thm:main_poly}
Let $\Gamma$ be a group equipped with a linear representation 
$\rho: \Gamma\rightarrow GL(\mathbb{F}^d)$.
Let $(G=(V,E),\psi)$ be a $\Gamma$-gain graph.
Then, 
\begin{equation}
f_{\rho}(E)=\dim_{\mathbb{F}}\{A_{e,\psi}\mid e\in E\}.
\end{equation}
\end{theorem}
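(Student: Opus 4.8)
The plan is to follow the classical argument for the canonical representation of frame matroids and Dowling geometries. Since each $A_{e,\psi}$ is supported on the endpoints of $e$, the problem localizes to the connected components of $E$: the coordinate blocks $(\mathbb{F}^d)^{V(X)}$ ($X\in C(E)$) are linearly independent inside $(\mathbb{F}^d)^V$, so $\dim_{\mathbb{F}}\{A_{e,\psi}\mid e\in E\}=\sum_{X\in C(E)}\dim_{\mathbb{F}}\{A_{e,\psi}\mid e\in X\}$, whereas $f_{\rho}(E)=\sum_{X\in C(E)}\bigl(d|V(X)|-d+d_{\rho}\langle X\rangle\bigr)$ directly from the definition of $f_{\rho}$. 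Hence I may assume $E$ is connected (so $V=V(E)$) and reduce to proving $\dim_{\mathbb{F}}\{A_{e,\psi}\mid e\in E\}=d|V|-d+d_{\rho}\langle E\rangle$.

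Next I would normalize the gains on a spanning tree. Fix a spanning tree $T$ of $G$, and use Proposition~\ref{prop:tree_identity} to pass to an equivalent gain function with $\psi(e)={\rm id}$ for all $e\in T$: this leaves the left-hand side unchanged by Lemma~\ref{lem:linear_equivalence}, and leaves $d_{\rho}\langle E\rangle$ unchanged because $\langle E\rangle_{\psi,v}$ only varies up to conjugacy under switching (Proposition~\ref{prop:fundamental_operation}) while $d_{\rho}$ is conjugation invariant (Example~\ref{ex:gain2}). Now $A_{e,\psi}=D_e^d$ for every $e\in T$, so by Lemma~\ref{lem:graphic} the subspace $V_T:={\rm span}\{A_{e,\psi}\mid e\in T\}$ has dimension $d|V|-d$; since in addition each $D_e^d$ lies in the ``sum-zero'' subspace $\ker\sigma$ of the linear map $\sigma\colon(\mathbb{F}^d)^V\to\mathbb{F}^d$, $\sigma(x)=\sum_{v\in V}x(v)$, and $\dim\ker\sigma=d|V|-d$ as well, I conclude $V_T=\ker\sigma$.

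The heart of the argument is then to project the whole span onto $\mathbb{F}^d$ via $\sigma$. Put $U:={\rm span}\{A_{e,\psi}\mid e\in E\}\supseteq V_T=\ker\sigma$; then $\sigma|_U$ has kernel $\ker\sigma$, so $\dim U=\dim\ker\sigma+\dim\sigma(U)=(d|V|-d)+\dim\sigma(U)$. A direct inspection of (\ref{eq:dowling_new_rep}) and (\ref{eq:dowling_new_rep_loop}) shows $\sigma(A_{e,\psi})={\rm image}(I_d-\rho(\psi(e)))$ for every $e\in E$ --- for a non-loop $e=(i,j)$ the element of $A_{e,\psi}$ with $x(j)=\alpha$ has $\sigma(x)=(I_d-\rho(\psi(e)))\alpha$, and for a loop it is immediate. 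Since tree edges have identity gain and thus contribute $0$, this gives $\sigma(U)={\rm span}\{{\rm image}(I_d-\rho(\psi(e)))\mid e\in E\setminus T\}$; by the inclusion ${\rm image}(I_d-\rho(\gamma_1\gamma_2))\subseteq{\rm image}(I_d-\rho(\gamma_1))+{\rm image}(I_d-\rho(\gamma_2))$ recorded in Example~\ref{ex:gain2}, this span is unchanged if $\{\psi(e)\mid e\in E\setminus T\}$ is replaced by the subgroup it generates, which by Proposition~\ref{lem:checking_label} equals $\langle E\rangle_{\psi,v}$. Therefore $\dim\sigma(U)=d_{\rho}\langle E\rangle$ and $\dim_{\mathbb{F}}\{A_{e,\psi}\mid e\in E\}=d|V|-d+d_{\rho}\langle E\rangle=f_{\rho}(E)$.

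The step I expect to require the most care is this last identification of $\sigma(U)$: one must verify not only that ${\rm span}\{{\rm image}(I_d-\rho(\gamma))\mid\gamma\in S\}$ and ${\rm span}\{{\rm image}(I_d-\rho(\gamma))\mid\gamma\in\langle S\rangle\}$ have the same dimension (which is exactly the closure invariance of $d_{\rho}$ from Example~\ref{ex:gain2}) but that they coincide as subspaces --- which follows since the former is contained in the latter with equal dimension. The remainder is bookkeeping: the component decomposition, the switching normalization, the dimension count for $V_T$ via Lemma~\ref{lem:graphic}, and the elementary evaluation of $\sigma$ on each $A_{e,\psi}$.
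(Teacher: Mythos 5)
Your proposal is correct, and it follows the same overall strategy as the paper: reduce to a connected component, switch the gains to be the identity on a spanning tree $T$ (invoking Lemma~\ref{lem:linear_equivalence} and the conjugation/closure invariance of $d_{\rho}$), observe that the tree edges already span a space of dimension $d|V|-d$, and then show the remaining contribution is $\dim_{\dF}\{{\rm image}(I_d-\rho(\psi(e)))\mid e\in E\setminus T\}=d_{\rho}\langle E\rangle$ via Proposition~\ref{lem:checking_label}. The one genuine difference is how that remaining contribution is extracted: the paper computes the quotient spaces $A_{e,\psi}/A_T$, using the second assertion of Lemma~\ref{lem:graphic} (the membership criterion $x(i)+x(j)=0$) to identify each quotient with ${\rm image}(I_d-\rho(\psi(e)))$ relocated at a fixed vertex, whereas you realize the quotient concretely as the coordinate-sum map $\sigma(x)=\sum_{v}x(v)$, prove $A_T=\ker\sigma$ by a dimension count (so only the first assertion of Lemma~\ref{lem:graphic} is needed), and read off $\sigma(A_{e,\psi})={\rm image}(I_d-\rho(\psi(e)))$ directly; rank--nullity then finishes. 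This is a slightly more explicit implementation of the same quotient argument, and it is entirely sound; your final remark is also right that only the equality of dimensions (closure invariance of $d_{\rho}$), not the coincidence of subspaces, is actually needed there.
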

\begin{proof}
For any $F\subseteq E$, let $A_F=\spa\{A_{e,\psi}\mid e\in F\}$.
By definition, it is easy to check that $A_E=\bigoplus_{X\in C(E)}A_X$.
Moreover, $f_{\rho}(E)=\sum_{X\in C(E)}f_{\rho}(X)$.
Hence, it suffices to show the statement when $G$ is connected.

Let $T$ be a spanning tree in $E$.
By Proposition~\ref{prop:tree_identity} and Lemma~\ref{lem:linear_equivalence}, 
we may assume that $\psi(e)$ is identity for $e\in T$.
By Proposition~\ref{lem:checking_label},
$\langle E\rangle_{\psi,v}=\langle \psi(e)\mid e\in E\setminus T\rangle$ for any $v\in V$.
Hence, as $\mu$ is invariant under taking closure, $\mu\langle E \rangle=\mu(\{\psi(e)\mid e\in E\setminus T \})$.
Thus,
\[
 f_{\rho}(E)=d|V|-d+\dim_{\dF}\{{\rm image}(I_d-\rho(\psi(e)))\mid e\in E\setminus T\}.
\]

By Lemma~\ref{lem:graphic},
we have that (i) $\dim_{\dF} A_T=d|V|-d$
and (ii) for any $i,j\in V$ and any $x\in (\mathbb{F}^d)^V$ with $x(V\setminus \{i,j\})=0$, 
$x\in A_T$ if and only if $x(i)+x(j)=0$. 
This means that each quotient space $A_{e,\psi}/A_T$ for $e=(i,j)\in E\setminus T$ is written by 
\begin{align*}
A_{e,\psi}/A_T=\{x+A_T \mid \exists\alpha\in \mathbb{F}^d\colon x(j)=(I_d-\rho(\psi(e)))\alpha, x(V\setminus \{j\})=0 \} \\
=\{x+A_T \mid \exists\alpha\in \mathbb{F}^d\colon x(1)=(I_d-\rho(\psi(e)))\alpha, x(V\setminus \{1\})=0 \} 
\end{align*}
where $1$ denotes one specific vertex in $V$.
Therefore, $\spa\{A_{e,\psi}/A_T\mid e\in E\setminus T\}$ is isomorphic to
\begin{align*}
{\rm span} \{x\in (\mathbb{F}^d)^V \mid \exists e\in E\setminus F, \exists\alpha\in \mathbb{F}^d\colon 
x(1)=(I_d-\rho(\psi(e)))\alpha, x(V\setminus \{1\})=0 \}
\end{align*} 
which is isomorphic to $\spa \{{\rm image}(I_d-\rho(\psi(e)))\mid e\in E\setminus T\}$. 
Thus, we obtain $\dim_{\dF}\{A_{e,\psi}\mid e\in E\}=
\dim_{\dF} A_T+\dim_{\dF}\{A_{e,\psi}/A_T\mid e\in E\setminus T\}
=d|V|-d+\dim_{\dF}\{{\rm image}(I_d-\rho(\psi(e)))\mid e\in E\setminus T\}=f_{\rho}(E)$,
completing the proof of Theorem~\ref{thm:main_poly}.
\end{proof}

\begin{theorem}
\label{thm:main_rank}
Let $\Gamma$ be a group equipped with a linear representation 
$\rho: \Gamma\rightarrow GL(\mathbb{F}^d)$.
Let $(G=(V,E),\psi)$ be a $\Gamma$-gain graph.
Then, $f^{\mathbf{1}}_{\rho}$ is the rank function of $\mathbf{D}_{\rho}(G,\psi)$.
\end{theorem}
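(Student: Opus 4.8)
The plan is to read off the rank function of $\mathbf{D}_\rho(G,\psi)$ from Lov{\'a}sz's restriction formula (Theorem~\ref{theorem:flat_matroid}) and to identify the resulting expression with $f_\rho^{\mathbf 1}$ using Theorem~\ref{thm:main_poly}. First I would observe that Theorem~\ref{thm:main_poly}, although stated for the whole edge set, in fact determines $\dim_{\mathbb F}\{A_{e,\psi}\mid e\in F\}$ for \emph{every} $F\subseteq E$: applying it to the gain subgraph $(G[F],\psi|_F)$, and noting that for $e\in F$ the subspace $A_{e,\psi}$ is supported on the endvertices of $e$ and hence on $V(F)$ --- so its span has the same dimension whether computed in $(\mathbb F^d)^{V(F)}$ or in $(\mathbb F^d)^V$ --- one obtains
\[
\dim_{\mathbb F}\{A_{e,\psi}\mid e\in F\}=f_\rho(F)\qquad(F\subseteq E).
\]
In other words, the linear polymatroid $\mathbf{DP}_\rho(G,\psi)=\mathbf{LP}(E,\Phi)$ with $\Phi\colon e\mapsto A_{e,\psi}$ coincides with $\mathbf{P}(f_\rho)$.

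Next I would invoke the construction of \textsection\ref{subsec:dowling_linear}: $\mathbf{D}_\rho(G,\psi)$ is the generic matroid $\mathbf{LM}(E,\Phi)$ obtained from $\mathbf{LP}(E,\Phi)$, since the vector $x_{e,\psi}$ in (\ref{eq:dowling_new_rep}) or (\ref{eq:dowling_new_rep_loop}) is precisely a representative vector chosen generically from $A_{e,\psi}$ over $\mathbb K$ by means of the algebraically independent coordinates $\alpha_e^i$. (For a loop $e$ attached at $i$, this uses that $\alpha\mapsto(I_d-\rho(\psi(e)))\alpha$ carries a tuple with algebraically independent coordinates to a generic vector of ${\rm image}(I_d-\rho(\psi(e)))$, so that (\ref{eq:dowling_new_rep_loop}) indeed picks a generic representative vector of $A_{e,\psi}$.) Then, applying Theorem~\ref{theorem:flat_matroid} with an arbitrary $F\subseteq E$ in place of $E$ --- equivalently, using the remark in \textsection\ref{subsec:points} that passing to the generic matroid is exactly the restriction operation for linear polymatroids --- the rank of $F$ in $\mathbf{D}_\rho(G,\psi)$ equals
\[
r(F)=\min\bigl\{\,|F\setminus X|+\dim_{\mathbb F}\{A_{e,\psi}\mid e\in X\}\ \big|\ X\subseteq F\,\bigr\}.
\]

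Substituting the identity from the first step yields $r(F)=\min\{|F\setminus X|+f_\rho(X)\mid X\subseteq F\}$, which is exactly $f_\rho^{\mathbf 1}(F)$ by the definition (\ref{eq:rist}) of restriction; hence $f_\rho^{\mathbf 1}$ is the rank function of $\mathbf{D}_\rho(G,\psi)$. Essentially all of the work is already carried by Theorem~\ref{thm:main_poly}, so the argument is short; the only delicate points will be the bookkeeping that upgrades Theorem~\ref{thm:main_poly} from $E$ to an arbitrary subset $F$ (via the support observation above), and the verification that (\ref{eq:dowling_new_rep})--(\ref{eq:dowling_new_rep_loop}) are genuinely generically chosen representative vectors of the $A_{e,\psi}$, where the loop case is the one mildly non-obvious ingredient.
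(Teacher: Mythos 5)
Your proposal is correct and follows essentially the same route as the paper's own proof: apply Theorem~\ref{thm:main_poly} to each subset $F$ (via restriction to $(V,F)$), note that the $x_{e,\psi}$ are representative vectors of the $A_{e,\psi}$ in generic position, and then read off the rank from Theorem~\ref{theorem:flat_matroid} and the definition (\ref{eq:rist}) of $f_\rho^{\mathbf 1}$. Your extra care about the loop case and the support argument for subsets is just a more explicit version of what the paper leaves implicit.
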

\begin{proof}
By Theorem~\ref{thm:main_poly}, we have
$f_{\rho}(F)=\dim_{\dK}{\rm span}\{A_{e,\psi}\mid e\in F\}$ for any $F\subseteq E$
(by restricting the statement to the graph $(V,F)$).
Since $x_{e,\psi}$ of (\ref{eq:dowling_new_rep})(\ref{eq:dowling_new_rep_loop}) is taken from $A_{e,\psi}$
so that $\{x_{e,\psi}\mid e\in E\}$ is in generic position,
Theorem~\ref{theorem:flat_matroid} implies that 
the rank of $F\subseteq E$ in $\mathbf{D}_{\rho}(G)$ is equal to
$\min \{|X|+\dim_{\mathbb{K}}\{A_{e,\psi}\mid e\in F\setminus X\}\mid X\subseteq F\}$,
which is equal to  $f^{\mathbf{1}}_{\rho}(F)$ 
by definition (\ref{eq:rist}).
\end{proof}

\begin{corollary}
\label{cor:main_matroid} 
Let $\Gamma$ be a group equipped with a linear representation 
$\rho: \Gamma\rightarrow GL(\mathbb{F}^d)$.
Let $(G=(V,E),\psi)$ be a $\Gamma$-gain graph.
Then, 
$\{ x_{e,\psi}\mid e\in E \})$ is linearly independent if and only if  
$|F|\leq d|V(F)|-dc(F)+\sum_{X\in C(F)}d_{\rho}\langle X\rangle$ for any  $F\subseteq E$.
\end{corollary}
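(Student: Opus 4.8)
The plan is to derive Corollary~\ref{cor:main_matroid} directly from Theorem~\ref{thm:main_rank} together with the general theory of matroids induced by submodular functions recalled in \textsection\ref{subsec:induced_matroids}. By Theorem~\ref{thm:main_rank}, the rank function of the linear matroid $\mathbf{D}_{\rho}(G,\psi)$ is $f_{\rho}^{\mathbf{1}}$, where $f_{\rho}$ is the integer-valued normalized monotone submodular function of (\ref{eq:f_rho}). A set $\{x_{e,\psi}\mid e\in E\}$ is linearly independent precisely when $E$ is independent in $\mathbf{D}_{\rho}(G,\psi)$, i.e.\ when $f_{\rho}^{\mathbf{1}}(E)=|E|$.

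First I would invoke the characterization of independence in a matroid induced by a monotone submodular function: as noted in \textsection\ref{subsec:induced_matroids} (following the restriction construction and (\ref{eq:poly_trun})), $F\subseteq E$ is independent in $(E,f_{\rho}^{\mathbf{1}})=\mathbf{M}(f_{\rho})$ if and only if $|X|\leq f_{\rho}(X)$ holds for every (nonempty) $X\subseteq F$. Applying this with $F=E$, independence of $E$ is equivalent to $|X|\leq f_{\rho}(X)$ for all $X\subseteq E$. Since $f_{\rho}(\emptyset)=0=|\emptyset|$, the empty set causes no trouble, so the condition is exactly $|F|\leq f_{\rho}(F)$ for all $F\subseteq E$. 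Unfolding the definition (\ref{eq:f_rho}) of $f_{\rho}$, this reads $|F|\leq d|V(F)|-dc(F)+\sum_{X\in C(F)}d_{\rho}\langle X\rangle$ for all $F\subseteq E$, which is the claimed inequality.

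There is essentially no obstacle here; the work has all been done in Theorems~\ref{thm:main_poly} and~\ref{thm:main_rank} and in the polymatroid preliminaries. The only point requiring a line of care is making sure we apply the independence criterion to the matroid $\mathbf{M}(f_{\rho})$, which by Theorem~\ref{thm:main_rank} coincides (as a matroid on $E$) with $\mathbf{D}_{\rho}(G,\psi)$, and that linear independence of the vector family is the same as independence of $E$ as a whole in this matroid. I would also remark, for completeness, that $d_{\rho}\langle X\rangle$ is well defined (independent of base vertex and of the chosen equivalent gain function) by Propositions~\ref{prop:conjugate} and~\ref{prop:fundamental_operation} together with the conjugation- and closure-invariance of $d_{\rho}$ established in Example~\ref{ex:gain2}, so that the right-hand side of the stated inequality is meaningful. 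With these observations the corollary follows in a couple of sentences.
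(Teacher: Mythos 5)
Your proposal is correct and matches the paper's intended derivation: the corollary is stated without separate proof precisely because it follows immediately from Theorem~\ref{thm:main_rank} together with the independence criterion for $(E,f_{\rho}^{\mathbf{1}})=\mathbf{M}(f_{\rho})$ recalled in \textsection\ref{subsec:induced_matroids}, which is exactly the route you take. Your side remarks (the empty set being harmless since $f_{\rho}(\emptyset)=0$, and well-definedness of $d_{\rho}\langle X\rangle$) are accurate and consistent with the paper.
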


\section{Applications}
\label{sec:applications}
As applications,
we shall address two problems from discrete geometry.
In the first problem we discuss the symmetric redrawing problem of 
symmetrically embedding graphs, called symmetric frameworks.
We shall extend Whiteley's  parallel redrawing theorem to symmetric setting.
In the second problem we discuss the symmetry-forced rigidity of symmetric frameworks
and extend Laman's theorem concerning the rigidity of graphs.  

The section is organized as follows.
We shall first introduce notions of symmetric graphs and symmetric frameworks 
in \textsection\ref{subsec:symmetric_graphs} 
and \textsection\ref{subsec:symmetric_framework}, respectively.
Then, we will discuss the symmetric parallel redrawing problem in \textsection\ref{subsec:drawing}
and the symmetry-forced rigidity in \textsection\ref{subsec:rigidity}.


\subsection{Symmetric graphs}
\label{subsec:symmetric_graphs}

Let $H$ be a simple graph, which may not be finite.
An {\em automorphism} of $H$ is a permutation $\pi:V(H)\rightarrow V(H)$ such that
$\{u,v\}\in E(H)$ if and only if $\{\pi(u),\pi(v)\}\in E(H)$.
The set of all automorphisms of $H$ forms a subgroup of the symmetric group of $V(H)$, 
known as the {\em automorphism group} ${\rm Aut}(H)$ of $H$.
An {\em action} of a group $\Gamma$ on $H$ is a group homomorphism $\theta:\Gamma \rightarrow {\rm Aut}(H)$.
An action $\theta$ is called {\em free} if  $\theta(\gamma)(v)\neq v$ for any $v\in V$ and  
any non-identity $\gamma\in \Gamma$.
We say that a graph $H$ is {\em $(\Gamma,\theta)$-symmetric} (or simply {\em $\Gamma$-symmetric}) 
if $\Gamma$ acts on $H$ by $\theta$.
In the subsequent discussion, we only consider free actions, and we 
omit to specify the action $\theta$, if it is clear from the context.
We then denote $\theta(\gamma)(v)$ by  $\gamma v$. 


For a $\Gamma$-symmetric graph $H$,
the {\em quotient graph} $H/\Gamma$ is a multigraph on the set $V(H)/\Gamma$ 
of vertex orbits, together with the set $E(H)/\Gamma$ of edge orbits as the edge set (with respect to $\theta$). 
An edge orbit may be represented by a loop in $H/\Gamma$.
Figure~\ref{fig:dihedral_group} illustrates an example when $\Gamma$ is the dihedral group of order $4$.

Several distinct graphs may have the same quotient graph. 
However, if we assume that the underlying action is free, 
then a gain labeling  makes the relation one-to-one.
To see this, we arbitrary choose a vertex $v$ as a representative vertex from each vertex orbit.
Then, each orbit is written by $\Gamma v=\{gv\mid g\in \Gamma\}$.
If the action is free, 
an edge orbit connecting $\Gamma u$ and $\Gamma v$ in $H/\Gamma$ 
can be written by $\{\{gu,ghv\}\mid g\in \Gamma \}$ 
for a unique $h\in \Gamma$.
We then orient the edge orbit from $\Gamma u$ to $\Gamma v$ in $H/\Gamma$
and assign to it gain $h$.
In this way, we obtain {\em the quotient $\Gamma$-gain graph}, denoted by $(H/\Gamma,\psi)$. 

Conversely, let $(G,\psi)$ be a finite $\Gamma$-gain graph for a group $\Gamma$.
We simply denote the pair $(g,v)$ of $g\in \Gamma$ and $v\in V(G)$  by $gv$.
The {\em covering graph} (also known as the derived graph) of $(G,\psi)$
is the simple graph with the vertex set $\Gamma\times V(G)=\{gv\mid g\in \Gamma, v\in V(G)\}$
and the edge set $\{\{gu,g\psi(e)v\}\mid e=(u,v)\in E(G), g\in \Gamma\}$.

Clearly, $\Gamma$ freely acts on the covering graph with the action 
$\theta$ defined by $\theta(g):v \mapsto gv$ for $g\in \Gamma$,
under which the quotient graph comes back to $(G,\psi)$.
In this way, there is a one-to-one correspondence between $\Gamma$-gain graphs and $\Gamma$-symmetric graphs with free actions.
For more properties of covering graphs, see e.g.,~\cite{Biggs,GrossTucker}.

\begin{figure}[th]
\centering
\begin{minipage}{0.4\textwidth}
\centering
\includegraphics[scale=0.5]{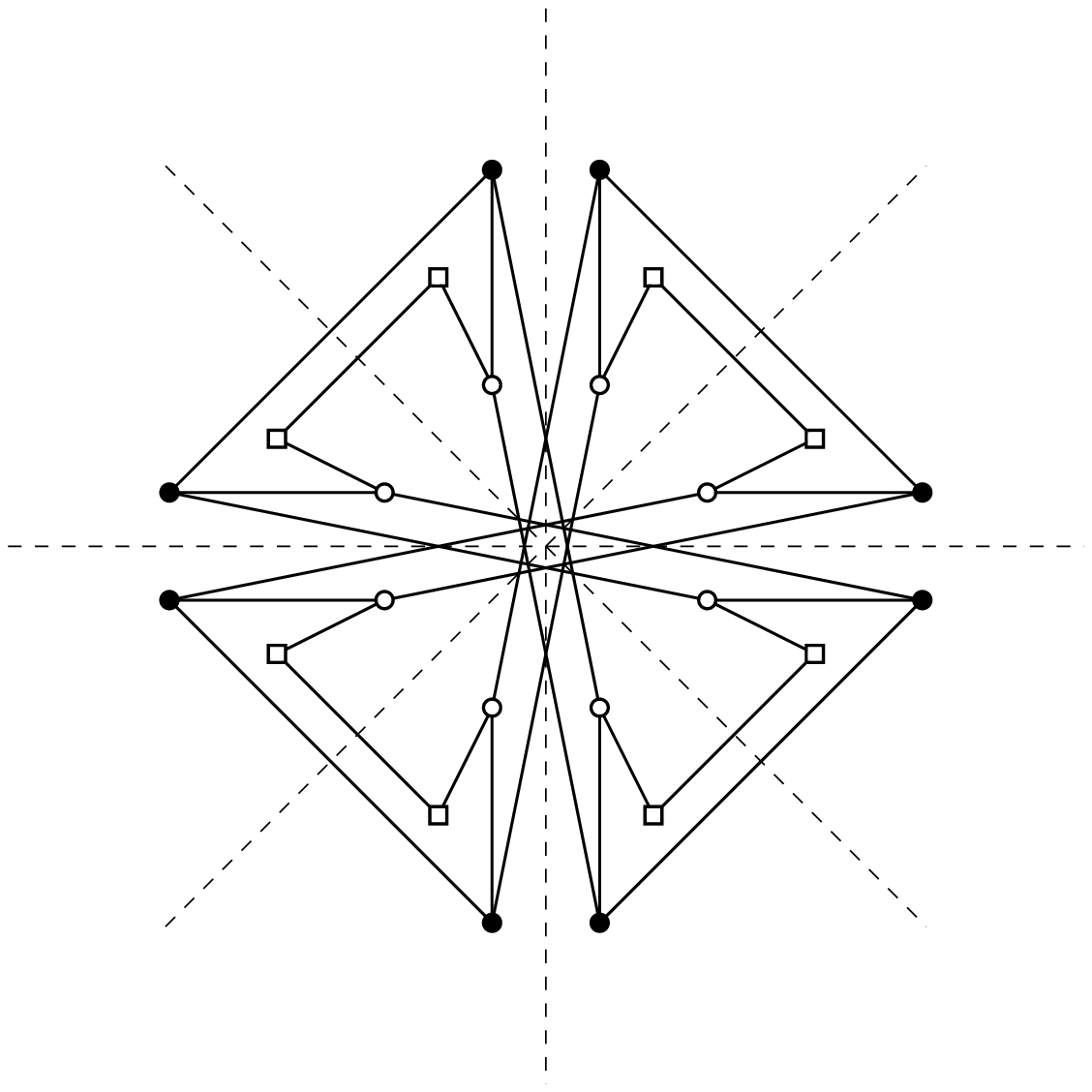}
\end{minipage}
 \begin{minipage}{0.24\textwidth}
\centering
\includegraphics[scale=0.5]{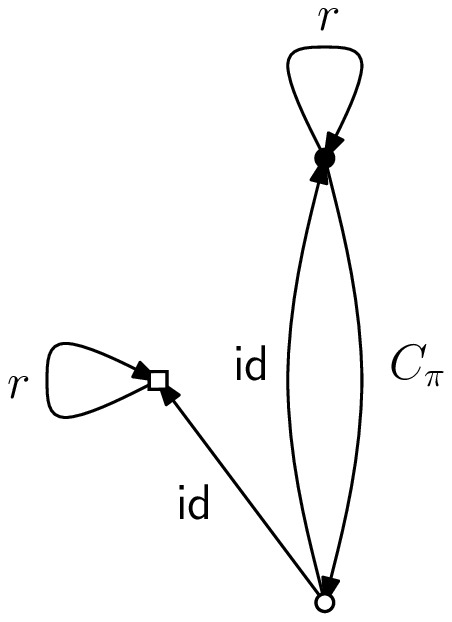}
\end{minipage}
\caption{A symmetric graph and the quotient gain graph with dihedral group symmetry of order $4$.}
\label{fig:dihedral_group}
\end{figure}

\subsection{Finite Symmetric frameworks}
\label{subsec:symmetric_framework}
A {\em $d$-dimensional framework} (or, simply, a framework) is a pair $(H,p)$
of a simple undirected graph $H$ and a mapping $p:V(H)\rightarrow \mathbb{R}^d$, called a {\em point-configuration},
which may be regarded as a straight-line realization of $G$ in $\mathbb{R}^d$.
In this paper, we are interested in symmetrically embedded  symmetric graphs in the Euclidean space.
Thus, throughout applications in \textsection\ref{sec:applications}, $\Gamma$ denotes a subgroup of matrix group $GL(\mathbb{R}^d)$.

Let $H$ be a  $(\Gamma,\theta)$-symmetric graph, where $\Gamma$ freely acts on $H$ through $\theta$.
A function $f:V(H)\rightarrow \mathbb{R}^d$ is called {\em $(\Gamma,\theta)$-symmetric} 
(or simply {\em {\em $\Gamma$}-symmetric}) if
\begin{equation}
\label{eq:symmetric_func}
\gamma f(v)=f(\gamma v) \qquad \forall \gamma\in \Gamma \text{ and } \forall v\in V(H).
\end{equation}
The pair $(H, p)$ is said to be a {\em $(\Gamma,\theta)$-symmetric framework} 
(or simply {\em $\Gamma$-symmetric framework})
if $H$ and $p$ are $(\Gamma,\theta)$-symmetric.
 
It is convenient to fix a representative vertex $v$ of each vertex orbit $\Gamma v$,
and define the {\em quotient} $f/\Gamma:V/\Gamma\rightarrow \mathbb{R}^d$ of $f$ to be 
$f(v)=f/\Gamma(\Gamma v)$ for each representative vertex $v$. 

In the subsequent discussion, we shall only consider finite frameworks, where $H$ is a finite graph.
(In \textsection\ref{sec:further}, we will discuss infinite frameworks with crystallographic symmetry.)
Namely, we shall restrict our attention to {\em discrete point groups} ${\cal P}$,
which are finite discrete subgroups of the {\em orthogonal group} ${\cal O}(\mathbb{R}^d)$,
i.e., the set of $d\times d$ orthogonal matrices.

For a discrete point group $\Gamma$, let $\mathbb{Q}_{\Gamma}$ be the field
 generated by $\mathbb{Q}$ and the entries of matrices contained in $\Gamma$.
For a $\Gamma$-gain graph $(G,\psi)$, a mapping $f:V(G)\rightarrow \mathbb{R}^d$ is said to be 
{\em $\Gamma$-generic} if the set of coordinates of the image of $f$ is algebraically independent over $\mathbb{Q}_{\Gamma}$.
Also, for a $\Gamma$-symmetric graph $H$, 
a $\Gamma$-symmetric function $f:V(H)\rightarrow \mathbb{R}^d$ is said to be 
{\em $\Gamma$-generic} if $f/\Gamma$ is $\Gamma$-generic.

\subsection{Symmetric parallel redrawing problem}
\label{subsec:drawing}
\subsubsection{Parallel redrawing}
Let $(H,p)$ be a finite $d$-dimensional framework.
We shall consider $(H,p)$ as a {\em drawing} of the graph $H$ in $\mathbb{R}^d$ with straight-line edges. 

A framework $(H,q)$ is called a {\em parallel redrawing} of $(H,p)$ if
$q(i)-q(j)$ is parallel to $p(i)-p(j)$ for all $\{i,j\}\in E(H)$.
No matter how the underlying graph is dense, any framework  admits parallel redrawings,
since  a translation of $(H,p)$ or a dilation of $(H,p)$ is always a redrawing.
A drawing $(H,p)$ is said to be {\em robust} if 
any redrawing of $(H,p)$ is a consequence of translations and dilation of $(H,p)$.
In the parallel redrawing problem, we are asked whether $(H,p)$ is robust or not.

In the context of rigidity theory, this concept is known as the {\em direction-rigidity} of 
$d$-dimensional bar-joint frameworks $(H,p)$,
where we are interested in direction-constraint, rather than 
conventional length-constraint (which we will discuss in the next subsection), 
or the mixture of length and direction constraints 
(see, e.g.,~\cite{Whitley:1997,jackson2010globally,Servatius:1999}).

Let us take a look at the formal definition.
We define a {\em relocation} of $(H,p)$ by $m:V(H)\rightarrow \mathbb{R}^d$ 
such that 
\begin{align*}
\text{$m(i)-m(j)$ is parallel to  $p(i)-p(j)$} \qquad \forall \{i,j\}\in E(H), 
\end{align*}
or, equivalently,
\begin{equation}
\label{eq:parallel_const1}
\langle m(i)-m(j),\alpha\rangle=0 \quad \forall \alpha\in \mathbb{R}^d \text{ such that } \langle p(i)-p(j),\alpha\rangle=0
\end{equation}
for all $\{i,j\}\in E(H)$, where $\langle \cdot, \cdot\rangle$ denotes the standard inner product in the Euclidean space. 

For $t\in \mathbb{R}^d$, 
let us define a constant map $m_t:V(H)\rightarrow \mathbb{R}^d$ by
$m_t(v)=t$ for $v\in V(H)$.
Then, $m_t$ is a relocation of $(H,p)$, known as a {\em translation}. 
On the other hand, let $m_{\rm di}(v)=p(v)$ for $v\in V(H)$.
Then, $m_{\rm di}$ is also a relocation of $(H,p)$, known as a {\em dilation}.
In general, a relocation is called {\em trivial} if it is a linear combination of $m_{\rm di}$ and $m_t$ 
for $t\in \mathbb{R}^d$, 
and $(H,p)$ is called {\em robust} if all possible relocations are trivial.
The set of trivial relocations of $(H,p)$ forms a linear subspace of $(\mathbb{R}^d)^V$,
denoted by ${\rm tri}(H,p)$, which has dimension $d+1$ unless 
$\{p(v)\mid v\in V(H)\}$ is a point.

In~\cite{Whiteley:89,Whitley:1997}, Whiteley showed a combinatorial characterization of 
robust frameworks on generic point-configurations,  
as a corollary of a combinatorial characterization of reconstructivity of pictures appeared 
in scene analysis.
The goal of this section is to extend this result to 
the {\em symmetric parallel redrawing problem} of symmetric frameworks.

Let $\Gamma$ be a discrete point group in $d$-dimensional Euclidean space $\mathbb{R}^d$,
and suppose that $(H,p)$  is $\Gamma$-symmetric.
It is then natural to ask whether there is a redrawing 
preserving the symmetry.
Namely, we shall take into account only {\em $\Gamma$-symmetric relocations}.
It is straightforward from (\ref{eq:symmetric_func})(\ref{eq:parallel_const1}) to see the following:
\begin{proposition}
Let $(H,p)$ be a $\Gamma$-symmetric framework,
and let $m:V\rightarrow \mathbb{R}^d$ be a $\Gamma$-symmetric relocation of $(H,p)$.
Then, $(H,p+m)$ is $\Gamma$-symmetric and a parallel redrawing of $(H,p)$.
Conversely, if $(H,q)$ is a $\Gamma$-symmetric parallel redrawing of $(H,p)$,
then $q-p$ is a $\Gamma$-symmetric relocation of $(H,p)$.
\end{proposition}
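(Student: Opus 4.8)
The plan is to verify both implications directly from the definitions, using only that each $\gamma\in\Gamma$ acts as a linear map on $\mathbb{R}^d$ and that "parallel" is encoded by the orthogonality formulation (\ref{eq:parallel_const1}). Throughout I would work with (\ref{eq:parallel_const1}) rather than writing $m(i)-m(j)=\lambda_{ij}(p(i)-p(j))$, so as to avoid any case distinction when $p(i)=p(j)$.

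For the forward direction, suppose $m$ is a $\Gamma$-symmetric relocation of $(H,p)$. First I would check $\Gamma$-symmetry of $p+m$: for every $\gamma\in\Gamma$ and $v\in V(H)$, linearity of $\gamma$ together with (\ref{eq:symmetric_func}) applied to $p$ and to $m$ gives $(p+m)(\gamma v)=p(\gamma v)+m(\gamma v)=\gamma p(v)+\gamma m(v)=\gamma\big((p+m)(v)\big)$. Next I would check that $(H,p+m)$ is a parallel redrawing: fix $\{i,j\}\in E(H)$ and $\alpha\in\mathbb{R}^d$ with $\langle p(i)-p(j),\alpha\rangle=0$; then $\langle m(i)-m(j),\alpha\rangle=0$ by (\ref{eq:parallel_const1}) for $m$, hence $\langle (p+m)(i)-(p+m)(j),\alpha\rangle=\langle p(i)-p(j),\alpha\rangle+\langle m(i)-m(j),\alpha\rangle=0$. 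As this holds for all such $\alpha$, the vector $(p+m)(i)-(p+m)(j)$ is parallel to $p(i)-p(j)$.

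For the converse, let $(H,q)$ be a $\Gamma$-symmetric parallel redrawing and set $m=q-p$. I would first note that $m$ is $\Gamma$-symmetric: $m(\gamma v)=q(\gamma v)-p(\gamma v)=\gamma q(v)-\gamma p(v)=\gamma\big(q(v)-p(v)\big)=\gamma m(v)$, again by linearity of $\gamma$ and (\ref{eq:symmetric_func}) for $p$ and $q$. Then I would verify that $m$ is a relocation: for $\{i,j\}\in E(H)$ and $\alpha$ with $\langle p(i)-p(j),\alpha\rangle=0$, the parallel-redrawing property of $(H,q)$ gives $\langle q(i)-q(j),\alpha\rangle=0$, so $\langle m(i)-m(j),\alpha\rangle=\langle q(i)-q(j),\alpha\rangle-\langle p(i)-p(j),\alpha\rangle=0$, which is precisely condition (\ref{eq:parallel_const1}) for $m$.

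There is essentially no obstacle here: the statement is a bookkeeping consequence of the definitions, and in both directions the verification of the parallel condition reduces to the additivity of the map $x\mapsto\langle x,\alpha\rangle$, while the $\Gamma$-symmetry reduces to the additivity of the linear action of $\Gamma$. The only point worth stating explicitly is the choice of the orthogonality form (\ref{eq:parallel_const1}) of "parallel", which makes both directions free of the degenerate subcase $p(i)=p(j)$.
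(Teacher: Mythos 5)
Your proof is correct and is exactly the routine verification the paper has in mind: the paper states this proposition without proof, remarking only that it is straightforward from (\ref{eq:symmetric_func}) and (\ref{eq:parallel_const1}), and your direct check via linearity of the $\Gamma$-action and additivity of the inner product (using the orthogonality form of ``parallel'' to sidestep the $p(i)=p(j)$ case) is precisely that argument.
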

We thus say that $(H,p)$ is {\em symmetrically robust} 
if any $\Gamma$-symmetric relocation of $(H,p)$ is trivial.
The set of $\Gamma$-symmetric trivial relocations 
forms a linear subspace of ${\rm tri}(H,p)$, which is denoted by ${\rm tri}_\Gamma(H,p)$.

Recall that $\dim_{\mathbb{R}} ({\rm tri}(H,p))=d+1$.
However, 
not every translation $m_t$ is $\Gamma$-symmetric;
$m_t$  is $\Gamma$-symmetric if and only if
$\gamma t=t$ for all $\gamma\in \Gamma$, or equivalently 
$t\in \bigcap_{\gamma\in \Gamma}{\rm ker}(\gamma-I_d)$.
Thus, if $\{p(v)\mid v\in V(H)\}$  is not the orbit of single point,
$(H,p)$ is symmetrically robust if and only if 
the dimension of the space of $\Gamma$-symmetric relocations is equal to 
\begin{equation}
\label{eq:parallel_const4}
\dim_{\dR} {\rm tri}_\Gamma(H,p)=1+\dim_{\mathbb{R}} \bigcap_{\gamma\in \Gamma}{\rm ker}(\gamma-I_d).
\end{equation}
Note that the dilation is always $\Gamma$-symmetric, which is indeed crucial in the proof 
of our main claim below.

\subsubsection{Symmetric parallel redrawing polymatroids}
\label{subsec:parallel_polymatroid}
For a symmetric relocation of $\Gamma$-symmetric framework $(H,p)$,
the system (\ref{eq:parallel_const1}) is 
apparently redundant due to the $\Gamma$-symmetry of $p$ and $m$, 
and the redundancy can be eliminated by using the quotient $\Gamma$-gain graph $(H/\Gamma,\psi)$. 

To see this, let us take a representative vertex $v\in V(H)$ from each vertex orbit $\Gamma v\in V(H/\Gamma)$, 
as mentioned in \textsection\ref{subsec:symmetric_framework}.
Then, there is a natural one-to-one correspondence between $p$ and its quotient $p/\Gamma$ 
(resp., $m$ and $m/\Gamma$) through $p(v)=p/\Gamma(\Gamma v)$ (resp., $m(v)=m/\Gamma(\Gamma v)$).
Recall also that each edge orbit connecting from $\Gamma i$ to $\Gamma j$ is written by 
$\Gamma e=\{\{\gamma i, \gamma \psi_e j\}\colon \gamma\in \Gamma\}$, where $\psi_e$ is the gain of $\Gamma e$ in the quotient gain graph.
Hence, (\ref{eq:parallel_const1}) is written by 
\begin{equation*}
\langle m(\gamma i)-m(\gamma \psi_e j),\alpha\rangle=0 \quad \forall \alpha\in \mathbb{R}^d 
\text{ such that } \langle p(\gamma i)-p(\gamma \psi_e j),\alpha\rangle=0
\end{equation*}
for each edge $\{\gamma i, \gamma \psi_e j\}$ in each edge orbit $\Gamma e$.
Since $\Gamma$ consists of orthogonal matrices, the conditions over edges in $\Gamma e$ 
can be reduced to  one condition,
\begin{equation*}
\langle m(i)-m(\psi_e j),\alpha\rangle=0 \quad \forall \alpha\in \mathbb{R}^d
\text{ such that } \langle p(i)-p(\psi_e j),\alpha\rangle=0.
\end{equation*}
By the $\Gamma$-symmetry of $p$ and $m$, this is further converted to
\begin{equation}
\label{eq:parallel_const_q}
\langle m/\Gamma(\Gamma i)-\psi_e m/\Gamma(\Gamma j),\alpha\rangle=0 \quad \forall \alpha\in \mathbb{R}^d
\text{ such that } \langle p/\Gamma(\Gamma i)-\psi_e p/\Gamma(\Gamma j),\alpha\rangle=0.
\end{equation}
Therefore, to analyze the space of relocations $m$, it suffices to 
analyze the dimension of $m/\Gamma$ satisfying (\ref{eq:parallel_const_q}) for all edge orbit 
$\Gamma e=(\Gamma i, \Gamma j)$.

Thus, by simplifying notations, 
the problem can be considered in  a general $\Gamma$-gain graph $(G,\psi)$, 
and our goal is to understand
the space of $m\in (\mathbb{R}^d)^{V(G)}$ satisfying 
\begin{equation}
\label{eq:parallel_const5}
\langle m(i),- \psi_e m(j),\alpha\rangle=0 \quad \forall \alpha\in \mathbb{R}^d
\text{ such that } \langle p(i)-\psi_e p(j),\alpha\rangle=0
\end{equation}
for every $e=(i,j)\in E(G)$ with $\psi(e)=\psi_e$. 
To do that, we associate  a $(d-1)$-dimensional linear subspace $P_{e,\psi}(p)$ with each 
edge orbit $e=(i,j)\in E(G)$ defined by
\begin{equation}
\label{eq:direction_flat}
P_{e,\psi}(p)=A_{e,\psi}\cap \{x\in (\mathbb{R}^d)^{V(G)}\mid
\langle p(i)-\psi_ep(j),x(i)\rangle=0\}
\end{equation}
where $A_{e,\psi}$ is, as defined in (\ref{eq:A_non_loop})(\ref{eq:A_loop}),
\begin{equation}
\label{eq:direction_flat_A1}
A_{e,\psi}=\left\{x\in (\mathbb{R}^d)^{V(G)}\Big\vert 
\begin{array}{l}
x(i)+\psi_e x(j)=0, \\
x(V\setminus \{i,j\})=0
\end{array}
\right\}
\end{equation}
or 
\begin{equation}
\label{eq:direction_flat_A2}
A_{e,\psi}=\left\{x\in (\mathbb{R}^d)^{V(G)}\Big\vert \exists \alpha\in \mathbb{R}^d\colon 
\begin{array}{l}
x(i)=(I_d-\psi_e)\alpha, \\ 
x(V\setminus \{i\})=0
\end{array}
\right\}
\end{equation}
depending on whether $e$ is a non-loop or a loop, respectively.
Observe then that $m\in (\mathbb{R}^d)^{V(G)}$ satisfies (\ref{eq:parallel_const5}) for all $e\in E(G)$ 
if and only if 
$m$ is in the orthogonal complement of $\spa\{P_{e,\psi}(p)\mid e\in E(G)\}$, because,
for any $x\in P_{e,\psi}(p)$, we have
\begin{align*}
\langle m, x\rangle&=\langle m(i),x(i)\rangle + \langle m(j),x(j)\rangle \\
&= \langle m(i), x(i)\rangle - \langle m(j), \psi_e^{-1}x(i)\rangle \\
&= \langle m(i)-\psi_e m(j), x(i)\rangle
\end{align*}
with $\langle p(i)-\psi_e p(j),x(i)\rangle =0$.
In total, we proved the following.
\begin{theorem}
\label{thm:parallel_eq}
Let $(H,p)$ be a $\Gamma$-symmetric framework, 
and $(H/\Gamma,\psi)$ the quotient $\Gamma$-gain graph of $H$.
Then, $(H,p)$ is symmetrically robust if and only if 
\[
 \dim_{\mathbb{R}}\{P_{e,\psi}(p/\Gamma)\mid e\in E(H/\Gamma)\}
=d|V/\Gamma|-1-\dim_{\mathbb{R}} \bigcap_{\gamma\in \Gamma}{\rm} ( {\rm ker} (I_d-\gamma). 
\]
\end{theorem}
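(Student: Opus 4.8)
The plan is to obtain the theorem as a dimension count from the identities assembled immediately before its statement. First I would observe that, by the Proposition relating $\Gamma$-symmetric parallel redrawings to $\Gamma$-symmetric relocations together with the reduction of (\ref{eq:parallel_const1}) to the quotient condition (\ref{eq:parallel_const5}), the map $m\mapsto m/\Gamma$ is a linear isomorphism from the space of $\Gamma$-symmetric relocations of $(H,p)$ onto the space $R$ of all $m\in(\dR^d)^{V(H/\Gamma)}$ satisfying (\ref{eq:parallel_const5}) for every edge $e=(i,j)$ of the quotient gain graph $(H/\Gamma,\psi)$. The chain of equalities $\langle m,x\rangle=\langle m(i)-\psi_e m(j),x(i)\rangle$ displayed just before the theorem, valid for every $x\in P_{e,\psi}(p/\Gamma)$ in view of the defining orthogonality $\langle p(i)-\psi_e p(j),x(i)\rangle=0$, shows that $R$ is exactly the orthogonal complement of $\spa\{P_{e,\psi}(p/\Gamma)\mid e\in E(H/\Gamma)\}$ in $(\dR^d)^{V(H/\Gamma)}$. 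Hence the space of $\Gamma$-symmetric relocations has dimension $d|V/\Gamma|-\dim_{\dR}\{P_{e,\psi}(p/\Gamma)\mid e\in E(H/\Gamma)\}$.

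Next I would recall from the discussion surrounding (\ref{eq:parallel_const4}) that ${\rm tri}_\Gamma(H,p)$ is, by construction, a subspace of the space of $\Gamma$-symmetric relocations, so that ``$(H,p)$ is symmetrically robust'' is equivalent to those two spaces having equal dimension; and that, under the standing assumption that $\{p(v)\mid v\in V(H)\}$ is not a single orbit, $\dim_{\dR}{\rm tri}_\Gamma(H,p)=1+\dim_{\dR}\bigcap_{\gamma\in\Gamma}{\rm ker}(\gamma-I_d)$. Equating this with the dimension computed in the previous paragraph and rearranging — using ${\rm ker}(\gamma-I_d)={\rm ker}(I_d-\gamma)$ — gives
\[
\dim_{\dR}\{P_{e,\psi}(p/\Gamma)\mid e\in E(H/\Gamma)\}=d|V/\Gamma|-1-\dim_{\dR}\bigcap_{\gamma\in\Gamma}{\rm ker}(I_d-\gamma),
\]
which is exactly the asserted equivalence.

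I do not expect a genuine obstacle; the only points requiring care are bookkeeping ones. One is that (\ref{eq:parallel_const4}) was derived under the hypothesis that $p$ is non-constant (equivalently $\{p(v)\}$ is not a single point), so that the dilation $m_{\rm di}$ is not absorbed into the translations and $\dim_{\dR}{\rm tri}(H,p)=d+1$; this hypothesis should be carried along, and the degenerate case dispatched by a separate easy remark. A second is to note that each $P_{e,\psi}(p/\Gamma)$ is genuinely $(d-1)$-dimensional whenever $p(i)\neq\psi_e p(j)$ along the corresponding edge orbit — in particular for $\Gamma$-generic $p$ — although the displayed identity holds verbatim with whatever dimension $\spa\{P_{e,\psi}(p/\Gamma)\}$ actually has. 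Everything else is the orthogonal-complement computation already carried out in the paragraph preceding the theorem.
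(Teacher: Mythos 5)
Your argument is correct and is essentially the paper's own proof: the reduction of the relocation constraints to the quotient gain graph, the identification of the symmetric relocation space with the orthogonal complement of $\spa\{P_{e,\psi}(p/\Gamma)\}$, and the dimension count $\dim_{\dR}{\rm tri}_\Gamma(H,p)=1+\dim_{\dR}\bigcap_{\gamma\in\Gamma}\ker(\gamma-I_d)$ are exactly the chain of observations the paper assembles in \textsection\ref{subsec:parallel_polymatroid} before stating the theorem. Your added remarks on the degenerate point-configuration case and on when $P_{e,\psi}$ attains dimension $d-1$ are harmless bookkeeping that the paper handles with the same caveat.
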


\subsubsection{Combinatorial characterization}
By Theorem~\ref{thm:parallel_eq}, 
it now suffices to analyze the polymatroid of $\Gamma$-gain graphs $(G,\psi)$ with linear representation 
$e \mapsto P_{e,\psi}(p)$ for $p:V(G)\rightarrow \mathbb{R}^d$, which we call the {\em $\Gamma$-symmetric parallel redrawing polymatroid}
of $(G,\psi)$ (with respect to $p$).
The following theorem provides a combinatorial characterization of this polymatroid.
\begin{theorem}
\label{theorem:direction}
Let $\Gamma$ be a discrete point group 
with the natural representation $\rho:\gamma \in \Gamma \mapsto \gamma \in GL(\mathbb{R}^d)$,
$(G=(V,E),\psi)$ a $\Gamma$-gain graph,
and  $p:V\rightarrow \mathbb{R}^d$ a $\Gamma$-generic mapping. 
Define $f_{\rho}$ as in (\ref{eq:f_rho}) by 
\begin{equation*}
f_{\rho}(F)=d|V(F)|-dc(F)+\sum_{X\in C(F)}\dim_{\dR}\{{\rm image}(I_d-\rho(\gamma))\mid \gamma \in X\} 
\qquad (F\subseteq E)
\end{equation*}
and  $h_{\rho}:2^{E}\rightarrow \mathbb{Z}$ by
\begin{equation}
\label{eq:parallel_func}
h_{\rho}(F)=f_{\rho}(F)-1 \qquad (F\subseteq E).
\end{equation}
Then,
\[
 \dim_{\dR}\{P_{e,\psi}(p)\mid e\in E \}=\hat{h}_{\rho}(E).
\] 
In other words, for almost all $p$, the $\Gamma$-symmetric parallel redrawing polymatroid is equal to 
the polymatroid induced by $h_{\rho}$.
\end{theorem}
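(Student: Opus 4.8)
The plan is to reduce the statement to Theorem~\ref{theorem:truncation} (Lov\'asz's Dilworth-truncation formula) applied to the linear polymatroid $\mathbf{DP}_{\rho}(G,\psi)$ whose representation is $e\mapsto A_{e,\psi}$. First I would observe that $P_{e,\psi}(p)$ is exactly the intersection of $A_{e,\psi}$ with the hyperplane $H_p=\{x\in(\mathbb{R}^d)^V\mid \langle p(i)-\psi_e p(j),x(i)\rangle=0\}$; the subtle point is that, unlike the abstract setup of \textsection\ref{subsec:linear_truncation}, here the ``hyperplane'' depends on the edge $e$. So the first real step is to argue that, when $p$ is $\Gamma$-generic, the collection of cutting conditions behaves like a single generic hyperplane as far as the dimension count is concerned. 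Concretely, I would rewrite the condition defining $P_{e,\psi}(p)$: for a non-loop $e=(i,j)$, every $x\in A_{e,\psi}$ is determined by $x(i)=-\rho(\psi(e))\alpha$, $x(j)=\alpha$ for $\alpha\in\mathbb{F}^d$, so the cutting equation becomes $\langle p(i)-\psi_e p(j),\,\rho(\psi(e))\alpha\rangle=0$, i.e.\ a single linear equation on the $d$-dimensional parameter space of $A_{e,\psi}$; similarly for a loop. Thus $P_{e,\psi}(p)$ is a hyperplane section of $A_{e,\psi}$ cut out by one linear functional $\ell_e$ depending (generically and algebraically) on the coordinates of $p$.

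The second step is to show this reduces to the generic-hyperplane computation. I would set $\mathbb{F}=\mathbb{Q}_\Gamma$ and $\mathbb{K}=\mathbb{Q}_\Gamma(\{p(v)(k)\})$, the field extension by the (algebraically independent) coordinates of $p$, which is exactly the field setup fixed in the introduction. The point is that the functionals $\ell_e$, although not literally the restriction of one global generic hyperplane, are ``generic enough'': one can encode all of them simultaneously by a single generic hyperplane in a larger space, or more directly verify condition (\ref{eq:generic_position})/the genericity hypothesis of Theorem~\ref{theorem:truncation} for the family $\{A_{e,\psi}\cap H_p\}$. Here I would invoke the fact, already used implicitly throughout the paper, that restricting a linear polymatroid defined over $\mathbb{F}$ by generic algebraically independent parameters realizes the Dilworth truncation; the relevant abstract statement is Theorem~\ref{theorem:truncation} together with the remark following it that the formula holds whenever the chosen vectors are in generic position. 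So I would verify that $p$ being $\Gamma$-generic over $\mathbb{Q}_\Gamma$ forces $\{P_{e,\psi}(p)\mid e\in E\}$ into generic position relative to $\{A_{e,\psi}\mid e\in E\}$, which is essentially a transcendence-degree argument: any linear dependence among the $P_{e,\psi}(p)$ forced by the cut would, by algebraic independence of the $p$-coordinates, already be forced inside the $A_{e,\psi}$.

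Granting that, Theorem~\ref{theorem:truncation} gives
\[
\dim_{\dR}\{P_{e,\psi}(p)\mid e\in E\}=\min_{\{E_1,\dots,E_k\}}\sum_{i=1}^k\bigl(\dim_{\dF}\{A_{e,\psi}\mid e\in E_i\}-1\bigr),
\]
the minimum over partitions of $E$ into nonempty blocks. Now I would plug in Theorem~\ref{thm:main_poly} (applied blockwise to each $(V,E_i)$), which says $\dim_{\dF}\{A_{e,\psi}\mid e\in E_i\}=f_\rho(E_i)$. Hence the right-hand side becomes $\min\sum_i (f_\rho(E_i)-1)=\min\sum_i h_\rho(E_i)$, and by formula (\ref{eq:hat}) for the Dilworth truncation this is exactly $\hat h_\rho(E)$. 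This chains the three ingredients — the hyperplane-section reformulation of $P_{e,\psi}(p)$, Lov\'asz's truncation theorem, and Theorem~\ref{thm:main_poly} — into the claimed identity, and since all steps localize to edge subsets it also yields $\dim_{\dR}\{P_{e,\psi}(p)\mid e\in F\}=\hat h_\rho(F)$ for every $F\subseteq E$, i.e.\ the polymatroid identification.

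The main obstacle I anticipate is precisely the first/second step: justifying that the edge-dependent cutting hyperplanes $H_p$ can be treated as a single generic hyperplane. One has to be careful that the functional $\ell_e$ lands in the parameter space of $A_{e,\psi}$ in a way that is ``sufficiently transverse'' — in particular that cutting does not accidentally kill more dimension than it should in some block because $p$ is constrained by the symmetry (e.g.\ fixed points of $\Gamma$, or when $\rho(\psi(e))=I_d$ for a loop, in which case $A_{e,\psi}=0$ and the cut is vacuous). I would handle the degenerate blocks (balanced blocks, isolated vertices, loops with trivial gain) separately and then reduce the genuinely generic statement to the transcendence-degree argument above; establishing the generic-position hypothesis (\ref{eq:generic_position}) cleanly is where the bulk of the care goes, and it is the analogue of the corresponding step in the classical (non-symmetric) parallel-redrawing theorem of Whiteley.
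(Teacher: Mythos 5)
Your overall skeleton --- cut the linear polymatroid $\mathbf{DP}_{\rho}(G,\psi)$ by a hyperplane, invoke Theorem~\ref{theorem:truncation}, then substitute $\dim_{\dF}\{A_{e,\psi}\mid e\in E_i\}=f_{\rho}(E_i)$ from Theorem~\ref{thm:main_poly} and finish with (\ref{eq:hat}) --- is exactly the paper's, and that final chaining is fine. The gap sits at the step you yourself flag as the main obstacle, and your proposed fix does not close it. You assert that the cutting conditions are ``not literally the restriction of one global generic hyperplane'' and plan to compensate with a transcendence-degree/generic-position argument. In fact they \emph{are} restrictions of a single hyperplane: take ${\cal H}=\{x\in(\dR^d)^V\mid \sum_{v\in V}\langle p(v),x(v)\rangle=0\}$. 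For a non-loop $e=(i,j)$ and $x\in A_{e,\psi}$ one has $x(j)=-\psi_e^{-1}x(i)$, and since $\psi_e$ is orthogonal, $\sum_{v\in V}\langle p(v),x(v)\rangle=\langle p(i),x(i)\rangle-\langle p(j),\psi_e^{-1}x(i)\rangle=\langle p(i)-\psi_e p(j),x(i)\rangle$; hence $P_{e,\psi}(p)=A_{e,\psi}\cap{\cal H}$ for all edges simultaneously, and ${\cal H}$ is generic because $p$ is $\Gamma$-generic. This orthogonality observation is the crux of the paper's proof; without it, Theorem~\ref{theorem:truncation}, which is stated for one common hyperplane, simply does not apply.

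Moreover, your fallback cannot work as stated: cutting the subspaces $A_{e,\psi}$ by genuinely edge-dependent generic hyperplanes does not compute the Dilworth truncation. A minimal example: two parallel elements each assigned the same plane $A\subseteq\dR^2$; the truncation value is $1$, and a single generic line cuts both copies to the same line (span of dimension $1$), whereas two independently generic lines inside $A$ span dimension $2$. So the blockwise upper bound $\dim\{P_{e,\psi}(p)\mid e\in E_i\}\leq \dim\{A_{e,\psi}\mid e\in E_i\}-1$ fails in general for per-edge cuts, and no algebraic independence of the coordinates of $p$ restores it; what restores it is precisely the identity $P_{e,\psi}(p)=A_{e,\psi}\cap{\cal H}$ above. (Also note that condition (\ref{eq:generic_position}) is the hypothesis of the restriction theorem, Theorem~\ref{theorem:flat_matroid}, not of the truncation theorem, so verifying it would not yield the truncation formula.) Once the single-hyperplane identity is in hand, the remainder of your argument goes through verbatim and coincides with the paper's proof; the loop edges deserve a separate (but routine) check of the same kind.
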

\begin{proof}
%
The proof idea is from the alternative proof of Laman's theorem by  Lov{\'a}sz and Yemini\cite{lovasz:1982}.

Applying Theorem~\ref{thm:main_poly} with $f_{\rho}$, 
the polymatroid 
$\mathbf{P}(f_{\rho})=(E,f_{\rho})$ is equal to the linear polymatroid 
$\mathbf{DP}_{\rho}(G,\psi)$ with the linear representation $e\mapsto A_{e,\psi}$ 
(given in (\ref{eq:direction_flat_A1})(\ref{eq:direction_flat_A2})).
 
We define a hyperplane ${\cal H}$ of $(\mathbb{R}^d)^V$ 
(i.e., a ($d|V|-1$)-dimensional subspace) by 
\begin{equation*}
{\cal H}=\{x\in (\mathbb{R}^d)^V\mid \langle p,x\rangle =0 \}.
\end{equation*}
Then, observe that $P_{e,\psi}(p)=A_{e,\psi}\cap {\cal H}$ for every $e\in E$. 
Indeed, 
for any  $e=(i,j)\in E$ and any $x\in A_{e,\psi}$, we have
$\langle p,x\rangle=\langle p(i),x(i)\rangle+\langle p(j),x(j)\rangle=\langle p(i),x(i)\rangle+\langle p(j),-\psi_e^{-1}x(i)\rangle
=\langle p(i)-\psi_ep(j),x(i)\rangle$, 
which implies that $x\in {\cal H}$ if and only if $\langle p(i)-\psi_ep(j),x(i)\rangle=0$.
Therefore, as $p$ is $\Gamma$-generic, 
we conclude that 
the $\Gamma$-symmetric parallel redrawing polymatroid of $(G,\psi)$ 
is obtained from $\mathbf{DP}_{\rho}(G,\psi)$ by a Dilworth truncation, given in \textsection\ref{subsec:linear_truncation}.
By Theorem~\ref{theorem:truncation} and Theorem~\ref{thm:main_poly}, we  obtain
\begin{align*}
&\dim_{\dR} \{P_{e,\psi}(p)\mid e\in E\} \\
&=\min\{ \sum_i(\dim_{\dR}\{ A_{e,\psi}\mid e\in E_i \}-1) \mid  \text{a partition} \{E_1,\dots, E_k\} 
\text{ of } E\}  \quad \text{(by Theorem~\ref{theorem:truncation})}\\
&=\min\{ \sum_i (f_{\rho}(E_i)-1) \mid \text{ a partition } \{E_1,\dots, E_k\} \text{ of } E \} \qquad \text{(by Theorem~\ref{thm:main_poly})} \\
&=\min\{ \sum_i h_{\rho}(E_i) \mid \text{ a partition } \{E_1,\dots, E_k\} \text{ of } E \} \qquad \text{(by (\ref{eq:parallel_func}))} \\
&=\hat{h}_{\rho}(E). \qquad \text{(by (\ref{eq:hat})).}
\end{align*}
\end{proof}

The following extends  the result of Whiteley~\cite{Whitley:1997} to the symmetric parallel redrawing problem,
which directly follows from Theorem~\ref{thm:parallel_eq} and Theorem~\ref{theorem:direction}.
\begin{corollary}
\label{cor:parallel}
Let $H$ be a $\Gamma$-symmetric graph for a discrete point group $\Gamma$ 
and $(H/\Gamma,\psi)$ be the quotient $\Gamma$-gain graph.
For almost all $\Gamma$-symmetric $p:V(H)\rightarrow \mathbb{R}^d$,  
$(H,p)$ is symmetrically robust if and only if 
the graph obtained from $H/\Gamma$ by replacing each edge $e\in E(H/\Gamma)$
by $d-1$ parallel copies contains 
an edge subset $I$ satisfying the following counting conditions:
\begin{itemize}
\item $|I|=d|V|-1-\dim_{\dR} \bigcap_{\gamma\in \Gamma}{\rm ker}(\gamma-I_d)$;
\item $|F|\leq d|V(F)|-dc(F)-1+\sum_{X\in C(F)}\dim_{\dR} \{{\rm image}(I_d-\gamma)\mid \gamma\in \langle X\rangle\}$ 
for any nonempty  $F\subseteq I$. 
\end{itemize}
\end{corollary}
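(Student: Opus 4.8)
The plan is to chain Theorem~\ref{thm:parallel_eq} with Theorem~\ref{theorem:direction} and then reinterpret the polymatroid rank $\hat h_\rho(E)$ combinatorially by spreading each edge of $H/\Gamma$ into $d-1$ parallel copies. Write $E=E(H/\Gamma)$, let $(H/\Gamma,\psi)$ be the quotient gain graph, and put $\tau=d|V/\Gamma|-1-\dim_{\dR}\bigcap_{\gamma\in\Gamma}{\rm ker}(I_d-\gamma)$, the target size in the first bullet. By Theorem~\ref{thm:parallel_eq}, $(H,p)$ is symmetrically robust iff $\dim_{\dR}\{P_{e,\psi}(p/\Gamma)\mid e\in E\}=\tau$, and by Theorem~\ref{theorem:direction} the left side equals $\hat h_{\rho}(E)$ for almost all $\Gamma$-symmetric $p$, where $h_\rho=f_\rho-1$ with $f_\rho$ as in~(\ref{eq:f_rho}); moreover $\hat h_\rho(E)\le\tau$ always, since the dilation together with the translations along $\bigcap_{\gamma}{\rm ker}(I_d-\gamma)$ span a $\bigl(1+\dim_{\dR}\bigcap_{\gamma}{\rm ker}(I_d-\gamma)\bigr)$-dimensional space of $\Gamma$-symmetric relocations lying in the orthogonal complement of $\spa\{P_{e,\psi}(p/\Gamma)\}$ (cf.~(\ref{eq:parallel_const4})). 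So it suffices to show that $\hat h_\rho(E)$ equals the largest size of a \emph{feasible} edge set $I$ in the $(d-1)$-fold blow-up $G'$ of $H/\Gamma$, where feasibility means $|F|$ is at most the right side of the second bullet for every nonempty $F\subseteq I$: indeed $(H,p)$ is then symmetrically robust iff $\hat h_\rho(E)\ge\tau$ iff $G'$ has a feasible set of size $\ge\tau$ iff (feasibility being inherited by subsets, and $\hat h_\rho(E)\le\tau$) $G'$ has a feasible set of size exactly $\tau$. Note also that if $\pi\colon E(G')\to E$ collapses parallel copies, then two parallel copies of an edge form only a balanced $2$-cycle, so $V(F)$, $c(F)$, the component partition $C(F)$ and every induced subgroup $\langle X\rangle$ agree for $F$ and $\pi(F)$; thus the right side of the second bullet equals $f_\rho(\pi(F))-1=h_\rho(\pi(F))$.

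For the remaining identity, fix any $\Gamma$-generic $p$; by Theorem~\ref{theorem:direction} applied to each $X\subseteq E$ we have $\dim_{\dR}\{P_{e,\psi}(p/\Gamma)\mid e\in X\}=\hat h_\rho(X)$, and in particular $\dim_{\dR}P_{e,\psi}(p/\Gamma)\le d-1$ for every $e$. From each $P_{e,\psi}(p/\Gamma)$ choose $d-1$ generic representative vectors $x_e^1,\dots,x_e^{d-1}$ over a suitable extension $\dK$ of $\dR$ (as in Theorem~\ref{theorem:flat_matroid}), and let $\mathbf M'$ be the linear matroid on $E(G')$ assigning $x_e^i$ to the $i$-th copy of $e$. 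Since $d-1$ generic vectors span the (at most $(d-1)$-dimensional) space $P_{e,\psi}(p/\Gamma)$, the family $\{x_e^i\}$ spans over $\dK$ the same subspace as $\{P_{e,\psi}(p/\Gamma)\mid e\in E\}$, so $\rank\mathbf M'=\hat h_\rho(E)$. By Theorem~\ref{theorem:flat_matroid} applied to each $F\subseteq E(G')$, the $\mathbf M'$-rank of $F$ is $\min\{|F\setminus F''|+\hat h_\rho(\pi(F''))\mid F''\subseteq F\}$, hence $F$ is independent in $\mathbf M'$ iff $|F''|\le\hat h_\rho(\pi(F''))$ for every $F''\subseteq F$. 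Expanding $\hat h_\rho(\pi(F''))=\min_{\cal Q}\sum_{Q\in{\cal Q}}h_\rho(Q)$ over partitions ${\cal Q}$ of $\pi(F'')$ and splitting $F''$ along a minimizing partition (using monotonicity of $h_\rho$), one checks that this is equivalent to $|F''|\le h_\rho(\pi(F''))$ for every $F''\subseteq F$, i.e. to $F$ being feasible. Thus the independent sets of $\mathbf M'$ are exactly the feasible sets, and the largest feasible set has size $\rank\mathbf M'=\hat h_\rho(E)$, which together with the first paragraph completes the proof.

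The main obstacle is the translation carried out in the second paragraph: one must choose the $x_e^i$ generically enough that Theorem~\ref{theorem:flat_matroid} applies while keeping $p$ itself $\Gamma$-generic, observe that blowing up by exactly $d-1$ copies (matching $\dim_{\dR}P_{e,\psi}(p/\Gamma)\le d-1$) is what makes the generic vectors recover the full span and makes the rank formula clean, and verify the equivalence of the ``$h_\rho$'' and ``$\hat h_\rho$'' counting conditions. The rest is routine bookkeeping with the definitions of $f_\rho$, $c(F)$, $C(F)$ and with the observation that parallel edges are balanced.
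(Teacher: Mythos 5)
Your proposal is correct and follows essentially the same route as the paper: the paper derives the corollary directly by chaining Theorem~\ref{thm:parallel_eq} with Theorem~\ref{theorem:direction}, leaving the translation from ``$\hat h_{\rho}(E)=\tau$'' to the counting conditions on the $(d-1)$-fold blow-up implicit as the standard polymatroid-to-matroid correspondence. Your filling-in of that implicit step via $d-1$ generic representative vectors per edge and Theorem~\ref{theorem:flat_matroid} (rather than the purely combinatorial rank formula (\ref{eq:rank}) for the induced matroid) is a valid, equivalent justification, and the rest matches the paper's argument.
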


\subsection{Symmetry-forced rigidity of symmetric frameworks} 
\label{subsec:rigidity}
We then move to another application of Theorem~\ref{thm:main_poly}, the infinitesimal rigidity of symmetric frameworks.
The papers by Schulze and Whiteley~\cite{schulze2011orbit} and Jord{\'a}n et al.~\cite{gain_sparsity}
contain a more detailed explanation on this topic.

\subsubsection{Symmetry-forced infinitesimal rigidity}
\label{subsec:sym_rigidity_def}
The {\em infinitesimal rigidity} concerns with the dimension of the space of infinitesimal motions.
An {\em infinitesimal motion} of a framework $(H,p)$ 
is defined as an assignment $m\colon V(H)\rightarrow \mathbb{R}^d$ such that 
\begin{equation}
\label{eq:inf}
\langle m(i)-m(j), p(i)-p(j)\rangle=0 \qquad \forall \{i,j\}\in E(H).
\end{equation}
The set of infinitesimal motions forms a linear space, denoted $L(H,p)$.

In general, for a set $P\subseteq \mathbb{R}^d$ of points,
an {\em infinitesimal isometry} of $P$ is defined by $m:P\rightarrow \mathbb{R}^d$ such that
\begin{equation*}
\langle m(x)-m(y), x-y \rangle=0 \qquad \forall x, y\in P.
\end{equation*}
The set of infinitesimal isometries forms a linear space, denoted by ${\rm iso}(P)$.
Notice that, for a skew-symmetric matrix $S$ and $t\in \mathbb{R}^d$,
a mapping $m:P\rightarrow \mathbb{R}^d$ defined by 
\begin{equation*}
m(x)=Sx+t \qquad (x\in P)
\end{equation*}
is an infinitesimal isometry of $P$.
Indeed, it is well-known that any infinitesimal isometry can be described in this form,
and  
 \begin{equation}
\label{eq:trivial_dim}
 \dim_{\dR} {\rm iso}(P)=d(k+1)-{k+1 \choose 2},
\end{equation}
where $k$ denotes the affine dimension of $P$.
For example, for $d=2$,  an infinitesimal isometry
is a linear combination of {\em translations} and  the {\em infinitesimal rotation} around the origin.

An infinitesimal motion $m:V(H)\rightarrow \mathbb{R}^d$ 
of a framework $(H,p)$ is said to be {\em trivial} if 
$m$ can be expressed by 
\begin{equation}
\label{eq:trivial_motion}
m(v)=Sp(v)+t \qquad (v\in V(H))
\end{equation}
for some skew-symmetric matrix $S$ and $t\in \mathbb{R}^d$.
The set of all trivial motions forms a linear subspace of $L(H,p)$, denoted by ${\rm tri}(H,p)$.
By definition, ${\rm tri}(H,p)$ is isomorphic to ${\rm iso}(\{p(v)\mid v\in V(H)\})$,
and hence (\ref{eq:trivial_dim}) gives the exact dimension of ${\rm tri}(H,p)$.
$(H,p)$ is called {\em infinitesimally rigid} if $L(H,p)={\rm tri}(H,p)$.

As in the parallel redrawing problem, 
we are interested in $\Gamma$-symmetric infinitesimal motions of symmetric frameworks.
For a discrete point group $\Gamma$, 
a $\Gamma$-symmetric framework $(H,p)$ is said to be {\em symmetry-forced rigid}
if any $\Gamma$-symmetric infinitesimal motion of $(H,p)$ is trivial.
We should remark that, as in the case of the parallel redrawing problem, 
not every trivial infinitesimal motion is $\Gamma$-symmetric.
%

The following result of Schulze~\cite{schulze:2010} 
motivates us to look at symmetry-forced infinitesimal rigidity.
(The precise definition of some terminologies are omitted here.)
\begin{theorem}[Schulze~\cite{schulze:2010}]
\label{thm:finite_motion}
Let $\Gamma$ be a discrete point group, and 
$H$ be a $\Gamma$-symmetric graph.
Then, for any $\Gamma$-generic $p$,
$(H,p)$ has a nontrivial continuous motion that preserves 
the $\Gamma$-symmetry
if and only if $(H,p)$ has a nontrivial $\Gamma$-symmetric infinitesimal motion.
\end{theorem}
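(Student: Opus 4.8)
The plan is to transplant the Asimov--Roth/Gluck argument \cite{asimov1978,gluck} that a generic framework is flexible if and only if it is infinitesimally flexible to the quotient setting, using the orbit rigidity matrix of Schulze--Whiteley \cite{schulze2011orbit}. First I would fix a representative vertex of each vertex orbit, identify a $\Gamma$-symmetric realization of $H$ with its quotient $q\colon V(H)/\Gamma\to\dR^d$, and thus identify the space of $\Gamma$-symmetric realizations with $(\dR^d)^{V(H)/\Gamma}$. On this space consider the $\Gamma$-symmetric squared-length map $\ell\colon (\dR^d)^{V(H)/\Gamma}\to\dR^{E(H)/\Gamma}$ sending $q$ to $(\|q(\Gamma i)-\psi_e\, q(\Gamma j)\|^2)_{e\in E(H)/\Gamma}$, one coordinate per edge orbit $e=(\Gamma i,\Gamma j)$ of the quotient gain graph $(H/\Gamma,\psi)$. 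By the same reduction used for the parallel-redrawing problem (orthogonality of the matrices in $\Gamma$ collapses the $|\Gamma|$ constraints of an edge orbit to one), the differential $d\ell_q$ is, up to a nonzero scalar on each row, the orbit rigidity matrix; its kernel is exactly the space of $\Gamma$-symmetric infinitesimal motions of $(H,q)$, and the $\Gamma$-symmetric trivial infinitesimal motions form the tangent space at $q$ to the orbit of $q$ under the group of Euclidean isometries commuting with $\Gamma$, a smooth submanifold whose dimension is the symmetric analogue of $(\ref{eq:trivial_dim})$.

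For the easy direction, suppose $(H,p)$ admits a nontrivial $\Gamma$-symmetric continuous motion. Passing to quotients gives a continuous arc $q_t$ in the real-algebraic fiber $\ell^{-1}(\ell(q_0))$, $q_0=p/\Gamma$, that does not lie in the trivial orbit; by the curve selection lemma we may take the arc real-analytic. Writing $q_t=q_0+t^k m+O(t^{k+1})$ with $m\neq 0$ the leading-order deviation, differentiating $\ell(q_t)\equiv\ell(q_0)$ at order $k$ gives $d\ell_{q_0}m=0$, so $m$ is a $\Gamma$-symmetric infinitesimal motion of $(H,p)$, and it is nontrivial because the arc leaves the trivial orbit, i.e.\ $m$ is transverse to that orbit at $q_0$. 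This direction uses no genericity and simply records that differentiating $(\ref{eq:inf})$ along a symmetric motion yields a symmetric infinitesimal motion.

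The hard direction, and the main obstacle, is the converse: if $p$ is $\Gamma$-generic and $(H,p)$ has a nontrivial $\Gamma$-symmetric infinitesimal motion, produce a nontrivial $\Gamma$-symmetric continuous motion. The crux is that a $\Gamma$-generic $q_0$ is a regular point of $\ell$, meaning $\rank d\ell_{q_0}$ equals the maximum of $\rank d\ell_q$ over $(\dR^d)^{V(H)/\Gamma}$. This is precisely where $\mathbb{Q}_{\Gamma}$ enters: every entry of the orbit rigidity matrix is a polynomial, with coefficients in $\mathbb{Q}_{\Gamma}$, in the coordinates of $q$ (the matrix entries of elements of $\Gamma$ lie in $\mathbb{Q}_{\Gamma}$), hence so is every minor; the maximal rank is attained somewhere, so some minor of that size is a nonzero polynomial, and a point whose coordinates are algebraically independent over $\mathbb{Q}_{\Gamma}$ cannot be a zero of any nonzero $\mathbb{Q}_{\Gamma}$-polynomial, so it makes that minor nonzero. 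By lower semicontinuity of rank, $\rank d\ell$ is then constant near $q_0$, so the constant-rank theorem makes $\ell^{-1}(\ell(q_0))$, near $q_0$, a smooth manifold of dimension $d\,|V(H)/\Gamma|-\rank d\ell_{q_0}$ with tangent space $\ker d\ell_{q_0}$, the space of $\Gamma$-symmetric infinitesimal motions. If that space strictly contains the $\Gamma$-symmetric trivial motions, this fiber strictly contains the trivial orbit near $q_0$, hence contains points arbitrarily close to $q_0$ off that orbit; a smooth path in the fiber from $q_0$ to such a point, lifted back $\Gamma$-symmetrically to $H$, is a nontrivial $\Gamma$-symmetric continuous motion of $(H,p)$. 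The points needing care are the passage to and from the quotient so that all derivatives and paths stay inside the $\Gamma$-symmetric configuration space (this is exactly why one works with $\ell$ rather than the full rigidity map), the identification of the space of $\Gamma$-symmetric trivial motions with the tangent space to the $\Gamma$-commuting isometry orbit together with its dimension count, and the verification that a maximal minor is genuinely a nonzero polynomial over $\mathbb{Q}_{\Gamma}$.
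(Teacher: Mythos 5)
This theorem is not proved in the paper at all: it is imported verbatim from Schulze~\cite{schulze:2010} (the paper even omits some of the definitions), so there is no internal proof to compare against. Your proposal is, in essence, a reconstruction of the known argument: restrict the squared-length map to the space of $\Gamma$-symmetric configurations (identified, via a free action, with quotient configurations $(\dR^d)^{V(H)/\Gamma}$), observe that its differential is the orbit rigidity matrix of Schulze--Whiteley so that its kernel is the space of $\Gamma$-symmetric infinitesimal motions, prove that a $\Gamma$-generic configuration is a regular point because the entries (hence all minors) of the orbit rigidity matrix are polynomials over $\mathbb{Q}_{\Gamma}$ in the quotient coordinates, and then run the Asimov--Roth/Gluck constant-rank argument comparing the local fiber with the orbit of the $\Gamma$-compatible isometries. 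This is the right approach, and the hard direction (symmetric infinitesimal flex at a $\Gamma$-generic point implies a finite symmetric flex) is sound as you describe it, modulo the dimension count for the trivial orbit that you yourself flag.

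The one genuine soft spot is your easy direction. From an analytic arc $q_t=q_0+t^k m+O(t^{k+1})$ that leaves the trivial orbit you conclude that $m$ is a nontrivial infinitesimal motion ``because the arc leaves the trivial orbit, i.e.\ $m$ is transverse to that orbit''; this does not follow. The leading-order term of a curve that leaves a submanifold can perfectly well be tangent to it (the curve may escape only at higher order), so $m\neq 0$ being a symmetric infinitesimal motion does not yet give a \emph{nontrivial} one. The standard repairs are either to argue the contrapositive Asimov--Roth style (infinitesimal rigidity forces regularity, the constant-rank theorem then makes the fiber locally coincide with the trivial orbit, hence rigidity), or, since the statement only concerns $\Gamma$-generic $p$, simply to read both implications off the same constant-rank normal form you already establish for the converse: at a regular point the fiber is locally a manifold with tangent space $\ker$ of the orbit rigidity matrix, so it strictly contains the trivial orbit near $q_0$ if and only if the symmetric infinitesimal motions strictly contain the trivial ones. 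With that adjustment your argument is complete and matches the proof in the cited source in both structure and substance.
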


\subsubsection{Orbit rigidity matrix}
\label{subsec:orbit_rigidity}

Let $(H, p)$ be a $\Gamma$-symmetric framework.
Due to $\Gamma$-symmetry, the system (\ref{eq:inf}) of linear equations (with respect to $m$)
is redundant.
Schulze and Whiteley~\cite{schulze2011orbit} pointed out that 
the system can be reduced to $|E(H)/\Gamma|$ linear equations.

In fact, by the same manner as \textsection\ref{subsec:parallel_polymatroid}, it follows that 
$m:V(H)\rightarrow \mathbb{R}^d$ is a $\Gamma$-symmetric infinitesimal motion of $(H,p)$
if and only if 
\begin{equation}
\label{eq:inf_orbit}
\langle m/\Gamma(\Gamma i), p/\Gamma(\Gamma i)-\psi_e\cdot p/\Gamma(\Gamma j) \rangle+
\langle m/\Gamma (\Gamma j), p/\Gamma(\Gamma j)-\psi_e^{-1}\cdot p/\Gamma(\Gamma i)\rangle=0
\end{equation} 
for every oriented edge orbit $\Gamma e=(\Gamma i, \Gamma j)$ in the quotient gain graph $(H/\Gamma,\psi)$.
By regarding (\ref{eq:inf_orbit}) as a system of linear equations of $m/\Gamma$,
the corresponding $|E(H)/\Gamma|\times d|V(H)/\Gamma|$-matrix is called 
the {\em orbit rigidity matrix} by Schulze and Whiteley~\cite{schulze2011orbit}.

In general, for a $\Gamma$-gain graph $(G,\psi)$ and $p:V(G)\rightarrow \mathbb{R}^d$,
we are interested in the system of linear equations on $m\in (\mathbb{R}^d)^{V(G)}$ defined by 
\begin{equation}
\label{eq:inf_orbit2}
\langle m( i), p(i)-\psi_ep(j) \rangle+
\langle m( j), p(j)-\psi_e^{-1}p(i) \rangle=0 \quad \forall e=(i,j)\in E(G).
\end{equation} 
To analyze the solution space of (\ref{eq:inf_orbit2}),
we then associate a 1-dimensional linear space $R_{e,\psi}$ with each $e=(i,j)\in E(G)$,
\begin{equation}
\label{eq:rigidity_flat1}
R_{e,\psi}(p)=\left\{x\in (\mathbb{R}^d)^{V(G)} \Bigg\vert 
\exists t \in \mathbb{R} \colon 
\begin{array}{l}
x(i)=t(p(i)-\psi_e p(j)), \\ 
x(j)=t(p(j)-\psi_e^{-1} p(i)),  \\
x(V\setminus \{i,j\})=0 
\end{array}
\right\}
\end{equation}
if $e$ is a non-loop edge, and 
\begin{equation}
\label{eq:rigidity_flat2}
R_{e,\psi}(p)=\left\{x\in (\mathbb{R}^d)^{V(G)} \Big\vert \exists t \in \mathbb{R} \colon  
\begin{array}{l}
x(i)=t(2I_d-\psi_e -\psi_e^{-1})p(i), \\
x(V\setminus \{i\})=0
\end{array}
\right\}
\end{equation}
if $e$ is a loop attached to $i$.
Observe then that $m$ satisfies  (\ref{eq:inf_orbit2}) if and only if $m$ is in the orthogonal complement of 
$\spa\{R_{e,\psi}(p)\mid e\in E(G)\}$.
This implies the following.
\begin{proposition}[Schulze and Whiteley~\cite{schulze2011orbit}]
\label{prop:inf}
Let $(H, p)$ be a $\Gamma$-symmetric framework 
and $(H/\Gamma,\psi)$ be the quotient $\Gamma$-gain graph.
Then, the dimension of the space of $\Gamma$-symmetric infinitesimal motions of $(H,p)$ is equal to
\[
 d|V(H)/\Gamma|-\dim_{\dR} \{R_{e,\psi}(p/\Gamma)\mid e\in E(H/\Gamma)\}.
\]
\end{proposition}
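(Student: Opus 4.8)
The plan is to reduce, exactly as in \textsection\ref{subsec:parallel_polymatroid}, the system (\ref{eq:inf}) of infinitesimal-motion equations on $(H,p)$ to the reduced system (\ref{eq:inf_orbit}) on the quotient gain graph, and then to read off the dimension by rank--nullity. After fixing a representative vertex in each vertex orbit, the map $m\mapsto m/\Gamma$ (with $m(v)=m/\Gamma(\Gamma v)$ on representatives, extended $\Gamma$-symmetrically) is a linear isomorphism from the space of $\Gamma$-symmetric maps $V(H)\to\mathbb{R}^d$ onto $(\mathbb{R}^d)^{V(H)/\Gamma}$, and likewise $p\mapsto p/\Gamma$. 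So it suffices to prove that, under this identification, $m$ is a $\Gamma$-symmetric infinitesimal motion of $(H,p)$ if and only if $m/\Gamma$ satisfies (\ref{eq:inf_orbit}) for every oriented edge orbit $\Gamma e=(\Gamma i,\Gamma j)$ of $(H/\Gamma,\psi)$, and then to compute the dimension of the solution space of that reduced system.

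For the reduction step I would fix an edge orbit $\Gamma e=\{\{\gamma i,\gamma\psi_e j\}\mid \gamma\in\Gamma\}$ and evaluate (\ref{eq:inf}) on the edge $\{\gamma i,\gamma\psi_e j\}$. Substituting $m(\gamma v)=\gamma m(v)$ and $p(\gamma v)=\gamma p(v)$ (the $\Gamma$-symmetry of $m$ and $p$) turns this into $\langle \gamma(m(i)-\psi_e m(j)),\,\gamma(p(i)-\psi_e p(j))\rangle=0$, which, since $\gamma\in\Gamma$ is orthogonal, is independent of $\gamma$ and equals $\langle m(i)-\psi_e m(j),\,p(i)-\psi_e p(j)\rangle=0$; hence the whole orbit of $|\Gamma|$ equations collapses to this single one. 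Expanding it and moving $\psi_e$ across the inner product through $\langle \psi_e a,b\rangle=\langle a,\psi_e^{-1}b\rangle$ (again orthogonality of $\psi_e\in\Gamma$) rewrites it as exactly (\ref{eq:inf_orbit}); for a loop at $\Gamma i$ the same computation with $j=i$ gives $\langle m(i),(2I_d-\psi_e-\psi_e^{-1})p(i)\rangle=0$, i.e.\ the loop case.

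To conclude I would observe that the reduced system, in the general-gain-graph form (\ref{eq:inf_orbit2}) on $(G,\psi)=(H/\Gamma,\psi)$, states precisely that $m$ is orthogonal to the spanning vector of $R_{e,\psi}(p)$ defined in (\ref{eq:rigidity_flat1}) — respectively (\ref{eq:rigidity_flat2}) for a loop: writing that spanning vector $x^e$, one checks $\langle m,x^e\rangle=\langle m(i),p(i)-\psi_e p(j)\rangle+\langle m(j),p(j)-\psi_e^{-1}p(i)\rangle$, which is the left-hand side of (\ref{eq:inf_orbit2}). Therefore the space of $\Gamma$-symmetric infinitesimal motions of $(H,p)$ is isomorphic to the orthogonal complement of $\spa\{R_{e,\psi}(p/\Gamma)\mid e\in E(H/\Gamma)\}$ in $(\mathbb{R}^d)^{V(H)/\Gamma}$, and rank--nullity yields the asserted dimension $d|V(H)/\Gamma|-\dim_{\dR}\{R_{e,\psi}(p/\Gamma)\mid e\in E(H/\Gamma)\}$.

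I do not expect a genuine obstacle: each step uses only that the point group $\Gamma$ consists of orthogonal matrices, which is exactly what makes the per-orbit copies of (\ref{eq:inf}) redundant and lets the inner products be rearranged. The one place demanding a bit of care is an edge orbit that is a loop in $H/\Gamma$ (the case $\Gamma i=\Gamma j$): there the edge $\{i,\psi_e i\}$ of $H$ must be treated as the $j=i$ degeneration of the generic computation, and one verifies that it produces exactly the single equation encoded by $R_{e,\psi}$ in (\ref{eq:rigidity_flat2}), not a trivial one.
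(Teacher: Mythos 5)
Your proposal is correct and follows essentially the same route as the paper: reduce the orbit of constraints (\ref{eq:inf}) to the single quotient equation (\ref{eq:inf_orbit}) using the $\Gamma$-symmetry of $m,p$ and the orthogonality of $\Gamma$ (as in \textsection\ref{subsec:parallel_polymatroid}), then identify the solution space of (\ref{eq:inf_orbit2}) with the orthogonal complement of $\spa\{R_{e,\psi}(p/\Gamma)\mid e\in E(H/\Gamma)\}$ and apply rank--nullity. Your explicit treatment of the loop case and the collapse of the $|\Gamma|$ equations per edge orbit just fills in details the paper leaves implicit.
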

The detailed description and examples can be found in \cite{schulze:2010,schulze2011orbit}.
A combinatorial necessity condition of symmetry-forced rigidity 
can be found in \cite{gain_sparsity}. 

\subsubsection{Combinatorial characterization}
The following theorem provides a combinatorial characterization of the linear matroid
induced on $\{R_{e,\psi}(p)\mid e\in E(G)\}$
for the spacial case of $d=2$ and rotation groups ${\cal C}_k$. 
\begin{theorem}
\label{theorem:rigidity}
Let ${\cal C}_k$ be the group of $k$-fold rotations around the origin in the plane.
Let $(G=(V,E),\psi)$ be a ${\cal C}_k$-gain graph and  
$p:V\rightarrow \mathbb{R}^2$ be a ${\cal C}_k$-generic mapping. 
Then,
\[
 \dim_{\dR}\{R_{e,\psi}(p)\mid e\in E \}=\hat{h}_{\rho}(E),
\] 
where $h_{\rho}$ is as defined in (\ref{eq:parallel_func}) with $\Gamma={\cal C}_k$.
\end{theorem}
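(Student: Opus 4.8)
The plan is to run exactly the argument used for Theorem~\ref{theorem:direction}, exhibiting each one-dimensional space $R_{e,\psi}(p)$ as $A_{e,\psi}\cap{\cal H}$ for a single generic hyperplane ${\cal H}$, and then applying Theorem~\ref{thm:main_poly} together with the geometric Dilworth truncation formula (Theorem~\ref{theorem:truncation}). What makes $d=2$ and ${\cal C}_k$ special is the classical $90^{\circ}$-rotation duality between length and direction constraints in the plane, which persists in the symmetric setting precisely because every element of ${\cal C}_k$ commutes with the rotation $J=\left(\begin{smallmatrix}0&-1\\1&0\end{smallmatrix}\right)$. Concretely, set $J$ as above; it has rational entries, hence lies over $\mathbb{Q}_{{\cal C}_k}$, and $J\gamma=\gamma J$ for all $\gamma\in{\cal C}_k$. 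Define
\[
{\cal H}=\{x\in(\mathbb{R}^2)^{V}\mid \langle Jp,x\rangle=0\}.
\]
Since the coordinates of $Jp$ are, up to sign, those of $p$, the ${\cal C}_k$-genericity of $p$ makes ${\cal H}$ a generic hyperplane with respect to the field $\mathbb{Q}_{{\cal C}_k}$ over which the subspaces $A_{e,\psi}$ of (\ref{eq:direction_flat_A1})(\ref{eq:direction_flat_A2}) are defined.

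The key lemma to establish is $R_{e,\psi}(p)=A_{e,\psi}\cap{\cal H}$ for every $e\in E$. For a non-loop $e=(i,j)$, the spanning vector $x$ of $R_{e,\psi}(p)$ satisfies $x(i)+\psi_e x(j)=t(p(i)-\psi_ep(j))+\psi_e t(p(j)-\psi_e^{-1}p(i))=0$, so $R_{e,\psi}(p)\subseteq A_{e,\psi}$; a short computation using $J\psi_e=\psi_e J$ and the orthogonality of $\psi_e$ gives $\langle Jp,x\rangle=0$, so $R_{e,\psi}(p)\subseteq A_{e,\psi}\cap{\cal H}$, and since ${\cal H}$ is generic the latter is one-dimensional, hence coincides with $R_{e,\psi}(p)$. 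For a loop $e$ at $i$ one uses that $2I_2-\psi_e-\psi_e^{-1}=2(1-\cos\theta)I_2$ is a nonzero scalar matrix (where $\psi_e$ is rotation by $\theta\ne0$), so $R_{e,\psi}(p)$ is spanned by the configuration equal to $p(i)$ at $i$ and $0$ elsewhere, while $A_{e,\psi}$ is the full plane supported at $i$ because $I_2-\psi_e$ is invertible; hence $A_{e,\psi}\cap{\cal H}=\{x\mid \langle Jp(i),x(i)\rangle=0,\ x(V\setminus\{i\})=0\}$ is exactly the line through $p(i)$, again equal to $R_{e,\psi}(p)$.

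Granting the lemma, the proof closes as in Theorem~\ref{theorem:direction}. By Theorem~\ref{thm:main_poly} (with $\Gamma={\cal C}_k$, $d=2$), the linear polymatroid with representation $e\mapsto A_{e,\psi}$ has rank function $f_{\rho}$. Applying Theorem~\ref{theorem:truncation} to it and the generic hyperplane ${\cal H}$,
\begin{align*}
\dim_{\dR}\{R_{e,\psi}(p)\mid e\in E\}
&=\min\Bigl\{\sum_i\bigl(\dim_{\dR}\{A_{e,\psi}\mid e\in E_i\}-1\bigr)\Bigr\} \\
&=\min\Bigl\{\sum_i\bigl(f_{\rho}(E_i)-1\bigr)\Bigr\}
=\min\Bigl\{\sum_i h_{\rho}(E_i)\Bigr\}=\hat{h}_{\rho}(E),
\end{align*}
the minima being over partitions $\{E_1,\dots,E_k\}$ of $E$, the steps using Theorem~\ref{theorem:truncation}, Theorem~\ref{thm:main_poly}, (\ref{eq:parallel_func}), and (\ref{eq:hat}).

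I expect the main obstacle to be the key lemma, and within it the loop case: one must notice that the symmetric length constraint at a loop collapses to a scalar multiple of $p(i)$ and match this against $A_{e,\psi}\cap{\cal H}$. Conceptually the real content is identifying the correct hyperplane — the one cut out by $Jp$ rather than by $p$ — and recognizing that the commutation $J\gamma=\gamma J$, hence the whole identity $R_{e,\psi}(p)=A_{e,\psi}\cap{\cal H}$, requires ${\cal C}_k$ to consist of rotations; if the point group contained a reflection the argument would break, which is why the theorem is stated only for ${\cal C}_k$.
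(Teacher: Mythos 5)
Your proposal is correct and follows essentially the same route as the paper: the hyperplane cut out by $C_{\pi/2}p$ (your $Jp$), the observation that $C_{\pi/2}$ commutes with every element of ${\cal C}_k$ to obtain $R_{e,\psi}(p)=A_{e,\psi}\cap{\cal H}$, and then Theorem~\ref{thm:main_poly} plus the Dilworth truncation formula of Theorem~\ref{theorem:truncation}. Your explicit treatment of the loop case and the genericity of the hyperplane over $\mathbb{Q}_{{\cal C}_k}$ only spells out steps the paper leaves implicit.
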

\begin{proof}
%
The proof technique is exactly 
the same as the proof of Theorem~\ref{theorem:direction}.

Let $C_{\pi/2}$ be the matrix of size $2\times 2$, representing 
the  $4$-fold rotation around the origin in the Euclidean plane.
 We  define a hyperplane ${\cal H}'$ of $(\mathbb{R}^2)^V$ by 
\begin{equation*}
{\cal H}'=\{x\in (\mathbb{R}^2)^V\mid \sum_{v\in V}\langle C_{\pi/2}p(v),x(v)\rangle =0 \}
\end{equation*}
where $p\in (\mathbb{R}^2)^V$ is ${\cal C}_k$-generic as defined in the statement.

Let $A_{e,\psi}$ be the 2-dimensional linear subspace defined in 
(\ref{eq:direction_flat_A1})(\ref{eq:direction_flat_A2}) with $d=2$.
Then, observe that,  for each non-loop $e=(i,j)\in E$ and for any $x\in A_{e,\psi}$, 
\begin{align*}
\sum_{v\in V}\langle C_{\pi/2}p(v),x(v)\rangle&=\langle C_{\pi/2}p(i),x(i)\rangle+\langle C_{\pi/2}p(j),x(j)\rangle \\
&=\langle C_{\pi/2}p(i),x(i)\rangle+\langle C_{\pi/2}p(j),-\psi_e^{-1}x(i)\rangle \\
&=\langle C_{\pi/2}(p(i)-\psi_ep(j)),x(i)\rangle 
\end{align*}
where we used the fact that $C_{\pi/2}$ commutes with any element of ${\cal C}_k$.
This implies that $x\in {\cal H}'$ if and only if $x(i)\in \spa\{p(i)-\psi_e p(j)\}$ 
for every $e=(i,j)\in E$. 
In other words, $R_{e,\psi}(p)=A_{e,\psi}\cap {\cal H}'$.
The same analysis works in case of loops $e$.
 
Since $p$ is ${\cal C}_k$-generic,  
we conclude that 
the linear matroid induced on $\{R_{e,\psi}(p)\mid e\in E\}$ 
is obtained from $\mathbf{DP}_{\rho}(G,\psi)$ by a Dilworth truncation.
Since $\mathbf{DP}_{\rho}(G,\psi)=\mathbf{P}(f_{\rho})$ by Theorem~\ref{thm:main_poly},
Theorem~\ref{theorem:truncation} implies the statement.
\end{proof}

Combining Proposition~\ref{prop:inf} and Theorem~\ref{theorem:rigidity}, we conclude that 
the row matroid of an orbit rigidity matrix is the matroid induced by $h_{\rho}$,
if $d=2$ and the underlying symmetry is a group of rotations.  
The same characterization was obtained in \cite{malestein2011generic,gain_sparsity} by different approaches.

\section{Matroids Induced by Group Actions on Exterior Product Spaces}
\label{sec:action}
In this section, we give another application to rigidity theory,
where we extend Tay's theorem~\cite{tay:84} on generic rigidity of body-bar frameworks to a symmetric setting.
The result solves a conjecture given in \cite{ross2011}. 
For this extension,  we shall first investigate 
the case when a group is represented in the exterior product of vector spaces.

\subsection{Restriction to decomposable $k$-vectors}
\label{subsec:decomposable}
In the subsequent discussion of this section, 
the underlying group $\Gamma$ is equipped with a linear representation $\rho:\Gamma\rightarrow GL(\mathbb{F}^d)$
over a filed $\mathbb{F}$.
As before, we denote by $\mathbb{K}$ the field obtained from $\mathbb{F}$ by transcendental extensions.

Let  $\bigwedge^k \mathbb{F}^{d}$ be the $k$-th exterior power of $\mathbb{F}^d$.
Recall that $\bigwedge^k \mathbb{F}^{d}$ is a ${d\choose k}$-dimensional linear space,
and so each entry of an element of $\bigwedge^k\mathbb{F}^d$ can be naturally indexed by 
a $k$-tuple $(i_1,\dots, i_k)$ with $1\leq i_1<\dots <i_k\leq d$.
An element of $\bigwedge^k \mathbb{F}^{d}$ is called a {\em $k$-vector},
and a $k$-vector is said to be {\em decomposable} if it can be written of the form  $v_1\wedge \dots \wedge v_k$
for some $v_1,\dots, v_k\in \mathbb{F}^d$.

Let us consider a natural action of $\Gamma$ on $\bigwedge^k\mathbb{F}^d$,
that is, $\gamma\in \Gamma$ acts on a decomposable element $v_1\wedge \dots \wedge v_k$ by
$\rho(\gamma)v_1\wedge \dots \wedge \rho(\gamma)v_k$ 
and extends linearly to the other elements.
This leads to a linear mapping from $\bigwedge^k \mathbb{F}^d$ to  $\bigwedge^k \mathbb{F}^d$.
It is known that this action is a well-defined group representation of $\Gamma$ over $GL(\bigwedge^k \mathbb{F}^d)$.
In other words, there is a unique representation 
$\rho^{(k)}:\Gamma\rightarrow GL(\bigwedge^k \mathbb{F}^d)$
such that $\rho^{(k)}(\gamma)(v_1\wedge\dots \wedge v_k)=\rho(\gamma)v_1\wedge \dots \wedge \rho(\gamma)v_k$
for each $\gamma\in \Gamma$ and each $v_1\wedge\dots\wedge v_k$ (see, e.g.,\cite[Chapter 7]{HodgePedoe}).

Note that $\rho^{(k)}(\gamma)$ is a matrix of size ${d\choose k}\times {d\choose k}$.
To see a specific expression of the entries, let us simply denote $N=\rho(\gamma)$.
For $1\leq i_1<\dots<i_k\leq d$ and $1\leq j_1<\dots <j_k\leq d$,
let $N_{i_1\dots i_k}^{j_1\dots j_k}$ be the submatrix of $N$ induced by 
the $i_1$-th, $\dots, i_k$-th rows and the $j_1$-th $\dots, j_k$-th columns. 
If we index each column and each row of $N^{(k)}$ by a $k$-tuple $(i_1,\dots, i_k)$ 
according to the index ordering of elements in $\bigwedge^k \mathbb{F}^d$, 
we have 
\[
 N^{(k)}[(j_1,\dots, j_k),(i_1,\dots, i_k)]={\rm det}\ n_{i_1\dots i_k}^{j_1\dots j_k}, 
\]
where $N^{(k)}[(j_1,\dots, j_k),(i_1,\dots, i_k)]$ denotes the entry at 
$(j_1,\dots, j_k)$-th row  and $(i_1,\dots, i_k)$-th column.

Using representation $\rho^{(k)}$, 
we now consider a special case of matroids given in \textsection\ref{subsec:dowling_linear}.
Let $(G=(V,E),\psi)$ be a $\Gamma$-gain graph with a gain function $\psi:e \mapsto \psi_e$.
For each $e=(i,j)\in E$, let us assign $x_{e,\psi}^{(k)}\in (\bigwedge^k \mathbb{K}^{d})^V$ as follows:
\begin{equation}
\label{eq:exterior_vector}
x_{e,\psi}^{(k)}(v)=
\begin{cases}
-\rho^{(k)}(\psi_e)\alpha_e & \text{ if } v=i \\
\alpha_e & \text{ if } v=j \\
0 & \text{ otherwise}
\end{cases}
\end{equation}
\begin{equation}
\label{eq:exterior_vector_loop}
x_{e,\psi}^{(k)}(v)=
\begin{cases}
(\rho^{(k)}(\psi_e)-I_{d\choose k})\alpha_e & \text{ if } v=i \\
0 & \text{ otherwise}
\end{cases}
\end{equation}
depending on whether $e$ is a non-loop edge or a loop, 
where $\alpha_e=(\alpha_e^1,\dots, \alpha_e^{d\choose k})^{\top}\in \bigwedge^k \mathbb{K}^{d}$ such that 
$\{\alpha_e^i\mid 1\leq i\leq {d\choose k}, e\in E\}$ is algebraically independent over $\mathbb{F}$.
Then, by Corollary~\ref{cor:main_matroid}, $\{x_{e,\psi}^{(k)}\mid e\in E\}$ is linearly independent if and only if 
$|F|\leq f_{\rho^{(k)}}(F)$ for any  $F\subseteq E$
where 
\begin{align}
f_{\rho^{(k)}}(F)&={d\choose k}|V(F)|-{d\choose k}c(F)+\sum_{X\in C(F)}d_{\rho^{(k)}}\langle X\rangle \qquad
(F\subseteq E) \\
d_{\rho^{(k)}}(X)&=\dim_{\dR} \{{\rm image}(\rho^{(k)}(\gamma)-I_{d\choose k})\mid \gamma\in X\} \qquad (X\subseteq \Gamma). 
\label{eq:exterior_d}
\end{align}
Note that, for any $\gamma\in \Gamma$, 
we have 
\[
 {\rm image}(\rho^{(k)}(\gamma)-I_{d\choose k})={\rm span} \{ \rho(\gamma)p_1\wedge \dots \wedge \rho(\gamma)p_k 
-p_1\wedge\dots \wedge p_k\mid p_1,\dots, p_k\in \mathbb{F}^d\},
\]
and hence $d_{\rho^{(k)}}$ can be rewritten in terms of decomposable $k$-vectors  
by putting this into (\ref{eq:exterior_d}).
It is hence natural to ask whether the matroid $\mathbf{M}(f_{\rho^{(k)}})$ has a linear representation 
in terms of decomposable $k$-vectors, that is, a representation given by
\begin{equation}
\label{eq:decomposable_vector}
\hat{x}_{e,\psi}^{(k)}(v)=
\begin{cases}
-\rho(\psi_e)p_{e,1}\wedge \dots \wedge \rho(\psi_e)p_{e,k} & \text{ if } v=i \\
p_{e,1}\wedge\dots\wedge p_{e,k} & \text{ if } v=j \\
0 & \text{ otherwise}
\end{cases}
\end{equation}
and
\begin{equation}
\label{eq:decomposable_vector_loop}
\hat{x}_{e,\psi}^{(k)}(v)=
\begin{cases}
\rho(\psi_e)p_{e,1}\wedge \dots \wedge \rho(\psi_e)p_{e,k}-p_{e,1}\wedge\dots\wedge p_{e,k} & \text{ if } v=i \\
0 & \text{ otherwise}
\end{cases}
\end{equation}
for some $p_{e,1},\dots, p_{e,k}\in \mathbb{K}^d$.
The next theorem asserts that (\ref{eq:decomposable_vector})(\ref{eq:decomposable_vector_loop}) indeed
define a linear representation of $\mathbf{M}(f_{\rho^{(k)}})$.

\begin{theorem}
\label{thm:exterior}
For some $\{p_{e,i}\in \mathbb{K}^d\mid e\in E, 1\leq i\leq k\}$,
the linear matroid induced on $\{\hat{x}_{e,\psi}^{(k)}\mid e\in E\}$
is equal to the matroid $\mathbf{M}(f_{\rho^{(k)}})$.
\end{theorem}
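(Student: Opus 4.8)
The plan is to prove that for a generic choice of the vectors $p_{e,i}\in\mathbb{K}^d$ the family $\{\hat{x}_{e,\psi}^{(k)}\mid e\in E\}$ is \emph{in generic position} (in the sense of (\ref{eq:generic_position})) with respect to the subspaces $\{A_{e,\psi}^{(k)}\mid e\in E\}$, where $A_{e,\psi}^{(k)}$ is defined exactly as $A_{e,\psi}$ in (\ref{eq:A_non_loop})--(\ref{eq:A_loop}) but using $\rho^{(k)}\colon\Gamma\to GL(\bigwedge^k\mathbb{F}^d)$ in place of $\rho$. This suffices: one checks directly that each $\hat{x}_{e,\psi}^{(k)}$ lies in $A_{e,\psi}^{(k)}$ (for a non-loop $e=(i,j)$ because $\hat{x}_{e,\psi}^{(k)}(i)=-\rho^{(k)}(\psi_e)\hat{x}_{e,\psi}^{(k)}(j)$, for a loop because the value at $i$ is $(\rho^{(k)}(\psi_e)-I_{d\choose k})(p_{e,1}\wedge\cdots\wedge p_{e,k})$), so the notion is meaningful, and generic position is inherited by every subfamily indexed by $F\subseteq E$. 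Hence, by Theorem~\ref{theorem:flat_matroid} --- more precisely by the fact recorded after it that (\ref{eq:flat_matroid_rank}) holds whenever the chosen vectors are in generic position --- we get $\dim_{\mathbb{K}}\{\hat{x}_{e,\psi}^{(k)}\mid e\in F\}=\min\{|F\setminus F'|+\dim_{\mathbb{F}}\{A_{e,\psi}^{(k)}\mid e\in F'\}\mid F'\subseteq F\}$ for every $F\subseteq E$; applying Theorem~\ref{thm:main_poly} to $\rho^{(k)}$ on the graph $(V,F')$ rewrites this as $\min\{|F\setminus F'|+f_{\rho^{(k)}}(F')\mid F'\subseteq F\}=f_{\rho^{(k)}}^{\mathbf{1}}(F)$, the rank function of $\mathbf{M}(f_{\rho^{(k)}})$. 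Since the rank functions agree on all subsets, the two matroids coincide.

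The heart of the argument is the elementary fact that the decomposable $k$-vectors span $\bigwedge^k\mathbb{K}^d$ (they contain the standard basis vectors $e_{i_1}\wedge\cdots\wedge e_{i_k}$), so they are contained in no proper subspace; consequently a generic decomposable $k$-vector avoids any prescribed proper subspace of $\bigwedge^k\mathbb{K}^d$ defined over a field obtained by adjoining finitely many other transcendentals. To use this, fix $X\subseteq E$ and $f\in E\setminus X$ and set $W_X=\spa\{\hat{x}_{e,\psi}^{(k)}\mid e\in X\}$, which does not involve the parameters $p_{f,\cdot}$. From (\ref{eq:A_non_loop})--(\ref{eq:A_loop}) there is a linear map $\iota_f\colon\bigwedge^k\mathbb{K}^d\to(\bigwedge^k\mathbb{K}^d)^{V}$ with $A_{f,\psi}^{(k)}=\iota_f(\bigwedge^k\mathbb{K}^d)$ and $\hat{x}_{f,\psi}^{(k)}=\iota_f(p_{f,1}\wedge\cdots\wedge p_{f,k})$; hence ``$\hat{x}_{f,\psi}^{(k)}\in W_X$'' is the condition ``$p_{f,1}\wedge\cdots\wedge p_{f,k}\in V_X$'' for the subspace $V_X:=\iota_f^{-1}(W_X)$, which depends only on $\{p_{e,\cdot}\mid e\in X\}$, while ``$A_{f,\psi}^{(k)}\subseteq W_X$'' is exactly ``$V_X=\bigwedge^k\mathbb{K}^d$''. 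If $V_X$ is the whole space, the conclusion of the generic-position implication for the pair $(X,f)$ already holds; if $V_X$ is proper, then by the spanning property a generic choice of $p_{f,\cdot}$ gives $p_{f,1}\wedge\cdots\wedge p_{f,k}\notin V_X$, so the hypothesis ``$\hat{x}_{f,\psi}^{(k)}\in W_X$'' fails and the implication holds vacuously. Intersecting the finitely many genericity conditions obtained over all pairs $(X,f)$ produces a generic $\{p_{e,i}\}$ for which $\{\hat{x}_{e,\psi}^{(k)}\}$ is in generic position with respect to $\{A_{e,\psi}^{(k)}\}$, which by the first paragraph finishes the proof.

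The step I expect to require the most care is the identification in the second paragraph: writing $A_{f,\psi}^{(k)}$ as the image of a single linear map $\iota_f$ on $\bigwedge^k\mathbb{K}^d$ (treating the non-loop and loop cases uniformly), so that the membership ``$\hat{x}_{f,\psi}^{(k)}\in W_X$'' becomes one linear condition on $p_{f,1}\wedge\cdots\wedge p_{f,k}$ that is vacuous precisely when $A_{f,\psi}^{(k)}\subseteq W_X$. Everything else is either part of the machinery already set up in \textsection\ref{sec:poly} and \textsection\ref{sec:dowling_extension} (Theorems~\ref{theorem:flat_matroid} and~\ref{thm:main_poly}) or the one-line fact that the Grassmann cone spans the exterior power.
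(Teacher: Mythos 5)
Your proposal is correct and follows essentially the same route as the paper: both arguments reduce the theorem to showing that representative vectors chosen from the decomposable locus inside each $A^{(k)}_{e,\psi}$ can be taken in generic position, and then conclude via Theorem~\ref{theorem:flat_matroid} together with Theorem~\ref{thm:main_poly}. The only difference is in presentation: where the paper justifies the generic-position step by the irreducibility/rationality of the Grassmann cone (deferring details to an external reference), you verify it directly from the fact that decomposable $k$-vectors span $\bigwedge^k\mathbb{K}^d$ together with algebraic independence of the parameters over the field of definition of each $V_X$, which is a sound and self-contained substitute.
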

\begin{proof}
Let 
\begin{equation*}
A_{e,\psi}^{(k)}=\left\{x\in (\mbox{$\bigwedge^k \mathbb{K}^{d}$})^V \Big\vert
\begin{array}{l}
x(i)+\rho^{(k)}(\psi_e)x(j)=0, \\ 
x(V\setminus \{i,j\})=0
\end{array}
\right\} 
\end{equation*}
for a non-loop edge $e\in E$, and
\begin{equation*}
A_{e,\psi}^{(k)}=\left\{x\in (\mbox{$\bigwedge^k \mathbb{K}^{d}$})^V \Big\vert 
\exists \alpha\in (\mbox{$\bigwedge^k \mathbb{K}^{d}$})^V\colon
\begin{array}{l}
x(i)=(I_d-\rho^{(k)}(\psi_e))\alpha, \\ 
x(V\setminus \{i,j\})=0
\end{array}
\right\} 
\end{equation*}
for a loop $e\in E$.

Let ${\rm Gr}(d,k)$ be the set of all decomposable $k$-vectors in $\bigwedge^k \mathbb{K}^d$.
${\rm Gr}(d,k)$ is known as the {\em Grassmannian} in the literature
and is an irreducible rational variety spanning $\bigwedge^k\mathbb{K}^d$.
We shall define a subset $\hat{A}_{e,\psi}$ of $A_{e,\psi}$ by 
\begin{align*}
\hat{A}_{e,\psi}^{(k)}=A_{e,\psi}^{(k)}\cap \{x\in (\mbox{$\bigwedge^k \mathbb{K}^{d}$})^V\mid x(j)\in {\rm Gr}(d,k)\}.
\end{align*}
for a non-loop edge $e=(i,j)$, and
\begin{equation*}
\hat{A}_{e,\psi}^{(k)}=\left\{x\in (\mbox{$\bigwedge^k \mathbb{K}^{d}$})^V \Big\vert 
\exists \alpha\in {\rm Gr}(d,k) \colon
\begin{array}{l}
x(i)=(I_d-\rho^{(k)}(\psi_e))\alpha, \\ 
x(V\setminus \{i,j\})=0
\end{array}
\right\} 
\end{equation*}
for a loop $e$.
By Theorem~\ref{thm:main_poly}, we know that 
$f_{\rho^{(k)}}(E)=\dim_{\dR}\{A_{e,\psi}^{(k)}\mid e\in E\}$,
and each linear representation of $\mathbf{M}_{\rho^{(k)}}$ is obtained by taking a representative vector 
$x_{e,\psi}^{(k)}$ from each $A_{e,\psi}^{(k)}$ in generic position.
Thus, to show the statement, 
it suffices to show that a representative vector $x_{e,\psi}^{(k)}$ 
can be taken from $\hat{A}_{e,\psi}^{(k)}$ so that 
$\{x_{e,\psi}^{(k)}\mid e\in E\}$ is in generic position in the sense of (\ref{eq:generic_position}). 

 
Suppose that $E$ has no loop.
Then, each $\hat{A}_{e,\psi}^{(k)}$ is (linearly) isomorphic to ${\rm Gr}(d,k)$ by a projection to $x(j)$. 
Notice that the condition (\ref{eq:generic_position}) of genericity is written in terms of linear dependencies.
Since ${\rm Gr}(k,d)$ is an irreducible rational variety, 
the linear isomorphism between $\hat{A}_{e,\psi}^{(k)}$ and ${\rm Gr}(d,k)$ 
implies that a representative vertex can be taken from $\hat{A}_{e,\psi}^{(k)}$ in generic position.
(A more detailed description for a special case can be found 
in \cite[Theorem 3.1]{tanigawa_truncation}, and the exactly same argument can be applied here.)

If $e$ is a loop, then $A_{e,\psi}^{(k)}$ and $\hat{A}_{e,\psi}^{(k)}$ are linearly isomorphic to
$(\rho^{(k)}(\psi_e)-I_d) (\bigwedge^k \mathbb{K}^d)$ and 
$(\rho^{(k)}(\psi_e)-I_d) {\rm Gr}(d,k)$, respectively. 
Since $\rho^{(k)}(\psi_e)-I_d$ is a linear operator, we can apply the same argument.
%
%
%
\end{proof}

\subsection{Symmetry-forced Rigidity of Body-bar Frameworks}
\label{subsec:symmetric_body_bar}
As an application,
we consider  matroids arose in the rigidity of symmetric body-bar frameworks,
which are  structures consisting of
rigid bodies linked by bars in $\mathbb{R}^d$.

Let ${\rm Aff}(\mathbb{R}^d)$ be 
the group of invertible affine transformations.
It is well-known that ${\rm Aff}(\mathbb{R}^d)=GL(\mathbb{R}^d)\ltimes \mathbb{R}^d$,
that is, 
the semidirect product of $GL(\mathbb{R}^d)$ and $\mathbb{R}^d$,
and  each element $\gamma=(A_{\gamma},t_{\gamma})\in {\rm Aff}(\mathbb{R}^d)$ acts on $\mathbb{R}^d$ 
by $\gamma \cdot q=A_{\gamma}q+t_{\gamma}$ for $q\in \mathbb{R}^d$.

The $d$-dimensional Euclidean group ${\cal E}(d)$ is a subgroup of ${\rm Aff}(\mathbb{R}^d)$,
where $(A_{\gamma},t_{\gamma})\in {\rm Aff}(\mathbb{R}^d)$ is in ${\cal E}(d)$ 
if and only if  $A_{\gamma}\in {\cal O}(\mathbb{R}^d)$. 
A {\em space group (or crystallographic group)} $\Gamma$ is a discrete cocompact subgroup of ${\cal E}(d)$,
i.e., $\mathbb{R}^d/\Gamma$ is compact.
Throughout this subsection, $\Gamma$ denotes either a space group or
a discrete point group, where $t=0$ in case of a point group.
%
%
%

We now  briefly take a look at how the linear matroid given in the last subsection 
\textsection\ref{subsec:decomposable} arises in the context of rigidity 
of body-bar frameworks.
The following modeling is based on \cite{borcea2011periodic}.
A body-bar framework is a structure consisting of rigid bodies connected by bars, 
and it can be represented by a triple $(H,B,q)$, where 
\begin{itemize}
\item $H$ is an undirected graph whose vertex is corresponding to a body and 
whose edge is corresponding to a bar liking the corresponding two bodies;
\item $B$ indicates the location of each body corresponding to each vertex $v\in V$ by 
$B(v)=(A_v,p_v)\in {\cal O}(\mathbb{R}^d)\ltimes \mathbb{R}^d$
(i.e., each body is identified with a Cartesian (local) coordinate system);
\item $q$ indicates the location of each bar in each local coordinate system as follows; 
 for each $e\in E(H)$ and an endvertex $v$ of $e$, $q(e,v)\in \mathbb{R}^d$ 
denotes the coordinate of the endpoint of the bar corresponding to $e$ in the coordinate system of the body $v$. 
Thus, the coordinate in the global system is equal to $A_vq(e,v)+p_v$, denoted by $\tilde{q}(e,v)$.
\end{itemize}
$B$ and $q$ are called a {\em body-configuration} and a {\em bar-configuration}, respectively.

When bodies are moving, each bar constraints the distance between the endpoints.
Such a length constraint can be written by 
\begin{equation}
\label{eq:body_bar_const1}
 \langle \tilde{q}(e,i)-\tilde{q}(e,j), \tilde{q}(e,i)-\tilde{q}(e,j) \rangle = \ell_e 
\qquad \forall e=\{i,j\}\in E(H) 
\end{equation}
by some specific bar-length $\ell_e$.

We consider a symmetric version of body-bar frameworks, 
where a body-bar framework $(H,B,q)$ is {\em $\Gamma$-symmetric} if 
$H$ is a $\Gamma$-symmetric graph (with a specific free action $\theta$) 
and $B$ and $q$ are subject to $\Gamma$-symmetry;
for any $v\in V(H)$, $e\in E(H)$, and $\gamma=(A_{\gamma},t_{\gamma})\in \Gamma$.
\begin{align*}
B(\gamma v)&=\gamma B(v) = (A_{\gamma}A_v,A_{\gamma}p_v+t_{\gamma}) \\
q(\gamma e, \gamma v)&=q(e,v).
\end{align*}
Indeed, in the global coordinate system, we have 
$\tilde{q}(\gamma e, \gamma v)=A_{\gamma}A_vq(\gamma e,\gamma v)+A_{\gamma}p_v+t
=A_{\gamma}A_vq(e,v)+A_{\gamma}p_v+t=\gamma \tilde{q}(e,v)$, 
and thus the definition implies the $\Gamma$-symmetry of $\tilde{q}$ in the global system.

By using $\Gamma$-symmetry of $q$ and $B$, the system (\ref{eq:body_bar_const1})
can be reduced to the following system of equations,
\begin{equation}
\label{eq:body_bar_const2}
 \langle \tilde{q}(e,i)-\psi_e \tilde{q}(e,j), \tilde{q}(e,i)-\psi_e \tilde{q}(e,j) \rangle = \ell_e 
\qquad  \forall \Gamma e=(\Gamma i,\Gamma j)\in E(H/\Gamma),
\end{equation}
where $\psi(\Gamma e)=\psi_e$ denotes the gain of $\Gamma e$ in the quotient $\Gamma$-gain graph.

Thus, the analysis can be accomplished on the quotient graph, and 
we may consider the problem even in a general $\Gamma$-gain graph $(G=(V,E),\psi)$.
Namely, given a $\Gamma$-gain graph $(G,\psi)$, 
$B(v)=(A_v,p_v)$ for $v\in V$,  
and $q_{e,i}, q_{e,j}\in\mathbb{R}^d$ for $e=(i,j)\in E$, we consider
the system
\begin{equation}
\label{eq:body_bar_const22}
 \langle \tilde{q}_{e,i}-\psi_e\cdot \tilde{q}_{e,j}, \tilde{q}_{e,i}-\psi_e\cdot \tilde{q}_{e,j} \rangle = \ell_e 
\qquad  \forall e=(i,j)\in E. 
\end{equation}
By taking the derivative with respect to $B$, 
we shall again investigate the infinitesimal rigidity.
To see this, let us focus on the equation for $e=(i,j)\in E$ and simply denote $q_{e,i}$ by $q_i$.
Also, we denote $\psi_e=(A_{\psi_e},t_{\psi_e})$.
Then  (\ref{eq:body_bar_const22}) is rewritten as
\begin{equation*}
\label{eq:body_bar_const3}
\langle 
A_i q_i+p_i-(A_{\psi_e}A_j q_j+A_{\psi_e}p_j+t_{\psi_e}), 
A_iq_i+p_i-(A_{\psi_e}A_jq_j+A_{\psi_e}p_j+t_{\psi_e}) 
\rangle = \ell_e. 
\end{equation*}

To analyze infinitesimal motions of bodies under bar-constraints,
we take the derivative with respect to $(A_v,p_v)$ for $v\in V$, leading to
\begin{equation}
\label{eq:body_bar_const4}
 \langle 
A_iq_i+p_i-(A_{\psi_e}A_jq_j+A_{\psi_e}p_j+t_{\psi_e}), 
\dot{A}_i q_i+\dot{p}_i-(A_{\psi_e}\dot{A}_jq_j+A_{\psi_e}\dot{p}_j) 
\rangle = 0.
\end{equation}
Without loss of generality, we may take $(A_v, p_v)=(I_d,0)$  for all $v\in V$.
Then, $\dot{A}_v$ is a $d\times d$ skew-symmetric matrix over $\mathbb{R}^d$
since the tangent space of ${\cal O}(\mathbb{R}^d)$ at the identity consists of skew-symmetric matrices.
Therefore, (\ref{eq:body_bar_const4}) is a linear equation of variables $(\dot{A}_i,\dot{p}_i)$ and  
$(\dot{A}_j,\dot{p}_j)$ written by
\begin{equation}
\label{eq:body_bar_const42}
 \langle 
q_i-(A_{\psi_e}q_j+t_{\psi_e}), 
\dot{A}_i q_i+\dot{p}_i-(A_{\psi_e}\dot{A}_jq_j+A_{\psi_e}\dot{p}_j) 
\rangle = 0.
\end{equation}  
Since the set ${\cal S}_d$ of skew-symmetric matrices is isomorphic to $\mathbb{R}^{d\choose 2}$,
we can represent each $\dot{A}\in {\cal S}_d$ by a vector $\omega\in \mathbb{R}^{d \choose 2}$.
It is known that, 
by choosing a one-to-one correspondence between ${\cal S}_d$ and $\mathbb{R}^{d\choose 2}$ in  an appropriate manner,
we have 
\[
 \langle h, \dot{A}q\rangle =\langle h\wedge q, \omega\rangle  
\]
for any $h,q\in \mathbb{R}^d$ and any $\dot{A}\in {\cal S}_d$ corresponding to $\omega$.
Therefore, replacing $\dot{A}_i$ and $\dot{A}_j$ with the corresponding 
$\omega_i\in \mathbb{R}^{d\choose 2}$ and $\omega_j\in \mathbb{R}^{d\choose 2}$,
we have the following two relations to simplify (\ref{eq:body_bar_const42}),
\[
 \langle q_i-A_{\psi_e} q_j-t_{\psi_e},\dot{A}_iq_i\rangle 
=\langle (q_i-A_{\psi_e}q_j-t_{\psi_e})\wedge q_i,\omega_i\rangle 
=-\langle (A_{\psi_e} q_j+t_{\psi_e})\wedge q_i,\omega_i\rangle 
\]
\[
 \langle q_i-A_{\psi_e} q_j-t_{\psi_e},A_{\psi_e}\dot{A}_iq_j\rangle 
=\langle (A_{\psi_e}^{-1}(q_i-A_{\psi_e}q_j-t_{\psi_e}))\wedge q_j,\omega_j\rangle 
=\langle (A_{\psi_e}^{-1}(q_i-t_{\psi_e}))\wedge q_j,\omega_j\rangle.
\]
Thus, (\ref{eq:body_bar_const42}) can be written by
\begin{equation}
\label{eq:body_bar_const5}
\langle q_i-\psi_eq_j,
\dot{p}_i\rangle
-\langle \psi_e^{-1}q_i-q_j,
\dot{p}_j\rangle
-\langle (\psi_eq_j)\wedge q_i,\omega_i\rangle
-\langle (\psi_e^{-1}q_i)\wedge q_j,\omega_j\rangle=0.
\end{equation}
The pair $(\omega_i,\dot{p}_i)$  is conventionally called 
an {\em infinitesimal motion} (or a {\em screw motion}) of body $i$,
and the set of all infinitesimal motions of each body forms a ${d+1 \choose 2}$-dimensional linear space, 
which can be identified with $\bigwedge^2 \mathbb{R}^{d+1}$. 

Thus, our problem is formulated as follows.
For a $\Gamma$-gain graph $(G,\psi)$ and $q_{e,i}, q_{e,j}\in \mathbb{R}^d$ for each $e=(i,j)\in E$,
an {\em infinitesimal motion} is defined by $s:i\in V\mapsto s_i=(\omega_i,\dot{p}_i)\in \bigwedge^2\mathbb{R}^{d+1}$
satisfying 
\begin{equation}
\label{eq:body_bar_const5_2}
\langle q_{e,i}-\psi_eq_{e,j},
\dot{p}_i\rangle
-\langle \psi_e^{-1}q_{e,i}-q_{e,j},
\dot{p}_j\rangle
-\langle (\psi_eq_{e,j})\wedge q_{e,i},\omega_i\rangle
-\langle (\psi_e^{-1}q_{e,i})\wedge q_{e,j},\omega_j\rangle=0.
\end{equation}
over all $e=(i,j)\in E$, 
and we are asked to compute the dimension of the space of infinitesimal motions.

(\ref{eq:body_bar_const5_2}) can be further simplified. 
To see this, let $\rho:\Gamma\rightarrow GL(\mathbb{R}^d)$ be 
the linear representation of $\Gamma$ by augmented $(d+1)\times (d+1)$-matrices,
i.e., $\rho(\gamma)=\begin{pmatrix} A_{\gamma} & t_{\gamma} \\ 0 & 1\end{pmatrix}$ 
for $\gamma=(A_{\gamma},t_{\gamma})\in \Gamma$.
Also, for $q\in \mathbb{R}^d$, denote $\bar{q}=\begin{pmatrix} q \\ 1 \end{pmatrix}\in \mathbb{R}^{d+1}$.
Since $\bar{q}_1\wedge \bar{q}_2=(q_1\wedge q_2, q_1-q_2)$ for any $q_1,q_2\in \mathbb{R}^d$,
if we denote the pair $(\omega_j,\dot{p}_j)$ by $s_j\in \bigwedge^2 \mathbb{R}^{d+1}$, 
(\ref{eq:body_bar_const5_2}) becomes
\begin{equation*}
\langle 
\bar{q}_{e,i}\wedge \rho(\psi_e)\bar{q}_{e,j}, s_i\rangle
-
\langle 
\rho(\psi_e)^{-1}\bar{q}_{e,i}\wedge \bar{q}_{e,j}, s_j\rangle
=0.
\end{equation*}
We may also replace $\bar{q}_{e,i}$ by $\rho(\psi_e)\bar{q}_{e,i}$,
leading to a system of linear equations of $s:i\in V\mapsto s_i\in \bigwedge^2 \mathbb{R}^{d+1}$,
\begin{equation}
\label{eq:body_bar_row2}
\langle 
 \rho(\psi_e)\bar{q}_{e,i}\wedge \rho(\psi_e)\bar{q}_{e,j}, s_i\rangle
-
\langle 
\bar{q}_{e,i}\wedge \bar{q}_{e,j}, s_j\rangle
=0 \qquad \forall e=(i,j)\in E.
\end{equation}


Define $\hat{x}_{e,\psi}^{(2)}$ by 
\begin{equation*}
\hat{x}_{e,\psi}^{(2)}(v)=
\begin{cases}
-\rho(\psi_e)\bar{q}_{e,i}\wedge \rho(\psi_e)\bar{q}_{e,j} & \text{ if } v=i \\
\bar{q}_{e,i}\wedge \bar{q}_{e,j} & \text{ if } v=j \\
0 & \text{ otherwise}
\end{cases}
\end{equation*}
for a non-loop edge $e=(i,j)$, and for each loop attached to a vertex $i$
\begin{equation*}
\hat{x}_{e,\psi}^{(2)}(v)=
\begin{cases}
\rho(\psi_e)\bar{q}_{e,i} \wedge \rho(\psi_e)\bar{q}_{e,j}-
\bar{q}_{e,i}\wedge \bar{q}_{e,j} & \text{ if } v=i \\
0 & \text{ otherwise}.
\end{cases}
\end{equation*}
Observe that $s$ is a solution of (\ref{eq:body_bar_row2}) if and only if 
$s$ is in the orthogonal complement of $\spa\{\hat{x}_{e,\psi}^{(2)}\mid e\in E\}$.
However, since $\hat{x}_{e,\psi}^{(2)}$ is a special case of 
(\ref{eq:decomposable_vector})(\ref{eq:decomposable_vector_loop}) given in the last subsection, 
we can apply Theorem~\ref{thm:exterior} to compute the exact value of 
$\dim_{\dR}\{\hat{x}_{e,\psi}^{(2)}\mid e\in E\}$ if a bar-configuration $q$ is generic.
(Although the last coordinate is restricted to 1 in each $\bar{q}_{e,i}$, 
we can still apply Theorem~\ref{thm:exterior}, 
as $\dim_{\dR}\{\hat{x}_{e,\psi}^{(2)}\mid e\in E\}$ is invariant up to scalar multiples of $\bar{q}_{e,i}$.) 
In terms of the infinitesimal rigidity of $\Gamma$-symmetric body-bar frameworks,
we proved  the following.
\begin{theorem}
\label{thm:body_bar}
Let $\Gamma$ be a discrete point group or a space group,
and $H$ a $\Gamma$-symmetric graph.
Then, for almost all body-configurations $B$ and bar-configurations $q$, 
the $\Gamma$-symmetric body-bar framework $(H,B,q)$ is symmetry-forced infinitesimally rigid if and only if
the quotient $\Gamma$-gain graph contains an edge subset $I$ satisfying the following,
\begin{itemize}
\item $|I|=D|V|-D+\dim_{\dR}\{{\rm image}(I_D-\rho^{(2)}(\gamma))\mid \gamma \in \Gamma\}$;
\item for any $F\subseteq I$, $|F|\leq D|V(F)|-Dc(F)
+\sum_{X\in C(F)}\dim_{\dR}\{{\rm image}(I_D-\rho^{(2)}(\gamma))\mid \gamma \in \langle X\rangle \})$
\end{itemize}
where $D={d+1\choose 2}$, 
$\rho$ is a linear representation of $\Gamma$
by augmented $(d+1)\times (d+1)$-matrices $\rho(\gamma)=\begin{pmatrix} A_{\gamma} & t_{\gamma} \\ 0 & 1\end{pmatrix}$, 
and $\rho^{(2)}:\Gamma\rightarrow GL(\bigwedge^2\mathbb{R}^{d+1})$ is a linear representation of $\Gamma$ defined by 
 $\rho^{(2)}(\gamma)(v_1\wedge v_2)=\rho(\gamma)v_1\wedge\rho(\gamma)v_2$ for $v_1,v_2\in \mathbb{R}^{d+1}$ and 
$\gamma\in \Gamma$.
\end{theorem}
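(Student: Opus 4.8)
The plan is to assemble the theorem from the reduction carried out above, together with Theorem~\ref{thm:exterior} and Corollary~\ref{cor:main_matroid}; almost no new computation is needed. Write $(G,\psi)=(H/\Gamma,\psi)$, $D=\binom{d+1}{2}$, and recall from the derivation preceding the statement that, after normalising all body positions to $(I_d,0)$, a map $s\colon i\in V\mapsto s_i\in\bigwedge^2\mathbb{R}^{d+1}$ is a $\Gamma$-symmetric infinitesimal motion of $(H,B,q)$ if and only if $s$ lies in the orthogonal complement of $\spa\{\hat{x}^{(2)}_{e,\psi}\mid e\in E\}$ in $(\bigwedge^2\mathbb{R}^{d+1})^V$; hence the space of $\Gamma$-symmetric infinitesimal motions has dimension $D|V|-\dim_{\dR}\{\hat{x}^{(2)}_{e,\psi}\mid e\in E\}$. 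First I would record the dimension of the trivial motions: a $\Gamma$-symmetric trivial motion is a global infinitesimal isometry of $\mathbb{R}^d$ commuting with the $\Gamma$-action, hence a vector in the fixed subspace of $\rho^{(2)}|_\Gamma$ on $\bigwedge^2\mathbb{R}^{d+1}$; decomposing $\bigwedge^2\mathbb{R}^{d+1}$ into the trivial isotypic component of $\rho^{(2)}|_\Gamma$ and its $\Gamma$-complement $W$ gives $\dim_{\dR}(\text{trivial motions})=\dim_{\dR}\bigcap_{\gamma\in\Gamma}\ker(I_D-\rho^{(2)}(\gamma))=D-\dim_{\dR}\{\mathrm{image}(I_D-\rho^{(2)}(\gamma))\mid\gamma\in\Gamma\}$, the last equality because each $\mathrm{image}(I_D-\rho^{(2)}(\gamma))\subseteq W$ and these images span $W$. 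Thus $(H,B,q)$ is symmetry-forced infinitesimally rigid precisely when $\dim_{\dR}\{\hat{x}^{(2)}_{e,\psi}\mid e\in E\}=D|V|-D+\dim_{\dR}\{\mathrm{image}(I_D-\rho^{(2)}(\gamma))\mid\gamma\in\Gamma\}$.

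Next I would compute $\dim_{\dR}\{\hat{x}^{(2)}_{e,\psi}\mid e\in E\}$ for generic $q$. Each $\hat{x}^{(2)}_{e,\psi}$ is exactly the vector \eqref{eq:decomposable_vector}/\eqref{eq:decomposable_vector_loop} with $k=2$, $\rho$ the augmented $(d+1)\times(d+1)$ representation, and $p_{e,i}=\bar{q}_{e,i}$; moreover this dimension is unchanged under rescaling each $\bar{q}_{e,i}$, so the affine normalisation $\bar q_{e,i}=(q_{e,i},1)^\top$ is harmless. Hence Theorem~\ref{thm:exterior} applies: for some choice the matroid induced on $\{\hat{x}^{(2)}_{e,\psi}\}$ is $\mathbf{M}(f_{\rho^{(2)}})$, and, the rank being lower semicontinuous in $q$, the same holds for almost all $q$. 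Consequently $\dim_{\dR}\{\hat{x}^{(2)}_{e,\psi}\mid e\in E\}$ equals $\rank_{\mathbf{M}(f_{\rho^{(2)}})}(E)$, i.e.\ the maximum size of an edge set $I\subseteq E$ independent in $\mathbf{M}(f_{\rho^{(2)}})$; by Corollary~\ref{cor:main_matroid} such an $I$ is independent if and only if $|F|\le D|V(F)|-Dc(F)+\sum_{X\in C(F)}\dim_{\dR}\{\mathrm{image}(I_D-\rho^{(2)}(\gamma))\mid\gamma\in\langle X\rangle\}$ for every $F\subseteq I$, which is the second bullet.

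Finally I would identify the target value with $f_{\rho^{(2)}}(E)$. Assume $H$ connected (the general case follows componentwise, accounting for the $\Gamma$-orbit structure of the components); then $G$ is connected, and since $\Gamma$ acts freely on the connected covering graph $H$ the deck group $\Gamma$ acts transitively on each fibre, so $\langle E\rangle_{\psi,v}=\Gamma$. Hence $d_{\rho^{(2)}}\langle E\rangle=\dim_{\dR}\{\mathrm{image}(I_D-\rho^{(2)}(\gamma))\mid\gamma\in\Gamma\}$ and $f_{\rho^{(2)}}(E)=D|V|-D+d_{\rho^{(2)}}\langle E\rangle$ equals the quantity in the first bullet. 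Since always $\rank_{\mathbf{M}(f_{\rho^{(2)}})}(E)\le f_{\rho^{(2)}}(E)$, symmetry-forced rigidity is equivalent to tightness of this bound, i.e.\ to the existence of an independent $I$ with $|I|=D|V|-D+\dim_{\dR}\{\mathrm{image}(I_D-\rho^{(2)}(\gamma))\mid\gamma\in\Gamma\}$; together with the previous paragraph this is exactly the stated two-bullet condition.

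I expect the only two places needing genuine care, rather than bookkeeping, to be: (i) the dimension count for the trivial $\Gamma$-symmetric motions, which rests on a $\Gamma$-symmetric rigid-body motion corresponding bijectively to a $\rho^{(2)}|_\Gamma$-fixed vector of $\bigwedge^2\mathbb{R}^{d+1}$; and (ii) the passage from the existential statement of Theorem~\ref{thm:exterior} to a ``for almost all $q$'' statement under the affine normalisation. The hard part about decomposable $k$-vectors and the Grassmannian is already absorbed into Theorem~\ref{thm:exterior}, so the present theorem is essentially an assembly step.
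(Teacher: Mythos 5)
Your overall assembly is the same as the paper's: the quotient reduction, the observation that the symmetric motion space is the orthogonal complement of $\spa\{\hat{x}^{(2)}_{e,\psi}\mid e\in E\}$, Theorem~\ref{thm:exterior} plus Corollary~\ref{cor:main_matroid} for generic $q$, and the identification of the target rank with $f_{\rho^{(2)}}(E)$ via $\langle E\rangle_{\psi,v}=\Gamma$ for connected $H$. The genuine gap is in your step (i), the count of trivial $\Gamma$-symmetric motions, and it is exactly the space-group case where your argument breaks. A $\Gamma$-symmetric global screw is \emph{not} a $\rho^{(2)}|_\Gamma$-fixed vector of $\bigwedge^2\mathbb{R}^{d+1}$: the screws $s_i$ enter only through the pairings $\langle \bar q_{e,i}\wedge\rho(\psi_e)\bar q_{e,j}, s_i\rangle$, so they transform under the contragredient of $\rho^{(2)}$ (equivalently, the adjoint action $(\Omega,\tau)\mapsto(A_\gamma\Omega A_\gamma^{-1},A_\gamma\tau-A_\gamma\Omega A_\gamma^{-1}t_\gamma)$), which for a space group is \emph{not} equivalent to $\rho^{(2)}=\bigwedge^2\rho$ because the augmented representation is not orthogonal. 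Concretely, take $d=2$ and $\Gamma$ a lattice of translations: the fixed space of $\rho^{(2)}|_\Gamma$ in $\bigwedge^2\mathbb{R}^3$ is the $1$-dimensional span of $e_1\wedge e_2$, while the trivial $\Gamma$-symmetric motions are the $2$-dimensional space of translations, and $D-\dim_{\dR}\spa\{{\rm image}(I_D-\rho^{(2)}(\gamma))\mid\gamma\in\Gamma\}=3-1=2$. So both of your intermediate claims fail there: trivial motions are not the $\rho^{(2)}$-fixed space, and the asserted equality $\dim\bigcap_\gamma\ker(I_D-\rho^{(2)}(\gamma))=D-\dim\spa\{{\rm image}(I_D-\rho^{(2)}(\gamma))\}$ is false because $\rho^{(2)}|_\Gamma$ is not completely reducible (translations act unipotently), so there is no ``trivial isotypic component plus $\Gamma$-complement'' decomposition. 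Your two errors happen to cancel into the correct final number, but as written the argument asserts $1=2$ in the example above.

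The repair is short and avoids representation-theoretic semisimplicity altogether: a global screw $(\Omega,\tau)$ is $\Gamma$-symmetric iff $A_\gamma\Omega=\Omega A_\gamma$ and $(A_\gamma-I_d)\tau=\Omega t_\gamma$ for all $\gamma\in\Gamma$, and under the pairing used in (\ref{eq:body_bar_row2}) this is equivalent to $\langle (I_D-\rho^{(2)}(\gamma))x,\,s\rangle=0$ for all $x\in\bigwedge^2\mathbb{R}^{d+1}$ and all $\gamma\in\Gamma$, i.e.\ to $s\in\bigl(\spa\{{\rm image}(I_D-\rho^{(2)}(\gamma))\mid\gamma\in\Gamma\}\bigr)^{\perp}$. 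This gives $\dim_{\dR}(\text{trivial motions})=D-\dim_{\dR}\spa\{{\rm image}(I_D-\rho^{(2)}(\gamma))\mid\gamma\in\Gamma\}$ directly, for point groups and space groups alike, and since such screws satisfy every bar constraint they always lie in the motion space; with that in place your remaining steps (semicontinuity in $q$ under the scale-invariance remark, and the identification of the first bullet with $f_{\rho^{(2)}}(E)$) go through and reproduce the paper's argument. Note also that this trivial-motion computation is the one piece the paper itself leaves implicit, so it deserves the careful (dual) formulation rather than the fixed-space one.
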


As a special case when $\Gamma$ is a group of translations, Theorem~\ref{thm:body_bar} verifies
a conjecture by Ross~\cite{ross2012rigidity}.

%

\section{Generalization of Lift Matroids}
\label{sec:lift}
In \cite{zaslavsky1991biased}, Zaslavsky also introduced another matroid of gain graphs, 
called {\em lift matroids}. 
This matroid is a special case of  {\em elementary lifts} of graphic matroids,
(see e.g.,~\cite{oxley} for elementary lifts).
It was shown by Zaslavsky~\cite{zaslavsky2003biased} that a lift matroid is representable over $\mathbb{F}$ 
if the underlying group is isomorphic to an additive subgroup of $\mathbb{F}$.
In this section, we shall propose an extension of lift matroids. 

\subsection{Lift matroids}
Let $(G=(V,E),\psi)$ be  a $\Gamma$-gain graph.
In the {\em lift matroid} $\mathbf{L}(G,\psi)$ of $(G,\psi)$, $F\subseteq E$ is independent if and only if 
there is at most one cycle, which  is unbalanced if exists~\cite{zaslavsky1991biased}.
Therefore, if we define $\ell_{\Gamma}:2^E\rightarrow \mathbb{Z}$ by
\begin{equation}
\label{eq:lift}
\ell_{\Gamma}(F)=|V(F)|-c(F)+\alpha_{\Gamma}(F) \qquad (F\subseteq E),
\end{equation}
where $\alpha_{\Gamma}$ is as defined in (\ref{eq:a2}),
then $\ell_{\Gamma}$ is the rank function of $\mathbf{L}(G,\psi)$.

Suppose that $\Gamma$ is an additive subgroup of $\mathbb{F}$.
We shall add a special new element $\ast$ to $V$,
and consider a linear representation  given by 
$e\in E\mapsto L_e\subseteq \mathbb{F}^{V\cup\{\ast\}}$ with
\begin{equation*}
L_e=\left\{x\in\mathbb{F}^{V\cup \{\ast\}} \Bigg\vert 
\begin{array}{l}
x(i)+x(j)=0, \\ 
\psi(e)x(i)+x(\ast)=0, \\
x(V\setminus \{i,j\})=0
\end{array}
\right\}.
\end{equation*}
This gives a linear representation of $\mathbf{L}(G,\psi)$,
called the {\em canonical representation} of $\mathbf{L}(G,\psi)$~\cite[Theorem~4.1]{zaslavsky2003biased}.
It is also known that any representation of $\mathbf{L}(K_n^{\bullet},\psi^{\bullet})$ 
is of this form
(see \cite[\textsection4]{zaslavsky2003biased} for more detail).

\subsection{Generalized lift matroids}
The idea of our extension of lift matroids is the same as the case of frame matroids;
instead of $\alpha_{\Gamma}(F)$, we consider a submodular function over $\Gamma$.

Suppose that $(G=(V,E),\psi)$ is a $\Gamma$-gain graph with an abelian group $\Gamma$.
We consider a symmetric polymatroidal function $\mu:2^{\Gamma}\rightarrow \mathbb{R}_+$
over $\Gamma$ (see \textsection\ref{subsec:fractional_lifting} for definition).

For $F\subseteq E$, we define $\llangle F\rrangle$ by
\[
 \llangle F\rrangle=\langle \psi(W)\mid W\in \pi_1(F,v), v\in V\rangle,
\]
i.e., the group generated by gains of all closed walks in $F$.
Since $\Gamma$ is abelian, if $F$ is connected, 
$\llangle F\rrangle=\langle F\rangle_v$ for any $v\in V(F)$ by Proposition~\ref{prop:conjugate}.
We then define $\ell_{\mu}:2^E\rightarrow \mathbb{R}$ by
\begin{equation}
\label{eq:generalized_lift}
 \ell_{\mu}(F)=|V(F)|-c(F)+\mu\llangle F\rrangle \qquad (F\subseteq E),
\end{equation}
where $\mu\llangle F\rrangle$ is an abbreviation of $\mu(\llangle F\rrangle)$.
As in Theorem~\ref{thm:submodularity1}, we have the following.
\begin{theorem}
\label{thm:submodularity_lift}
Let $(G=(V,E),\psi)$ be a $\Gamma$-gain graph with an abelian group $\Gamma$,
and $\mu$ be a symmetric polymatroidal function over $\Gamma$.
If $\mu(\gamma)\leq 1$ for every $\gamma\in \Gamma$,
then $\ell_{\mu}$ is monotone submodular.
\end{theorem}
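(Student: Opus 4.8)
The plan is to follow the template of the proof of Theorem~\ref{thm:submodularity1} — which treated the frame-matroid analogue $g_\mu$ — while exploiting commutativity of $\Gamma$ to obtain a strictly simpler increment formula. For $X\subseteq E$ and $e=(i,j)\in E\setminus X$ put $\Delta(X,e)=\ell_\mu(X\cup\{e\})-\ell_\mu(X)$; by the diminishing-returns characterization of submodularity it suffices to control $\Delta$. The first task is to derive the counterpart of~(\ref{eq:g1}). Since $|V(F)|-c(F)$ is the rank of $F$ in the graphic matroid of $G$, and since adding an edge that does not close a cycle in $G[X\cup\{e\}]$ leaves $\llangle X\rrangle$ unchanged (a bridge, a pendant edge, or an isolated edge contributes nothing to the group generated by closed walks; after a switch one may assume its gain is the identity and invoke Proposition~\ref{lem:checking_label} component-wise), whereas adding an edge that does close a cycle replaces $\llangle X\rrangle$ by $\langle\llangle X\rrangle\cup\{\psi(e)\}\rangle$ after switching a spanning forest of $X$ to the identity, one obtains
\begin{equation*}
\Delta(X,e)=
\begin{cases}
\mu(\llangle X\rrangle\cup\{\psi(e)\})-\mu\llangle X\rrangle & \text{if $e$ is a loop or the two ends of $e$ lie in a common component of $X$,}\\
1 & \text{otherwise.}
\end{cases}
\end{equation*}
Here $\mu\llangle\cdot\rrangle$ is well defined (independent of orientation, base vertex, and the choice of equivalent gain function) and the passage to the closure is licensed by the invariance axioms of $\mu$; crucially, for abelian $\Gamma$ the ``up to conjugacy'' clauses of \textsection\ref{sec:gain_graph} become genuine equalities, so these reductions are clean. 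Note the contrast with~(\ref{eq:g1}): in the lift case the ``otherwise'' branch collapses to the constant $1$, because merging two components by a bridge does not change $\llangle\cdot\rrangle$, whereas in the frame case it changes the component-wise sum.

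Monotonicity is then immediate: in the first branch $\Delta(X,e)\ge 0$ by monotonicity of $\mu$ over $\Gamma$, and in the second $\Delta(X,e)=1$. For submodularity it suffices to check $\Delta(X,e)\ge\Delta(Y,e)$ for $X\subseteq Y\subseteq E$ and $e\in E\setminus Y$, and I would split into three cases according to whether $e$ closes a cycle in $X\cup\{e\}$, only in $Y\cup\{e\}$, or in neither — noting that closing a cycle in $X\cup\{e\}$ forces one in $Y\cup\{e\}$ since $X\subseteq Y$. If $e$ closes a cycle already in $X\cup\{e\}$: choose a spanning forest $T$ of $Y$ with $T\cap X$ a spanning forest of $X$ and switch so that $\psi\equiv\mathrm{id}$ on $T$; then both increments are of the first type with the \emph{same} element $\psi(e)$, and $\Delta(X,e)\ge\Delta(Y,e)$ follows from $\llangle X\rrangle\subseteq\llangle Y\rrangle$ and the submodular inequality~(\ref{eq:submodular_ineq}) for $\mu$ over $\Gamma$. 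If $e$ closes a cycle in $Y\cup\{e\}$ but not in $X\cup\{e\}$: then $\Delta(X,e)=1$, while, applying~(\ref{eq:submodular_ineq}) with $\emptyset\subseteq\llangle Y\rrangle$ and then the hypothesis, $\Delta(Y,e)=\mu(\llangle Y\rrangle\cup\{\psi(e)\})-\mu\llangle Y\rrangle\le\mu(\{\psi(e)\})-\mu(\emptyset)=\mu(\psi(e))\le 1$; this is the sole place the bound $\mu(\gamma)\le 1$ enters. If $e$ closes a cycle in neither: $\Delta(X,e)=1=\Delta(Y,e)$. This establishes submodularity.

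The only genuinely delicate point is the increment formula in the cycle-closing case, i.e.\ verifying that $\llangle X\cup\{e\}\rrangle=\langle\llangle X\rrangle\cup\{\psi(e)\}\rangle$ holds even when $X$ is disconnected, together with the well-definedness of $\mu\llangle\cdot\rrangle$ under switching and reorientation; all of this reduces to Propositions~\ref{prop:conjugate}, \ref{prop:fundamental_operation}, \ref{prop:tree_identity}, and \ref{lem:checking_label} once one notes that conjugation is trivial in $\Gamma$. After that the argument is a short case check and, unlike the proof of Theorem~\ref{thm:submodularity1}, needs no chain of inequalities over $\Gamma$ in the bridge case.
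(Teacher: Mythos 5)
Your proof is correct and follows essentially the same route as the paper: the same increment formula (the paper's (\ref{eq:g5}) and Claim~\ref{claim:submo_lifting}, obtained by switching a maximal forest to the identity and using invariance under closure), monotonicity from monotonicity of $\mu$ over $\Gamma$, and submodularity via the diminishing-returns check in which the bound $\mu(\psi(e))\leq 1$ handles the case where $e$ closes a cycle only in $Y\cup\{e\}$. Your three-case organization is just a slight repackaging of the paper's reduction ``$\Delta(Y,e)\leq 1$, so it suffices to treat the case $\Delta(X,e)<1$.''
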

\begin{proof}
For each $X\subseteq E$ and $e=(i,j)\in E\setminus X$, 
let $\Delta(X,e)=\ell_{\mu}(X\cup \{e\})-\ell_{\mu}(X)$,
and denote by $X_i$ and $X_j$ the connected components of $X$ for which $i\in V(X_i)$ and $j\in V(X_j)$,
each of which is an empty set if such a component does not  exist.
By a simple calculation, we have the following relation: 
\begin{align}
\label{eq:g4}
\Delta(X,e)=\begin{cases}
\mu\llangle X\cup\{e\}\rrangle-\mu \llangle X\rrangle & \text{ if $e$ is a loop or } X_i=X_j\neq\emptyset  \\
\mu\llangle X\cup\{e\}\rrangle+1-\mu \llangle X\rrangle & \text{ otherwise}. 
\end{cases}
\end{align}
However, since $\Gamma$ is abelian, 
it can be easily checked that $\llangle X\cup \{e\}\rrangle=\llangle X\rrangle$
if the later condition holds in (\ref{eq:g4}).
Therefore, we actually have
\begin{align}
\label{eq:g5}
\Delta(X,e)=\begin{cases}
\mu\llangle X\cup\{e\}\rrangle-\mu \llangle X\rrangle & \text{ if $e$ is a loop or } X_i=X_j\neq \emptyset  \\
1 & \text{ otherwise}. 
\end{cases}
\end{align}

By (\ref{eq:g5}), the monotonicity of $\mu$ over $\Gamma$ implies that 
$\Delta(X,e)\geq 0$. 
Thus, $\ell_{\mu}$ is monotone.

To see the submodularity, we claim the following.
\begin{claim}
\label{claim:submo_lifting}
Let  $X\subseteq E$, $e=(i,j)\in E\setminus X$,
and $F$ a maximal forest in $X$.
Suppose that $\psi(e)$ is identity for $e\in F$.
Then, 
\begin{align*}
\Delta(X,e)=\begin{cases}
\mu(\{\psi(f)\mid f\in X\cup\{e\}\})-\mu(\{\psi(f)\mid f\in X\}) & 
\text{ if $e$ is a loop or } X_i=X_j\neq \emptyset  \\
			 1 & \text{ otherwise}. 
\end{cases}
\end{align*}
Moreover, $\Delta(X,e)\leq 1$.
\end{claim}
\begin{proof}
By Proposition~\ref{lem:checking_label}, 
\begin{equation}
\label{eq:submo_lifting}
\llangle X\rrangle=\langle \psi(f)\mid f\in X\rangle 
\quad \text{ and } \quad
\llangle X\cup\{e\}\rrangle=\langle \psi(f)\mid f\in X\cup\{e\}\rangle.
\end{equation}
By the invariance of $\mu$ under taking closure,
putting (\ref{eq:submo_lifting}) into (\ref{eq:g5}),
we obtain the former relation of the statement.

To see the latter claim, observe that, if $e$ is a loop or $X_i=X_j\neq \emptyset$ holds,
then
$\Delta(X,e)=
\mu(\{\psi(f)\mid f\in X\cup\{e\}\})-\mu(\{\psi(f)\mid f\in X\})
\leq \mu(\psi(e))-\mu(\emptyset)\leq 1$,
where the second inequality follows from the submodularity of $\mu$ over $\Gamma$
and the third one follows from $\mu(\psi(e))\leq 1$ and $\mu(\emptyset)=0$.
\end{proof}

To see the submodularity of $\ell_{\mu}$,
let us check $\Delta(X,e)\geq \Delta(Y,e)$ for any $X\subseteq Y\subseteq E$ and $e\in E\setminus Y$.
Since $\Delta(Y,e)\leq 1$ by Claim~\ref{claim:submo_lifting}, 
it suffices to consider the case when $\Delta(X,e)<1$, i.e., $e$ is a loop or $X_i=X_j\neq \emptyset$.
If $X_i=X_j\neq \emptyset$, then $Y_i=Y_j\neq \emptyset$ as well.
Hence, 
$\Delta(X,e)\geq \Delta(Y,e)$ directly follows from Claim~\ref{claim:submo_lifting} 
and the submodularity (\ref{eq:submodular_ineq}) of $\mu$ over $\Gamma$.
\end{proof}

As in the case of gain matroids, let us focus on rational functions $\mu$,
i.e., $\mu:2^{\Gamma}\rightarrow \{0,\frac{k}{d},\dots, \frac{d-1}{d}k,k\}$ for some positive integers
$d$ and $k$. Then, 
$d\ell_{\mu}$ is a normalized integer-valued monotone submodular function,
and hence $(E,d\ell_{\mu})$ is a polymatroid.    

\begin{example}
\label{ex:lifting1}
Let us consider $\mathbb{Z}^d$-gain graph $(G=(V,E),\psi)$.
If we define $\mu$ by $\mu(X)=\dim_{\mathbb{R}} X$ for $X\subseteq \mathbb{Z}^d$,
then $\mu$ is a symmetric polymatroidal function.
Hence, $\ell_{\mu}(F)=|V(F)|-c(F)+\mu\llangle F\rrangle$ is monotone submodular.
Since $\ell_{\mu}(e)\leq 1$ for $e\in E$, $\ell_{\mu}$ is indeed a rank function of a matroid on $E$.
\end{example}

\begin{example}
\label{ex:lifting2}
Let us consider $\mathbb{Z}^d$-gain graph $(G=(V,E),\psi)$, again.
Define $\mu$ by 
$\mu(X)=\dim_{\mathbb{R}}\{\alpha\otimes \gamma\mid \alpha\in \mathbb{R}^d, \gamma\in X\}/d$ for $X\subseteq \mathbb{Z}^d$,
where $\alpha\otimes \gamma$ denotes the tensor product of $\alpha$ and $\gamma$.
Then, $\mu$ is a symmetric polymatroidal function with $\mu(\gamma)\leq 1$ for every $\gamma\in \Gamma$.
Therefore, $d\ell_{\mu}$ is monotone submodular.
Actually, the resulting  polymatroid is just the sum of $d$ copies of the matroid given in Example~\ref{ex:lifting1}.
\end{example}

\begin{remark}
Note that lifting matroids can be defined on $\Gamma$-gain graphs with any group $\Gamma$,
whereas we assumed in the above extension that $\Gamma$ is abelian. 
In fact, Theorem~\ref{thm:submodularity_lift} holds even for non-abelian group $\Gamma$, if 
$\mu\llangle \cdot \rrangle$ is invariant under switchings, which is the case of lifting matroids.
\end{remark}

\subsection{Linear representations of generalized lift matroids}
We now give an extension of the canonical representation of $\mathbf{L}(G,\psi)$.
Let $(G,\psi)$ be a $\Gamma$-gain graph,
and suppose that $\Gamma$ is an additive subgroup of a vector space $\mathbb{F}^{t}$ over $\mathbb{F}$.

For a bilinear map $b:\mathbb{F}^d \times \mathbb{F}^t \rightarrow \mathbb{F}^{k}$,
we define $\mu_b:2^{\Gamma}\rightarrow \mathbb{Z}$ as follows:
\begin{equation}
\mu_b(X)=\dim_{\dF}\{b(\alpha,\gamma)\mid \alpha\in \mathbb{F}^d,\gamma\in X\} \qquad (X\subseteq \Gamma).
\end{equation}
Then, it is easy to check that $\mu_b$ is a symmetric polymatroid function over $\Gamma$.
Also, for any $\gamma\in \Gamma$, we have $\mu_b(\gamma)\leq d$.
Therefore, by Theorem~\ref{thm:submodularity_lift}, the following  function $f_b$
 induces a polymatroid of a $\Gamma$-gain graph $(G=(V,E),\phi)$,
\begin{equation}
f_b(F)=d|V(F)|-dc(F)+\mu_b\llangle F\rrangle \qquad (F\subseteq E).
\end{equation}
For example, if setting $b:\mathbb{F}\times \mathbb{F}^d\rightarrow \mathbb{F}^d$ to be 
$b:(\alpha,\gamma)\mapsto \alpha\gamma$, we have the case of Example~\ref{ex:lifting1}. 

We now show a linear representation of the (poly)matroid induced by $f_b$.
With each edge $e=(i,j)\in E$, we associate a 
linear space 
\begin{equation*}
L_{e,\psi}=\left\{x\in (\mathbb{F}^d)^{V} \oplus \mathbb{F}^{k}\Bigg\vert 
\begin{array}{l}
x(i)+x(j)=0, \\
b(x(i),\psi(e))+x(\ast)=0,\\ 
x(V\setminus \{i,j\})=0
\end{array}
\right\}
\end{equation*}
if $e$ is not a loop, and 
\begin{equation*}
 L_{e,\psi}=\left\{x\in (\mathbb{F}^d)^{V}\oplus \mathbb{F}^{k}\Big\vert \exists \alpha\in \mathbb{F}^d \colon
\begin{array}{l}
x(\ast)=-b(\alpha,\psi(e)), \\ 
x(V)=0
\end{array}
\right\}.
\end{equation*}
if $e$ is a loop, where $(\mathbb{F}^d)^{V} \oplus \mathbb{F}^{k}$ is an abbreviation of  
$(\mathbb{F}^d)^{V} \oplus (\mathbb{F}^{k})^{\{\ast\}}$ used throughout subsequent discussions. 

We consider a linear polymatroid induced on $\{L_{e,\psi}\mid e\in E\}$.
Clearly, it depends on $\psi$, but 
as in Lemma~\ref{lem:linear_equivalence} the rank of the polymatroid is invariant up to equivalence.
\begin{lemma}
\label{lem:linear_equivalence2}
Let $\psi$ and $\psi'$ be equivalent gain functions.
Then, 
$\dim_{\dF}\{L_{e,\psi}\mid e\in E\})=\dim_{\dF}\{L_{e,\psi'}\mid e\in E\}$.
\end{lemma}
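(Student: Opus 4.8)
The plan is to mimic the proof of Lemma~\ref{lem:linear_equivalence}, since $L_{e,\psi}$ has the same structural shape as $A_{e,\psi}$, with the extra bookkeeping coordinate $x(\ast)\in\mathbb{F}^k$ tracking the bilinear pairing. As in that proof, it suffices to show that $\dim_{\dF}\{L_{e,\psi}\mid e\in E\}$ is unchanged under a single switching operation, say at a vertex $v\in V$ with group element $\gamma\in\Gamma$; iterating then gives invariance under any sequence of switchings, hence under the equivalence relation. Since each $L_{e,\psi}$ is independent of the chosen orientation of $e$ (the defining relations $x(i)+x(j)=0$ and $b(x(i),\psi(e))+x(\ast)=0$ are symmetric under swapping $i,j$ together with inverting $\psi(e)$, using bilinearity of $b$ and that $\Gamma$ is an additive subgroup), we may assume every edge is oriented away from $v$. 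Then $\psi'(e)=\gamma+\psi(e)$ if $e$ is incident to $v$, $\psi'(e)=\psi(e)$ if $e$ is a loop at $v$ (an additive group is abelian, so conjugation is trivial), and $\psi'(e)=\psi(e)$ otherwise.

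The key step is to exhibit an explicit bijective linear transformation $T:(\mathbb{F}^d)^V\oplus\mathbb{F}^k\to(\mathbb{F}^d)^V\oplus\mathbb{F}^k$ with $T L_{e,\psi}=L_{e,\psi'}$ for every $e\in E$; since $T$ is an isomorphism it preserves dimensions of spans, giving the claim. The natural candidate fixes all coordinates except $x(\ast)$, and sets
\[
T(x)(\ast)=x(\ast)+b\bigl(x(v),\gamma\bigr),
\]
so that $x(\ast)=T(x)(\ast)-b(T(x)(v),\gamma)$ on reading back. For a non-loop edge $e=(v,j)$ oriented from $v$, a point of $L_{e,\psi}$ satisfies $x(v)+x(j)=0$ and $b(x(v),\psi(e))+x(\ast)=0$; applying $T$ and using bilinearity, the image satisfies $y(v)+y(j)=0$ and $b(y(v),\psi(e))+b(y(v),\gamma)+y(\ast)=b(y(v),\psi(e)+\gamma)+y(\ast)=0$, which is exactly the defining relation of $L_{e,\psi'}$ since $\psi'(e)=\gamma+\psi(e)$. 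For a loop at $v$, the defining relation $x(\ast)=-b(\alpha,\psi(e))$ with $x(V)=0$ is preserved because $T$ changes $x(\ast)$ by $b(x(v),\gamma)=b(0,\gamma)=0$ on the locus $x(V)=0$, and $\psi'(e)=\psi(e)$. For an edge not incident to $v$, $T$ acts trivially on its defining locus and $\psi'(e)=\psi(e)$, so $TL_{e,\psi}=L_{e,\psi}=L_{e,\psi'}$. Hence $T$ carries the whole family $\{L_{e,\psi}\}$ onto $\{L_{e,\psi'}\}$, and the dimensions of the spans agree.

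The main technical point to watch is the orientation-independence of $L_{e,\psi}$, which is what allows the reduction to the case ``all edges oriented from $v$'': one must check that replacing $e=(i,j)$ by $(j,i)$ and $\psi(e)$ by $\psi(e)^{-1}=-\psi(e)$ leaves the pair of linear relations defining $L_{e,\psi}$ unchanged, which follows from the symmetry of $x(i)+x(j)=0$ and from $b(-x(j),-\psi(e))=b(x(j),\psi(e))$ by bilinearity. Everything else is a routine substitution, entirely parallel to Lemma~\ref{lem:linear_equivalence}; no new ideas about $\Gamma$ or $b$ are needed beyond bilinearity and the additive (hence abelian) structure of $\Gamma$. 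I would keep the write-up short, pointing to the proof of Lemma~\ref{lem:linear_equivalence} for the parts that are verbatim the same and only spelling out the modified transformation $T$ on the $\ast$-coordinate and the loop case.
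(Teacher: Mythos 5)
Your proposal is correct and follows the paper's own argument essentially verbatim: reduce to a single switching and exhibit the bijective linear map $T$ that fixes the $V$-coordinates and shifts the $\ast$-coordinate by $b(x(v),\gamma)$, then verify $TL_{e,\psi}=L_{e,\psi'}$ in the three cases (non-loop edge at $v$, loop at $v$, edge avoiding $v$). The only blemish is a sign slip in the non-loop case: with edges oriented from $v$ and your $T$, the constraint you actually get is $b(y(v),\psi(e)-\gamma)+y(\ast)=0$, so your $T$ realizes the switching by $-\gamma$ rather than by $\gamma$; this is harmless since the statement is needed for every $\gamma\in\Gamma$ (and the paper's write-up contains the mirror image of the same slip).
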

\begin{proof}
It is sufficient to show that the dimension is invariant from any switch operation.

Suppose that $\psi'$ is obtained from $\psi$ by a switch operation at $v$ with $\gamma\in \Gamma$.
We may assume that all of edges incident to $v$ is oriented to $v$.
Then,  $\psi'(e)=\psi(e)-\gamma$ if $e$ is incident to $v$;
otherwise $\psi'(e)=\psi(e)$.
Note that, since $\Gamma$ is abelian, $\psi'(e)=\psi(e)$ for any loop $e$.

Consider a bijective linear transformation 
$T:(\mathbb{F}^d)^{V}\oplus \mathbb{F}^{k}\rightarrow (\mathbb{F}^d)^{V}\oplus \mathbb{F}^{k}$ defined by
$T(x)(w)=x(w)$ for $w\in V$ and $T(x)(\ast)=b(x(v),\gamma)+x(\ast)$ for the special vertex $\ast$.
We then have 
\begin{align*}
TL_{e,\psi}&=
\left\{T(x)\in (\mathbb{F}^d)^V\oplus \mathbb{F}^k \Bigg\vert 
\begin{array}{l}
b(x(i),\psi(e))+x(\ast)=0, \\
x(i)+x(j)=0, \\ 
x(V\setminus \{i,j\})=0
\end{array}
\right\} \\
&=\left\{y \in (\mathbb{F}^d)^V\oplus \mathbb{F}^k \Bigg\vert
\begin{array}{l}
b(y(i),\psi(e))-b(y(v),\gamma)+y(\ast)=0,\\ 
y(i)+y(j)=0, \\ 
y(V\setminus \{i,j\})=0
\end{array}
\right\}.
\end{align*}
Since $b(y(v),\gamma)=0$ if $v\neq i,j$,
we have $b(y(i),\psi(e))-b(y(v),\gamma)=b(y(i),\psi'(e))$ for any $e=(i,j)\in E$.
Thus, $TL_{e,\psi}=L_{e,\psi'}$,
implying the lemma.
\end{proof}

The following theorem is a counterpart of Theorem~\ref{thm:main_poly}, whose proof is almost identical.
\begin{theorem}
\label{thm:lifting_rep}
Let $(G=(V,E),\psi)$ be a $\Gamma$-gain graph
with  an additive subgroup $\Gamma$ of $\mathbb{F}^t$. 
Define $f_b$ and $L_{e,\psi}$ as above.
Then, 
\[
 f_b(E)=\dim_{\dF}\{L_{e,\psi}\mid e\in E\}.
\]
\end{theorem}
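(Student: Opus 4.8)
The plan is to mimic the proof of Theorem~\ref{thm:main_poly} essentially verbatim, replacing the role of the ``closure'' group $\langle E\rangle_{\psi,v}$ by the doubly-bracketed group $\llangle E\rrangle$ and the representation-image computation by the bilinear-map computation defining $\mu_b$. First I would reduce to the case that $G$ is connected: by definition $L_{e,\psi}$ only couples the coordinates at the endpoints of $e$ together with the single extra coordinate at $\ast$, and the span of all the $L_{e,\psi}$ decomposes as the ``free'' part on each connected component plus the span of the $x(\ast)$-components; more precisely one checks that $\dim_{\dF}\{L_{e,\psi}\mid e\in E\}=\sum_{X\in C(E)}(d|V(X)|-d)+\dim_{\dF}\{b(\alpha,\psi(W))\mid W\ \text{a closed walk in}\ E\}$, and the right-hand side of the claimed formula telescopes the same way because $f_b(E)=\sum_X(d|V(X)|-dc(X))+\mu_b\llangle E\rrangle$ and $\mu_b\llangle E\rrangle$ already sees all components at once. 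So it is not quite a clean reduction to connected $G$ unless we are careful, but one can still argue component by component on the graphic part and then add one global term; I would spell this out first.

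Next, assuming $G$ connected, pick a spanning tree $T$; by Proposition~\ref{prop:tree_identity} and Lemma~\ref{lem:linear_equivalence2} we may assume $\psi(e)=\mathrm{id}$ (i.e.\ the zero element of $\Gamma\subseteq\dF^t$) for all $e\in T$. Then by Proposition~\ref{lem:checking_label}, $\llangle E\rrangle=\langle\psi(e)\mid e\in E\setminus T\rangle$, and since $\mu_b$ is invariant under closure, $\mu_b\llangle E\rrangle=\mu_b(\{\psi(e)\mid e\in E\setminus T\})=\dim_{\dF}\{b(\alpha,\psi(e))\mid \alpha\in\dF^d,\ e\in E\setminus T\}$. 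Hence the target value is $f_b(E)=d|V|-d+\dim_{\dF}\{b(\alpha,\psi(e))\mid\alpha\in\dF^d,\ e\in E\setminus T\}$. On the linear-algebra side, let $L_T=\spa\{L_{e,\psi}\mid e\in T\}$. Because $\psi(e)=0$ on $T$, the constraint $b(x(i),\psi(e))+x(\ast)=0$ forces $x(\ast)=0$, so $L_T$ lives inside $(\dF^d)^V\oplus\{0\}$ and is exactly the span of the graphic-type spaces; by (the $k=1$, $\rho=\mathrm{id}$ analogue of) Lemma~\ref{lem:graphic}, $\dim_{\dF}L_T=d|V|-d$ and, for any $x$ supported on two vertices $\{i,j\}$ with $x(\ast)=0$, we have $x\in L_T$ iff $x(i)+x(j)=0$. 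I would restate/reprove this little graphic fact inline since Lemma~\ref{lem:graphic} is phrased for frame matroids, but it is the same computation.

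Then the heart of the argument is the quotient-space computation: for $e=(i,j)\in E\setminus T$, I claim $L_{e,\psi}/L_T$ is spanned by a vector whose $\ast$-coordinate ranges over $\{-b(\alpha,\psi(e))\mid\alpha\in\dF^d\}=\mathrm{span}\{b(\alpha,\psi(e))\mid\alpha\in\dF^d\}$ and whose $V$-coordinates are absorbed into $L_T$. Indeed, given $x\in L_{e,\psi}$, we have $x(i)+x(j)=0$ so the $(\dF^d)^V$-part of $x$ already lies in $L_T$ by the graphic fact, and modulo $L_T$ only the $\ast$-coordinate $x(\ast)=-b(x(i),\psi(e))$ survives, with $x(i)$ free in $\dF^d$; for a loop at $i$ the same holds directly from the definition of $L_{e,\psi}$. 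Therefore $\spa\{L_{e,\psi}/L_T\mid e\in E\setminus T\}\cong\spa\{b(\alpha,\psi(e))\mid\alpha\in\dF^d,\ e\in E\setminus T\}\subseteq\dF^k$, which has dimension $\mu_b\llangle E\rrangle$ by the above. Adding, $\dim_{\dF}\{L_{e,\psi}\mid e\in E\}=\dim_{\dF}L_T+\dim_{\dF}(\spa\{L_{e,\psi}/L_T\})=d|V|-d+\mu_b\llangle E\rrangle=f_b(E)$, as required.

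The main obstacle, and the only place needing genuine care, is the reduction across connected components together with the single shared coordinate $\ast$: unlike the frame case where the ambient space $(\dF^d)^V$ splits cleanly over components, here every $L_{e,\psi}$ touches the common $\ast$-block, so $\spa\{L_{e,\psi}\mid e\in E\}$ is \emph{not} a direct sum over $C(E)$. The fix is to observe that the projection onto $(\dF^d)^V$ kills $\ast$ and does split over components (giving $\sum_X(d|V(X)|-d)$), while the projection onto the $\ast$-block has image $\spa\{b(\alpha,\psi(W))\mid W\in\pi_1(E,v),v\in V\}$ of dimension $\mu_b\llangle E\rrangle$; one then checks these two contributions add without overlap, using that for each component one can trade $x(\ast)$ against an interior vertex value as in the quotient computation. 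Once this bookkeeping is set up, the connected case above finishes it. I would present the connected case in full and then indicate the component decomposition as the routine wrap-up, exactly paralleling the structure of the proof of Theorem~\ref{thm:main_poly}.
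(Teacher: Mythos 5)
Your proposal is correct, and its core is exactly the paper's proof: switch so that the forest gains vanish (Proposition~\ref{prop:tree_identity} plus Lemma~\ref{lem:linear_equivalence2}), identify $\llangle E\rrangle$ with $\langle \psi(e)\mid e\in E\setminus T\rangle$ via Proposition~\ref{lem:checking_label} and closure-invariance of $\mu_b$, get $\dim_{\dF}L_T$ from Lemma~\ref{lem:graphic}, and read off each quotient $L_{e,\psi}/L_T$ as $\{b(\alpha,\psi(e))\mid \alpha\in\dF^d\}$. Where you diverge is the treatment of disconnected $G$, and there the paper is simpler than your projection bookkeeping: it takes $T$ to be a maximal forest of all of $E$, so $\dim_{\dF}L_T=d|V(E)|-dc(E)$ (Lemma~\ref{lem:graphic} applied componentwise), and the same quotient computation runs verbatim, since for $e=(i,j)\in E\setminus T$ the $V$-part of any $x\in L_{e,\psi}$ is supported on two vertices of a single component with $x(i)+x(j)=0$ and hence already lies in $L_T$; the shared $\ast$-block never has to be split over components, so the difficulty you flag simply does not arise. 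Two cautions about your fix as written: the $\ast$-projection of $\spa\{L_{e,\psi}\mid e\in E\}$ is $\spa\{b(\alpha,\psi(e))\mid e\in E\}$, which coincides with the closed-walk span $\spa\{b(\alpha,\psi(W))\}$ only after the forest gains have been normalized to zero (a single non-loop edge with nonzero gain and no cycle can already make the two differ), and the additivity $\dim W=\dim\pi_V(W)+\dim\pi_\ast(W)$ needs the extra observation that every attainable $\ast$-value is attained by an element of $W$ vanishing on $V$, obtained by subtracting a suitable forest element --- which is just the quotient computation again. Both points are fine once the normalization is in place, but adopting the maximal-forest formulation from the start makes the entire wrap-up unnecessary.
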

\begin{proof}
Let $T$ be a maximal forest in $E$.
By   Proposition~\ref{prop:tree_identity} and Lemma~\ref{lem:linear_equivalence2}, 
we may assume that $\psi(e)=0$ for $e\in T$.
Since $\Gamma$ is abelian, Proposition~\ref{prop:conjugate} and Proposition~\ref{lem:checking_label} imply that 
$\llangle E\rrangle=\langle \psi(e)\mid e\in E\setminus T\rangle$,
and hence 
\[
 f_b(E)=d|V(E)|-dc(E)+\dim_{\dF}\{b(\alpha,\psi(e))\mid \alpha\in\mathbb{F}^d, e\in E\setminus T\}.
\]

Let $L_T=\spa\{L_{e,\psi}\mid e\in T\}$.
By Lemma~\ref{lem:graphic}, it follows that 
(i) $\dim_{\dF} L_T=d|V(E)|-dc(E)$ and (ii)
each quotient space $L_{e,\psi}/L_T$ for $e\in E\setminus T$ is written by
\begin{align*}
\{x+L_T\mid \exists\alpha\in \mathbb{F}^d\colon b(\alpha, \psi(e))+x(\ast)=0, 
x(V)=0 \}, 
\end{align*}
which is  isomorphic to
$\{b(\alpha, \psi(e))\mid \alpha\in \mathbb{F}^d \}$. 
Therefore, $\dim_{\dF}\{L_{e,\psi}\mid e\in E\}=
\dim_{\dF} L_T+\dim_{\dF}\{L_{e,\psi}/L_T\mid e\in E\setminus T\}
=d|V(E)|-dc(E)+\dim_{\dF} \{b(\alpha,\psi(e))\mid \alpha\in \mathbb{F}^d, e\in E\setminus T\}=f_b(E)$.
\end{proof}

Let $(G,\psi)$ be a $\Gamma$-gain graph.
With each  $e=(i,j)\in E(G)$, we associate a vector 
$y_{e,\psi}$ from $L_{e,\psi}$ so that 
$\{y_{e,\psi}\mid e\in E(G)\}$ is in generic position, by extending 
the underlying field to $\mathbb{K}^d$. 
The following is an immediate consequence of Theorem~\ref{theorem:flat_matroid} and Theorem~\ref{thm:lifting_rep}.
\begin{corollary}
\label{cor:lifting_rep}
Let $(G,\psi)$ be a $\Gamma$-gain graph
with an additive subgroup $\Gamma$ of $\mathbb{F}^t$.
Let $b:\mathbb{F}^d\times \mathbb{F}^t\rightarrow \mathbb{F}^{k}$ be a bilinear map.
Then, $\{y_{e,\psi}\mid e\in E(G)\}$ is linearly independent in $\dK^d$ if and only if 
for any $F\subseteq E$ 
\[
 |F|\leq d|V(F)|-dc(F)+\dim_{\dF}\{b(\alpha,\gamma)\mid \alpha\in \mathbb{F}^d, \gamma\in\llangle F\rrangle\}.
\]
\end{corollary}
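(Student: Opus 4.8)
The plan is to recognize Corollary~\ref{cor:lifting_rep} as the matroidal shadow of Theorem~\ref{thm:lifting_rep}, in exactly the way Theorem~\ref{thm:main_rank} and Corollary~\ref{cor:main_matroid} were deduced from Theorem~\ref{thm:main_poly}. First I would note that the family $\{y_{e,\psi}\mid e\in E(G)\}$ is, by construction, a set of representative vectors chosen from the linear subspaces $L_{e,\psi}$ in generic position (in the sense of (\ref{eq:generic_position})), obtained by extending the underlying field from $\mathbb{F}$ to $\mathbb{K}$. Hence the linear matroid induced on $\{y_{e,\psi}\mid e\in E(G)\}$ over $\mathbb{K}$ is precisely the generic matroid $\mathbf{LM}(E,\Phi)$ obtained from the linear polymatroid $\mathbf{LP}(E,\Phi)$ with $\Phi:e\mapsto L_{e,\psi}$, and by Theorem~\ref{theorem:flat_matroid} its rank function is
\begin{equation*}
F\mapsto \min\{|F\setminus X|+\dim_{\dF}\{L_{e,\psi}\mid e\in X\}\mid X\subseteq F\}.
\end{equation*}

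Next I would apply Theorem~\ref{thm:lifting_rep} not just to $E$ but to every edge-induced subgraph $(V,F)$ with $F\subseteq E$: this gives $\dim_{\dF}\{L_{e,\psi}\mid e\in X\}=f_b(X)$ for all $X\subseteq E$, where
\begin{equation*}
f_b(X)=d|V(X)|-dc(X)+\dim_{\dF}\{b(\alpha,\gamma)\mid \alpha\in\mathbb{F}^d,\ \gamma\in\llangle X\rrangle\}.
\end{equation*}
Substituting this into the rank formula shows that the rank of $F$ in the matroid on $\{y_{e,\psi}\}$ equals $f_b^{\mathbf{1}}(F)=\min\{|F\setminus X|+f_b(X)\mid X\subseteq F\}$, i.e. the matroid is exactly $\mathbf{M}(f_b)=(E,f_b^{\mathbf{1}})$, which is legitimate since $f_b$ is a normalized integer-valued monotone submodular function by Theorem~\ref{thm:submodularity_lift} (using that $\mu_b$ is a symmetric polymatroidal function and that, after rescaling, one is in the setting where the bound $\mu_b(\gamma)\le d$ makes $f_b$ induce a matroid; more directly, $f_b(e)\le d|V(e)|-dc(e)+\dim_{\dF}\{b(\alpha,\psi(e))\}$, and the restriction operation $(\cdot)^{\mathbf{1}}$ guarantees the result is a matroid regardless). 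Finally, $\{y_{e,\psi}\mid e\in E(G)\}$ is linearly independent over $\mathbb{K}$ if and only if $E$ itself is independent in $\mathbf{M}(f_b)$, which by the characterization of independence for matroids induced by submodular functions (\textsection\ref{subsec:induced_matroids}, via (\ref{eq:rist})) holds if and only if $|F|\le f_b(F)$ for every $F\subseteq E$; writing out $f_b(F)$ gives precisely the stated inequality. Since the hypothesis is about a \emph{specific} independent set obtained by taking all of $E(G)$, one should state it with $F$ ranging over all subsets of $E(G)$, which matches the corollary.

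The only mildly delicate point — and the one I would spend a sentence on rather than hand-wave — is that the generic-position choice underlying Theorem~\ref{theorem:flat_matroid} must actually be realizable for the family $\{L_{e,\psi}\}$. This is where the abelian hypothesis and the explicit form of $L_{e,\psi}$ enter: the $y_{e,\psi}$ are obtained by choosing, for each $e$, a generic vector in $L_{e,\psi}$ using fresh transcendentals $\alpha_e^i$ over $\mathbb{F}$, and the standard argument (exactly as in the paragraph preceding Theorem~\ref{thm:main_rank}, or as in the use of $\mathbf{LM}(E,\Phi)$) shows such a choice is in generic position. So no new idea beyond the frame-matroid case is needed; the corollary follows by combining Theorem~\ref{thm:lifting_rep}, Theorem~\ref{theorem:flat_matroid}, Theorem~\ref{thm:submodularity_lift}, and the independence criterion from \textsection\ref{subsec:induced_matroids}. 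I expect the writeup to be short; the main obstacle is purely bookkeeping — making sure the rescaling needed so that $f_b$ (rather than some $\frac{1}{d}$-multiple) is integer-valued is handled cleanly, but since $f_b$ as written is already integer-valued, there is in fact nothing to rescale, and the proof is immediate.
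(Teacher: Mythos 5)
Your proposal is correct and follows essentially the same route as the paper, which derives the corollary immediately from Theorem~\ref{theorem:flat_matroid} (the generic rank formula for representative vectors in generic position) combined with Theorem~\ref{thm:lifting_rep} applied to every subset $F\subseteq E$. The extra remarks on Theorem~\ref{thm:submodularity_lift} and on rescaling are harmless but not needed, since $f_b$ is already integer-valued and the independence criterion follows directly from the rank formula.
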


\subsection{Applications}
Let $(G,\psi)$ be a $\mathbb{Z}^d$-gain graph,
and let us define a bilinear map $b:\mathbb{R}^d\times \mathbb{R}^d\rightarrow \mathbb{R}^{d^2}$
by $b(\alpha, \gamma)=\alpha\otimes \gamma$.
For each $e=(i,j)\in E(G)$, we shall associate a vector $y_{e,\psi}\in (\mathbb{R}^d)^V\oplus \mathbb{R}^{d^2}$
with  
\begin{equation*}
 y_{e,\psi}(v)=\begin{cases}
\alpha_e & \text{ if } v=i \\
-\alpha_e & \text{ if } v=j \\
-\alpha_e\otimes \psi(e) & \text{ if } v=\ast \\
0 & \text{ otherwise}
\end{cases}
\end{equation*}
such that the set of all coordinates of $\alpha_e \ (e\in E(G))$ is algebraically independent
over $\mathbb{Q}$.
By Corollary~\ref{cor:lifting_rep},
$\{y_{e,\psi}\mid e\in E(G)\}$ is linearly independent if and only if 
for any $F\subseteq E$ 
\[
 |F|\leq d|V(F)|-dc(F)+d\dim_{\dR} \{\gamma\mid \gamma\in\llangle F\rrangle\}.
\]

As in \textsection~\ref{sec:applications}, 
it is easy to check that the restriction of $\{L_{e,\psi}\mid e\in E\}$ to a generic hyperplane
 gives rise to  the orbit rigidity matrix of a $\mathbb{Z}^2$-symmetric framework (called a periodic framework) when $d=2$
or to the linear representation of the $\mathbb{Z}^d$-symmetric parallel redrawing polymatroid of a 
$\mathbb{Z}^d$-symmetric framework  for general dimension $d$.
This implies that the independence in the associated linear (poly)matroid is characterized by the following counting condition;
For any nonempty $F\subseteq E(G)$ 
\[
 |F|\leq d|V(F)|-dc(F)+d\dim_{\dR}\{\gamma\mid \gamma\in\llangle F\rrangle\}-1.
\]
This is an alternative proof of results by Malestein and Theran~\cite{malestein2010} for $d=2$.

\section{Toward unified matroids}
\label{sec:unified}
Although we have no clear idea on how to unify the extension of frame matroids and 
that of lift matroids via their rank functions,
the canonical representations tell us a natural approach to unify representation theory obtained so far.
To see this, in this section, we shall focus on subgroups of $GL(\mathbb{F}^d)\ltimes \mathbb{F}^d$,
that is, the semidirect product of $GL(\mathbb{F}^d)$ and $\mathbb{F}^d$
with product $(g,z)\cdot (g',z')=(g g', g z'+z)$.

Let $\Gamma$ be a subgroup of $GL(\mathbb{F}^d)\ltimes \mathbb{F}^d$.
The projection of $\Gamma$ to the first component, i.e., $(g,z)\mapsto g$, is a group homomorphism,
and hence the image $\{g\mid (g,z)\in \Gamma\}$ forms a subgroup of $GL(\mathbb{F}^d)$, 
called the {\em linear part} of $\Gamma$ and denoted by $\Gamma_1$.

Let $(G=(V,E),\psi)$ be a $\Gamma$-gain graph 
with a gain function $\psi=(\psi_1,\psi_2)$,
and let $b:\mathbb{F}^d \times \mathbb{F}^d \rightarrow \mathbb{F}^{k}$ be a bilinear map
such that $\Gamma_1$ is unitary with respect to $b$, i.e., $b(g x, y)=b(x,g^{-1} y)$ 
for any $g \in \Gamma_1$ and any $x,y$. 
Combining the idea of \textsection\ref{sec:dowling_extension} and \textsection\ref{sec:lift},
we now associate a linear subspace with each edge $e=(i,j)\in E$ as follows:
\begin{equation}
\label{eq:unified_U1}
U_{e,\psi}=\left\{x\in (\mathbb{F}^d)^{V}\oplus \mathbb{F}^{k} \Bigg\arrowvert 
\begin{array}{lll} 
x(i)+\psi_1(e)x(j)=0, \\
x(\ast)=-b(x(i),\psi_2(e)), \\
x(V\setminus \{i,j\})=0 \end{array}
\right\}
\end{equation}
if $e$ is not a loop, and 
\begin{equation}
\label{eq:unified_U2}
 U_{e,\psi}=\left\{x\in (\mathbb{F}^d)^{V}\oplus \mathbb{F}^{k} \Bigg\arrowvert \exists \alpha\in \mathbb{F}^d \colon
\begin{array}{lll} 
x(i)=(I_d-\psi_1(e))\alpha, \\ 
x(\ast)=-b(\alpha,\psi_2(e)), \\ 
x(V\setminus \{i\})=0
\end{array}
\right\}
\end{equation}
if $e$ is a loop attached to $i$.

Note that $U_{e,\psi}$ is invariant from the reorientation of $e$,
as 
\begin{align*}
U_{e,\psi}&=\{x\mid 
x(i)+\psi_1(e)x(j)=0,
b(x(i),\psi_2(e))+x(\ast)=0, x(V\setminus \{i,j\})=0\} \\
&=\{x\mid 
\psi_1(e)^{-1}x(i)+x(j)=0,
b(-\psi_1(e)x(j),\psi_2(e))+x(\ast)=0, x(V\setminus \{i,j\})=0\} \\
&=\{x\mid 
\psi_1(e)^{-1}x(i)+x(j)=0,
b(x(j),-\psi_1(e)^{-1}\psi_2(e))+x(\ast)=0, x(V\setminus \{i,j\})=0\}
\end{align*}
where $\psi(e)^{-1}=(\psi_1(e)^{-1},-\psi_1(e)^{-1}\psi_2(e))$.
Although $U_{e,\psi}$ depends on the choice of gain functions $\psi$, 
as in the previous cases, 
the rank of the polymatroid induced on $\{U_{e,\psi}\mid e\in E\}$ is invariant up to equivalence.
\begin{lemma}
\label{lem:linear_equivalence3}
Let $\psi$ and $\psi'$ be equivalent gain functions. Then, 
$\dim_{\dF} \{U_{e,\psi}\mid e\in E\}=\dim_{\dF}\{U_{e,\psi'}\mid e\in E\}$.
\end{lemma}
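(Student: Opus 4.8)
The plan is to imitate the proofs of Lemma~\ref{lem:linear_equivalence} and Lemma~\ref{lem:linear_equivalence2}. Since equivalence of gain functions is generated by single switchings, it suffices to prove that $\dim_{\dF}\{U_{e,\psi}\mid e\in E\}$ is unchanged when $\psi'$ arises from $\psi$ by one switching at a vertex $v$ with an element $\gamma=(\gamma_1,\gamma_2)\in\Gamma$. Using the reorientation invariance of $U_{e,\psi}$ noted just after (\ref{eq:unified_U2}), I may assume every non-loop edge incident to $v$ is oriented away from $v$; then $\psi'(e)=\gamma\psi(e)$ on those edges, $\psi'(e)=\gamma\psi(e)\gamma^{-1}$ on loops at $v$, and $\psi'(e)=\psi(e)$ on all other edges. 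Writing $\psi(e)=(\psi_1(e),\psi_2(e))$, one has $\psi_1'(e)=\gamma_1\psi_1(e)$ and $\psi_2'(e)=\gamma_1\psi_2(e)+\gamma_2$ for a non-loop $e=(v,j)$, while $\psi_1'(e)=\gamma_1\psi_1(e)\gamma_1^{-1}$ and $\psi_2'(e)=\gamma_1\psi_2(e)+(I_d-\gamma_1\psi_1(e)\gamma_1^{-1})\gamma_2$ for a loop $e$ at $v$.

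The heart of the argument is to exhibit a single invertible linear map $T$ on $(\mathbb{F}^d)^{V}\oplus\mathbb{F}^{k}$ with $T\,U_{e,\psi}=U_{e,\psi'}$ for every $e\in E$; once this holds, $T$ carries a spanning family of $\{U_{e,\psi}\}$ to one of $\{U_{e,\psi'}\}$ and the two dimensions agree. Guided by the earlier lemmas — where the frame-type switch used $x(v)\mapsto\gamma_1 x(v)$ and the lift-type switch used a $b$-correction on the $\ast$-coordinate — I take $T$ to act by the linear part $\gamma_1$ on the $v$-coordinate, to add the $\gamma_2$-correction to the $\ast$-coordinate, and to fix every other coordinate:
\[
T(x)(v)=\gamma_1\,x(v),\qquad T(x)(\ast)=x(\ast)-b(\gamma_1 x(v),\gamma_2),\qquad T(x)(w)=x(w)\quad(w\in V\setminus\{v\}).
\]
This $T$ is block-triangular with invertible diagonal blocks (the operator $\gamma_1$ and identities), hence bijective, with $T^{-1}$ corresponding to the switching by $\gamma^{-1}$.

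I would then verify $T\,U_{e,\psi}=U_{e,\psi'}$ edge by edge. For $e$ not incident to $v$ the defining conditions force $x(v)=0$, so $T$ restricts to the identity on $U_{e,\psi}$ while $\psi'(e)=\psi(e)$, giving the claim at once. For a non-loop $e=(v,j)$, substitute $y=T(x)$ into the system defining $U_{e,\psi'}$: the equation $y(v)+\psi_1'(e)y(j)=0$ becomes $\gamma_1\bigl(x(v)+\psi_1(e)x(j)\bigr)=0$, and $y(\ast)=-b(y(v),\psi_2'(e))$ reduces, using bilinearity and the unitarity $b(\gamma_1 u,\gamma_1 w)=b(u,w)$, to $x(\ast)=-b(x(v),\psi_2(e))$; hence $x\in U_{e,\psi}\iff y\in U_{e,\psi'}$. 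The remaining case, a loop $e$ at $v$, is the delicate one: with $x(v)=(I_d-\psi_1(e))\alpha$ one uses the identity $\gamma_1(I_d-\psi_1(e))\alpha=(I_d-\psi_1'(e))(\gamma_1\alpha)$ so that $\beta=\gamma_1\alpha$ matches the $v$-coordinate, and the $\ast$-coordinate must then be reconciled by repeated use of unitarity (in the forms $b(gu,w)=b(u,g^{-1}w)$ and $b(u,gw)=b(g^{-1}u,w)$ for $g\in\Gamma_1$) together with the explicit formula for $\psi_2'(e)$, adjusting $\beta$ within its coset modulo $\ker(I_d-\psi_1'(e))$ if needed. Granting this, applying $T$ finishes the proof.

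I expect the loop-at-$v$ case to be the only genuine obstacle: the reduction to a single switch, the bijectivity of $T$, and the "not incident" and non-loop cases are the same routine computations as in Lemma~\ref{lem:linear_equivalence} and Lemma~\ref{lem:linear_equivalence2}, whereas for a loop one has to check that the conjugation appearing in the linear part and the translation-part correction built into $T$ cancel exactly in the $\ast$-coordinate, which is where the unitarity hypothesis on $\Gamma_1$ with respect to $b$ is used most essentially.
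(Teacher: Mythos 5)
Your strategy coincides with the paper's: reduce to a single switching at $v$ with $\gamma=(\gamma_1,\gamma_2)$ and conjugate by an explicit invertible map $T$; in fact your $T$ is literally the paper's map, since by unitarity $b(\gamma_1 x(v),\gamma_2)=b(x(v),\gamma_1^{-1}\gamma_2)$, and your verifications for edges not incident to $v$ and for non-loop edges at $v$ are the same computations the paper performs. The problem is the loop case, which you leave with ``granting this'': it is a genuine gap, and it is not closed by the manipulations you describe. Carrying the computation out: for $x\in U_{e,\psi}$ with $x(v)=(I_d-\psi_1(e))\alpha$ and $x(\ast)=-b(\alpha,\psi_2(e))$, setting $\beta=\gamma_1\alpha$ one gets $T(x)(\ast)=-b\bigl(\beta,\,\gamma_1\psi_2(e)+(I_d-\gamma_1\psi_1(e)^{-1}\gamma_1^{-1})\gamma_2\bigr)$, whereas membership of $T(x)$ in $U_{e,\psi'}$ with this $\beta$ requires $-b\bigl(\beta,\,\gamma_1\psi_2(e)+(I_d-\gamma_1\psi_1(e)\gamma_1^{-1})\gamma_2\bigr)$: the inverse $\psi_1(e)^{-1}$ appears where $\psi_2'(e)$ has $\psi_1(e)$, leaving the residual term $b\bigl(\beta,\gamma_1(\psi_1(e)-\psi_1(e)^{-1})\gamma_1^{-1}\gamma_2\bigr)$ that must be absorbed.

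Your proposed remedy, moving $\beta$ within its coset modulo $\ker(I_d-\psi_1'(e))$, cannot absorb it in general. Concretely, take $d=2$, $\Gamma_1$ generated by the $4$-fold rotation $r=C_{\pi/2}$, the bilinear map $b(\alpha,t)=(\langle\alpha,t\rangle,\langle\alpha,C_{\pi/2}t\rangle)$ arising for ${\sf p4}$ with $B_{\Gamma}=I_2$, a loop at $v$ with gain $(r,0)$, and a switching by a pure translation $\gamma=(I_2,z)$ with $z\neq 0$. Then $I_2-\psi_1'(e)=I_2-r$ is invertible, so the coset is trivial and $\beta$ is forced, while the residual $b(\beta,(r-r^{-1})z)=2b(\beta,C_{\pi/2}z)$ is a nonzero linear functional of $\beta$; hence $T\,U_{e,\psi}\neq U_{e,\psi'}$ for this loop. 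So the loop case really is not ``the same routine computation as in Lemmas~\ref{lem:linear_equivalence} and~\ref{lem:linear_equivalence2}'', and you cannot defer it. Be aware also that the paper's own proof treats the loop by exactly this substitution, and its final identification of $T\,U_{e,\psi}$ with $U_{e,\psi'}$ is precisely where the $\psi_1(e)^{-1}$-versus-$\psi_1(e)$ mismatch sits; completing your argument therefore needs a genuinely new ingredient (extra structure on $b$ or on the gains, or a reformulation of the loop space $U_{e,\psi}$), not just unitarity plus a coset adjustment.
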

\begin{proof}

Suppose that $\psi'$ is obtained from $\psi$ by a switch operation at $v$ with 
$\gamma=(g,z)\in \Gamma$. 
Since $U_{e,\psi}$ is invariant from reorientation of $e$, we may assume that 
all of the edges incident to $v$ are oriented from $v$.
Then, $\psi'(e)=\gamma\psi(e)$ if $e$ is  a non-loop edge incident to $v$;
$\psi'(e)=\gamma\psi(e)\gamma^{-1}$ if $e$ is a loop incident to $v$;
otherwise $\psi'(e)=\psi(e)$.

Consider a bijective linear transformation 
$T:(\mathbb{F}^d)^{V}\oplus \mathbb{F}^{k}\rightarrow (\mathbb{F}^d)^{V}\oplus \mathbb{F}^{k}$ defined by,
for each $x\in (\mathbb{F}^d)^{V}\oplus \mathbb{F}^{k}$,
\begin{equation*}
T(x)(w)=\begin{cases}
x(w) & \text{ if } w\in V\setminus \{v\} \\
g x(v) & \text{ if } w=v \\
-b(x(v),g^{-1}z)+x(\ast) & \text{ if } w=\ast.
\end{cases}
\end{equation*}
We then have 
\begin{align*}
x(w)&=T(x)(w) \quad \text{ for } w\in V\setminus \{v\}, \\
x(v)&=g^{-1}T(x)(v), \\
x(\ast)&=T(x)(\ast)+b(x(v),g^{-1}z)=T(x)(\ast)+b(T(x)(v),z).
\end{align*}
Therefore, if $e$ is a non-loop edge oriented from $v$ to $j\in V$,
\begin{align*}
TU_{e,\psi}=\left\{y \in (\mathbb{F}^d)^V\oplus \mathbb{F}^k \Bigg\vert 
\begin{array}{lll}
y(v)+g\psi_1(e)y(j)=0, \\
b(y(v),g \psi_2(e)+z)+y(\ast)=0, \\
y(V\setminus \{v,j\})=0\end{array}
\right\}.
\end{align*}
As $\psi'(e)=(g\psi_1(e),g\psi_2(e)+z)$, 
we obtain that $TU_{e,\psi}=U_{e,\psi'}$.
Similarly, for  a loop $e$ attached to $v$,
\begin{align*}
TU_{e,\psi}&=\left\{y \in (\mathbb{F}^d)^V\oplus \mathbb{F}^k \Bigg\vert 
\exists\alpha\in \mathbb{R}^d\colon
\begin{array}{lll}
y(v)=g(I_d-\psi_1(e))\alpha, \\
y(\ast)=-b(\alpha,-\psi_1(e)^{-1}g^{-1}z+g^{-1}z+\psi_2(e)), \\
y(V\setminus \{v,j\})=0\end{array}
\right\} \\
&=\left\{y \in (\mathbb{F}^d)^V\oplus \mathbb{F}^k \Bigg\vert 
\exists \alpha\in \mathbb{R}^d\colon
\begin{array}{lll}
y(v)=(I_d-g\psi_1(e)g^{-1})\alpha, \\
y(\ast)=-b(\alpha,-g\psi_1(e)^{-1}g^{-1}z+g\psi_2(e)+z), \\
y(V\setminus \{v,j\})=0\end{array}
\right\} \\
&=U_{e,\psi'}.
\end{align*}

If $e=(i,j)$ is not incident to $v$, then we have $T(x)(i)=x(i)$, $T(x)(j)=x(j)$, and $T(x)(\ast)=x(\ast)$
by $x(v)=0$ for any $x\in U_{e,\psi}$,
and hence $TU_{e,\psi}=U_{e,\psi}=U_{e,\psi'}$. 
Thus, we obtain the lemma.
\end{proof}

By using Lemma~\ref{lem:linear_equivalence3}, we can now apply the same proof 
as Theorem~\ref{thm:main_poly} to show a combinatorial characterization of 
the polymatroid induced on $\{U_{e,\psi}\mid e\in E\}$.
To see this, we need a new terminology.
Consider $F\subseteq E$. 
Recall that $G[F]$ denotes the edge-induced subgraph $(V(F),F)$.
By Proposition~\ref{prop:tree_identity}, for a maximal forest $T$ of $F$, 
there is an equivalent gain function $\psi^{\circ}_F$ to $\psi$
such that $\psi^{\circ}_F(e)$ is identity  for all $e\in T$. 
A {\em compressed graph} by $F$ is defined as 
a $\Gamma$-gain graph $(G_F^{\circ},\psi^{\circ}_F)$, where $G^{\circ}_F$ is obtained from $G[F]$ 
by contracting each connected component to a single vertex,
where each edge $e$ of $F$ remains in $G^{\circ}_F$ as a loop with the gain $\psi^{\circ}_F(e)$.
By Proposition~\ref{prop:tree_identity} and Proposition~\ref{lem:checking_label}, 
$(G^{\circ}_F,\psi^{\circ}_F)$ is invariant from the choice of $T$ up to the equivalence of gain functions $\psi^{\circ}_F$. 

Applying the same proof as Theorem~\ref{thm:lifting_rep}, we now have the following result.
We omit the proof, which is identical to those of Theorem~\ref{thm:main_poly} and Theorem~\ref{thm:lifting_rep}.
\begin{theorem}
\label{thm:unified}
Let  $\Gamma$ be a subgroup of $GL(\mathbb{F}^d)\ltimes \mathbb{F}^d$,
and $\Gamma_1$ the projection of $\Gamma$ to $GL(\mathbb{F}^d)$.
Let $(G,\psi)$ be a $\Gamma$-gain graph, and $b:\mathbb{F}^d\times \mathbb{F}^d\rightarrow \mathbb{F}^k$
a bilinear map such that $\Gamma_1$ is unitary with respect to $b$. 
Then, for any $F\subseteq E(G)$,
\[
 \dim_{\dF}\{U_{e,\psi}\mid e\in F\}=d|V(F)|-dc(F)+\dim_{\dF}\{U_{e,\psi_F^{\circ}}\mid e\in E(G_F^{\circ})\}.
\]
\end{theorem}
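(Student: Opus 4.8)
The plan is to run the proofs of Theorem~\ref{thm:main_poly} and Theorem~\ref{thm:lifting_rep} essentially verbatim, now reading the roles played there by $d_\rho\langle\cdot\rangle$ (resp.\ $\mu_b\llangle\cdot\rrangle$) combinatorially through the compressed graph. By passing from $G$ to the subgraph $(V(F),F)$ and from $\psi$ to its restriction, it suffices to treat the case $F=E(G)$; note that here, unlike in Theorem~\ref{thm:main_poly}, one cannot split off connected components, because all the $U_{e,\psi}$ share the $\ast$-coordinate, so the argument is run in the spirit of Theorem~\ref{thm:lifting_rep}. First, by Proposition~\ref{prop:tree_identity} and Lemma~\ref{lem:linear_equivalence3}, fix a maximal forest $T$ of $E$ and switch so that $\psi$ is the identity on $T$; then $\psi=\psi_E^{\circ}$, and the loop of $G_E^{\circ}$ arising from an edge $e$ carries the gain $\psi(e)$.

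Next set $U_T=\spa\{U_{e,\psi}\mid e\in T\}$. Since $\psi(e)$ is the identity for $e\in T$, bilinearity of $b$ gives $U_{e,\psi}=D_e^d\oplus\{0\}$, so applying Lemma~\ref{lem:graphic} to each connected component of $(V(E),T)$ yields $\dim_{\dF}U_T=d|V(E)|-dc(E)$, and moreover $U_T$ is exactly the kernel of the linear surjection $\Pi\colon (\dF^d)^{V(E)}\oplus \dF^{k}\to (\dF^d)^{V(G_E^{\circ})}\oplus \dF^{k}$ that sums the $\dF^d$-coordinates over each connected component of $G[E]$ and leaves the $\ast$-coordinate untouched (the inclusion $U_T\subseteq\ker\Pi$ is immediate from the description of $U_T$ in Lemma~\ref{lem:graphic}, and both spaces have the same dimension). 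Since $U_T\subseteq\spa\{U_{e,\psi}\mid e\in E\}$,
\[
\dim_{\dF}\{U_{e,\psi}\mid e\in E\}=\dim_{\dF}U_T+\dim_{\dF}\{\Pi(U_{e,\psi})\mid e\in E\}=d|V(E)|-dc(E)+\dim_{\dF}\{\Pi(U_{e,\psi})\mid e\in E\}.
\]
For $e\in T$ one has $\Pi(U_{e,\psi})=\{0\}$, as does the corresponding identity-gain loop of $G_E^{\circ}$; for $e\in E\setminus T$, a direct computation---eliminate the tree direction using the characterisation of $U_T$ from Lemma~\ref{lem:graphic}, and then use bilinearity of $b$ together with the hypothesis that $\Gamma_1$ is unitary with respect to $b$ to absorb the linear part of $\psi(e)$ inside the pairing $b(\cdot,\cdot)$---shows that $\Pi(U_{e,\psi})$ is precisely the loop subspace attached in $G_E^{\circ}$ to the contraction of $e$. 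Hence $\dim_{\dF}\{\Pi(U_{e,\psi})\mid e\in E\}=\dim_{\dF}\{U_{e,\psi_E^{\circ}}\mid e\in E(G_E^{\circ})\}$, which is the claimed identity.

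The main obstacle is that last identification: making precise that contracting an identity-gain forest turns the $U$-subspace of a non-loop edge into the $U$-subspace of the resulting loop. This is the linear-algebraic shadow of the combinatorial fact behind Proposition~\ref{lem:checking_label}, but one must carry the $\dF^{k}$-component along, and the bookkeeping closes up only because $\Gamma_1$ is unitary with respect to $b$: it is exactly this that lets the linear part of $\psi_1(e)$ be moved across when comparing $b(x(i),\psi_2(e))$ with the pairing appearing in the loop subspace. One should be careful about the orientation/conjugation convention under which $\psi_E^{\circ}$ is recorded on the compressed loop; any residual ambiguity is harmless since, by Lemma~\ref{lem:linear_equivalence3} applied on $G_E^{\circ}$, the quantity $\dim_{\dF}\{U_{e,\psi_E^{\circ}}\mid e\in E(G_E^{\circ})\}$ depends only on the equivalence class of the gain function. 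After this point the argument is word-for-word that of Theorem~\ref{thm:main_poly} and Theorem~\ref{thm:lifting_rep}.
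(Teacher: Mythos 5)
Your overall architecture (reduce to $F=E$, switch so that $\psi$ is the identity on a maximal forest $T$ via Proposition~\ref{prop:tree_identity} and Lemma~\ref{lem:linear_equivalence3}, compute $\dim_{\dF}U_T=d|V(E)|-dc(E)$ from Lemma~\ref{lem:graphic}, identify $U_T=\ker\Pi$, and add $\dim_{\dF}\{\Pi(U_{e,\psi})\mid e\in E\}$) is exactly the intended adaptation of Theorems~\ref{thm:main_poly} and~\ref{thm:lifting_rep}, and all of that part is sound. The gap is precisely at the step you yourself call the main obstacle and then declare closed by unitarity: it does not close. For a non-loop $e=(i,j)\in E\setminus T$ with $\psi(e)=(g,t)$, parametrizing $U_{e,\psi}$ by $\beta=x(i)$ gives $x(j)=-g^{-1}\beta$ and $x(\ast)=-b(\beta,t)$, hence $\Pi(U_{e,\psi})=\{((I_d-g^{-1})\beta,\,-b(\beta,t))\mid\beta\in\dF^d\}$. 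The loop subspace of the compressed graph prescribed by (\ref{eq:unified_U2}) with gain $\psi^{\circ}_E(e)=(g,t)$ is $\{((I_d-g)\alpha,\,-b(\alpha,t))\}$, and for the reversed gain $(g^{-1},-g^{-1}t)$ it is $\{((I_d-g^{-1})\alpha,\,b(\alpha,g^{-1}t))\}$; unitarity turns $b(g\alpha,t)$ into $b(\alpha,g^{-1}t)$ but does not convert $\Pi(U_{e,\psi})$ into either of these subspaces, and switchings on $G_E^{\circ}$ only conjugate all loop gains by a common element, so Lemma~\ref{lem:linear_equivalence3} cannot absorb the mismatch either.

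Moreover the mismatch is not harmless at the level of spans, so the step genuinely fails rather than merely lacking detail. Take $d=2$, $k=1$, $b$ the Euclidean inner product, $\Gamma\subseteq O(2)\ltimes\dR^2$ generated by $\gamma_2=(R_{90},(1,0)^{\top})$ and $\gamma_3=(-I_2,(1,-1)^{\top})$, and let $G$ have two vertices $i,j$ joined by three parallel edges with gains ${\rm id},\gamma_2,\gamma_3$, with $T$ the identity edge. A direct computation gives $\dim_{\dR}\{U_{e,\psi}\mid e\in E\}=5$, while $d|V|-dc+\dim_{\dR}\{U_{e,\psi^{\circ}_E}\mid e\in E(G_E^{\circ})\}=2+2=4$, because the two nonidentity loop spaces of (\ref{eq:unified_U2}) coincide although their images $\Pi(U_{e,\psi})$ span a $3$-dimensional space. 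So with the loop formula (\ref{eq:unified_U2}) taken literally, the asserted identification $\Pi(U_{e,\psi})=U_{e,\psi^{\circ}_E}$ is false and the displayed identity itself can fail; your outline becomes correct verbatim only if the compressed-graph loop spaces are taken in the form your computation actually produces, namely $\{((I_d-\psi_1(e)^{-1})\beta,\,-b(\beta,\psi_2(e)))\mid\beta\in\dF^d\}$ (the same form that the derivation of the loop constraints in \S\ref{subsec:cry_parallel} leads to). As written, the proof has a genuine gap at its pivotal identification, and the appeal to unitarity and to Lemma~\ref{lem:linear_equivalence3} does not repair it.
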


\begin{remark}
Theorem~\ref{thm:unified} gives a good characterization of the dimension of 
$\spa\{U_{e,\psi}\mid e\in F\}$,
since computing $\dim_{\dF}\{U_{e,\psi_F^{\circ}}\mid e\in E(G_F^{\circ})\}$
can be reduced to the computation of the rank of a matrix of size $(dc(F)+k)\times d|F|$.
Hence, it is possible to compute $\dim_{\dF}\{U_{e,\psi}\mid e\in F\})$ deterministically in polynomial time.
\qed 
\end{remark}


\section{Further applications}
\label{sec:further}
As applications of Theorem~\ref{thm:unified}, we shall extend the result of \textsection\ref{sec:applications}
to symmetric frameworks with crystallographic symmetry.
For detailed analysis,
we first review basic facts on space groups in \textsection\ref{subsec:space}.
In \textsection\ref{subsec:cry_parallel} and \textsection\ref{subsec:cry_rigidity}, 
we discuss the parallel redrawing problem and the symmetry-forced rigidity, respectively.

\subsection{Space groups}
\label{subsec:space}
Recall that a {\em space group (or crystallographic group)} $\Gamma$ is a discrete cocompact subgroup of 
the Euclidean group ${\cal E}(d)$,
and  each element $(A,t)\in \Gamma$ acts on $\mathbb{R}^d$ by $(A,t)\cdot q=Aq+t$ for $q\in \mathbb{R}^d$.
An element of the form $(I_d,t)$ is called a {\em translation}, and is simply denoted by $t$.
As in the previous section, 
let $\Gamma_1=\{A_{\gamma}\mid \gamma\in \Gamma\}$, the projection to the first component.

%
%
%

The subgroup ${\cal L}_{\Gamma}$ consisting of all translations in $\Gamma$ is called the {\em lattice group} of $\Gamma$,
and it is known by Bieberbach's theorem that ${\cal L}_{\Gamma}$ is a normal subgroup of $\Gamma$ generated by $d$ linearly independent translations 
$t_1,\dots, t_d\in \mathbb{R}^d$.
The $d\times d$-matrix $B_{\Gamma}$ of the base transformation from the standard basis of $\mathbb{R}^d$
to $\{t_1,\dots, t_d\}$ is called a {\em lattice basis} of $\Gamma$ here. 
(Conventionally, a lattice basis of ${\cal L}_{\Gamma}$ means $\{t_1,\dots, t_d\}$, rather than $B_{\Gamma}$.)
Then, ${\cal L}_{\Gamma}=\{B_{\Gamma}z \mid z\in \mathbb{Z}^d\}$.

The quotient subgroup ${\cal K}_{\Gamma}=\Gamma/{\cal L}_{\Gamma}$ is known as {\em the point group of $\Gamma$}.
Since ${\cal K}$ acts on ${\cal L}_{\Gamma}$ and ${\cal L}_{\Gamma}$ is isomorphic to $\mathbb{Z}^d$, 
${\cal K}_{\Gamma}$ can be represented as integral matrices.
Therefore, in the subsequent discussion, ${\cal K}_{\Gamma}$ is regarded as a finite subgroup of $GL(\mathbb{Z}^d)$.
Note then that $B_{\Gamma}{\cal K}_{\Gamma}B_{\Gamma}^{-1}=\Gamma_1\subseteq {\cal O}(\mathbb{R}^d)$.
Indeed, using the lattice basis $B_{\Gamma}$,  
each element $\gamma=(A_{\gamma},t_{\gamma})$ of $\Gamma$  can be uniquely written by a triple 
$(K_{\gamma},z_{\gamma},c_{\gamma})\in GL(\mathbb{Z}^d)\times \mathbb{Z}^d \times [0,1)^d$,
where $A_{\gamma}=B_{\Gamma}K_{\gamma}B_{\Gamma}^{-1}$ and $t_{\gamma}=B_{\Gamma} (z_{\gamma}+c_{\gamma})$.
The representation in $GL(\mathbb{Z}^d)\times \mathbb{Z}^d \times [0,1)^d$ is sometimes called the {\em standard form}.
Note that a space group $\Gamma$ is determined by the standard form of each element and the lattice basis.


By Bieberbach's theorem, two space groups $\Gamma$ and $\Gamma'$ are isomorphic if and only if they are conjugate 
by an affine transformation in ${\rm Aff}(\mathbb{R}^d)$.
Eliminating trivial motions, we focus on affine motions that change the lattice basis $B_{\Gamma}$ 
 without changing the linear part. 
We hence define the {\em space of lattices} by 
\begin{equation}
{\rm Lat}(\Gamma)=\{B\in GL(\mathbb{R}^d) \mid \forall K_{\gamma}\in {\cal K}_{\Gamma}\colon BK_{\gamma}B^{-1}=B_{\Gamma}K_{\gamma}B_{\Gamma}^{-1}(=A_{\gamma})\},
\end{equation}
and here we say that $\Gamma$ and $\Gamma'$ are {\em equivalent} if $B_{\Gamma'}\in {\rm Lat}(\Gamma)$.
It is convenient to  consider a slightly larger set 
\begin{equation}
\label{eq:lattice_motion}
\overline{\rm Lat}(\Gamma)=\{B\in \mathbb{R}^{d\times d}\mid  \forall K_{\gamma}\in {\cal K}_{\Gamma}\colon BK_{\gamma}=A_{\gamma}B\}.
\end{equation}
Then, $\overline{{\rm Lat}}(\Gamma)$ is a linear space and 
${\rm Lat}(\Gamma)$ is a dense open subset of $\overline{{\rm Lat}}(\Gamma)$.

\subsection{Parallel redrawing with space group symmetry}
\label{subsec:cry_parallel}
Let us move to the $\Gamma$-symmetric parallel redrawing problem
for  a  space group $\Gamma$ with linear part $\Gamma_1$.
Let ${\cal L}_{\Gamma}$ be the lattice group of $\Gamma$ with a basis $B_{\Gamma}\in GL(\mathbb{R}^d)$.
Each element $\gamma\in \Gamma$ is denoted by $(A_{\gamma},t_{\gamma})\in {\cal O}(\mathbb{R}^d)\ltimes \mathbb{R}^d$, 
but we also use the standard form 
$(K_{\gamma},z_{\gamma},c_{\gamma})\in GL(\mathbb{Z}^d)\times \mathbb{Z}^d\times [0,1)^d$,
where $A_{\gamma}=B_{\Gamma}K_{\gamma}B_{\Gamma}^{-1}$ and $t_{\gamma}=B_{\Gamma} (z_{\gamma}+c_{\gamma})$.

We consider a $\Gamma$-symmetric framework $(H,p)$, where 
$H$ is a $\Gamma$-symmetric graph and $p$ is a $\Gamma$-symmetric point-configuration.
We say that $(H,q)$ is a {\em symmetric parallel redrawing} of $(H,p)$
if $(H,q)$ is a parallel redrawing of $(H,p)$ and 
it is $\Gamma'$-symmetric for some equivalent space group $\Gamma'$ to $\Gamma$.

A {\em relocation} of $(H,p)$ is $m:V(H)\rightarrow \mathbb{R}^d$ such that
\begin{equation}
\label{eq:cry_parallel_const1}
m(i)-m(j) \text{ is parallel to } p(i)-p(j) \text{ for any } \{i,j\}\in E(H).
\end{equation} 
Regarding the  $\Gamma$-symmetry of $m$, we have two remarks.
(1) Since $m$ is a vector, rather than a point in the Euclidean space, 
only $\Gamma_1$ acts on the space of relocations.
(2) A framework can be also relocated by deforming the underlying lattice.
Thus, we say that a relocation $m$ is {\em $\Gamma$-symmetric} 
if there is $M\in \overline{\rm Lat}(\Gamma)$ such that $M+B_{\Gamma}\in {\rm Lat}(\Gamma)$ and 
\begin{equation}
\label{eq:cry_symmetric_motion}
 m(\gamma v)=A_{\gamma}m(v)+M(z_{\gamma}+c_{\gamma}) \qquad \forall v\in V(H), \forall \gamma\in \Gamma.
\end{equation}
The definition is justified by the following proposition.

\begin{proposition}
\label{prop:cry_parallel1}
Let $(H,p)$ be a $\Gamma$-symmetric framework with a  space group $\Gamma$.
If $m$ is a $\Gamma$-symmetric relocation of $(H,p)$, then $(H,p+m)$ is a symmetric parallel redrawing of $(H,p)$.
Conversely, if $(H,q)$ is a symmetric parallel redrawing, then 
$q-p$ is a $\Gamma$-symmetric relocation. 
\end{proposition}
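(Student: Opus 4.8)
The plan is to prove both implications by direct computation, working with the standard-form description $\gamma=(K_\gamma,z_\gamma,c_\gamma)\in GL(\mathbb{Z}^d)\times\mathbb{Z}^d\times[0,1)^d$ recalled in \textsection\ref{subsec:space}, together with $A_\gamma=B_\Gamma K_\gamma B_\Gamma^{-1}$ and $t_\gamma=B_\Gamma(z_\gamma+c_\gamma)$. The observation to isolate first is that an equivalent space group $\Gamma'$ shares with $\Gamma$ all of the data $(K_\gamma,z_\gamma,c_\gamma)$ and the linear parts $A_\gamma$: indeed $B_{\Gamma'}\in{\rm Lat}(\Gamma)$ forces $B_{\Gamma'}K_\gamma B_{\Gamma'}^{-1}=B_\Gamma K_\gamma B_\Gamma^{-1}=A_\gamma$, so $\Gamma'$ differs from $\Gamma$ only through its lattice basis and hence only through its translation parts $t'_\gamma=B_{\Gamma'}(z_\gamma+c_\gamma)$. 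Since, as recalled, a space group is determined by the standard forms together with the lattice basis, $H$ being $(\Gamma,\theta)$-symmetric is automatically $(\Gamma',\theta)$-symmetric under the same underlying graph action: the quotient gain graph encoding $\theta$ is unchanged. This compatibility of actions is the only structural point that requires care.

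For the forward direction, let $m$ be a $\Gamma$-symmetric relocation, witnessed by $M\in\overline{\rm Lat}(\Gamma)$ with $B_{\Gamma'}:=M+B_\Gamma\in{\rm Lat}(\Gamma)$ and $m(\gamma v)=A_\gamma m(v)+M(z_\gamma+c_\gamma)$. First, $(H,p+m)$ is a parallel redrawing of $(H,p)$, since for every $\{i,j\}\in E(H)$ the vector $(p+m)(i)-(p+m)(j)=(p(i)-p(j))+(m(i)-m(j))$ lies in the line spanned by $p(i)-p(j)$ by (\ref{eq:cry_parallel_const1}). Next, let $\Gamma'$ be the space group with lattice basis $B_{\Gamma'}$ and the same standard forms as $\Gamma$; it is equivalent to $\Gamma$ because $B_{\Gamma'}\in{\rm Lat}(\Gamma)$. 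Then I would compute
\[
(p+m)(\gamma v)=\big(A_\gamma p(v)+t_\gamma\big)+\big(A_\gamma m(v)+M(z_\gamma+c_\gamma)\big)
=A_\gamma(p+m)(v)+(B_\Gamma+M)(z_\gamma+c_\gamma),
\]
and since $(B_\Gamma+M)(z_\gamma+c_\gamma)=B_{\Gamma'}(z_\gamma+c_\gamma)=t'_\gamma$, this shows $p+m$ is $\Gamma'$-symmetric, so $(H,p+m)$ is a symmetric parallel redrawing of $(H,p)$.

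For the converse, let $(H,q)$ be a symmetric parallel redrawing, hence $\Gamma'$-symmetric for some equivalent space group $\Gamma'$ with lattice basis $B_{\Gamma'}\in{\rm Lat}(\Gamma)$, and with $A'_\gamma=A_\gamma$ and $t'_\gamma=B_{\Gamma'}(z_\gamma+c_\gamma)$ by the first paragraph. Set $M:=B_{\Gamma'}-B_\Gamma$ and $m:=q-p$. Then $M\in\overline{\rm Lat}(\Gamma)$, since by (\ref{eq:lattice_motion}) the set $\overline{\rm Lat}(\Gamma)$ is a linear space containing both $B_{\Gamma'}$ and $B_\Gamma$, and $M+B_\Gamma=B_{\Gamma'}\in{\rm Lat}(\Gamma)$; also $m$ satisfies (\ref{eq:cry_parallel_const1}) because $q(i)-q(j)$ and $p(i)-p(j)$ are parallel, so their difference is parallel to $p(i)-p(j)$ as well. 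Finally,
\[
m(\gamma v)=\big(A_\gamma q(v)+t'_\gamma\big)-\big(A_\gamma p(v)+t_\gamma\big)
=A_\gamma m(v)+(B_{\Gamma'}-B_\Gamma)(z_\gamma+c_\gamma)=A_\gamma m(v)+M(z_\gamma+c_\gamma),
\]
which is precisely (\ref{eq:cry_symmetric_motion}), so $m$ is a $\Gamma$-symmetric relocation.

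The main obstacle is not the algebra, which is routine, but the bookkeeping in the first paragraph: one must pin down that equivalence of space groups leaves the point-group data $(K_\gamma,z_\gamma,c_\gamma)$ and the graph action $\theta$ intact while altering only the lattice basis, so that ``$\Gamma'$-symmetric'' can be compared term-by-term with ``$\Gamma$-symmetric''. A minor point to dispatch is the degenerate case $p(i)=p(j)$ on an edge, where ``parallel to $p(i)-p(j)$'' is to be read as imposing no constraint, consistently with (\ref{eq:parallel_const1}); the displayed computations are unaffected.
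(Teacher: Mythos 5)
Your proposal is correct and follows essentially the same route as the paper's proof: both directions are handled by the same direct computation, taking $B_{\Gamma'}=M+B_\Gamma$ in the forward direction and $M=B_{\Gamma'}-B_\Gamma$, $m=q-p$ in the converse, with the identity $A_\gamma=B_{\Gamma'}K_\gamma B_{\Gamma'}^{-1}$ doing the work. Your extra bookkeeping paragraph on why an equivalent $\Gamma'$ shares the standard-form data and the graph action is a reasonable elaboration of what the paper leaves implicit, but it does not change the argument.
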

\begin{proof}
Suppose that $m$ is a $\Gamma$-symmetric relocation.
Define $q$ by $q(v)=p(v)+m(v)$ for $v\in V(H)$.
Since $m$ is a relocation, $(H,q)$ is a parallel redrawing of $(H,p)$ by (\ref{eq:cry_parallel_const1}).
Also, since $m$ is $\Gamma$-symmetric,  there exists $M\in \overline{\rm Lat}(\Gamma)$
for which $M+B_{\Gamma}\in {\rm Lat}(\Gamma)$ and (\ref{eq:cry_symmetric_motion}) is satisfied.
Let $B=M+B_{\Gamma}$.
Then, for any $v\in V(H)$ and $\gamma\in \Gamma$,
\begin{align*}
q(\gamma v)&=p(\gamma v)+m(\gamma v)=A_{\gamma}p(v)+B_{\Gamma}(z_{\gamma}+c_{\gamma})
+A_{\gamma}m(v)+M(z_{\gamma}+c_{\gamma}) \\
&=A_{\gamma}(p(v)+m(v))+B(z_{\gamma}+c_{\gamma})=A_{\gamma}q(v)+B(z_{\gamma}+c_{\gamma}).
\end{align*}
Since $B\in {\rm Lat}(\Gamma)$, this implies that 
$(H,q)$ is $\Gamma'$-symmetric for an equivalent $\Gamma'$ to $\Gamma$.

Conversely, suppose that  $(H,q)$ is a symmetric parallel redrawing of $(H,p)$.
Then, $(H,q)$ is $\Gamma'$-symmetric for some equivalent $\Gamma'$ to $\Gamma$.
This means that there is $B\in {\rm Lat}(\Gamma)$ such that 
$B$ is a lattice basis of $\Gamma'$.
Setting $m=q-p$ and $M=B-B_{\Gamma}$, we see that $M\in \overline{\rm Lat}(\Gamma)$ and
for any $v\in V(H)$ and $\gamma \in \Gamma$, 
\begin{align*}
m(\gamma v)&=q(\gamma v)-p(\gamma v)=BK_{\gamma}B^{-1}q(v)+B(z_{\gamma}+c_{\gamma})-
(B_{\Gamma}K_{\gamma}B_{\Gamma}^{-1}p(v)+B_{\Gamma}(z_{\gamma}+c_{\gamma})) \\
&=B_{\Gamma}K_{\gamma}B_{\Gamma}^{-1}(q(v)-p(v))+(B-B_{\Gamma})(z_{\gamma}+c_{\gamma}) \\
&=A_{\gamma}m(v)+M(z_{\gamma}+c_{\gamma}),
\end{align*}
implying that $m$ is a $\Gamma$-symmetric relocation of $(H,p)$.
\end{proof}

As in the case of point group symmetry, 
a relocation $m$ is said to be {\em trivial} if $m$ is a linear combination of 
translations $m_t$ and a dilation $m_{\rm di}$.
Indeed, for any $t\in \bigcap_{\gamma\in \Gamma}(A_{\gamma}-I_d)$, 
translation $m_t$ defined by $m_t(v)=t$ for $v\in V(H)$ is a 
$\Gamma$-symmetric relocation with $M=0$;
on the other hand, a dilation $m_{\rm di}$ defined by $m_{\rm di}(v)=p(v)$ for $v\in V(H)$
is also a $\Gamma$-symmetric relocation with $M=B_{\Gamma}$.

Motivated by Proposition~\ref{prop:cry_parallel1} a $\Gamma$-symmetric $(H,p)$ is said to be 
{\em symmetrically robust} if 
all possible $\Gamma$-symmetric relocations are trivial.
%
%
Let us then show that checking the robustness can be reduced to computing 
the rank of linear polymatroids of quotient gain graphs.
We first remark that the condition for $M$ to be $M+B_{\Gamma}\in {\rm Lat}(\Gamma)$ can be ignored in the analysis.
\begin{lemma}
\label{lem:cry_collapse}
A $\Gamma$-symmetric $(H,p)$ is not symmetrically robust if and only if 
there are $m:V(H)\rightarrow \mathbb{R}^d$ and $M\in \overline{\rm Lat}(\Gamma)$ for which 
(\ref{eq:cry_parallel_const1}) and (\ref{eq:cry_symmetric_motion}) are satisfied and 
$m$ is not a linear combination of trivial relocations.
\end{lemma}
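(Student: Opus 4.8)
The plan is to reduce the statement to a transversality observation: the extra requirement $M+B_\Gamma\in {\rm Lat}(\Gamma)$ only asks that an element of the linear space $\overline{\rm Lat}(\Gamma)$ be invertible, and this can always be arranged by adding a suitable multiple of the dilation, which is itself a legitimate $\Gamma$-symmetric relocation. So I would not argue about robustness directly but work with the linear space of all admissible pairs $(m,M)$ and perturb.

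First I would record that the set $L$ of pairs $(m,M)$ with $m\in(\mathbb{R}^d)^{V(H)}$, $M\in\overline{\rm Lat}(\Gamma)$ satisfying both \eqref{eq:cry_parallel_const1} and \eqref{eq:cry_symmetric_motion} is a linear subspace of $(\mathbb{R}^d)^{V(H)}\times\overline{\rm Lat}(\Gamma)$: condition \eqref{eq:cry_parallel_const1} confines each $m(i)-m(j)$ to the line spanned by $p(i)-p(j)$, condition \eqref{eq:cry_symmetric_motion} is linear in $(m,M)$ jointly, and $\overline{\rm Lat}(\Gamma)$ is a linear space. The crucial point is that $(m_{\rm di},B_\Gamma)\in L$: indeed $m_{\rm di}(\gamma v)=p(\gamma v)=A_\gamma p(v)+B_\Gamma(z_\gamma+c_\gamma)$ by $\Gamma$-symmetry of $p$, which is exactly \eqref{eq:cry_symmetric_motion} with $M=B_\Gamma$, and $B_\Gamma K_\gamma=A_\gamma B_\Gamma$ shows $B_\Gamma\in\overline{\rm Lat}(\Gamma)$.

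The ``only if'' direction is then immediate, since a nontrivial $\Gamma$-symmetric relocation comes by definition equipped with some $M\in\overline{\rm Lat}(\Gamma)$ satisfying \eqref{eq:cry_parallel_const1} and \eqref{eq:cry_symmetric_motion} (with the stronger condition $M+B_\Gamma\in{\rm Lat}(\Gamma)\subseteq\overline{\rm Lat}(\Gamma)$, which we simply forget). For ``if'', I would start from a pair $(m,M)\in L$ with $m$ not a linear combination of trivial relocations, and consider the one-parameter family $(m',M'):=(m,M)+s\,(m_{\rm di},B_\Gamma)\in L$ for $s\in\mathbb{R}$. Since ${\rm Lat}(\Gamma)=\overline{\rm Lat}(\Gamma)\cap GL(\mathbb{R}^d)$ contains $B_\Gamma$ with $\det B_\Gamma\neq 0$, the single-variable polynomial $u\mapsto\det(M+uB_\Gamma)$ has leading term $u^d\det B_\Gamma$ and is therefore not identically zero, so for all but finitely many $s$ the matrix $M'+B_\Gamma=M+(s+1)B_\Gamma$ is invertible and hence lies in ${\rm Lat}(\Gamma)$. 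Fixing such an $s$, the pair $(m',M')$ certifies that $m'$ is a genuine $\Gamma$-symmetric relocation, and $m'=m+s\,m_{\rm di}$ is not trivial because $m$ is not and the trivial relocations form a subspace containing $m_{\rm di}$. Thus $(H,p)$ is not symmetrically robust.

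There is essentially no obstacle; the only points requiring a moment's care are checking that $(m_{\rm di},B_\Gamma)$ really satisfies \eqref{eq:cry_symmetric_motion} with this specific $M$, and that the ``parallel'' condition \eqref{eq:cry_parallel_const1} indeed cuts out a linear subspace — both routine — after which the argument is just the remark that a nonzero polynomial has finitely many roots.
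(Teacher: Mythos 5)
Your argument is correct and is essentially the paper's own perturbation idea: the paper simply rescales the pair to $(\epsilon m,\epsilon M)$ and uses nonsingularity of $B_\Gamma$ (continuity of the determinant) to get $B_\Gamma+\epsilon M\in{\rm Lat}(\Gamma)$ for small $\epsilon$, while you shift along the dilation pair $(m_{\rm di},B_\Gamma)$ and invoke that $\det(M+uB_\Gamma)$ is a nonzero polynomial. Both proofs rest on the same two observations — the pairs satisfying (\ref{eq:cry_parallel_const1}) and (\ref{eq:cry_symmetric_motion}) form a linear space in which invertibility of $M+B_\Gamma$ can be restored generically, and the modification (scaling by $\epsilon\neq 0$, resp.\ adding a trivial relocation) preserves nontriviality of $m$ — so the difference is cosmetic.
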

\begin{proof}
The sufficiency is trivial from the definition.
To see the necessity, suppose there are such $m$ and $M$.
Then, for any $\epsilon \in \mathbb{R}$, $\epsilon m$ and $\epsilon M$ satisfy (\ref{eq:cry_parallel_const1}) and (\ref{eq:cry_symmetric_motion}).
Moreover, if we take $\epsilon$ enough small, $B_{\Gamma}+\epsilon M\in {\rm Lat}(\Gamma)$ holds as $B_{\Gamma}$ is nonsingular.
Therefore $\epsilon m$ is a nontrivial $\Gamma$-symmetric relocation, and hence $(H,p)$ is not symmetrically robust.
\end{proof}

As in the finite case, we now simplify the system (\ref{eq:cry_parallel_const1}).
Recall that each edge orbit is written by 
$\Gamma e=\{(\gamma i, \gamma \psi_e j)\mid \gamma\in \Gamma\}$ with 
$\psi(e)=\psi_e=(A_{\psi_e},z_{\psi_e}+c_{\psi_e})$.
Thus, for each edge $(\gamma i, \gamma \psi_e j)$ in an edge orbit $\Gamma e$,
(\ref{eq:cry_parallel_const1}) is written by 
\begin{equation*}
\langle m(\gamma i)-m(\gamma \psi_e j),\alpha\rangle=0 \quad \forall \alpha\in \mathbb{R}^d 
\text{ such that } \langle p(\gamma i)-p(\gamma \psi_e j),\alpha\rangle=0.
\end{equation*}
These are indeed equivalent to one equation,
\begin{equation*}
\langle m(i)-m(\psi_e j),\alpha\rangle=0 \quad \forall \alpha\in \mathbb{R}^d
\text{ such that } \langle p(i)-p(\psi_e j),\alpha\rangle=0,
\end{equation*}
which is further converted to, by (\ref{eq:cry_symmetric_motion}),
\begin{equation*}
\langle m(i)- (A_{\psi_e}m (j)+M(z_{\psi_e}+c_{\psi_e})),\alpha\rangle=0 \quad \forall \alpha\in \mathbb{R}^d
\text{ such that } \langle p(i)-(A_{\psi_e}p(j)+t_{\psi_e}),\alpha\rangle=0.
\end{equation*}
for some $M\in \overline{\rm Lat}(\Gamma)$.

Therefore, by Lemma~\ref{lem:cry_collapse}, 
the problem can be considered in  a general $\Gamma$-gain graph $(G=(V,E),\psi)$, 
and we are interested in 
the space of $(m,M)\in (\mathbb{R}^d)^V \oplus \overline{\rm Lat}(\Gamma)$ satisfying 
\begin{equation}
\label{eq:cry_parallel_const4}
\langle m(i)- (A_{\psi_e}m (j)+M(z_{\psi_e}+c_{\psi_e})),\alpha\rangle=0 \quad \forall \alpha\in \mathbb{R}^d
\text{ such that } \langle p(i)-(A_{\psi_e}p(j)+t_{\psi_e}),\alpha\rangle=0
\end{equation}
for every $e=(i,j)\in E$. 
For further analysis, we shall take a basis $B_1,\dots, B_k\in \mathbb{R}^{d\times d}$ 
of $\overline{\rm Lat}(\Gamma)$,
where $k=\dim_{\dR} \overline{\rm Lat}(\Gamma)$. 
We then define a bilinear function $b_i:\mathbb{R}^d\times \mathbb{R}^d\rightarrow \mathbb{R}$ by
\begin{equation}
\label{eq:cry_bi}
b_i(\alpha,t)=\langle \alpha, B_iB_{\Gamma}^{-1}t\rangle \qquad ((\alpha,t)\in \mathbb{R}^d\times \mathbb{R}^d).
\end{equation}
Then, a bilinear map $b:\mathbb{R}^d\times \mathbb{R}^d\rightarrow \mathbb{R}^k$ is defined by 
$b(\alpha,t)=(b_1(\alpha, t),\dots, b_k(\alpha,t))^{\top}$.

Observe that $\Gamma_1$ is unitary with respect to $b_i$.
Indeed, for each $\gamma\in \Gamma$, we have $A_{\gamma}B_{\Gamma}=B_{\Gamma}K_{\gamma}$ and $A_{\gamma}B_i=B_iK_{\gamma}$ 
as $B_{\Gamma}, B_i\in\overline{\rm Lat}(\Gamma)$,
and hence  $b_i(A_{\gamma}\alpha,t)=\langle A_{\gamma}\alpha,B_iB_{\Gamma}^{-1}t\rangle
=\langle \alpha,A_{\gamma}^{-1}B_iB_{\Gamma}^{-1}t\rangle=
\langle \alpha,B_iK_{\gamma}^{-1}B_{\Gamma}^{-1}t\rangle=\langle \alpha,B_iB_{\Gamma}^{-1}A_{\gamma}^{-1}t\rangle=b_i(\alpha,A_{\gamma}^{-1}t)$.
Hence, $\Gamma_1$ is also unitary with respect to $b$.

We shall associate  a $(d-1)$-dimensional linear subspace $P_{e,\psi}'(p)$ with each 
edge $e=(i,j)\in E$, defined by
\begin{equation}
\label{eq:cry_direction_flat}
P_{e,\psi}'(p)=U_{e,\psi}\cap \{x\in (\mathbb{R}^d)^{V}\oplus \mathbb{R}^k \mid
\langle p(i)-(A_{\psi_e}p(j)+t_{\psi_e}),x(i)\rangle=0\}
\end{equation}
where  $U_{e,\psi}$ is, as defined in (\ref{eq:unified_U1})(\ref{eq:unified_U2}),
\begin{equation}
\label{eq:direction_flat_U1}
U_{e,\psi}=
\left\{x\in (\mathbb{R}^d)^{V}\oplus \mathbb{R}^k \Bigg\vert \begin{array}{lll}
x(i)+A_{\psi_e} x(j)=0, \\ 
b(x(i),t_{\psi_e})+x(\ast)=0, \\
x(V\setminus \{i,j\})=0 \end{array}
\right\}
\end{equation}
or 
\begin{equation}
\label{eq:direction_flat_U2}
U_{e,\psi}=\left\{x\in (\mathbb{R}^d)^{V}\oplus \mathbb{R}^k\Bigg\vert 
\exists \alpha\in \mathbb{R}^d\colon
\begin{array}{lll}
x(i)=(I_d-A_{\psi_e})\alpha, \\ 
x(\ast)=-b(\alpha,t_{\psi_e}), \\ 
x(V\setminus \{i,j\})=0 \end{array} \right \}
\end{equation}
depending on whether $e$ is a non-loop or a loop, respectively.

\begin{lemma}
\label{lem:cry_parallel_eq}
Let $(G=(V,E),\psi)$ be a $\Gamma$-gain graph with a space group $\Gamma$.
Then, the dimension of the space of $(m,M)\in (\mathbb{R}^d)^V\oplus \overline{\rm Lat}(\Gamma)$ 
satisfying (\ref{eq:cry_parallel_const4}) is equal to
\[
d|V|+k-\dim_{\dR}\{P_{e,\psi}'(p)\mid e\in E\}
\]
where $k=\dim_{\dR} \overline{\rm Lat}(\Gamma)$.
\end{lemma}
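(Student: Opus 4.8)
The plan is to mimic the reduction already used in the proof of Theorem~\ref{thm:parallel_eq} and Proposition~\ref{prop:inf}, namely to exhibit the solution space of (\ref{eq:cry_parallel_const4}) as the orthogonal complement of $\spa\{P_{e,\psi}'(p)\mid e\in E\}$ inside $(\mathbb{R}^d)^V\oplus\overline{\rm Lat}(\Gamma)$, and then to read off the dimension count. First I would fix the basis $B_1,\dots,B_k$ of $\overline{\rm Lat}(\Gamma)$, so that any $M\in\overline{\rm Lat}(\Gamma)$ is written uniquely as $M=\sum_{i=1}^k \lambda_i B_i$ with $\lambda\in\mathbb{R}^k$; this identifies the pair $(m,M)$ with a vector in $(\mathbb{R}^d)^V\oplus\mathbb{R}^k$, which is the ambient space in which the $U_{e,\psi}$ and $P_{e,\psi}'(p)$ live (cf.\ (\ref{eq:direction_flat_U1})(\ref{eq:direction_flat_U2})(\ref{eq:cry_direction_flat})). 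Under this identification, for $x\in U_{e,\psi}$ and a non-loop $e=(i,j)$ I would compute, using $x(i)+A_{\psi_e}x(j)=0$, $x(\ast)=-b(x(i),t_{\psi_e})$, and the definition (\ref{eq:cry_bi}) of $b_i$,
\begin{align*}
\langle (m,M),x\rangle
&=\langle m(i),x(i)\rangle+\langle m(j),x(j)\rangle+\langle \lambda,x(\ast)\rangle \\
&=\langle m(i),x(i)\rangle-\langle m(j),A_{\psi_e}^{-1}x(i)\rangle-\sum_{i'} \lambda_{i'}\langle x(i),B_{i'}B_{\Gamma}^{-1}t_{\psi_e}\rangle \\
&=\langle m(i)-A_{\psi_e}m(j)-M B_{\Gamma}^{-1}t_{\psi_e},\,x(i)\rangle.
\end{align*}
Here I used that $\sum_{i'}\lambda_{i'}B_{i'}B_{\Gamma}^{-1}t_{\psi_e}=MB_{\Gamma}^{-1}t_{\psi_e}$, and the point is that $B_{\Gamma}^{-1}t_{\psi_e}=z_{\psi_e}+c_{\psi_e}$ by the standard-form description of $\Gamma$, so $MB_{\Gamma}^{-1}t_{\psi_e}=M(z_{\psi_e}+c_{\psi_e})$. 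The loop case is analogous, with $x(i)=(I_d-A_{\psi_e})\alpha$ and the same manipulation.

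Next I would observe that, because $p$ is $\Gamma$-generic (equivalently, because $\{p(v)\}$ together with the lattice data is generic), the hyperplane slice defining $P_{e,\psi}'(p)$ inside $U_{e,\psi}$ is exactly $\{x\in U_{e,\psi}\mid \langle p(i)-(A_{\psi_e}p(j)+t_{\psi_e}),x(i)\rangle=0\}$, and that the displayed inner-product computation shows: $(m,M)$ satisfies (\ref{eq:cry_parallel_const4}) for the edge $e$ if and only if $\langle (m,M),x\rangle=0$ for every $x\in P_{e,\psi}'(p)$. Indeed, (\ref{eq:cry_parallel_const4}) says $\langle m(i)-A_{\psi_e}m(j)-M(z_{\psi_e}+c_{\psi_e}),\alpha\rangle=0$ for all $\alpha\perp (p(i)-A_{\psi_e}p(j)-t_{\psi_e})$, i.e.\ $m(i)-A_{\psi_e}m(j)-M(z_{\psi_e}+c_{\psi_e})\in\spa\{p(i)-A_{\psi_e}p(j)-t_{\psi_e}\}$; and since $x(i)$ ranges over the codimension-one subspace of $\mathbb{F}^d$ orthogonal to $p(i)-A_{\psi_e}p(j)-t_{\psi_e}$ as $x$ ranges over $P_{e,\psi}'(p)$ (this is where the $(d-1)$-dimensionality of $P_{e,\psi}'(p)$, and genericity of $p$ so that $A_{e,\psi}$-type spaces meet the hyperplane transversally, enter), the two conditions coincide. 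Quantifying over all $e\in E$, the solution space of (\ref{eq:cry_parallel_const4}) is precisely $\spa\{P_{e,\psi}'(p)\mid e\in E\}^{\perp}$.

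Finally, since $(\mathbb{R}^d)^V\oplus\overline{\rm Lat}(\Gamma)$ has dimension $d|V|+k$, taking orthogonal complements gives
\[
\dim_{\dR}\{(m,M)\text{ satisfying }(\ref{eq:cry_parallel_const4})\}=d|V|+k-\dim_{\dR}\{P_{e,\psi}'(p)\mid e\in E\},
\]
which is the claim. The step I expect to be the main (though still routine) obstacle is the careful bookkeeping in the middle paragraph: verifying that $x(i)$ genuinely sweeps out the full orthogonal hyperplane of $p(i)-A_{\psi_e}p(j)-t_{\psi_e}$ inside $\mathbb{F}^d$, rather than a proper subspace, so that the slice condition on $U_{e,\psi}$ is equivalent to the parallel condition on $(m,M)$ and not merely implied by it; this is exactly where one needs $p$ to be $\Gamma$-generic and needs $U_{e,\psi}$ (equivalently the $A_{e,\psi}$ component) to project onto $\mathbb{F}^d$ surjectively in the relevant coordinate, and it is the analogue of the transversality remark in the proof of Theorem~\ref{theorem:direction}. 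Everything else is direct linear algebra and the translation between $t_{\psi_e}$ and $(z_{\psi_e}+c_{\psi_e})$ via the lattice basis.
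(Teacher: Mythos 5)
Your proposal is correct and follows essentially the same route as the paper's proof: parametrize $\overline{\rm Lat}(\Gamma)$ by the basis $B_1,\dots,B_k$ to identify $(m,M)$ with a vector of $(\mathbb{R}^d)^V\oplus\mathbb{R}^k$, compute $\langle (m,M),x\rangle=\langle m(i)-A_{\psi_e}m(j)-M(z_{\psi_e}+c_{\psi_e}),x(i)\rangle$ for $x\in P_{e,\psi}'(p)$ using $b_\ell$ and $t_{\psi_e}=B_\Gamma(z_{\psi_e}+c_{\psi_e})$, conclude that the solution space of (\ref{eq:cry_parallel_const4}) is the orthogonal complement of $\spa\{P_{e,\psi}'(p)\mid e\in E\}$, and count dimensions. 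The only cosmetic difference is your appeal to $\Gamma$-genericity of $p$, which is not needed (and not assumed in the lemma): for a non-loop edge $U_{e,\psi}$ projects onto the $x(i)$-coordinate surjectively for every $p$, so $x(i)$ always sweeps the relevant orthogonal hyperplane (or all of $\mathbb{R}^d$ when $p(i)-A_{\psi_e}p(j)-t_{\psi_e}=0$, in which case both conditions force the vector to vanish).
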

\begin{proof}
Since $\{B_1,\dots, B_k\}$ is a basis of $\overline{\rm Lat}(\Gamma)$, $\overline{\rm Lat}(\Gamma)$ is parameterized by $k$ parameters 
$a=(a_1,\dots,a_k)^{\top}\in \mathbb{R}^k$ such that $\overline{\rm Lat}(\Gamma)=\{\sum_{1\leq \ell\leq k}a_{\ell}B_{\ell}\mid a\in \dR^k\}$.
In other words, the space of $(m,M)$ satisfying (\ref{eq:cry_parallel_const4}) is 
isomorphic to the space of $(m,a)\in (\mathbb{R}^d)^V\oplus \mathbb{R}^k$ 
satisfying
\begin{equation}
\label{eq:cry_parallel_const6}
\langle m(i)- (A_{\psi_e}m (j)+\sum_{1\leq \ell\leq k} a_{\ell}B_{\ell} (z_{\psi_e}+c_{\psi_e})),
\alpha\rangle=0 \quad \forall \alpha\in \mathbb{R}^d
\text{ such that } \langle p(i)-(A_{\psi_e}p(j)+t_{\psi_e}),\alpha\rangle=0
\end{equation}
for every $e=(i,j)\in E$. 
Observe then that $(m,a)\in (\mathbb{R}^d)^{V}\oplus \mathbb{R}^k$ satisfies (\ref{eq:cry_parallel_const6}) if and only if 
$(m,a)$ is in the orthogonal complement of $\spa\{P_{e,\psi}'(p)\mid e\in E\}$, because,
for any $x\in P_{e,\psi}'(p)$, we have
\begin{align*}
\langle (m,a), x\rangle&=\langle m(i),x(i)\rangle + \langle m(j),x(j)\rangle + \langle a, x(\ast)\rangle \\
&= \langle m(i), x(i)\rangle - \langle m(j), A_{\psi_e}^{-1}x(i)\rangle - \langle a, b(x(i),t_{\psi_e})\rangle \\
&= \langle m(i)-A_{\psi_e} m(j), x(i)\rangle-\sum_{\ell} a_{\ell}b_{\ell}(x(i),t_{\psi_e}), \\
&= \langle m(i)-A_{\psi_e} m(j), x(i)\rangle-\sum_{\ell} a_{\ell}\langle B_{\ell}(z_{\psi_e}+c_{\psi_e}), x(i)\rangle \\
&=\langle m(i)-A_{\psi_e} m(j)-\sum_{\ell} a_{\ell} B_{\ell}(z_{\psi_e}+c_{\psi_e}), x(i)\rangle 
\end{align*}
with $\langle p(i)-(A_{\psi_e}p(j)+t_{\psi_e}),x(i)\rangle=0$
\end{proof}

Since the set of trivial relocations forms a linear space of dimension  
$(\dim_{\dR} \bigcap_{\gamma\in \Gamma}{\rm ker}(A_{\gamma}-I_d))+1$,  
Lemmas~\ref{lem:cry_collapse} and~\ref{lem:cry_parallel_eq} imply the following.

\begin{corollary}
\label{coro:cry_parallel_eq}
Let $(H,p)$ be a $\Gamma$-symmetric framework with a space group $\Gamma$, 
and $(H/\Gamma,\psi)$ be the quotient $\Gamma$-gain graph of $H$.
Then, $(H,p)$ is symmetrically robust if and only if 
\[
 \dim_{\dR}\{P_{e,\psi}'(p/\Gamma)\mid e\in E(H/\Gamma)\}=d|V/\Gamma|+k-1-
\dim_{\dR} \bigcap_{\gamma\in \Gamma}{\rm} {\rm ker} (A_{\gamma}-I_d). 
\]
\end{corollary}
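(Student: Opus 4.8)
The plan is to read the statement off from Lemma~\ref{lem:cry_collapse} and Lemma~\ref{lem:cry_parallel_eq}; the only content left in the corollary is the translation between the space of pairs $(m,M)$ appearing in those lemmas and the space of $\Gamma$-symmetric relocations, together with the correct count of trivial relocations.

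First I would recall, via Proposition~\ref{prop:cry_parallel1}, that $(H,p)$ is symmetrically robust exactly when every $\Gamma$-symmetric relocation of $(H,p)$ is a linear combination of the translations $m_t$ (for $t\in\bigcap_{\gamma\in\Gamma}{\rm ker}(A_\gamma-I_d)$) and the dilation $m_{\rm di}$. As in (\ref{eq:parallel_const4}), with $A_\gamma$ in place of $\gamma$, the space of these trivial relocations has dimension $1+\dim_{\dR}\bigcap_{\gamma\in\Gamma}{\rm ker}(A_\gamma-I_d)$, the extra dimension coming from the dilation, which is always a $\Gamma$-symmetric relocation (with $M=B_\Gamma$); this uses the standing non-degeneracy that $\{p(v)\mid v\in V(H)\}$ is not a single orbit, which is implicit in the statement. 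By Lemma~\ref{lem:cry_collapse}, $(H,p)$ fails to be symmetrically robust precisely when the linear space $S$ of pairs $(m,M)\in(\mathbb{R}^d)^{V(H)}\oplus\overline{\rm Lat}(\Gamma)$ satisfying (\ref{eq:cry_parallel_const1}) and (\ref{eq:cry_symmetric_motion}) contains a pair whose first component is a non-trivial relocation.

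Next I would check that the projection $(m,M)\mapsto m$ is injective on $S$: if $(0,M)\in S$, then (\ref{eq:cry_symmetric_motion}) forces $M(z_\gamma+c_\gamma)=0$ for every $\gamma\in\Gamma$, and since $\Gamma$ contains every lattice translation $B_\Gamma z$ ($z\in\mathbb{Z}^d$), the vectors $z_\gamma+c_\gamma$ exhaust $\mathbb{Z}^d$, forcing $M=0$. Since $S$ also contains the subspace $\spa\{(m_{\rm di},B_\Gamma)\}+\{(m_t,0)\mid t\in\bigcap_{\gamma\in\Gamma}{\rm ker}(A_\gamma-I_d)\}$, which projects onto the space of trivial relocations, Lemma~\ref{lem:cry_collapse} together with this injectivity shows that $(H,p)$ is symmetrically robust if and only if $\dim_{\dR} S=1+\dim_{\dR}\bigcap_{\gamma\in\Gamma}{\rm ker}(A_\gamma-I_d)$.

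Finally, as spelled out in the paragraphs preceding Lemma~\ref{lem:cry_parallel_eq}, the conditions (\ref{eq:cry_parallel_const1})--(\ref{eq:cry_symmetric_motion}) on a $\Gamma$-symmetric relocation of $(H,p)$ are equivalent, after passing to orbit representatives, to the system (\ref{eq:cry_parallel_const4}) for the quotient $\Gamma$-gain graph $(H/\Gamma,\psi)$ with $p/\Gamma$; hence Lemma~\ref{lem:cry_parallel_eq} gives $\dim_{\dR} S=d|V/\Gamma|+k-\dim_{\dR}\{P_{e,\psi}'(p/\Gamma)\mid e\in E(H/\Gamma)\}$ with $k=\dim_{\dR}\overline{\rm Lat}(\Gamma)$. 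Equating the two expressions for $\dim_{\dR} S$ and solving for $\dim_{\dR}\{P_{e,\psi}'(p/\Gamma)\mid e\in E(H/\Gamma)\}$ yields the claimed identity. I do not expect a genuine obstacle: the two lemmas carry all the weight, and the only point needing care is the injectivity of $(m,M)\mapsto m$ on $S$, which is exactly what makes the dimension formula of Lemma~\ref{lem:cry_parallel_eq} directly applicable.
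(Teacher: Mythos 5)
Your proposal is correct and follows essentially the same route as the paper, which likewise deduces the corollary directly from Lemma~\ref{lem:cry_collapse}, Lemma~\ref{lem:cry_parallel_eq}, and the count $1+\dim_{\dR}\bigcap_{\gamma\in\Gamma}\ker(A_\gamma-I_d)$ for the (symmetric) trivial relocations. Your explicit check that $(m,M)\mapsto m$ is injective on the solution space (via the lattice translations forcing $M=0$ when $m=0$) is a point the paper leaves implicit but which is indeed what makes the dimension bookkeeping of Lemma~\ref{lem:cry_parallel_eq} directly applicable.
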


\subsubsection{Combinatorial characterization}
By Corollary~\ref{coro:cry_parallel_eq}, 
it now suffices to analyze the {\em $\Gamma$-symmetric parallel redrawing polymatroid} of 
a $\Gamma$-gain graph $(G=(V,E),\psi)$,
that is, the linear polymatroid  with linear representation 
$e \mapsto P_{e,\psi}'(p)$.
The following theorem provides a combinatorial characterization of this polymatroid.

We say that the lattice of $\Gamma$ is  {\em generic} if
$B_{\Gamma}$ is expressed by $B_{\Gamma}=\sum_{i=1}^k s_{i}B_{i}$ 
such that $\{s_1,\dots, s_k\}$ is  algebraically independent 
over $\mathbb{Q}_{\Gamma_1}$. 
For a discrete point group ${\cal P}$, almost all space groups $\Gamma$ with $\Gamma_1={\cal P}$ have generic lattices.

\begin{theorem}
\label{theorem:cry_direction}
Let $(G=(V,E),\psi)$ be a $\Gamma$-gain graph for a space group $\Gamma$ with a generic lattice,
and $k=\dim_{\dR} \overline{\rm Lat}(\Gamma)$. 
Define $h_{\Gamma}$ by 
\[
 h_{\Gamma}(F)=d|V(F)|-dc(F)+\dim_{\dR}\{U_{e,\psi_F^{\circ}}\mid e\in E(G_F^{\circ}) \})-1,
\]
where $(G_F^{\circ},\psi_F^{\circ})$ is the compressed graph of $(G,\psi)$ by $F$ (defined 
in~\textsection$\ref{sec:unified}$).
Then, for almost all $p:V\rightarrow \mathbb{R}^d$,
\[
 \dim_{\dR}\{P_{e,\psi}'(p)\mid e\in E \}=\hat{h}_{\Gamma}(E).
\] 
In other words, the $\Gamma$-symmetric parallel redrawing polymatroid is equal to 
the polymatroid induced by $h_{\Gamma}$.
\end{theorem}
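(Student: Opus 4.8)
The plan is to follow the proof of Theorem~\ref{theorem:direction} almost verbatim, with Theorem~\ref{thm:unified} playing the role that Theorem~\ref{thm:main_poly} played there: I would exhibit the family $\{P'_{e,\psi}(p)\mid e\in E\}$ as the intersection of the subspaces $U_{e,\psi}$ with a single suitably generic hyperplane, and then read off the dimension from the Dilworth‑truncation formula for linear polymatroids (Theorem~\ref{theorem:truncation}). First I would fix a field $\mathbb{F}$ over which all the $U_{e,\psi}$ of (\ref{eq:direction_flat_U1})(\ref{eq:direction_flat_U2}) are defined --- e.g.\ the field generated by $\mathbb{Q}$, the entries of $\Gamma_1$ and of $B_1,\dots,B_k$, and the (rational) translation parts $c_{\psi_e}$; the point is that $b_i(x(i),t_{\psi_e})=\langle x(i),B_i(z_{\psi_e}+c_{\psi_e})\rangle$ is intrinsic to $\Gamma$ and does not involve the lattice coefficients $s_i$, so this $\mathbb{F}$ works. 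Theorem~\ref{thm:unified} then says that $F\mapsto d|V(F)|-dc(F)+\dim_{\mathbb{F}}\{U_{e,\psi_F^{\circ}}\mid e\in E(G_F^{\circ})\}=h_\Gamma(F)+1$ is exactly $\dim_{\mathbb{F}}\{U_{e,\psi}\mid e\in F\}$, i.e.\ the rank function of the linear polymatroid on $E$ with representation $e\mapsto U_{e,\psi}$.

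Next I would build the hyperplane. Using the generic‑lattice hypothesis, write $B_\Gamma=\sum_{i=1}^{k}s_iB_i$ with $\{s_1,\dots,s_k\}$ algebraically independent over $\mathbb{F}$, so that $\sum_i s_iB_iB_\Gamma^{-1}=I_d$; put $c=(s_1,\dots,s_k)^{\top}$ and set
\[
 \mathcal{H}=\Bigl\{x\in(\mathbb{R}^d)^V\oplus\mathbb{R}^k\ \Bigm|\ \sum_{v\in V}\langle p(v),x(v)\rangle+\langle c,x(\ast)\rangle=0\Bigr\}.
\]
For a non‑loop edge $e=(i,j)$ and any $x\in U_{e,\psi}$ one has $x(j)=-A_{\psi_e}^{-1}x(i)$, $x(V\setminus\{i,j\})=0$ and $x(\ast)=-b(x(i),t_{\psi_e})$; using that $A_{\psi_e}$ is orthogonal and $\sum_i s_iB_iB_\Gamma^{-1}=I_d$, a one‑line computation yields
\[
 \sum_{v\in V}\langle p(v),x(v)\rangle+\langle c,x(\ast)\rangle=\langle p(i)-(A_{\psi_e}p(j)+t_{\psi_e}),\,x(i)\rangle ,
\]
so $U_{e,\psi}\cap\mathcal{H}=P'_{e,\psi}(p)$ by (\ref{eq:cry_direction_flat}); the loop case (with $x(i)=(I_d-A_{\psi_e})\alpha$) is the same computation. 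For almost all $p$, the coordinates of $p$ together with $s_1,\dots,s_k$ are algebraically independent over $\mathbb{F}$, so $\mathcal{H}$ is a generic hyperplane in the sense of Theorem~\ref{theorem:truncation}. Applying that theorem, then rewriting $\dim_{\mathbb{R}}\{U_{e,\psi}\mid e\in E_i\}=h_\Gamma(E_i)+1$ by Theorem~\ref{thm:unified}, and finally using (\ref{eq:hat}):
\begin{align*}
\dim_{\mathbb{R}}\{P'_{e,\psi}(p)\mid e\in E\}
&=\min\Bigl\{\textstyle\sum_i\bigl(\dim_{\mathbb{R}}\{U_{e,\psi}\mid e\in E_i\}-1\bigr)\ \Bigm|\ \{E_1,\dots,E_m\}\ \text{partition of}\ E\Bigr\}\\
&=\min\Bigl\{\textstyle\sum_i h_\Gamma(E_i)\ \Bigm|\ \{E_1,\dots,E_m\}\ \text{partition of}\ E\Bigr\}=\hat{h}_\Gamma(E).
\end{align*}

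The hard part will be the middle step. I must choose the linear functional cutting out $\mathcal{H}$ so that its restriction to each $U_{e,\psi}$ reproduces \emph{exactly} the extra constraint $\langle p(i)-(A_{\psi_e}p(j)+t_{\psi_e}),x(i)\rangle=0$ in the definition of $P'_{e,\psi}(p)$; this is where the unitarity of $\Gamma_1$ with respect to $b$ and the specific shape $b_i(\alpha,t)=\langle\alpha,B_iB_\Gamma^{-1}t\rangle$ of the bilinear forms are essential, and it is also the reason one needs the special "extra'' basis vector $c=(s_1,\dots,s_k)^\top$ (so that $\sum_i s_iB_i=B_\Gamma$). The second delicate point is to verify that the coefficients of $\mathcal{H}$, which mix the free coordinates of $p$ with the lattice coefficients $s_i$, are \emph{jointly} algebraically independent over $\mathbb{F}$; this is precisely where the generic‑lattice hypothesis enters and what forces the "for almost all $p$'' in the statement. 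Once these are settled, the theorem is a purely formal consequence of Theorems~\ref{thm:unified} and~\ref{theorem:truncation}, exactly as Theorem~\ref{theorem:direction} is deduced from Theorems~\ref{thm:main_poly} and~\ref{theorem:truncation}.
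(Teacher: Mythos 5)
Your proposal is correct and follows essentially the same route as the paper's proof: identify $\dim\{U_{e,\psi}\mid e\in F\}$ with $h_{\Gamma}(F)+1$ via Theorem~\ref{thm:unified}, cut with the single hyperplane $\sum_{v}\langle p(v),x(v)\rangle+\langle s,x(\ast)\rangle=0$ (where $B_{\Gamma}=\sum_i s_iB_i$) so that $U_{e,\psi}\cap\mathcal{H}=P'_{e,\psi}(p)$, and conclude by the Dilworth-truncation formula of Theorem~\ref{theorem:truncation} together with (\ref{eq:hat}). The computation you defer is exactly the one the paper carries out, using $t_{\psi_e}=(\sum_i s_iB_i)(z_{\psi_e}+c_{\psi_e})$ and the orthogonality of $A_{\psi_e}$, and your genericity discussion matches the paper's use of the generic-lattice hypothesis.
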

\begin{proof}
Let $h_{\Gamma}'=h_{\Gamma}+1$. 
Note that that the linear polymatroid $\mathbf{LP}(E,\Psi)$ of the linear representation 
$\Psi:e\mapsto U_{e,\psi}$  is a special case of
those given in \textsection\ref{sec:unified}, 
and hence Theorem~\ref{thm:unified} implies that 
$h_{\Gamma}'(F)=\dim_{\dR}\{U_{e,\psi}\mid e\in F\}$ for all $F\subseteq E$.

Since the lattice of $\Gamma$ is generic,
a lattice basis $B_{\Gamma}$ is written by $B_{\Gamma}=\sum_{i=1}^k s_iB_i$,
where $\{s_1,\dots, s_k\}$ is algebraically independent over $\mathbb{Q}_{\Gamma_1}$.
Let us take any $p:V\rightarrow \mathbb{R}^d$ such that
the coordinates of the image and $s_1,\dots, s_k$ are algebraically independent over $\mathbb{Q}_{\Gamma_1}$.
We define a hyperplane ${\cal H}$ of $(\mathbb{R}^d)^V\oplus \mathbb{R}^k$ by 
\begin{equation*}
{\cal H}=\{x\in (\mathbb{R}^d)^V\oplus \mathbb{R}^k \mid \sum_{v\in V}\langle p(v),x(v)\rangle+\langle s,x(\ast)\rangle =0 \}.
\end{equation*}
Then, observe that $P_{e,\psi}'(p)=U_{e,\psi}\cap {\cal H}$ for every $e\in E$, since, 
for any  $e=(i,j)\in E$ and any $x\in U_{e,\psi}$, we have 
\begin{align*}
\sum_{v\in V}\langle p(v),x(v)\rangle+\langle s,x(\ast)\rangle&=\langle p(i),x(i)\rangle+\langle p(j),x(j)\rangle+\langle s,x(\ast)\rangle \\
&=\langle p(i),x(i)\rangle+\langle p(j),-A_{\psi_e}^{-1}x(i)\rangle+\langle s, -b(x(i),t_{\psi_e})\rangle \\
&=\langle p(i),x(i)\rangle-\langle A_{\psi_e} p(j),x(i)\rangle-\sum_{\ell}s_{\ell}\langle 
B_{\ell}(z_{\psi_e}+c_{\psi_e}), x(i)\rangle \\
&=\langle p(i)-(A_{\psi_e}p(j)+t_{\psi_e}),x(i)\rangle, 
\end{align*}
by $t_{\psi_e}=B(z_{\psi_e}+c_{\psi_e})=(\sum_i s_iB_i)(z_{\psi_e}+c_{\psi_e})$.
Therefore, as the coordinates of the image of $p$ and $s_1,\dots, s_k$ are algebraically independent over  
$\mathbb{Q}_{\Gamma_1}$,
we conclude that 
the $\Gamma$-symmetric parallel redrawing polymatroid of $(G,\psi)$ 
is obtained from  $\mathbf{LP}(E,\Psi)$ by a Dilworth truncation, given in \textsection\ref{subsec:linear_truncation}. 
By Theorem~\ref{theorem:truncation}, we finally obtain
\begin{align*}
&\dim_{\dR}\{P_{e,\psi}'(p)\mid e\in E\} \\
&=
\min\{ \sum_i(\dim_{\dR}\{ U_{e,\psi}\mid e\in E_i \})-1) \mid  \text{a partition} \{E_1,\dots, E_k\} 
\text{ of } E\}  \\
&=\min\{ \sum_i (h_{\Gamma}'(E_i)-1) \mid \text{ a partition } \{E_1,\dots, E_k\} \text{ of } E \} \\
&=\min\{ \sum_i h_{\Gamma}(E_i) \mid \text{ a partition } \{E_1,\dots, E_k\} \text{ of } E \} \\
&=\hat{h}_{\Gamma}(E). 
\end{align*}
%
\end{proof}

Combining Corollary~\ref{coro:cry_parallel_eq} and Theorem~\ref{theorem:cry_direction},
we complete characterizing the symmetric robustness of drawings with crystallographic symmetry.
\begin{corollary}
\label{cor:cry_parallel}
Let $H$ be a $\Gamma$-symmetric graph for a space group $\Gamma$ with a generic lattice, 
and $(G,\psi)$ be the quotient $\Gamma$-gain graph of $H$.
For almost all $\Gamma$-symmetric $p:V(H)\rightarrow \mathbb{R}^d$,  
$(H,p)$ is symmetrically robust if and only if 
the $\Gamma$-gain graph obtained from $G$ by replacing each edge $e\in E(G)$
by $d-1$ parallel copies contains 
an edge subset $I$ satisfying the following counting conditions;
\begin{itemize}
\item $|I|=d|V|+k-1-\dim_{\dR}\bigcap_{\gamma\in \Gamma}{\rm ker}(A_\gamma-I_d)$;
\item $|F|\leq d|V(F)|-dc(F)-1+\dim_{\dR}\{U_{e,\psi_F^{\circ}}\mid e\in E(G_F^{\circ}) \}$ 
for any nonempty  $F\subseteq I$,
\end{itemize}
where $(G_F^{\circ},\psi_F^{\circ})$ is the compressed graph of $G$ by $F$ and  
$k=\dim_{\dR}\overline{\rm Lat}(\Gamma)$.
\end{corollary}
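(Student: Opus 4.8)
The plan is to concatenate the two structural results already established and then convert the resulting dimension equality into the matroidal counting condition, following the same template by which Corollary~\ref{cor:parallel} was deduced from Theorem~\ref{thm:parallel_eq} and Theorem~\ref{theorem:direction} in the point‑group case. First I would apply Corollary~\ref{coro:cry_parallel_eq}: for a $\Gamma$‑symmetric framework $(H,p)$ with quotient gain graph $(G,\psi)=(H/\Gamma,\psi)$, symmetric robustness is equivalent to $\dim_{\dR}\{P_{e,\psi}'(p/\Gamma)\mid e\in E(G)\}=d|V/\Gamma|+k-1-\dim_{\dR}\bigcap_{\gamma\in\Gamma}{\rm ker}(A_\gamma-I_d)$, where $k=\dim_{\dR}\overline{\rm Lat}(\Gamma)$; call this right‑hand side $T$. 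Then, for almost all $\Gamma$‑symmetric $p$, Theorem~\ref{theorem:cry_direction} identifies the left‑hand side with $\hat h_\Gamma(E(G))$. Hence $(H,p)$ is symmetrically robust if and only if $\hat h_\Gamma(E(G))=T$. Moreover $T$ is always an upper bound for $\hat h_\Gamma(E(G))$: by Lemma~\ref{lem:cry_parallel_eq} the solution space of (\ref{eq:cry_parallel_const4}) has dimension $d|V|+k-\dim_{\dR}\{P_{e,\psi}'(p)\mid e\in E(G)\}$ and always contains the $\bigl(1+\dim_{\dR}\bigcap_{\gamma}{\rm ker}(A_\gamma-I_d)\bigr)$‑dimensional space of trivial relocations, so $\hat h_\Gamma(E(G))\le T$ for every $p$, and robustness is equivalent to $\hat h_\Gamma(E(G))\ge T$.

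The remaining, combinatorial step is to realize $\hat h_\Gamma(E(G))$ as the rank of the full ground set of a matroid on $G^{(d-1)}$, the graph obtained from $G$ by replacing every edge by $d-1$ parallel copies. I would choose $d-1$ vectors generically inside each (at most $(d-1)$‑dimensional) space $P_{e,\psi}'(p)$; since $p$ is generic these lie in generic position in the sense of (\ref{eq:generic_position}), so by Theorem~\ref{theorem:flat_matroid} the induced linear matroid $\mathbf M'$ on $E(G^{(d-1)})$ satisfies $\rank(E(G^{(d-1)}))=\dim_{\dR}\{P_{e,\psi}'(p)\mid e\in E(G)\}=\hat h_\Gamma(E(G))$, and a subset $I$ is independent in $\mathbf M'$ exactly when $|F|\le\dim_{\dR}\{P_{e,\psi}'(p)\mid e\in\pi(F)\}$ for every nonempty $F\subseteq I$, where $\pi(F)\subseteq E(G)$ is the set of edges having a copy in $F$. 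Applying Theorem~\ref{theorem:cry_direction} to the subgraph induced by $\pi(F)$, the right‑hand side equals $\hat h_\Gamma(\pi(F))$, and, since $\hat h_\Gamma$ is the Dilworth truncation of $h_\Gamma$, the reasoning of \textsection\ref{subsec:induced_matroids} shows this is equivalent to $|F|\le h_\Gamma(\pi(F))=d|V(F)|-dc(F)-1+\dim_{\dR}\{U_{e,\psi_F^{\circ}}\mid e\in E(G_F^{\circ})\}$, the quantities $V(F),c(F),G_F^{\circ}$ being read off through $\pi$ and the extra parallel copies contributing only redundant loops. Because subsets of independent sets are independent, $\mathbf M'$ contains an independent set of size $T$ if and only if $\hat h_\Gamma(E(G))=\rank(E(G^{(d-1)}))\ge T$; together with the previous paragraph this is precisely the claimed equivalence, and the count conditions on $I$ are just the statement that $I$ is independent of size $T$.

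I expect the main obstacle to be the verification that the multiplicity $d-1$ is exactly right. Because the subspaces $P_{e,\psi}'(p)$ attached to loops can have dimension strictly below $d-1$, one must check that over‑assigning $d-1$ generic vectors to such an edge contributes only linearly dependent vectors, so that neither $\rank(E(G^{(d-1)}))=\hat h_\Gamma(E(G))$ nor the independence criterion is disturbed; and one must carry out carefully the reduction of the Lov\'asz rank formula on $E(G^{(d-1)})$ — whose parallel copies of a single edge share the same subspace — to the clean per‑subset inequality expressed through $\pi(F)$ and the compressed graph $G_F^{\circ}$. Everything else is a direct chaining of Corollary~\ref{coro:cry_parallel_eq}, Theorem~\ref{theorem:cry_direction}, Theorem~\ref{theorem:flat_matroid} and the theory of \textsection\ref{subsec:induced_matroids}, mirroring the proof of Corollary~\ref{cor:parallel}.
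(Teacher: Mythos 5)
Your proposal is correct and follows essentially the same route as the paper: Corollary~\ref{coro:cry_parallel_eq} reduces robustness to a rank condition on $\{P'_{e,\psi}(p/\Gamma)\}$, Theorem~\ref{theorem:cry_direction} identifies that rank with $\hat h_\Gamma(E)$, and the counting conditions are exactly independence of size $T$ in the matroid induced by $h_\Gamma$ on the $(d-1)$-fold parallel multigraph, obtained via Theorem~\ref{theorem:flat_matroid} and the machinery of \textsection\ref{subsec:induced_matroids}. Your extra care about loops with $\dim P'_{e,\psi}(p)<d-1$ and about reading the counts through $\pi(F)$ and $G_F^{\circ}$ just fills in steps the paper leaves implicit.
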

%

\begin{remark}
As we have remarked, $\dim_{\dR}\{U_{e,\psi_F^{\circ}}\mid e\in E(G_F^{\circ})\}$
can be deterministically computed in polynomial time.
Thus, Corollary~\ref{cor:cry_parallel} gives a good characterization of the symmetric robustness.
Checking the counting condition can be reduced to a mathematical programming
given in (\ref{eq:poly_trun}), which can be deterministically done in polynomial time
 (see e.g.~\cite[Theorem 48.4]{Schrijver}).  \qed
\end{remark}

\subsection{Symmetry-forced rigidity with space group symmetry}
\label{subsec:cry_rigidity}
Let $C_{\pi/2}$ be the $2\times 2$-matrix representing the 4-fold rotation about the origin in $\mathbb{R}^2$.
In \textsection\ref{subsec:rigidity}, we have seen that the idea of characterizing 
robust drawings with point group symmetry can be directly applied to
characterizing the symmetry-forced infinitesimal rigidity of symmetric 2-dimensional frameworks, 
if the underlying point group commutes with $C_{\pi/2}$.
Here, we show an analogous fact in space groups.

The space group $\Gamma$ we can cope with here 
is the case when the linear part $\Gamma_1$ is a group of rotations 
about the origin.
More specifically, $\Gamma$ falls into five crystallographic group types, called 
${\sf p1}, {\sf p2}, {\sf p3}, {\sf p4}, {\sf p6}$ in terms of {\em Crystallographic notation}.
In the subsequent discussion, $\Gamma$ is assumed to be one of ${\sf p1}, {\sf p2}, {\sf p3}, {\sf p4}, {\sf p6}$.

Let $(H,p)$ be a $\Gamma$-symmetric framework
with a $\Gamma$-symmetric graph $H$ (with a specific free action $\theta$) 
and a $\Gamma$-symmetric point-configuration $p$.
Recall that an {\em infinitesimal motion} of $(H,p)$ is defined as $m:V(H)\rightarrow \mathbb{R}^2$ satisfying
\begin{equation}
\label{eq:cry_inf}
\langle m(i)-m(j),p(i)-p(j)\rangle=0 \qquad \{i,j\}\in E(H).
\end{equation}
As in the previous subsection, we are interested in $\Gamma$-symmetric motions,
where we say that an infinitesimal motion $m$ is {\em $\Gamma$-symmetric} if
there is $M\in \overline{\rm Lat}(\Gamma)$ such that 
\begin{equation}
\label{eq:cry_sym_inf}
m(\gamma v)=A_{\gamma}m(v)+Mz_{\gamma} \qquad \forall v\in V(H), \forall \gamma\in \Gamma
\end{equation}
(where $c_{\gamma}=0$ for any $\gamma\in \Gamma$ if $\Gamma\in \{{\sf p1},{\sf p2},{\sf p3},{\sf p4},{\sf p6}\}$).
Note that the space of infinitesimal lattice motions with fixed origin is now equal to $\overline{\rm Lat}(\Gamma)$. 

It can be observed that
the infinitesimal rotation $m_r:V(H)\rightarrow \mathbb{R}^2$
defined by $m_r(v)=C_{\pi/2}p(v)$ is always a $\Gamma$-symmetric infinitesimal motion of  $(H,p)$.
To see this, let $M=C_{\pi/2}B_{\Gamma}$.
Then, since $C_{\pi/2}$ commutes with 
$A_{\gamma}=B_{\Gamma}K_{\gamma}B_{\Gamma}^{-1}$ for any $\gamma\in \Gamma$,
we have $C_{\pi/2}B_{\Gamma}K_{\gamma}B_{\Gamma}^{-1}C_{\pi/2}^{-1}=B_{\Gamma}K_{\gamma}B_{\Gamma}^{-1}$,
implying that $M=C_{\pi/2}B_{\Gamma}\in \overline{\rm Lat}(\Gamma)$.
Moreover, for any $\gamma\in \Gamma$ and $v\in V(H)$, we have
\begin{align*}
m_r(\gamma v)=C_{\pi/2}p(\gamma v)=C_{\pi/2}(A_{\gamma}p(v)+B_{\Gamma}z_{\gamma})
=A_{\gamma}m_r(v)+Mz_{\gamma},
\end{align*}
which implies that $m_r$ satisfies (\ref{eq:cry_sym_inf}) and is indeed a $\Gamma$-symmetric motion. 

Also it is easy to see that, 
for any $t\in \bigcap_{\gamma\in \Gamma}(A_{\gamma}-I_d)$, 
translation $m_t$ defined by $m_t(v)=t$ for $v\in V(H)$ is a 
$\Gamma$-symmetric motion with $M=0$.

We say that $(H,p)$ is {\em infinitesimally rigid} 
if every possible $\Gamma$-symmetric infinitesimal motion is 
a linear combination of such translations $m_t$ and $m_r$.

As usual, taking a representative vertex $v$ from each vertex orbit $\Gamma v$,
(\ref{eq:cry_inf}) is reduced to the system,
\begin{equation}
\label{eq:cry_inf2}
\langle m(i)-m(\psi_e j),p(i)-p(\psi_e j)\rangle=0
\end{equation}
over all edge orbits from $\Gamma i$ to $\Gamma j$ with the gain $\psi_e$.
Thus, the problem can be considered in a general $\Gamma$-gain graph $(G=(V,E),\psi)$ with $p:V\rightarrow \mathbb{R}^2$,
and we are asked to compute the dimension of the linear space of $(m,M)\in (\mathbb{R}^2)^V\oplus \overline{\rm Lat}(\Gamma)$ satisfying
\begin{equation}
\label{eq:cry_inf3}
\langle m(i)-(A_{\psi_e}m(j)+Mz_{\psi_e}), p(i)-(A_{\psi_e}p(j)+t_{\psi_e})\rangle =0
\quad \forall (i,j)\in E.
\end{equation}

Let $B_1,\dots, B_k\in \mathbb{R}^{d\times d}$ be a basis of $\overline{\rm Lat}(\Gamma)$.
As in (\ref{eq:cry_bi}), we shall define a bilinear function 
$b_i:\mathbb{R}^d\times \mathbb{R}^d\rightarrow \mathbb{R}$ by 
$b_i(\alpha,t)=\langle \alpha, B_iB_{\Gamma}^{-1}t\rangle$ for $(\alpha, t)\in \mathbb{R}^d\times \mathbb{R}^d$,
and define $b:\mathbb{R}^d\times \mathbb{R}^d\rightarrow \mathbb{R}^k$ by 
$b=(b_1,\dots, b_k)^{\top}$.

To analyze the system (\ref{eq:cry_inf3}), 
we shall associate a 1-dimensional linear space with each $e=(i,j)\in E$ as follows:
\begin{equation}
R_{e,\psi}'(p)=U_{e,\psi}\cap \{x\in (\mathbb{R}^2)^V\oplus \mathbb{R}^k\mid  
\langle C_{\pi/2}(p(i)-(A_{\psi_e} p(j)+t_{\psi_e})),x(i)\rangle=0\},
\end{equation}
where $U_{e,\psi}$ is as defined  in (\ref{eq:direction_flat_U1})(\ref{eq:direction_flat_U2}).
Then, applying the same proof as that of Lemma~\ref{lem:cry_parallel_eq}, it is easy to check  the following.
\begin{lemma}
\label{lem:cry_rigidity1}
Let $\Gamma$ be a 2-dimensional space group whose linear part  $\Gamma_1$ is a group of rotations,
$(G=(V,E),\psi)$ a $\Gamma$-gain graph, and $p:V\rightarrow \mathbb{R}^2$.
Then, the space of $(m,M)\in (\mathbb{R}^d)^V\oplus \overline{\rm Lat}(\Gamma)$ satisfying 
(\ref{eq:cry_inf3}) is equal to
\[
 2|V|+k-\dim_{\dR}\{R_{e,\psi}'(p)\mid e\in E\}.
\]
where $k=\dim_{\dR} \overline{\rm Lat}(\Gamma)$.
\end{lemma}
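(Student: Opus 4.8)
The statement of Lemma~\ref{lem:cry_rigidity1} is the exact rigidity-analogue of Lemma~\ref{lem:cry_parallel_eq}, with the direction-type orthogonality condition $\langle p(i)-(A_{\psi_e}p(j)+t_{\psi_e}),x(i)\rangle=0$ replaced by the length-type condition obtained by rotating the bar vector by $C_{\pi/2}$. So the plan is to follow the proof of Lemma~\ref{lem:cry_parallel_eq} verbatim, changing only the inner-product bookkeeping. First I would parameterize $\overline{\rm Lat}(\Gamma)$ by $a=(a_1,\dots,a_k)^\top\in\mathbb{R}^k$ via $M=\sum_\ell a_\ell B_\ell$, so that the solution space of (\ref{eq:cry_inf3}) is isomorphic to the space of $(m,a)\in(\mathbb{R}^d)^V\oplus\mathbb{R}^k$ satisfying the same equation with $Mz_{\psi_e}$ replaced by $\sum_\ell a_\ell B_\ell z_{\psi_e}$.

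\textbf{Key steps.} The heart is to show that $(m,a)$ satisfies the (reparameterized) system~(\ref{eq:cry_inf3}) if and only if $(m,a)$ lies in the orthogonal complement of $\spa\{R_{e,\psi}'(p)\mid e\in E\}$; the lemma then follows since $\dim R_{e,\psi}'(p)\le 1$ and $\dim\big((\mathbb{R}^d)^V\oplus\mathbb{R}^k\big)=d|V|+k$ (here $d=2$). For a fixed non-loop $e=(i,j)$ and $x\in U_{e,\psi}$, I would compute, using $x(i)+A_{\psi_e}x(j)=0$, hence $x(j)=-A_{\psi_e}^{-1}x(i)$, and $x(\ast)=-b(x(i),t_{\psi_e})$, the pairing
\begin{align*}
\langle (m,a),x\rangle&=\langle m(i),x(i)\rangle+\langle m(j),x(j)\rangle+\langle a,x(\ast)\rangle\\
&=\langle m(i),x(i)\rangle-\langle m(j),A_{\psi_e}^{-1}x(i)\rangle-\sum_\ell a_\ell b_\ell(x(i),t_{\psi_e})\\
&=\langle m(i)-A_{\psi_e}m(j)-\mbox{$\sum_\ell$} a_\ell B_\ell z_{\psi_e},x(i)\rangle,
\end{align*}
where in the last line I used $b_\ell(x(i),t_{\psi_e})=\langle B_\ell B_\Gamma^{-1}t_{\psi_e},x(i)\rangle=\langle B_\ell z_{\psi_e},x(i)\rangle$ when $c_\gamma=0$ (true for the five types ${\sf p1},{\sf p2},{\sf p3},{\sf p4},{\sf p6}$). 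Now $x$ ranges over $R_{e,\psi}'(p)$ precisely when additionally $\langle C_{\pi/2}(p(i)-(A_{\psi_e}p(j)+t_{\psi_e})),x(i)\rangle=0$, i.e.\ $x(i)$ is a scalar multiple of $p(i)-(A_{\psi_e}p(j)+t_{\psi_e})$ (rotating by $C_{\pi/2}$ turns the orthogonality line into a spanning line in $\mathbb{R}^2$); for such $x$ the displayed pairing is a scalar multiple of $\langle m(i)-A_{\psi_e}m(j)-\sum_\ell a_\ell B_\ell z_{\psi_e},\,p(i)-(A_{\psi_e}p(j)+t_{\psi_e})\rangle$, which is exactly the left side of~(\ref{eq:cry_inf3}). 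The loop case is the same computation with the parameterization of $U_{e,\psi}$ in~(\ref{eq:direction_flat_U2}). Taking the orthogonal complement over all $e\in E$ then gives the equivalence, and dimension counting finishes the proof.

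\textbf{Main obstacle.} The only genuinely new point compared to Lemma~\ref{lem:cry_parallel_eq} is verifying that $R_{e,\psi}'(p)$ is the correct one-dimensional subspace to use — that is, that intersecting $U_{e,\psi}$ with the hyperplane $\langle C_{\pi/2}(p(i)-(A_{\psi_e}p(j)+t_{\psi_e})),x(i)\rangle=0$ really does cut out the length constraint rather than the direction constraint. This hinges on the planar identity that $C_{\pi/2}$ rotates a vector's orthogonal complement onto its span, so the condition $x(i)\perp C_{\pi/2}w$ is equivalent to $x(i)\in\spa\{w\}$; once this is in place the algebra is identical to the direction case. I would also flag explicitly the use of $c_\gamma=0$, which is what lets $b_\ell(\cdot,t_{\psi_e})$ be expressed through the integer lattice coordinate $z_{\psi_e}$ and is guaranteed by the restriction to symmorphic-with-rotational-linear-part groups ${\sf p1},{\sf p2},{\sf p3},{\sf p4},{\sf p6}$.
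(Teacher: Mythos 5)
Your proposal is correct and follows essentially the same route as the paper, which itself proves this lemma by ``applying the same proof as that of Lemma~\ref{lem:cry_parallel_eq}'': you parameterize $\overline{\rm Lat}(\Gamma)$ by the basis $B_1,\dots,B_k$, compute $\langle (m,a),x\rangle=\langle m(i)-A_{\psi_e}m(j)-\sum_\ell a_\ell B_\ell z_{\psi_e},x(i)\rangle$ for $x\in U_{e,\psi}$, and use the planar fact that $x(i)\perp C_{\pi/2}w$ is equivalent to $x(i)\in\spa\{w\}$ to identify orthogonality to $\spa\{R_{e,\psi}'(p)\mid e\in E\}$ with the system (\ref{eq:cry_inf3}). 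Your explicit flagging of $c_\gamma=0$ and of the $C_{\pi/2}$ identity matches (indeed slightly exceeds) the paper's level of detail, so nothing is missing relative to its argument.
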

\begin{theorem}
\label{thm:cry_rigidity}
Let $\Gamma$ be a 2-dimensional space group whose point group  $\Gamma_1$ is a group of rotations
and which has generic lattice  $B_{\Gamma}$,
and  $(G=(V,E),\psi)$ a $\Gamma$-gain graph.
Then, for almost all $p:V\rightarrow \mathbb{R}^2$,
$\dim_{\dR}\{R_{e,\psi}'(p)\mid e\in E\}=\hat{h}_{\Gamma}(E)$,
where $h_{\Gamma}$ is
\begin{equation*}
 h_{\Gamma}(F)=2|V(F)|-2c(F)+\dim_{\dR}\{U_{e,\psi_F^{\circ}}\mid e\in E(G_F^{\circ})\}-1 \quad (F\subseteq E).
\end{equation*}
\end{theorem}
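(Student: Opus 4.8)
The plan is to mimic exactly the proof of Theorem~\ref{theorem:direction} (and its rotation-group analogue Theorem~\ref{theorem:rigidity}), replacing the linear polymatroid $\mathbf{DP}_{\rho}(G,\psi)$ by the linear polymatroid $\mathbf{LP}(E,\Psi)$ with $\Psi:e\mapsto U_{e,\psi}$. First I would invoke Theorem~\ref{thm:unified}, which gives $h_{\Gamma}'(F):=h_{\Gamma}(F)+1=\dim_{\dR}\{U_{e,\psi}\mid e\in F\}$ for every $F\subseteq E$; thus the linear polymatroid $\mathbf{LP}(E,\Psi)$ has rank function $h_{\Gamma}'$, exactly as $\mathbf{DP}_{\rho}(G,\psi)=\mathbf{P}(f_{\rho})$ played that role in the earlier theorems.

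Next I would exhibit $R_{e,\psi}'(p)$ as the intersection of $U_{e,\psi}$ with a generic hyperplane of $(\mathbb{R}^2)^V\oplus\mathbb{R}^k$. Since the lattice of $\Gamma$ is generic, write $B_{\Gamma}=\sum_{i=1}^k s_iB_i$ with $\{s_1,\dots,s_k\}$ algebraically independent over $\mathbb{Q}_{\Gamma_1}$, and pick $p:V\to\mathbb{R}^2$ whose coordinates together with $s_1,\dots,s_k$ are algebraically independent over $\mathbb{Q}_{\Gamma_1}$. Define
\[
{\cal H}=\Bigl\{x\in (\mathbb{R}^2)^V\oplus\mathbb{R}^k\ \Bigl|\ \sum_{v\in V}\langle C_{\pi/2}p(v),x(v)\rangle+\langle s,x(\ast)\rangle=0\Bigr\}.
\]
The key computation — the same one used in Theorem~\ref{theorem:rigidity} — is that for any non-loop $e=(i,j)\in E$ and any $x\in U_{e,\psi}$, using $x(j)=-A_{\psi_e}^{-1}x(i)$, $x(\ast)=-b(x(i),t_{\psi_e})$, the fact that $C_{\pi/2}$ commutes with every element of $\Gamma_1$ (rotations), and $t_{\psi_e}=B_{\Gamma}(z_{\psi_e}+c_{\psi_e})=\sum_i s_iB_i(z_{\psi_e}+c_{\psi_e})$, one gets
\[
\sum_{v\in V}\langle C_{\pi/2}p(v),x(v)\rangle+\langle s,x(\ast)\rangle=\langle C_{\pi/2}(p(i)-(A_{\psi_e}p(j)+t_{\psi_e})),x(i)\rangle,
\]
with the analogous identity for loops. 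Hence $x\in{\cal H}$ iff $x(i)\in\spa\{p(i)-(A_{\psi_e}p(j)+t_{\psi_e})\}$, i.e. $R_{e,\psi}'(p)=U_{e,\psi}\cap{\cal H}$.

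Then, because the coordinates of $p$ and $s_1,\dots,s_k$ are algebraically independent over $\mathbb{Q}_{\Gamma_1}$, the hyperplane ${\cal H}$ is generic relative to the field over which $\mathbf{LP}(E,\Psi)$ is defined, so the matroid induced on $\{R_{e,\psi}'(p)\mid e\in E\}$ is obtained from $\mathbf{LP}(E,\Psi)$ by a Dilworth truncation (\textsection\ref{subsec:linear_truncation}). Applying Theorem~\ref{theorem:truncation} together with $\dim_{\dR}\{U_{e,\psi}\mid e\in E_i\}=h_{\Gamma}'(E_i)=h_{\Gamma}(E_i)+1$ yields
\[
\dim_{\dR}\{R_{e,\psi}'(p)\mid e\in E\}=\min\Bigl\{\textstyle\sum_i(h_{\Gamma}'(E_i)-1)\Bigr\}=\min\Bigl\{\textstyle\sum_i h_{\Gamma}(E_i)\Bigr\}=\hat{h}_{\Gamma}(E),
\]
the minimum over partitions $\{E_1,\dots,E_k\}$ of $E$, where the last equality is (\ref{eq:hat}). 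I expect the only real subtlety — the ``hard part'' — to be verifying that the chosen hyperplane ${\cal H}$ is genuinely generic in the precise sense required by Theorem~\ref{theorem:truncation}, i.e. that the needed algebraic independence of the $s_i$ and of the coordinates of $p$ over $\mathbb{Q}_{\Gamma_1}$ suffices; this is exactly the point where the genericity of the lattice is used, and it must be checked that $t_{\psi_e}$ being a $\mathbb{Q}_{\Gamma_1}$-linear combination of the $s_i$ does not create an unwanted algebraic dependency. Everything else is a transcription of the proofs of Theorems~\ref{theorem:direction} and~\ref{theorem:rigidity}, with the commuting-rotation hypothesis on $\Gamma_1$ ensuring the $C_{\pi/2}$-twist is harmless.
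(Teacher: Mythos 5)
Your overall architecture is exactly the paper's: use Theorem~\ref{thm:unified} to identify the rank of the linear polymatroid $e\mapsto U_{e,\psi}$ with $h_{\Gamma}+1$, realize $R_{e,\psi}'(p)$ as the intersection of $U_{e,\psi}$ with a generic hyperplane of $(\mathbb{R}^2)^V\oplus\mathbb{R}^k$, and finish with the Dilworth-truncation formula of Theorem~\ref{theorem:truncation}. However, there is a concrete error in how you choose the vector $s$ defining the hyperplane, and with your choice the key displayed identity is false. You set $B_{\Gamma}=\sum_i s_iB_i$, which is the choice used in Theorem~\ref{theorem:cry_direction}, where the vertex part of the hyperplane involves $p(v)$ itself. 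Here the vertex part involves $C_{\pi/2}p(v)$, and for $x\in U_{e,\psi}$ with $x(j)=-A_{\psi_e}^{-1}x(i)$ and $x(\ast)=-b(x(i),t_{\psi_e})$ one computes
\begin{equation*}
\sum_{v\in V}\langle C_{\pi/2}p(v),x(v)\rangle+\langle s,x(\ast)\rangle
=\Bigl\langle C_{\pi/2}\bigl(p(i)-A_{\psi_e}p(j)\bigr),x(i)\Bigr\rangle
-\Bigl\langle \bigl(\mbox{$\sum_{\ell}$}s_{\ell}B_{\ell}\bigr)B_{\Gamma}^{-1}t_{\psi_e},\,x(i)\Bigr\rangle .
\end{equation*}
To obtain $\langle C_{\pi/2}(p(i)-(A_{\psi_e}p(j)+t_{\psi_e})),x(i)\rangle$ you need $\sum_{\ell}s_{\ell}B_{\ell}=C_{\pi/2}B_{\Gamma}$, not $B_{\Gamma}$; with your $s$ the second term is $\langle t_{\psi_e},x(i)\rangle$ instead of $\langle C_{\pi/2}t_{\psi_e},x(i)\rangle$, so $U_{e,\psi}\cap{\cal H}\neq R_{e,\psi}'(p)$ whenever $t_{\psi_e}\neq 0$, and the truncation then computes the wrong polymatroid.

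The fix is the paper's choice: since $C_{\pi/2}$ commutes with every $A_{\gamma}$ ($\Gamma_1$ being a group of rotations in the plane), $C_{\pi/2}B_{\Gamma}\in\overline{\rm Lat}(\Gamma)$, so one may take $s$ with $\sum_i s_iB_i=C_{\pi/2}B_{\Gamma}$; the genericity of the lattice then lets one assume $\{s_1,\dots,s_k\}$ is algebraically independent over $\mathbb{Q}_{\Gamma_1}$ (multiplication by $C_{\pi/2}$ is a $\mathbb{Q}_{\Gamma_1}$-linear automorphism of $\overline{\rm Lat}(\Gamma)$, so it preserves the algebraic independence of the coefficients). With this corrected $s$ your identity holds verbatim, and the remainder of your argument --- choosing $p$ so that its coordinates together with the $s_i$ are algebraically independent over $\mathbb{Q}_{\Gamma_1}$, invoking the Dilworth truncation, and collapsing $\sum_i(h_{\Gamma}'(E_i)-1)$ to $\hat{h}_{\Gamma}(E)$ --- coincides with the paper's proof.
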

\begin{proof}
Recall that $C_{\pi/2}B_{\Gamma}\in \overline{\rm Lat}(\Gamma)$.
Hence, there is $s=(s_1,\dots, s_k)^{\top} \in \mathbb{R}^k$ such that 
$\sum_i s_iB_i=C_{\pi/2}B_{\Gamma}$.
Since the lattice is generic, 
we may assume that $\{s_1,\dots, s_k\}$ is algebraically independent over $\mathbb{Q}_{\Gamma_1}$.

Let us take any $p:V\rightarrow \mathbb{R}^2$ 
such that the coordinates of the image of $p$ and $s_1,\dots, s_k$ are algebraically independent 
over $\mathbb{Q}_{\Gamma_1}$.
We define a hyperplane ${\cal H}'$ of $(\mathbb{R}^2)^V\oplus \mathbb{R}^k$ by
\[
 {\cal H}'=\{x\in (\mathbb{R}^2)^V\oplus \mathbb{R}^k\mid \sum_{v\in V}\langle C_{\pi/2}p(v),x(v)\rangle+
\langle s,x(\ast)\rangle=0 \}.
\]
Then, it can be shown  that $R_{e,\psi}'(p)=U_{e,\psi}\cap {\cal H}'$ 
in the same analysis as the proof of Theorem~\ref{theorem:cry_direction}.
%
Also, by Theorem~\ref{thm:unified}, 
$\dim_{\dR}\{U_{e,\psi}\mid e\in F\}=(h_{\Gamma}+1)(F)$
for any $F\subseteq E$.
Since ${\cal H}'$ is generic, by Theorem~\ref{theorem:truncation}, we obtain 
$\dim_{\dR}\{R_{e,\psi}'(p)\mid e\in F\}=\hat{h}_{\Gamma}(F)$.
\end{proof}
Lemma~\ref{lem:cry_rigidity1} and Theorem~\ref{thm:cry_rigidity} imply the following.
\begin{corollary}
\label{cor:cry_rigidity}
Let $\Gamma$ be a 2-dimensional space group whose linear part  $\Gamma_1$ is a group of rotations
and whose lattice is  generic.
Let $H$ be a $\Gamma$-symmetric graph.
Then, for almost all $\Gamma$-symmetric $p:V(H)\rightarrow \mathbb{R}^2$,
$(H,p)$ is symmetry-forced infinitesimally rigid if and only if 
the quotient $\Gamma$-gain graph $(G,\psi)$ contains an edge subset $I$ satisfying the following counting conditions:
\begin{itemize}
\item $|I|=2|V|+k-1-\dim_{\dR} \bigcap_{\gamma\in \Gamma}{\rm ker}(A_{\gamma}-I_d)$;
\item $|F|\leq 2|V(F)|-2c(F)+\dim_{\dR}\{U_{e,\psi_F^{\circ}}\mid e\in E(G_F^{\circ})\}-1$
for any nonempty $F\subseteq I$,
\end{itemize}
where $(G_F^{\circ},\psi_F^{\circ})$ is the compressed graph of $G$ by $F$ and  
$k=\dim_{\dR}\overline{\rm Lat}(\Gamma)$.
\end{corollary}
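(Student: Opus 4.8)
The plan is to deduce the statement by combining Lemma~\ref{lem:cry_rigidity1}, which reduces the dimension count to the quotient gain graph, with Theorem~\ref{thm:cry_rigidity}, which identifies the relevant dimension with a Dilworth truncation, and then translating the resulting numerical equality into the stated counting conditions via the theory of matroids induced by submodular functions (\textsection\ref{subsec:induced_matroids}).

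First I would pass to the quotient. Let $(G=(V,E),\psi)=(H/\Gamma,\psi)$ be the quotient $\Gamma$-gain graph and fix a representative vertex in each vertex orbit. As explained in the discussion preceding Lemma~\ref{lem:cry_rigidity1}, a $\Gamma$-symmetric infinitesimal motion $m$ of $(H,p)$ is determined by its restriction to the representative vertices together with a matrix $M\in\overline{\rm Lat}(\Gamma)$ through (\ref{eq:cry_sym_inf}) (the consistency of this extension being exactly the defining relation $MK_\gamma=A_\gamma M$ of $\overline{\rm Lat}(\Gamma)$), and the defining system (\ref{eq:cry_inf}) reduces precisely to (\ref{eq:cry_inf3}) on $(G,\psi)$ with respect to $p/\Gamma$. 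Hence by Lemma~\ref{lem:cry_rigidity1} the space of $\Gamma$-symmetric infinitesimal motions of $(H,p)$ has dimension $2|V|+k-\dim_{\dR}\{R_{e,\psi}'(p/\Gamma)\mid e\in E\}$, where $k=\dim_{\dR}\overline{\rm Lat}(\Gamma)$.

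Next I would pin down the space of trivial motions. By the discussion in \textsection\ref{subsec:cry_rigidity} the translations $m_t$ with $t\in\bigcap_{\gamma\in\Gamma}{\rm ker}(A_\gamma-I_d)$ and the infinitesimal rotation $m_r(v)=C_{\pi/2}p(v)$ are all $\Gamma$-symmetric, and for generic $p$ (whenever the image of $p/\Gamma$ is not a single point, which holds unless $G$ has a single vertex) $m_r$ is not a linear combination of translations, so the space of trivial motions has dimension $1+\dim_{\dR}\bigcap_{\gamma\in\Gamma}{\rm ker}(A_\gamma-I_d)$. Thus $(H,p)$ is symmetry-forced infinitesimally rigid if and only if these two dimensions coincide, i.e.
\[
\dim_{\dR}\{R_{e,\psi}'(p/\Gamma)\mid e\in E\}=2|V|+k-1-\dim_{\dR}\textstyle\bigcap_{\gamma\in\Gamma}{\rm ker}(A_\gamma-I_d).
\]

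Finally I would invoke Theorem~\ref{thm:cry_rigidity}: for almost all $p$ one has $\dim_{\dR}\{R_{e,\psi}'(p/\Gamma)\mid e\in F\}=\hat h_\Gamma(F)$ for every $F\subseteq E$, with $h_\Gamma(F)=2|V(F)|-2c(F)+\dim_{\dR}\{U_{e,\psi_F^\circ}\mid e\in E(G_F^\circ)\}-1$. Since the spaces $R_{e,\psi}'(p/\Gamma)$ are $1$-dimensional, the family defines a matroid on $E$, which therefore is $\mathbf{M}(h_\Gamma)$; here $h_\Gamma$ is integer-valued, monotone and submodular because by Theorem~\ref{thm:unified} the function $h_\Gamma+1$ is the rank function of the linear polymatroid $e\mapsto U_{e,\psi}$. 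Hence its rank $\hat h_\Gamma(E)$ is the maximum size of an edge set $I$ that is independent in $\mathbf{M}(h_\Gamma)$, and by the Edmonds characterization recalled in \textsection\ref{subsec:induced_matroids} such sets are exactly those with $|X|\le h_\Gamma(X)$ for every nonempty $X\subseteq I$. Comparing with the displayed equality yields the two counting conditions in the statement. I do not expect a genuine obstacle: the only points needing care are the bookkeeping of the reduction to the quotient (already done in Lemma~\ref{lem:cry_rigidity1}) and the exact dimension of the trivial motions — in particular verifying that the infinitesimal rotation $m_r$ is genuinely independent of the symmetric translations for generic $p$.
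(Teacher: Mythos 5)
Your proposal is correct and follows exactly the paper's route: the paper proves this corollary simply by combining Lemma~\ref{lem:cry_rigidity1} and Theorem~\ref{thm:cry_rigidity}, and your filling-in of the quotient reduction, the dimension $1+\dim_{\dR}\bigcap_{\gamma\in\Gamma}\ker(A_\gamma-I_d)$ of the trivial motions, and the translation of $\hat h_\Gamma(E)$ into the counting conditions via the induced-matroid machinery of \textsection\ref{subsec:induced_matroids} is precisely the intended argument. (Your caveat about a single-vertex quotient is unnecessary: since $p$ is $\Gamma$-symmetric for an infinite space group, its image is never a single point, so $m_r$ is always independent of the symmetric translations.)
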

For $\Gamma={\sf p1},{\sf p2}, {\sf p3}, {\sf p4}, {\sf p6}$, $k=4,4,2,2,2$, respectively.

\section*{Acknowledgments}
We thank Tibor Jord{\'a}n and Vikt{\'o}ria Kaszanitzky for valuable discussions on 
count matroids of gain graphs.
The modeling of symmetric body-bar frameworks given in \textsection\ref{sec:action} 
is based on \cite{borcea2011periodic}.
We thank  Ciprian Borcea and Ileana Streinu for valuable discussions on 
this topic.


\bibliographystyle{abbrv}
\bibliography{tani20120416}

\end{document}